\def\arXiv{1} 
\newcommand{\notarxiv}[1]{foo}
\newcommand{\arxiv}[1]{ba}
	\renewcommand{\arxiv}[1]{#1}%
	\renewcommand{\notarxiv}[1]{\ignorespaces}%
	\renewcommand{\arxiv}[1]{\ignorespaces}%
	\renewcommand{\notarxiv}[1]{#1}%
\definecolor{darkblue}{rgb}{0,0,.75}
\newcommand{\mc}[1]{\mathcal{#1}}
\DeclarePairedDelimiter{\abs}{\lvert}{\rvert} %
\DeclarePairedDelimiter{\brk}{[}{]}
\DeclarePairedDelimiter{\crl}{\{}{\}}
\DeclarePairedDelimiter{\prn}{(}{)}
\DeclarePairedDelimiter{\norm}{\|}{\|}
\DeclarePairedDelimiter{\ceil}{\lceil}{\rceil}
\DeclarePairedDelimiter{\floor}{\lfloor}{\rfloor}
\newcommand{\inner}[2]{\left<#1,#2\right>}
\newcommand{\overeq}[1]{\overset{#1}{=}}
\newcommand{\overle}[1]{\overset{#1}{\le}}
\newcommand{\overge}[1]{\overset{#1}{\ge}}
\NewDocumentCommand\Ex{s O{} m }{%
	\mathbb{E}%
	\begingroup
	\IfBooleanTF{#1}
	{\ExInn*{#3}}
	{\ExInn[#2]{#3}}%
	\endgroup
}
\DeclarePairedDelimiterX\ExInn[1]{[}{]}{%
	\activatebar
	#1%
}
\RenewDocumentCommand\Pr{sO{}r()}{%
	\mathbb{P}%
	\begingroup
	\IfBooleanTF{#1}
	{\PrInn*{#3}}
	{\PrInn[#2]{#3}}%
	\endgroup
}
\DeclarePairedDelimiterX\PrInn[1](){%
	\activatebar
	#1%
}
\newcommand{\activatebar}{%
	\begingroup\lccode`~=`|
	\lowercase{\endgroup\def~}{\;\delimsize\vert\;}%
	\mathcode`|=\string"8000
}
\newcommand{\ltwo}[1]{\norm{#1}_2} %
\newcommand{\defeq}{\coloneqq}
\newcommand{\wt}[1]{\widetilde{#1}} %
\newcommand{\half}{\frac{1}{2}}
\newcommand{\indic}[1]{\mathbbm{1}_{\!\left\{#1\right\}}} %
\newcommand{\R}{\mathbb{R}}
\newcommand{\N}{\mathbb{N}}
\newcommand{\ones}{\mathbf{1}}
\long\def\@makecaption#1#2{
  \vskip 0.8ex
  \setbox\@tempboxa\hbox{\small {\bf #1:} #2}
  \parindent 1.5em  %
  \dimen0=\hsize
  \advance\dimen0 by -3em
  \ifdim \wd\@tempboxa >\dimen0
  \hbox to \hsize{
    \parindent 0em
    \hfil 
    \parbox{\dimen0}{\def\baselinestretch{0.96}\small
      {\bf #1.} #2
    } 
    \hfil}
  \else \hbox to \hsize{\hfil \box\@tempboxa \hfil}
  \fi
}
\definecolor{innerboxcolor}{rgb}{.9,.95,1}
\definecolor{outerlinecolor}{rgb}{.6,0,.2}
\newcommand{\E}{\mathbb{E}} %
\renewcommand{\P}{\mathbb{P}} %
\newcommand{\simiid}{\stackrel{\rm iid}{\sim}}
\newcommand{\ball}{\mathbb{B}}
\newcommand{\normal}{\mathsf{N}}  %
\newcommand{\uniform}{\mathsf{Unif}}  %
\newcommand{\bernoulli}{\mathsf{Bernoulli}}  %
\providecommand{\argmin}{\mathop{\rm argmin}}
\providecommand{\abs}{\mathop{\rm abs}}
\providecommand{\minimize}{\mathop{\rm minimize}}
\newtheorem{theorem}{Theorem}
\newtheorem{lemma}{Lemma}
\newtheorem{corollary}[theorem]{Corollary}
\newtheorem{proposition}{Proposition}
\newtheorem{definition}{Definition}
\newtheorem{fact}{Fact}
\newtheorem{assumption}{Assumption}
\newtheorem{remark}{Remark}
\newcounter{example}
\newenvironment{example*}[1][]{
  \ifthenelse{\isempty{#1}}{%
    \noindent \textbf{Example:}\hspace*{.05em}
  }{%
    \noindent \textbf{Example} ({#1})\textbf{:}\hspace*{.05em}
  }
}{%
  $\Diamond$ \bigskip
}
\def\keywordname{{\bfseries \emph Keywords}}%
\def\keywords#1{\par\addvspace\medskipamount{\rightskip=0pt plus1cm
    \def\and{\ifhmode\unskip\nobreak\fi\ $\cdot$
    }\noindent\keywordname\enspace\ignorespaces#1\par}}
\newcommand{\yairside}[1]{\todo[color=blue!10]{Yair: #1}}
\newcommand{\yair}[1]{{\bf \color{blue} Yair: #1}}
\newcommand{\sidford}[1]{{\bf \color{purple} Sidford: #1}}
\newcommand{\arun}[1]{{\bf \color{orange} Arun: #1}}
\newcommand{\ha}[1]{{\bf \color{brown} Hilal: #1}}
\renewcommand{\yairside}[1]{\ignorespaces}
\renewcommand{\yair}[1]{\ignorespaces}
\renewcommand{\sidford}[1]{\ignorespaces}
\renewcommand{\arun}[1]{\ignorespaces}
\renewcommand{\ha}[1]{\ignorespaces}
\newcommand{\Otil}[1]{\widetilde{O}( #1 )}
\newcommand{\eps}{\epsilon}
\newcommand{\grad}{\nabla}
\newcommand{\del}{\partial}
\newcommand{\xopt}{x_\star}
\newcommand{\optest}{\textsc{OptEst}}
\newcommand{\gradestfsm}{\textsc{SoftmaxGradEst}}
\newcommand{\gradest}{\textsc{MorGradEst}}
\newcommand{\epochSGD}{\textsc{EpochSGD}}
\newcommand{\ODC}{\textsc{ODC}}
\newcommand{\Ellipsoid}{\textsc{Ellipsoid}}
\newcommand{\nextLambda}{\textsc{NextLambda}}
\newcommand{\oracle}{\mathcal{O}}
\newcommand{\proj}{\mathsf{Proj}}
\newcommand{\prox}{\mathsf{P}}
\newcommand{\appProx}{\wt{\prox}}
\newcommand{\linesearch}{\lambda\textsc{-Bisection}}
\newcommand{\BOO}{BROO\xspace}
\newcommand{\oracles}[2][\lambda,\delta]{\mathcal{O}_{#1}(#2)}
\newcommand{\xhat}{\hat{x}}
\newcommand{\hx}{\xhat}
\newcommand{\Kmax}{K_{\max}}
\newcommand{\Amax}{A_{\max}}
\newcommand{\vphi}{\varphi}
\newcommand{\fmax}{f_{\max}}
\newcommand{\fsm}{f_{\mathrm{smax}}}
\newcommand{\reps}{r_{\eps}}
\newcommand{\bx}{\bar{x}}
\title{Stochastic Bias-Reduced Gradient Methods}
\author{%
  Hilal Asi\thanks{Stanford University, \texttt{\{asi,jmblpati,yujiajin,sidford\}@stanford.edu}}\And
  Yair Carmon\thanks{Tel Aviv University, \texttt{ycarmon@tauex.tau.ac.il}}\And
  Arun Jambulapati\footnotemark[1]\And
  Yujia Jin\footnotemark[1]\And
  Aaron Sidford\footnotemark[1]\And
}
\author{Hilal Asi ~~~ Yair Carmon ~~~ Arun Jambulapati ~~~ Yujia Jin ~~~  Aaron Sidford\\
	\texttt{\{\href{mailto:asi@stanford.edu}{asi},%
	    \href{mailto:jmblpati@stanford.edu}{jmblpati},%
		\href{mailto:yujiajin@stanford.edu}{yujiajin},%
		\href{mailto:sidford@stanford.edu}{sidford}%
		\}@stanford.edu}~~
		 \texttt{\href{mailto:ycarmon@cs.tau.ac.il}{ycarmon}@cs.tau.ac.il}
		}
\date{}
\begin{document}

\maketitle

\begin{abstract}
  We develop a new primitive for stochastic optimization: a low-bias, low-cost  estimator of the minimizer $x_\star$ of any Lipschitz strongly-convex function. 
  In particular, we use a multilevel Monte-Carlo approach due to \citet{blanchet2015unbiased} to turn any optimal stochastic gradient method into an estimator of $x_\star$ with bias $\delta$, variance $O(\log(1/\delta))$, and an expected sampling cost of $O(\log(1/\delta))$ stochastic gradient evaluations.
 As an immediate consequence, we obtain cheap and nearly unbiased gradient estimators for the Moreau-Yoshida envelope of any Lipschitz convex function, allowing us to perform dimension-free randomized smoothing.
 \arxiv{
 
}
 We demonstrate the potential of our estimator through four applications.
 First, we develop a method for minimizing the maximum of $N$ functions, improving on recent results and matching a lower bound up to logarithmic factors.
 Second and third, we recover state-of-the-art rates for projection-efficient and gradient-efficient optimization using simple algorithms with a transparent analysis. 
 Finally, we show that an improved version of our estimator would yield a nearly linear-time, optimal-utility, differentially-private non-smooth stochastic optimization method.
\end{abstract}

\section{Introduction}\label{sec:intro}

\newcommand{\cutoff}{j_{\max}}
\newcommand{\Tmax}{T_{\max}}
\newcommand{\xmlmc}{\hat{x}_\star}

\newcommand{\ghat}{\hat{\nabla}}
\newcommand{\gest}[1][f]{\ghat #1}
\newcommand{\fhat}[1][]{{f_{(#1)}}}

\newcommand{\xset}{\mathcal{X}}
\newcommand{\geoRV}{\mathsf{Geom}}
\newcommand{\gestME}[1][f_\lambda]{\hat{\nabla} {#1}}
\newcommand{\fME}{f_\lambda}
\newcommand{\Nquery}[1][f]{\mathcal{N}_{\gest[#1]}}
\newcommand{\NqueryCustom}[1]{\mathcal{N}_{#1}}

Consider the fundamental problem of minimizing a $\mu$-strongly convex function $F: \xset \to \R$ given access to a stochastic (sub-)gradient estimator $\gest[F]$ satisfying $\E\, \gest[F](x) \in \partial F(x)$ and $\E \norm{\gest[F](x)}^2 \le G^2$ for every $x\in\xset$. 
Is it possible to transform the unbiased estimator $\gest[F]$ into a (nearly)
unbiased estimator of the minimizer $\xopt \defeq \argmin_{x\in\xset} F(x)$? In particular, can we improve upon the $O(G/(\mu \sqrt{T}))$ bias achieved by $T$ iterations of stochastic gradient descent (SGD)? %

In this paper, we answer this question in the affirmative, proposing an \emph{optimum estimator} $\xmlmc$, which (for any fixed $\delta>0$) has
\notarxiv{
\begin{equation*}
\text{bias } \norm{ \E \xmlmc - \xopt} = O\prn*{\delta}
\mbox{ and variance }  \E \norm{\xmlmc - \E \xmlmc}^2 = O\prn*{\frac{G^2}{\mu^2} \log \prn*{\frac{G}{\mu \delta}}},
\end{equation*}
}
\arxiv{
\begin{equation*}
\text{bias } \norm{ \E \xmlmc - \xopt} = \delta
\mbox{ and variance }  \E \norm{\xmlmc - \E \xmlmc}^2 = O\prn*{\frac{G^2}{\mu^2} \log \prn*{\frac{G}{\mu \delta}}},
\end{equation*}
}
and, \emph{in expectation}, costs $O(\log(\frac{G}{\mu\delta}))$ evaluations of $\gest[F]$.\footnote{When $\xset = \ball_R(x_0) \subset \R^d$, $F(x)=\frac{1}{n}\sum_{i\in[n]}\hat{F}(x;i)$, and $\gest[F]$ is the subgradient of a uniformly random $\hat{F}(x;i)$ we can also get an estimator with bias $0$ and expected cost %
 $O(\log (nd))$. 
	See \Cref{app:remark-zero-bias} for details.}
Setting $\delta = G/(\mu \sqrt{T})$, we obtain the same bias bound as $T$ iterations of SGD, but with expected cost of only $O(\log T)$ stochastic gradient evaluations (the worst-case cost is  $T$). Further, the bias can be made arbitrarily small with only logarithmic increase in the variance and the stochastic gradient evaluations of our estimator, and therefore---paralleling the term ``nearly linear-time''~\cite{gurevich1989nearly}---we call  $\xmlmc$ \emph{nearly unbiased}.

Our estimator is an instance of the multilevel Monte Carlo technique for de-biasing estimator sequences~\cite{giles2015multilevel} and more specifically the method of~\citet{blanchet2015unbiased}.  Our key observation is that this method is readily applicable to  strongly-convex variants of SGD, or indeed any stochastic optimization method with the same (optimal) rate of convergence.  

\subsection{Estimating proximal points and Moreau-Yoshida envelope gradients}
Given a convex function $f$ and regularization level $\lambda$, the proximal point of $y$ is $\prox_{f,\lambda}(y)\defeq \argmin_{x\in\R^d}\crl*{f(x) + \tfrac{\lambda}{2}\norm{x-y}^2}$. Since computing $\prox_{f,\lambda}$ amounts to solving a $\lambda$-strongly-convex problem, our technique provides low-bias and cheap proximal point estimators. Proximal points are ubiquitous in optimization~\cite{ParikhBo13,Drusvyatskiy18,Tseng08,MonteiroS13a} and estimating them efficiently with low bias opens up new algorithmic possibilities.
One of these possibilities is estimating the gradient of the Moreau-Yoshida  envelope $f_\lambda(y)\defeq \min_{x\in\R^d}\crl*{f(x) + \tfrac{\lambda}{2}\norm{x-y}^2}$, which is a $\lambda$-smooth,  $G^2/(2\lambda)$-accurate approximation of any $G$-Lipschitz $f$ 
(see, e.g., \cite{ParikhBo13,hiriart1993convex_ii} and \Cref{app:moreau-properties}). Since $\grad f_\lambda(y) = \lambda (y-\prox_{f,\lambda}(y))$, our optimum estimator provides a low-bias estimator for $\grad f_\lambda(y)$ with second moment and expected cost greater than those of $\gest[f]$ by only a logarithmic factor. 
Thus, for any non-smooth $f$ we can turn $\gest[f]$ into a gradient estimator for the smooth surrogate $f_\lambda$, whose smoothness is independent of the problem dimension, allowing us to perform \emph{dimension-free} randomized smoothing~\cite{duchi2012randomized}. 

\subsection{Applications via accelerated bias-reduced methods}\label{sec:intro-apps}
\newcommand{\Op}{\mathcal{O}}

Our optimum estimator is a new primitive in stochastic convex optimization and we expect it to find multiple applications. We now describe three such applications:  
the first improves on previously known complexity bounds while the latter two recover existing bounds straightforwardly.
For simplicity of presentation we assume (in the introduction only) $\E\norm{\gest[f]}^2\le 1$ and unit domain size.

In each application, we wish to minimize an objective function given access to a cheap subgradient estimator $\gest[f]$ as well as an expensive application-specific operation $\Op$ (e.g., a projection to a complicated set). 
Direct use of the standard stochastic gradient method finds an $\epsilon$-accurate solution using $O(\epsilon^{-2})$ computations of both $\gest[f]$ and $\Op$, and our goal is to improve the $\Op$ complexity without  hurting the $\gest[f]$ complexity.

To that end, we design stochastic accelerated methods consisting of $T$ iterations, each one involving only a constant number of $\Op$ and proximal point computations, which we approximate by averaging copies of our optimum estimator.\footnote{While averaging is parallelizable, our optimum estimator itself is sequential. Consequently, our approach does not yield improve parallelism; see~\Cref{app:remark-parallel} for further discussion}
Its low bias allows us to bound $T\ll \epsilon^{-2}$ as though our proximal points were exact, while maintaining an $\Otil{\epsilon^{-2}}$ bound on the total expected number of $\gest[f]$ calls.\footnote{It is easy to turn expected complexity bounds into deterministic ones; see~\Cref{app:remark-total-work}.} 
Thus, we save expensive operations without substantially increasing the gradient estimation cost. \Cref{table:summary} summarizes each application, and we briefly describe them below.

\begin{table}[]
	\centering
	\begin{tabular}{@{}llll@{}}
		\toprule
		Objective                                                      & Expensive operation $\Op$          & $\NqueryCustom{\Op}$                      & $\E \Nquery$                                             \\ \midrule
		$\max_{i\in[N]} \fhat[i](x)$ (Sec.~\ref{sec:mtm})              & $\fhat[1](x), \ldots, \fhat[N](x)$ & 
		$\Otil{\eps^{-2/3}}$
		& $\Otil{\eps^{-2}}$ \\
		$f(x)$ in domain $\xset$ (Sec.~\ref{sec:proj-eff})             & $\proj_{\xset}(x)$                 & $O(\eps^{-1})$                            &  ~~~\texttt{"}                                                        \\
		$\Lambda(x)+f(x)$ for $L$-smooth $\Lambda$ (Sec.~\ref{sec:gs}) & $\grad \Lambda(x)$                 & $O\prn[\big]{\sqrt{L/\eps}}$               &    ~~~\texttt{"}                                                      \\ \bottomrule
	\end{tabular}
	\caption{Summary of our applications of accelerated bias-reduced stochastic gradient methods. We use $\NqueryCustom{\Op}$ and $\Nquery$ to denote the number of expensive operations and subgradient estimations, respectively. The $\wt{O}$ notation hides polylogarithmic factors. See \Cref{sec:intro-apps} for additional description.}
	\label{table:summary}
\end{table}

\paragraph{Minimizing the maximal loss (\Cref{sec:mtm}).} 
Given $N$ convex, $1$-Lipschitz functions $\fhat[1], \ldots, \fhat[N]$ we would like to find an $\epsilon$-approximate minimizer of their maximum $\fmax(x) = \max_{i\in [N]} \fhat[i](x)$. This problem naturally arises  when optimizing worst-case behavior, as in maximum margin classification and robust optimization~\cite{vapnik1999overview,clarkson2012sublinear,ShalevWe16,Ben-TalGhNe09}. We measure complexity by the number of individual function and subgradient evaluations, so that the expensive operation of evaluating  $\fhat[1],\ldots,\fhat[N]$ at a single point has complexity $O(N)$ and the subgradient method solves this problem with complexity $O(N\epsilon^{-2})$. 
\citet{carmon2021thinking} develop an algorithm for minimizing $\fmax$ with complexity $\Otil{ N\eps^{-2/3} + \eps^{-8/3} }$, improving on the subgradient method for sufficiently large $N$. 
Using our bias-reduced Moreau gradient envelope estimator in a Monteiro-Svaiter-type accelerated proximal point method~\cite{carmon2020acceleration,bubeck2019complexity,MonteiroS13a}, we obtain improved complexity $\Otil{ N\epsilon^{-2/3} + \epsilon^{-2} }$.
This matches (up to logarithmic factors) a lower bound shown in \cite{carmon2021thinking}, settling the complexity of minimizing the maximum of $N$ non-smooth functions. Our result reveals a surprising fact: for $N \ll (GR/\epsilon)^{-4/3}$, minimizing the maximum of $N$ functions is no harder than minimizing their average.

\paragraph{Projection-efficient optimization via dimension-free randomized smoothing~(\Cref{sec:proj-eff}).}
Consider the problem of minimizing a convex function $f$ using an unbiased gradient estimator $\gest[f]$ over convex set $\xset$ for which Euclidean projections are expensive to compute (for example, the cone of PSD matrices). When $f$ is $L$-smooth, a stochastic version of Nesterov's accelerated gradient descent (AGD)~\cite{cohen2018acceleration} performs only $O(\sqrt{L/\epsilon})$ projections. 
\arxiv{
When $f$ is non-smooth, we may replace it with a smooth surrogate. Randomized smoothing~\cite{duchi2012randomized} allows us to find a solution using $O(d^{1/4}/\epsilon)$ projections, where $d$ is the problem dimension. 
Alternatively, applying AGD on the Moreau envelope $f_{\lambda}$ (for $\lambda = O(\epsilon^{-1})$) yields a solution in $O(\epsilon^{-1})$ projections, removing the dimension dependence inherent to standard randomized smoothing. However, computing $\grad f_{\lambda}$ to sufficient accuracy requires a total of $O(\epsilon^{-3})$  calls to $\gest[f]$. 
Instead, we apply stochastic AGD with our  Moreau envelope gradient estimator, improving the number of $\gest[f]$ calls to the near optimal $\Otil{\epsilon^{-2}}$. 
Our algorithm provides a simple alternative to the recent work of \citet{thekumparampil2020projection} whose performance guarantees are identical up to a logarithmic factor.
}
\notarxiv{
For non-smooth $f$ we instead apply AGD to the Moreau envelope smoothing of $f$ (with appropriate $\lambda=O(\epsilon^{-1})$) using our nearly-unbiased stochastic estimator for $\grad f_\lambda$. This yields a solution in $O(\epsilon^{-1})$ projections and $\Otil{\eps^{-2}}$ evaluations of $\gest[f]$. 
Our algorithm provides a simple alternative to the recent work of \citet{thekumparampil2020projection} whose performance guarantees are identical up to a logarithmic factor.
}

\paragraph{Gradient-efficient composite optimization~(\Cref{sec:gs}).}

We would like to minimize  $\Psi(x) = \Lambda(x) + f(x)$, where $\Lambda$ is convex and $L$-smooth but we can access it only via computing (expensive) exact gradients, while $f$ is a non-smooth convex functions for which we have a (cheap) unbiased subgradient estimator $\gest[f]$. Problems of this type include inverse problems with sparsity constraints and regularized  loss minimization in machine learning~\cite{lan2016gradient}. To save $\grad \Lambda$ computations, it is possible to use composite AGD~\cite{nesterov2013gradient} which solves $O(\sqrt{L/\eps})$ subproblems of the form $\minimize_{x}\crl[\big]{\inner{\grad \Lambda(y)}{x} + f(x) + \frac{\beta}{2}\norm{x-x'}^2}$. \citet{lan2016gradient} designed a specialized method, gradient sliding, for which the total subproblem solution cost is $O(\epsilon^{-2})$ evaluations of $\gest[f]$. We show that a simple alternative---estimating the subproblem solutions via our low-bias optimum estimator---recovers its guarantees up to logarithmic factors.

\subsection{Non-smooth differentially private stochastic convex optimization}
We now discuss a potential application of our technique that is conditional on the existence of an improved optimum estimator. In it, we minimize the population objective function $f(x) = \E_{S \sim P}\hat f(x;S)$ under the well-known constraint of differential privacy~\cite{DworkMcNiSm06}.
Given $n$ i.i.d.\ samples $S_i \sim P$ and assuming that each $\hat f$ is $1$-Lipschitz, convex and sufficiently smooth, \citet{FeldmanKoTa20} develop algorithms that obtain the optimal error and compute $O(n)$ subgradients of $\hat f$. The non-smooth case is more challenging and the best existing bound is $O(n^{11/8})$ for the high-dimensional setting $d=n$~\cite{KulkarniLeLi21,AsiFeKoTa21}.
In \Cref{sec:DP-SCO} we show that our optimum estimator, combined with recent localization techniques~\cite{FeldmanKoTa20}, 
 reduces the problem to private mean estimation. Unfortunately, our estimator is heavy-tailed, leading to insufficient utility. Nevertheless, assuming a version of our estimator that has bounded outputs, we give an algorithm that queries $\Otil{n}$ subgradients for non-smooth functions, solving a longstanding open problem in private optimization~\cite{ChaudhuriMoSa11,BassilySmTh14}.
This motivates the study of improved versions of our estimators that have constant sensitivity.

\subsection{Related work}
Multilevel Monte-Carlo (MLMC) techniques originate from the literature on parametric integration for solving integral and differential equations~\cite{giles2015multilevel}.
Our approach is based on an MLMC variant put forth by \citet{blanchet2015unbiased} for estimating functionals of expectations. Among several applications, they propose \cite[Section 5.2]{blanchet2015unbiased} an estimator for $\argmin_x \E_{S\sim P} \hat{f}(x;S)$ where $\hat{f}(\cdot;s)$ is convex for all $s$ and assuming access to minimizers of empirical objectives of the form $\sum_{i\in[N]} \hat{f}(x;s_i)$.
The authors provide a preliminary analysis of the estimator's variance (later elaborated in~\cite{blanchet2019unbiased}) using an asymptotic Taylor expansion around the population minimizer. In comparison, we study the more general setting of stochastic gradient estimators and provide a complete algorithm based on SGD, along with a non-asymptotic analysis and concrete settings where our estimator is beneficial.

A number of works have used the Blanchet-Glynn estimator in the context of optimization and machine learning. These applications include estimating the ratio of expectations for semisupervised learning~\cite{blanchet2018semi}, estimating gradients of distributionally robust optimization objectives~\cite{levy2020large}, and estimating gradients in deep latent variable models~\cite{shi2021multilevel}. Our estimator is similar to that of \citet{levy2020large} in that we also have to pick a ``critical'' doubling probability for the (random) computational budget, which makes the expected  cost and variance of our estimators depend logarithmically on the bias.

\subsection{Limitations}
Our paper demonstrates that our proposed optimum estimator is a useful \emph{proof device}: it allows us to easily prove upper bounds on the complexity of structured optimization problems, and at least in one case (minimizing the maximum loss) improve over previously known bounds. 
However, our work does  not investigate the \emph{practicality} of our optimum estimator, as implementation and experiments are outside its scope.

Nevertheless, let us briefly discuss the practical prospects of the algorithms we propose.
On the one hand, our optimum estimator itself is fairly easy to implement, adding only a few parameters on top of a basic gradient method. 
On the other hand, in the settings of \Cref{sec:proj-eff,sec:gs}, gradient-sliding based methods~\cite{lan2016gradient,thekumparampil2020projection} are roughly as simple to implement and enjoy slightly stronger convergence bounds (better by logarithmic factors) than our optimum estimator. Consequently, in these settings we have no reason to assume that our algorithms are better in practice. In the setting of \Cref{sec:mtm} (minimizing the maximum loss) our algorithm does enjoy a stronger guarantee than the previous best bound~\cite{carmon2021thinking}. However, both our algorithm and~\cite{carmon2021thinking} are based on an accelerated proximal point method that, in its current form, is not practical~\cite[Sec. 6.2]{carmon2021thinking}. Thus, evaluating the benefit of stochastic bias reduction in the context of minimizing the maximum loss would require us to first develop a practical accelerated proximal point algorithm, which is an open question under active research~\cite[see, e.g.,][]{song2021unified}.

Another limitation of our optimum estimator is that, while it has a bounded second moment, its higher moments are unbounded. While this does not matter for most of our results, the lack of higher moment bounds prevents us from setting the complexity of non-smooth private stochastic convex optimization in \Cref{sec:DP-SCO}. Finding an optimum estimator that is bounded with high probability---or proving that one does not exist---remains an open question for future work.

Finally, our analyses are limited to convex objective functions. 
However, while outside the scope of the paper, we believe our results are possibly relevant for non-convex settings as well. In particular, for smooth non-convex functions (and weakly-convex functions~\cite{davis2019stochastic} more broadly) the problem of computing proximal points with sufficiently high regularization is strongly convex and our estimator applies. Such non-convex proximal points play an important role in non-convex optimization~\cite{davis2019stochastic} with applications in deep learning~\cite[see, e.g.,][]{sinha2018certifying}. 
Applying the optimum-estimator technique in non-convex optimization is therefore a viable direction for future work.

\subsection{Notation}
We let $\ball_{R}(x) = \{ y \in \R^d : \norm{y - x} \le R \}$ denote the ball of radius $R$ around $x$, where $\norm{\cdot}$ is the Euclidean norm throughout. We write $\proj_{\mc{S}}$ for the Euclidean projection to $\mc{S}$. We write $\indic{A}$ for the indicator of event $A$, i.e., $\indic{A}=1$ when $A$ holds and $0$ otherwise.
Throughout the paper,  $\gest$ denotes a (stochastic) subgradient estimator for the function $f$, and $\xset \subset \R^d$ denotes the optimization domain, which we always assume is closed and convex. 
We use $\prox_{f,\lambda}$ to denote the proximal operator~\eqref{eq:def-prox} and $f_\lambda$ to denote the Moreau envelope~\eqref{eq:def-ME} associated with function $f$ and regularization parameter $\lambda$.
 Finally, we use $\NqueryCustom{f}$ and $\Nquery$ to denote function and subgradient estimator evaluation complexity, respectively.
\section{A multilevel Monte-Carlo optimum estimator}\label{sec:estimator}

In this section, we construct a low-bias estimator for the minimizer of any strongly convex function $F:\xset \to \R$. This estimator is the key component of our algorithms in the subsequent sections, which use it to approximate proximal points and Moreau envelope gradients. We assume that $F$ is of the form $F=f+\psi$, where the function $\psi$ is ``simple'' and that $f$ satisfies the following.
\begin{assumption}\label{ass:gest}
	The function $f:\xset\to \R$ is convex (with closed and convex domain $\xset$) and is accessible via an unbiased subgradient estimator $\gest[f]$ which satisfies $\E \norm{\gest[f](x)}^2 \le G^2$ for all $x$. 
\end{assumption}

Our applications only use  $\psi$ of the form $\psi(x)=\frac{\lambda}{2}\norm{x-x'}^2$ but our estimator applies more broadly to cases where $\argmin_{x}\crl[\big]{ \inner{v}{x} + \psi(x) + \frac{1}{2\eta} \norm{x-y}^2}$ is easy to compute for all $v$ and $y$.

\subsection{ODC algorithms}
Our estimator can use, in a black-box fashion, any method for minimizing $F$ with sufficiently fast convergence to $\xopt = \argmin_{x\in\xset} F(x)$. We highlight the required convergence property as follows.
\begin{definition}\label{def:ODC}
	An \emph{optimal-distance-convergence} algorithm $\ODC$ takes as input $\gest$ satisfying \Cref{ass:gest}, a simple function $\psi$ and a budget $T\ge 1$. If $F=f+\psi$ is $\mu$-strongly convex with minimizer $\xopt$, the algorithm's output $x = \ODC(\gest[f], \psi, T)$ requires at most $T$ evaluations of $\gest[f]$ to compute and satisfies $\E\norm{x-\xopt}^2 \le c \frac{G^2}{\mu^2 T}$ for some constant $c>0$.
\end{definition}

Standard lower bound constructions imply that the $O(\frac{G^2}{\mu^2 T})$ squared distance convergence rate is indeed optimal; see \Cref{app:remark-odc} for additional discussion.  Conversely, 
ODC algorithms are readily available in the literature~\cite{rakhlin2012making,hazan2014beyond} since any point $x$ satisfying $\E F(x) - F(\xopt) =  O(\frac{G^2}{\mu T})$ (the optimal rate of convergence in strongly convex, Lipschitz optimization) also satisfies $\E\norm{x-\xopt}^2 \le O(\frac{G^2}{\mu^2 T})$ by due to the strong convexity of $F$. We provide a concrete ODC algorithm consisting of a
generalization of epoch SGD~\cite{hazan2014beyond}, which allows us to optimize over the composite objective $F=f+\psi$ instead of only $f$ as in the prior study of epoch SGD.

\begin{restatable}{lemma}{epochlemma}\label{lem:epoch-SGD} 
	 $\epochSGD$ (\Cref{alg:epochSGD} in \Cref{app:epochSGD}) is an ODC algorithm with constant $c=32$.
\end{restatable}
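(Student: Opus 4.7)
The plan is to analyze $\epochSGD$ along the lines of Hazan--Kale's classical epoch-GD template~\cite{hazan2014beyond}, adapted to the composite setting $F=f+\psi$.

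First I would establish a single-epoch guarantee: running $T$ iterations of composite stochastic gradient descent on $F=f+\psi$ with a constant step size $\eta$ and starting from $x_0$, the averaged iterate $\bar{x}_T = \frac{1}{T}\sum_{k=1}^T x_k$ satisfies
\[
\E\norm{\bar{x}_T - \xopt}^2 \;\le\; \frac{\E\norm{x_0-\xopt}^2}{\mu \eta T} + \frac{\eta G^2}{\mu}.
\]
The starting point is the per-step recursion $\E\norm{x_{k+1}-\xopt}^2 \le \E\norm{x_k-\xopt}^2 - 2\eta\,\E[F(x_{k+1})-F(\xopt)] + \eta^2 G^2$, obtained from the first-order optimality of the composite proximal update $x_{k+1} = \argmin_x\{\langle g_k,x\rangle + \psi(x) + \frac{1}{2\eta}\norm{x-x_k}^2\}$ together with the bound $\E\norm{\gest[f](x_k)}^2 \le G^2$. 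Telescoping, using Jensen's inequality on the convex function $F$, and finally invoking $\mu$-strong convexity in the form $F(\bar{x}_T)-F(\xopt)\ge\tfrac{\mu}{2}\norm{\bar{x}_T-\xopt}^2$ yields the display above.

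Next I would plug this into a doubling scheme. Pick $D_0$ with $\E\norm{x_0-\xopt}^2\le D_0^2$; one can take $D_0 = O(G/\mu)$ by combining $\mu$-strong convexity of $F$ with $G$-Lipschitzness of $f$ (in the proximal applications of \Cref{sec:intro-apps}, the cleaner bound $\norm{x_0-\xopt}\le 2G/\mu$ holds directly). In epoch $i$ with input bound $D_i^2$, run $T_i = 16G^2/(\mu^2 D_i^2)$ iterations with step size $\eta_i = D_i/(G\sqrt{T_i})$; this balances the two terms of the single-epoch bound and gives $\E\norm{\bar{x}_{T_i}-\xopt}^2 \le 2D_i G/(\mu\sqrt{T_i}) = D_i^2/2 =: D_{i+1}^2$. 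Iterating, after $K$ epochs the total budget is
\[
T \;=\; \sum_{i=0}^{K-1} T_i \;=\; \frac{16G^2}{\mu^2 D_0^2}\,(2^K - 1) \;\le\; \frac{16 G^2}{\mu^2 D_K^2/2} \;=\; \frac{32 G^2}{\mu^2 D_K^2},
\]
so the final output $\bar{x}$ satisfies $\E\norm{\bar{x}-\xopt}^2 \le D_K^2 \le 32 G^2/(\mu^2 T)$, giving the ODC property with $c=32$. A minor bookkeeping step would handle the case where the prescribed budget $T$ does not fall exactly at an epoch boundary (absorb leftover iterations into the current epoch, at most doubling its length).

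I expect the principal technical obstacle to be the single-epoch recursion in the composite setting: the non-composite SGD per-step bound is standard, but to get an inequality directly on $\E[F(x_{k+1})-F(\xopt)]$ (rather than on $\E[F(x_k)-F(\xopt)]$) one must exploit the implicit proximal step via the inclusion $-\tfrac{1}{\eta}(x_{k+1}-x_k) - g_k \in \partial\psi(x_{k+1})$ and carefully combine it with the convexity of $f$ at $x_k$. Everything downstream (parameter tuning, epoch doubling, constant chasing) is routine bookkeeping once the per-step bound is in place.
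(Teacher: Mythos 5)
Your overall plan matches the paper's proof: a single-epoch composite-SGD bound (the paper's Lemma~\ref{lem:perepoch}) followed by a doubling scheme with step-size halving, yielding $c=32$ after one use of $\mu$-strong convexity. The only substantive difference is cosmetic — you propagate squared-distance bounds $D_i^2$ across epochs while the paper tracks the function gap $\Delta_k = F(x_k^0)-F(\xopt)$ by induction and converts to distance once at the very end; both use strong convexity in both directions and give the same constant.

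One detail in your sketch is stated wrong, though you correctly anticipate where the difficulty lies. The per-step recursion cannot read $\E\norm{x_{k+1}-\xopt}^2 \le \E\norm{x_k-\xopt}^2 - 2\eta\,\E[F(x_{k+1})-F(\xopt)] + \eta^2 G^2$: the stochastic subgradient $g_k$ is taken at $x_k$, while the proximal optimality condition lives at $x_{k+1}$, so the direct per-step bound produces the mixed-index quantity $f(x_k)+\psi(x_{k+1})$, not $F(x_{k+1})$ (nor $F(x_k)$). The paper resolves this by inserting a gradient-free $\psi$-proximal step at the start of every epoch (\cref{line:composite-update-first} of \Cref{alg:epochSGD}, with the convention $g^0=0$), which after telescoping realigns $f$ and $\psi$ onto the same index $x^t$ before applying Jensen. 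The same initialization at $\argmin_{x\in\xset}\psi(x)$ also supplies the precise seed bound $F(x_1^0)-F(\xopt)\le G^2/(2\mu)$ (hence $T_1=16$), rather than the slightly vague $D_0=O(G/\mu)$ in your write-up. These are reconcilable bookkeeping fixes, but as written the per-step recursion does not hold.
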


\begin{figure}[t]
	\begin{minipage}[t]{0.46\linewidth}%
		\centering
		\begin{algorithm}[H]
			\DontPrintSemicolon
			\LinesNotNumbered
				\caption{$\optest(\gest[f],\psi, \mu, \delta,\sigma^2,\xset)$}	\label{alg:optest}
			\Comment{$\delta,\sigma^2$ are required bias and square error}
			\Comment{$c$ is the ODC algorithm constant}
			$\Tmax = \ceil*{\frac{4cG^2}{\mu^2\min\crl{\delta^2, \half\sigma^2}}}$\;
			$N = \ceil*{ \frac{32c G^2 \log (\Tmax)}{\mu^2\sigma^2} }$\;
			\For{$i=1,\ldots,N$}{
				$\xmlmc^{(i)} = $ a draw of the estimator~\eqref{eq:def-mlmc}
			}
			\Return $\frac{1}{N}\sum_{i \in [N]} \xmlmc^{(i)}$
		\end{algorithm}
	\end{minipage}\hfill
	\begin{minipage}[t]{0.52\linewidth}%
		\centering
		\begin{algorithm}[H]
			\DontPrintSemicolon
			\LinesNotNumbered
			\caption{$\gradest(\gest[f],y,\lambda,\delta,\sigma^2,\xset)$}	\label{alg:gradest}
			\Comment{$\delta,\sigma^2$ are required bias and square error}
			\Comment{$\lambda$ is the regularization level}
			\Comment{$y$ is the point at which to estimate $\grad f_\lambda(y)$}
			$\psi_\lambda(x) = \frac{\lambda}{2}\norm{x-y}^2$\;
			$\xmlmc = \optest\prn*{\gest[f],\psi_\lambda,\lambda,\frac{\delta}{\lambda},\frac{\sigma^2}{\lambda^2},\xset}$\;
			\Return $\lambda(y-\xmlmc)$\;
		\end{algorithm}
	\end{minipage}
\end{figure}

\subsection{Constructing an optimum estimator}
To turn any ODC algorithm into a low-bias, low-cost and near-constant variance optimum estimator, we use the multilevel Monte Carlo (MLMC) technique of \citet{blanchet2015unbiased}. Given a problem instance $\gest[f],\psi$, an algorithm $\ODC$ and a cutoff parameter $\Tmax\in\N$, our estimator $\xmlmc$ is:
\begin{gather}
	\text{Draw}~J  \sim \geoRV\left(\tfrac{1}{2}\right)\in\N\mbox{ and, writing}~
	x_j \defeq \ODC(\gest[f], \psi, 2^{j}),
	\mbox{ compute}
	\nonumber \\ 
	\xmlmc = x_0 +
	\begin{cases}
		2^J \prn*{ x_{J} - x_{J-1} } & 2^J \le \Tmax \\
		0 & \mbox{otherwise}. \\
	\end{cases}
	\label{eq:def-mlmc}
\end{gather}

We note that for certain ODC algorithms it is possible to extract $x_0, x_{J-1}$ from the intermediate steps of computing $x_J$, so that we only need to invoke $\ODC$ once. This is particularly simple to do for $\epochSGD$, as we explain in \Cref{app:epochSGD}. The key properties of our estimator are as follows.

\newcommand{\jmax}{j_{\max}}

\begin{proposition}\label{prop:est}
\notarxiv{
Let $f$ and $\gest[f]$ satisfy \Cref{ass:gest}, $F=f+\psi$ be $\mu$-strongly convex with minimizer $\xopt$ and $\Tmax\in\N$. For any ODC algorithm with constant $c$, the estimator~\eqref{eq:def-mlmc} has bias $\norm{\E\xmlmc - \xopt} \le \sqrt{2c}\frac{G}{\mu\sqrt{\Tmax}}$ and variance $\E\norm{\xmlmc - \E\xmlmc}^2 \le 16 c \frac{G^2}{\mu^2}\log_2(\Tmax)$. Moreover, the expected number of $\gest$ evaluations required to compute $\xmlmc$ is $O(\log \Tmax)$.
}
\arxiv{
Let $f$ and $\gest[f]$ satisfy \Cref{ass:gest}, $F=f+\psi$ be $\mu$-strongly convex with minimizer $\xopt$ and $\Tmax\in\N$. For any ODC algorithm with constant $c$, the estimator~\eqref{eq:def-mlmc} has 
\begin{align*}
	\text{bias}~~\norm{\E\xmlmc - \xopt} \le \sqrt{2c}\frac{G}{\mu\sqrt{\Tmax}}~~\text{ and variance }~~\E\norm{\xmlmc - \E\xmlmc}^2 \le 16 c \frac{G^2}{\mu^2}\log_2(\Tmax).
\end{align*}
 Moreover, the expected number of $\gest$ evaluations required to compute $\xmlmc$ is $O(\log \Tmax)$.
}
\end{proposition}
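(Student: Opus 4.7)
The plan is to apply the standard Blanchet--Glynn MLMC analysis, specialized to the squared-distance convergence guarantee of the ODC algorithm. Let $\jmax \defeq \lfloor \log_2 \Tmax \rfloor$, so that the truncation condition $2^J \le \Tmax$ is equivalent to $J \le \jmax$, and write $\mu_j \defeq \E x_j$. I will treat the calls to $\ODC$ across different levels as independent (the bias argument requires only that each $x_j$ is an unbiased realization of its own algorithm, and the variance argument only uses marginal second moments at each level).

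For the bias, conditioning on $J \sim \mathrm{Geom}(1/2)$ with $\Pr(J=j) = 2^{-j}$ exactly cancels the importance weight $2^J$ in the estimator, yielding the telescope
\[
\E \xmlmc = \mu_0 + \sum_{j=1}^{\jmax} 2^{-j} \cdot 2^{j} (\mu_j - \mu_{j-1}) = \mu_{\jmax}.
\]
Jensen's inequality then gives $\|\E\xmlmc - \xopt\| \le \sqrt{\E\|x_{\jmax} - \xopt\|^2}$, which by the ODC guarantee and the bound $2^{\jmax} \ge \Tmax/2$ is at most $\sqrt{2c}\,G/(\mu\sqrt{\Tmax})$, as required.

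For the variance, I would use $\Var(\xmlmc) \le \E\|\xmlmc - \xopt\|^2$ and decompose $\xmlmc - \xopt = (x_0 - \xopt) + 2^J\indic{J \le \jmax}(x_J - x_{J-1})$. Applying $\|a+b\|^2 \le 2\|a\|^2 + 2\|b\|^2$ and then conditioning on $J$ reduces the problem to bounding $\E\|x_j - x_{j-1}\|^2 \le 2\E\|x_j - \xopt\|^2 + 2\E\|x_{j-1} - \xopt\|^2 = O(cG^2/(\mu^2 2^j))$ via another triangle inequality and the ODC guarantee at levels $j$ and $j-1$. The $2^{2j}$ factor coming from $(2^J)^2$, combined with $\Pr(J=j) = 2^{-j}$, yields an $O(cG^2/\mu^2)$ contribution per level; summing over $j \le \jmax$ and tracking constants gives the stated bound $16 c G^2 \log_2(\Tmax)/\mu^2$.

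Finally, the expected cost of drawing $\xmlmc$ is dominated by running $\ODC$ with budget $2^J$ a constant number of times (for $x_0$, $x_{J-1}$, and $x_J$), so the expected query count is $\sum_{j=0}^{\jmax} 2^{-j} \cdot O(2^j) = O(\log \Tmax)$. The only conceptual step is the variance calculation: the exponential $2^{2j}$ blow-up from importance weighting is absorbed precisely by the geometric probability $2^{-j}$ and the $2^{-j}$ decay of squared distance to $\xopt$, leaving only a logarithmic overhead. This tight matching is exactly why the ODC property is formulated in terms of squared distance rather than function value or first moment.
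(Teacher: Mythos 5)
Your proposal mirrors the paper's proof essentially step for step: the telescoping identity $\E\xmlmc = \E x_{\jmax}$ followed by Jensen and $2^{\jmax}\ge\Tmax/2$ for the bias; the decomposition $\E\|\xmlmc-\xopt\|^2 \le 2\E\|\xmlmc-x_0\|^2 + 2\E\|x_0-\xopt\|^2$ together with conditioning on $J$ and the bound $\E\|x_j-x_{j-1}\|^2 \le 6cG^2/(\mu^2 2^j)$ for the variance; and the geometric sum $\sum_j 2^{-j}\cdot O(2^j) = O(\jmax)$ for the cost. Your parenthetical about independence is harmless but unnecessary—as you yourself note, every step uses only marginal expectations or second moments of the individual $x_j$, so the argument goes through (as in the paper) even when $x_0$, $x_{J-1}$, $x_J$ are extracted from a single run of the ODC algorithm.
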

\begin{proof}
	Let $\jmax = \max\crl{j\in\N \mid 2^j \le \Tmax} = \floor{\log_2 \Tmax)}$. The expectation of $\xmlmc$ is
	\begin{equation*}
		\E \xmlmc = \E x_0 + \sum_{j=1}^{\jmax} \P(J=j) 2^j (\E x_{j} - \E x_{j-1} ) = \E x_{\jmax},
	\end{equation*}
	where the second equality follows from $\P(J=j) = 2^{-j}$ and the sum telescoping. Noting that $x_{\jmax} = \ODC(\gest[f],\psi, T)$ for $T=2^{\jmax} \ge \Tmax/2$, we have that 
	\begin{equation*}
	\norm{\E x_{\jmax} - \xopt} \le \sqrt{\E \norm{ x_{\jmax}-\xopt}^2 } \le \sqrt{c}\frac{G}{\mu\sqrt{\Tmax/2}}
	\end{equation*}
	by \Cref{def:ODC}. To bound the variance we use $\norm{a+b}^2 \le 2\norm{a}^2 + 2\norm{b}^2$  and note that
	\begin{equation*}
		\E \norm{\xmlmc - \E \xmlmc}^2 \le 
		\E \norm{\xmlmc - \xopt}^2 
		\le 
		2\E \norm{\xmlmc - x_0}^2 + 
		2\E \norm{x_0 - \xopt}^2.
	\end{equation*}
	The ODC property implies that $\E \norm{x_0 - \xopt}^2 \le c G^2 / \mu^2$. For the term $\E \norm{\xmlmc - x_0}^2$ we have
	\begin{align*}
		& \E \norm{\xmlmc - x_0}^2   = \sum_{j=1}^{\jmax} \P(J=j) 2^{2j} \E\norm{x_j - x_{j-1}}^2 = \sum_{j=1}^{\jmax} 2^{j} \E\norm{x_j - x_{j-1}}^2, 
		\mbox{~~and} \\
	& \E\norm{x_j - x_{j-1}}^2  \le 2\E \norm{x_j - \xopt}^2 + 2\E \norm{x_{j-1} - \xopt}^2 \le 6c \frac{G^2}{\mu^2} 2^{-j}.
	\end{align*}
	\notarxiv{
	Substituting, we get $\E \norm{\xmlmc - x_0}^2\le 6c \frac{G^2}{\mu^2} \jmax$ and 
	$\E \norm{\xmlmc - \E \xmlmc}^2 %
	\le 16 c \frac{G^2}{\mu^2} \log_2(\Tmax)$. }
	\arxiv{
	Substituting, we get 
	\[\E \norm{\xmlmc - x_0}^2\le 6c \frac{G^2}{\mu^2} \jmax~~\text{and}~~\E \norm{\xmlmc - \E \xmlmc}^2  
	\le 16 c \frac{G^2}{\mu^2} \log_2(\Tmax).\]
	}
	Finally, the expected number of $\gest[f]$ evaluations is
	$1 + \sum_{j=1}^{\jmax} \P(J=j) (2^j + 2^{j-1} ) = O(\jmax)$. 	
\end{proof}

The function $\optest$ in \Cref{alg:optest} computes an estimate of  $\xopt$ with and desired bias $\delta$ and square error $\sigma^2$ by averaging independent draws of the MLMC estimator~\eqref{eq:def-mlmc}. 
The following guarantees are immediate from \Cref{prop:est}; see \Cref{app:optest} for a short proof.

\begin{restatable}{theorem}{restateThmOptest}\label{thm:optest}
	Let $f$ and $\gest[f]$ satisfy \Cref{ass:gest}, $F=f+\psi$ be $\mu$-strongly convex with minimizer $\xopt\in \xset$, and $\delta,\sigma>0$. The function $\optest(\gest[f],\psi, \mu, \delta,\sigma^2,\xset)$ outputs $\xmlmc$ satisfying 
 \[\|\E \xmlmc-\xopt\| \le \delta~~\text{ and }~~\E\|\xmlmc-\xopt\|^2\le \sigma^2\]  using $\Nquery$ stochastic gradient computations, where
\[\E \Nquery = O\left(\frac{G^2}{\mu^2\sigma^2}\log^2 \left(\frac{G}{\mu\min\{\delta,\sigma\}}\right)
+\log \prn*{\frac{G}{\mu\min\{\delta,\sigma\}}}
\right).\] 
\end{restatable}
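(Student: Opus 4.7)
The plan is to apply \Cref{prop:est} to the $N$ independent copies $\xmlmc^{(i)}$ that $\optest$ averages, and then tune the parameters $\Tmax$ and $N$ exactly so that both the bias of the average and its variance fall below the required thresholds.

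First I would observe that since the $\xmlmc^{(i)}$ are i.i.d.\ copies of the MLMC estimator~\eqref{eq:def-mlmc} with cutoff $\Tmax$, linearity of expectation gives $\E\xmlmc = \E\xmlmc^{(1)}$, so the bias of the average equals the bias of a single draw. By \Cref{prop:est} this is at most $\sqrt{2c}\,G/(\mu\sqrt{\Tmax})$, and substituting the algorithm's choice $\Tmax \ge 4cG^2/(\mu^2\delta^2)$ yields $\|\E\xmlmc-\xopt\| \le \delta/\sqrt{2} \le \delta$, verifying the bias bound.

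Next I would handle the second moment by the standard bias-variance decomposition $\E\|\xmlmc-\xopt\|^2 = \E\|\xmlmc-\E\xmlmc\|^2 + \|\E\xmlmc-\xopt\|^2$. For the variance term, independence of the $\xmlmc^{(i)}$ gives $\E\|\xmlmc-\E\xmlmc\|^2 = \frac{1}{N}\E\|\xmlmc^{(1)}-\E\xmlmc^{(1)}\|^2 \le \frac{16cG^2\log_2\Tmax}{N\mu^2}$ via \Cref{prop:est}, which the choice $N \ge 32cG^2\log(\Tmax)/(\mu^2\sigma^2)$ makes at most $\sigma^2/2$. For the squared-bias term, the choice $\Tmax \ge 4cG^2/(\mu^2\cdot\sigma^2/2)$ gives $\|\E\xmlmc-\xopt\|^2 \le 2cG^2/(\mu^2\Tmax) \le \sigma^2/4$. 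Adding the two contributions yields $\E\|\xmlmc-\xopt\|^2 \le \sigma^2/2+\sigma^2/4 \le \sigma^2$.

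Finally, for the query complexity I would multiply $N$ by the expected per-draw cost $O(\log \Tmax)$ from \Cref{prop:est}, obtaining $\E\Nquery = O(N\log\Tmax)$. Substituting $N = \lceil 32cG^2\log(\Tmax)/(\mu^2\sigma^2)\rceil$ gives a leading term $O\!\left(\frac{G^2\log^2\Tmax}{\mu^2\sigma^2}\right)$; the ceiling contributes an additive $O(\log\Tmax)$ that becomes relevant when $G^2/(\mu^2\sigma^2)$ is small. Using $\Tmax = O(G^2/(\mu^2\min\{\delta^2,\sigma^2\}))$ to rewrite $\log\Tmax = O(\log(G/(\mu\min\{\delta,\sigma\})))$ then produces the stated bound.

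The proof is essentially mechanical once \Cref{prop:est} is in hand; the main thing to be careful about is matching the constants hidden in the ceilings defining $\Tmax$ and $N$ so that the bias contributes at most $\delta$ directly and squared contributes at most $\sigma^2/4$, while the averaged variance fits into the remaining $\sigma^2/2$ budget. No other technical obstacle is anticipated.
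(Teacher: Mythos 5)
Your proposal is correct and follows the same route as the paper's own proof: apply \Cref{prop:est} to the $N$ i.i.d.\ copies, use the bias--variance decomposition $\E\|\xmlmc-\xopt\|^2 = \E\|\xmlmc-\E\xmlmc\|^2 + \|\E\xmlmc-\xopt\|^2$, and check that the algorithm's choices of $\Tmax$ and $N$ make the variance term at most $\sigma^2/2$ and the squared-bias term at most $\sigma^2/2$, then bound the cost by $N\cdot O(\log\Tmax)$. You are merely more explicit about the constant bookkeeping (including the additive $O(\log\Tmax)$ from the ceiling on $N$), which the paper states without detail; there is no substantive difference.
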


\subsection{Estimating proximal points and Moreau envelope gradients}\label{ssec:estimator-grad}
The proximal point of function $f:\xset\rightarrow \R$ with regularization level $\lambda$ at point $y$ is
\begin{equation}
	\prox_{f,\lambda}(y) \defeq \argmin_{x\in\xset}\crl*{f(x) + \tfrac{\lambda}{2}\norm{x-y}^2}.\label{eq:def-prox}
\end{equation}
When $f$ satisfies \Cref{ass:gest},
 we may use $\optest$ (with $\psi(x) = \frac{\lambda}{2} \norm{x-y}^2$ and $\mu=\lambda$) to obtain a reduced-bias proximal point estimator. The proximal point $\prox_{f,\lambda}(y)$ is closely related to the Moreau envelope
\begin{equation}
	f_\lambda(y) \defeq \min_{x\in\xset}\crl*{f(x) + \tfrac{\lambda}{2}\norm{x-y}^2}~\label{eq:def-ME}
\end{equation}
via the relationship $\grad f_\lambda(y) = \lambda \prn*{ y -\prox_{f,\lambda}(y) }$ (see \Cref{app:moreau-properties}). 
Therefore, we can use our optimum estimator to turn $\Otil{1}$ calls to $\gest[f]$ into a nearly unbiased estimator for $\grad f_\lambda$. We formulate this as:

\notarxiv{
\begin{corollary}\label{coro:gradest}
	Let $f$ and $\gest[f]$ satisfy \Cref{ass:gest}, let $y\in\xset$ and let $\lambda,\sigma,\delta >0$. 
	The function $\gradest(\gest[f],\lambda, y,\delta,\sigma^2,\xset)$ outputs $\gestME(y)$ satisfying 
	$\|\E \gestME(y)-\grad f_\lambda(y)\| \le \delta$ and $\E\| \gestME(y)-\grad f_\lambda(y)\|^2\le \sigma^2$  and has complexity $\E\Nquery = O\left(\frac{G^2}{\sigma^2}\log^2 \left(\frac{G}{\min\{\delta,\sigma\}}\right)
	+ \log \left(\frac{G}{\min\{\delta,\sigma\}}\right)
	\right)$. 
\end{corollary}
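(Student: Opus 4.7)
The plan is to derive this corollary as a direct consequence of \Cref{thm:optest} combined with the identity $\grad f_\lambda(y) = \lambda(y - \prox_{f,\lambda}(y))$ between the Moreau envelope gradient and the proximal point. Concretely, $\gradest$ (\Cref{alg:gradest}) reduces the task of estimating $\grad f_\lambda(y)$ to running $\optest$ on the composite strongly convex objective $F = f + \psi_\lambda$ with $\psi_\lambda(x) = \frac{\lambda}{2}\|x-y\|^2$, whose unique minimizer is $\xopt = \prox_{f,\lambda}(y)$, and then affinely transforming the output.

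First, I would observe that $F = f + \psi_\lambda$ is $\lambda$-strongly convex, so \Cref{thm:optest} applies with strong convexity parameter $\mu = \lambda$ and with bias/variance targets $\delta/\lambda$ and $\sigma^2/\lambda^2$ (the parameters passed by $\gradest$). This yields $\xmlmc$ satisfying $\|\E \xmlmc - \xopt\| \le \delta/\lambda$ and $\E\|\xmlmc - \xopt\|^2 \le \sigma^2/\lambda^2$. Since $\gestME(y) \defeq \lambda(y - \xmlmc)$ is an affine function of $\xmlmc$, linearity of expectation gives $\E \gestME(y) - \grad f_\lambda(y) = \lambda(\xopt - \E \xmlmc)$, and taking norms gives the desired bias bound $\|\E \gestME(y) - \grad f_\lambda(y)\| \le \lambda \cdot \delta/\lambda = \delta$. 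Analogously, $\gestME(y) - \grad f_\lambda(y) = \lambda(\xopt - \xmlmc)$ pointwise, so $\E\|\gestME(y) - \grad f_\lambda(y)\|^2 = \lambda^2 \E \|\xmlmc - \xopt\|^2 \le \sigma^2$.

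For the complexity bound, I would substitute $\mu = \lambda$, bias $\delta/\lambda$, and squared error $\sigma^2/\lambda^2$ into the expression from \Cref{thm:optest}. The prefactor simplifies as $\frac{G^2}{\mu^2 \sigma'^2} = \frac{G^2}{\lambda^2 (\sigma^2/\lambda^2)} = \frac{G^2}{\sigma^2}$, and the logarithmic argument simplifies as $\frac{G}{\mu \min\{\delta',\sigma'\}} = \frac{G}{\lambda \min\{\delta/\lambda, \sigma/\lambda\}} = \frac{G}{\min\{\delta,\sigma\}}$, so the $\lambda$ factors cancel out completely and we recover the stated $O\bigl(\frac{G^2}{\sigma^2}\log^2(\frac{G}{\min\{\delta,\sigma\}}) + \log(\frac{G}{\min\{\delta,\sigma\}})\bigr)$ bound on $\E \Nquery$.

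There is essentially no obstacle here — the result is a mechanical corollary of \Cref{thm:optest} together with the Moreau gradient identity (recalled in \Cref{app:moreau-properties}). The only thing to double-check is that the cancellation of $\lambda$ in the complexity bound is not accidental: this works precisely because $\gradest$ scales both the bias and standard-deviation targets by $1/\lambda$, exactly matching the $1/\mu$ scaling inherent to distance-based convergence for $\mu$-strongly convex objectives.
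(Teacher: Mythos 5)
Your proof is correct and matches the paper's (implicit) argument: the paper presents \Cref{coro:gradest} as an immediate consequence of \Cref{thm:optest} together with the identity $\grad f_\lambda(y) = \lambda(y - \prox_{f,\lambda}(y))$, and you have filled in exactly the routine rescaling computation (bias and standard deviation each scale by $\lambda$, which cancels against the $1/\mu = 1/\lambda$ in the distance bounds and in the query complexity). Nothing further is needed.
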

}

\arxiv{
\begin{corollary}\label{coro:gradest}
	Let $f$ and $\gest[f]$ satisfy \Cref{ass:gest}, let $y\in\xset$ and let $\lambda,\sigma,\delta >0$. 
	The function $\gradest(\gest[f],\lambda, y,\delta,\sigma^2,\xset)$ outputs $\gestME(y)$ satisfying 
	\[\|\E \gestME(y)-\grad f_\lambda(y)\| \le \delta~~\text{and}~~\E\| \gestME(y)-\grad f_\lambda(y)\|^2\le \sigma^2\] 
	  using $\Nquery$ stochastic gradient computations, where
	 \[
	 \E\Nquery = O\left(\frac{G^2}{\sigma^2}\log^2 \left(\frac{G}{\min\{\delta,\sigma\}}\right)
	 + \log \left(\frac{G}{\min\{\delta,\sigma\}}\right)
	 \right).
	 \] 
\end{corollary}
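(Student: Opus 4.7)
The plan is to derive this corollary as a direct consequence of \Cref{thm:optest} applied to the strongly convex objective $F = f + \psi_\lambda$ with $\psi_\lambda(x) = \tfrac{\lambda}{2}\|x-y\|^2$, combined with the standard Moreau-envelope identity $\grad f_\lambda(y) = \lambda\bigl(y - \prox_{f,\lambda}(y)\bigr)$ (established in \Cref{app:moreau-properties}). Since $\psi_\lambda$ is $\lambda$-strongly convex and $f$ is convex, $F$ is $\lambda$-strongly convex with unique minimizer $\xopt = \prox_{f,\lambda}(y)$, so $\gradest$ is exactly the affine transformation $\gestME(y) = \lambda(y - \xmlmc)$ of the output $\xmlmc$ of $\optest(\gest[f], \psi_\lambda, \lambda, \delta/\lambda, \sigma^2/\lambda^2, \xset)$.

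First I would translate the bias and variance guarantees through this affine map. Writing $\gestME(y) - \grad f_\lambda(y) = \lambda(\xopt - \xmlmc)$, we get $\|\E\gestME(y) - \grad f_\lambda(y)\| = \lambda\|\E\xmlmc - \xopt\|$ and $\E\|\gestME(y) - \grad f_\lambda(y)\|^2 = \lambda^2 \E\|\xmlmc - \xopt\|^2$. Applying \Cref{thm:optest} with bias parameter $\delta/\lambda$ and squared-error parameter $\sigma^2/\lambda^2$, these two quantities are bounded by $\lambda \cdot (\delta/\lambda) = \delta$ and $\lambda^2 \cdot (\sigma^2/\lambda^2) = \sigma^2$, respectively, which yields the two claimed accuracy guarantees.

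Next I would compute the sampling complexity. Substituting $\mu = \lambda$, $\delta \mapsto \delta/\lambda$, and $\sigma^2 \mapsto \sigma^2/\lambda^2$ into the expression from \Cref{thm:optest} gives
\begin{equation*}
\E\Nquery = O\!\left(\frac{G^2}{\lambda^2 \cdot \sigma^2/\lambda^2}\log^2\!\left(\frac{G}{\lambda \cdot \min\{\delta/\lambda, \sigma/\lambda\}}\right) + \log\!\left(\frac{G}{\lambda \cdot \min\{\delta/\lambda, \sigma/\lambda\}}\right)\right),
\end{equation*}
in which the factors of $\lambda$ cancel in both the leading coefficient and inside the logarithms, producing exactly the stated bound $O\bigl(\tfrac{G^2}{\sigma^2}\log^2(G/\min\{\delta,\sigma\}) + \log(G/\min\{\delta,\sigma\})\bigr)$.

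I do not anticipate any real obstacle here: the entire argument is a change of variables on top of \Cref{thm:optest}, and the only step that could be easy to mishandle is making sure that the argument to $\optest$ uses the correct scaled bias and variance targets (namely $\delta/\lambda$ and $\sigma^2/\lambda^2$ rather than $\delta$ and $\sigma^2$), which matches the invocation in \Cref{alg:gradest}. One sentence checking that $\prox_{f,\lambda}(y) \in \xset$ (so that \Cref{thm:optest} applies) suffices, and this holds by definition since the $\argmin$ in~\eqref{eq:def-prox} is over $\xset$.
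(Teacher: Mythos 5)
Your proof is correct and matches the paper's (implicit) argument exactly: apply \Cref{thm:optest} with $\mu=\lambda$, bias $\delta/\lambda$, and squared error $\sigma^2/\lambda^2$, then push the guarantees through the affine map $\gestME(y)=\lambda(y-\xmlmc)$ using $\grad f_\lambda(y)=\lambda(y-\prox_{f,\lambda}(y))$; the $\lambda$ factors cancel in the complexity bound as you note. The paper states this corollary without a separate proof precisely because it is immediate in this way, and your observation that $\prox_{f,\lambda}(y)\in\xset$ by definition of the constrained $\argmin$ is the right sanity check.
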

}

\section{Projection-efficient convex optimization}
\label{sec:proj-eff}
\newcommand{\moreau}{f_{\lambda}}

In this section, we combine the bias-reduced Moreau envelope gradient estimator with a standard accelerated gradient method to recover the result of \citet{thekumparampil2020projection}. We consider the problem of minimizing a function $f$ satisfying \Cref{ass:gest} over the domain $\ball_R(0)$ subject to the constraint $x\in\xset$, where $\xset \subset \ball_R(0)$ is a complicated convex set that we can only access via (expensive) projections of the form
	$\proj_{\xset}(x) \defeq \argmin_{y \in \xset}\norm{y-x}$.
We further assume that an initial point $x_0\in\xset$ satisfies $\norm{x_0-\xopt} \le D$. 
\notarxiv{
\begin{algorithm}[h]
	\DontPrintSemicolon
	\KwInput{A gradient estimator $\gest[f]$ satisfying \Cref{ass:gest} in $\ball_R(0)$, projection oracle $\proj_{\xset}$, and initial point $x_0=v_0$ with $\norm{x_0 - \xopt} \leq D$.}
	\KwParameters{Iteration budget $T$ , Moreau regularization $\lambda$, approximation parameters $\delta_k, \sigma_k^2$}
	\For{$k=1,\cdots, T$}{
	$y_{k-1} = \frac{k-1}{k+1} x_k+\frac{2}{k+1} v_{k-1}$\;
	$g_{k} = \gradest(\gest[f],y_{k-1},\lambda,\delta_k,\sigma^2_k,\ball_R(0))$ \label{line:gradest_proj}\;%
	$x_{k} = \proj_{\xset} \left(y_{k-1} - \frac{1}{3 \lambda} g_{k}\right)$\;
	$v_{k} = \proj_{\ball_R(0)}\left(v_{k-1} - \frac{k}{6 \lambda} g_k \right)$\; 
	}
		\Return $x_{T}$
	\caption{Stochastic accelerated gradient descent on the Moreau envelope}
	\label{alg:proj_eff}
\end{algorithm}
}
\arxiv{
\begin{algorithm}[t]
	\DontPrintSemicolon
		\caption{Stochastic accelerated gradient descent on the Moreau envelope}
	\label{alg:proj_eff}
	\KwInput{A gradient estimator $\gest[f]$ satisfying \Cref{ass:gest} in $\ball_R(0)\supset\xset$, projection oracle $\proj_{\xset}$, and initial point $x_0=v_0$ with $\norm{x_0 - \xopt} \leq D$.}
	\KwParameters{Iteration budget $T$ , Moreau regularization $\lambda$, approximation parameters $\delta_k, \sigma_k^2$}
	\For{$k=1,\cdots, T$}{
	$y_{k-1} = \frac{k-1}{k+1} x_k+\frac{2}{k+1} v_{k-1}$\;
	$g_{k} = \gradest(\gest[f],y_{k-1},\lambda,\delta_k,\sigma^2_k,\ball_R(0))$ \label{line:gradest_proj}\;%
	$x_{k} = \proj_{\xset} \left(y_{k-1} - \frac{1}{3 \lambda} g_{k}\right)$\;
	$v_{k} = \proj_{\ball_R(0)}\left(v_{k-1} - \frac{k}{6 \lambda} g_k \right)$\; 
	}
		\Return $x_{T}$
\end{algorithm}
}

\Cref{alg:proj_eff} applies a variant of Nesterov's accelerated gradient descent  method (related to~\cite{ZhuO17,Zhu17}) on the ($\lambda$-smooth) Moreau envelope $f_\lambda$ defined in~\cref{eq:def-ME}. Since computing the Moreau envelope does not involve projection to $\xset$, for sufficiently accurate approximation of $\grad f_\lambda$ we require only $T=O(\sqrt{\lambda D^2/\epsilon})$ projections to $\xset$  for finding an $O(\epsilon)$-suboptimal point of $f_\lambda$ constrained to $\xset$. For that point to be also $\epsilon$-suboptimal for $f$ itself, we must choose $\lambda$ of the order of $G^2/\epsilon$, so that the number of projections is $O(GD/\epsilon)$. 

As noted in \cite{thekumparampil2020projection} computing $\grad f_\lambda$ to accuracy $O(\epsilon/R)$  is sufficient for the above guarantee to hold, but doing so using a stochastic gradient method requires $O( (GD/\epsilon)^2 )$ evaluations of $\gest[f]$ per iteration, and $O( (GD/\epsilon)^3 )$ evaluations in total. To improve this, we employ Algorithm~\ref{alg:gradest} to compute nearly-unbiased estimates for $\nabla \moreau$ and bound the error incurred by their variance. 
Our result matches the gradient sliding-based technique of \citet{thekumparampil2020projection} up to polylogarithmic factors  while retaining the conceptual simplicity of directly applying AGD on the Moreau envelope. We formally state the guarantees of our method below, and provide a self-contained proof in \Cref{app:proj-eff}.

\notarxiv{
\begin{restatable}{theorem}{thmPE}
Let $f: \ball_R(0) \rightarrow \R$ and $\gest$ satisfy \Cref{ass:gest}. Let $\xset \subseteq \ball_R(0)$ be a convex set admitting a projection oracle $\proj_{\xset}$. Let $x_0 \in \xset$ be an initial point with $\norm{x - \xopt} \leq D$ for some $\xopt\in\xset$. With $\lambda = \frac{2 G^2}{\eps}$, $\delta_k = \frac{\eps}{8 R}$, $\sigma_k^2 = \frac{2\eps \lambda}{k+1}$, and $T = \frac{7 G D}{\eps}$ Algorithm \ref{alg:proj_eff} computes $x \in \xset$ with $\mathbb{E}\left[ f(x) \right] \leq f(\xopt) + \eps$ with complexity $\E\Nquery = O\left(\frac{G^2 D^2}{\eps^2} \log^2 \left( \frac{G R}{\eps} \right) \right)$ and $O\left(\frac{G D}{\eps}\right)$ calls to $\proj_{\xset}$. 
\label{thm:PE}
\end{restatable}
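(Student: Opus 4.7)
The plan is to prove this by analyzing Algorithm \ref{alg:proj_eff} as an inexact variant of Nesterov's constrained accelerated gradient descent applied to the $\lambda$-smooth surrogate $f_\lambda$, then converting the $f_\lambda$-suboptimality bound back to an $f$-suboptimality bound via the standard envelope inequality $f_\lambda(x) \le f(x) \le f_\lambda(x) + G^2/(2\lambda)$ (see \Cref{app:moreau-properties}). The heart of the argument will be a potential-function analysis that tracks both the bias and variance injected by replacing the exact gradient $\grad f_\lambda(y_{k-1})$ with the stochastic estimate $g_k$ from $\gradest$, which by \Cref{coro:gradest} satisfies $\|\E g_k - \grad f_\lambda(y_{k-1})\| \le \delta_k$ and $\E\|g_k - \grad f_\lambda(y_{k-1})\|^2 \le \sigma_k^2$.

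First I would establish, via the standard ``estimate sequence'' style argument for the specific coupling $y_{k-1}=\tfrac{k-1}{k+1}x_{k-1}+\tfrac{2}{k+1}v_{k-1}$ and update weights $1/(3\lambda)$, $k/(6\lambda)$, a one-step inequality of the form
\begin{equation*}
A_k \E\bigl[f_\lambda(x_k) - f_\lambda(x_\star)\bigr] + 3\lambda \E\|v_k - x_\star\|^2 \;\le\; A_{k-1} \E\bigl[f_\lambda(x_{k-1}) - f_\lambda(x_\star)\bigr] + 3\lambda \E\|v_{k-1} - x_\star\|^2 + B_k,
\end{equation*}
where $A_k = \Theta(k^2)$ and $B_k$ collects the error contributions from bias and variance. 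A careful application of $\lambda$-smoothness plus the standard inner-product manipulation for the three-sequence AGD scheme yields $B_k = O\bigl( k R\,\delta_k + k^2 \sigma_k^2/\lambda \bigr)$, where $kR \delta_k$ comes from coupling the bias to the domain diameter $R$ and $k^2\sigma_k^2/\lambda$ from the standard $\|e\|^2/(2\lambda)$-style descent lemma correction. Telescoping gives
\begin{equation*}
\E\bigl[f_\lambda(x_T) - f_\lambda(x_\star)\bigr] \;\le\; \frac{C \lambda D^2}{T^2} + \frac{1}{T^2}\sum_{k=1}^T \Bigl( k R\,\delta_k + \frac{k^2 \sigma_k^2}{\lambda}\Bigr).
\end{equation*}

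Next I would plug in the stated parameters and verify each term is $O(\epsilon)$: with $\lambda=2G^2/\epsilon$ and $T=7GD/\epsilon$, the accelerated term $\lambda D^2/T^2$ is $\Theta(\epsilon)$; with $\delta_k = \epsilon/(8R)$, the bias contribution is $\tfrac{1}{T^2}\sum_k k R \delta_k = O(\epsilon)$; and with $\sigma_k^2 = 2\epsilon\lambda/(k+1)$, the variance contribution $\tfrac{1}{T^2}\sum_k k^2 \sigma_k^2/\lambda = O(\epsilon)$. Combining with $f(x_T) - f(x_\star) \le f_\lambda(x_T) - f_\lambda(x_\star) + G^2/(2\lambda) \le f_\lambda(x_T) - f_\lambda(x_\star) + \epsilon/4$ gives the overall accuracy bound. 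For the complexity count, \Cref{coro:gradest} gives expected cost $\tilde{O}(G^2/\sigma_k^2) = \tilde{O}((k+1)/\epsilon \cdot 1/\lambda \cdot G^2) = \tilde{O}(k)$ per iteration (with polylog factors in $GR/\epsilon$), so summing $k=1,\ldots,T$ yields $\tilde{O}(T^2) = \tilde{O}(G^2D^2/\epsilon^2)$ gradient queries, while projections to $\xset$ are performed exactly once per iteration, totalling $T = O(GD/\epsilon)$.

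The main obstacle will be the inexact AGD one-step inequality: one must be careful that the bias $\delta_k$ does not accumulate faster than $O(T \cdot R \delta_k)$ (which is why $\delta_k$ must scale like $\epsilon/R$ rather than $\epsilon/D$), and that the variance term picks up the correct $k^2/\lambda$ weighting when folded into the telescoping potential. A clean way to handle both simultaneously is to write $g_k = \grad f_\lambda(y_{k-1}) + b_k + \xi_k$ with $\|\E b_k\| \le \delta_k$ and $\E\|\xi_k\|^2 \le \sigma_k^2$, then at each step use Cauchy--Schwarz against $\|v_k - x_\star\| \le 2R$ for the bias term and smoothness for the variance term, bounding each separately before telescoping.
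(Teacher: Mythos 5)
Your proposal follows essentially the same route as the paper's proof: a potential of the form $k(k+1)\bigl[f_\lambda(x_k)-f_\lambda(\xopt)\bigr]+\Theta(\lambda)\|v_k-\xopt\|^2$, a one-step decrease with error $B_k=O\bigl(kR\delta_k+k^2\sigma_k^2/\lambda\bigr)$ where the bias is paid via Cauchy--Schwarz against $\|v_{k-1}-u\|\le 2R$ and the variance via the smoothness correction, then the Moreau approximation inequality to transfer from $f_\lambda$ to $f$, and the same per-iteration cost accounting from \Cref{coro:gradest}. The paper phrases the one-step argument in the linear-coupling (Allen-Zhu--Orecchia) $\mathsf{Prog}$-function form and carries slightly different explicit constants ($12\lambda R_k=6\lambda\|v_k-u\|^2$ rather than your $3\lambda$), but this is the same decomposition and the same final bounds.
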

}

\arxiv{
\begin{restatable}{theorem}{thmPE}\label{thm:PE}
Let $f: \ball_R(0) \rightarrow \R$ and $\gest$ satisfy \Cref{ass:gest}. Let $\xset \subseteq \ball_R(0)$ be a convex set admitting a projection oracle $\proj_{\xset}$. Let $x_0 \in \xset$ be an initial point with $\norm{x_0 - \xopt} \leq D$ for some $\xopt\in\xset$. With $\lambda = \frac{2 G^2}{\eps}$, $\delta_k = \frac{\eps}{8 R}$, $\sigma_k^2 = \frac{2\eps \lambda}{k+1}$, and $T = \frac{7 G D}{\eps}$ Algorithm \ref{alg:proj_eff} computes $x \in \xset$ such that   $\mathbb{E}\left[ f(x) \right] \leq f(\xopt) + \eps$ using $O\left(\frac{G D}{\eps}\right)$ calls to $\proj_{\xset}$ and $\Nquery$ stochastic gradient computations, where
\[\E\Nquery = O\left(\frac{G^2 D^2}{\eps^2} \log^2 \left( \frac{G R}{\eps} \right) \right).\]
\end{restatable}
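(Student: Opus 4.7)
The plan is to analyze \Cref{alg:proj_eff} as stochastic accelerated gradient descent on the $\lambda$-smooth Moreau envelope $f_\lambda$ with inexact gradients supplied by \Cref{coro:gradest}, and then translate the resulting guarantee on $f_\lambda$ into a guarantee on $f$ using the standard smoothing error bound $0 \le f(x) - f_\lambda(x) \le G^2/(2\lambda)$ established in \Cref{app:moreau-properties}. With $\lambda = 2G^2/\eps$ this immediately buys us $f(x) \le f_\lambda(x) + \eps/4$, so it suffices to show $\E f_\lambda(x_T) - f_\lambda(\xopt) \le 3\eps/4$.

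First I would introduce the estimated sequence / Lyapunov potential associated with this AGD variant, namely $\Phi_k = A_k \bigl(f_\lambda(x_k) - f_\lambda(\xopt)\bigr) + 3\lambda\|v_k - \xopt\|^2$ with $A_k = k(k+1)/4$ (the weights matched to the $\frac{k-1}{k+1}, \frac{2}{k+1}$ averaging and to the $\frac{k}{6\lambda}$ dual step). A standard one-step analysis for AGD on a $\lambda$-smooth convex function, when the gradient used is $g_k$ rather than the true $\nabla f_\lambda(y_{k-1})$, yields (after taking conditional expectations, using $\lambda$-smoothness to control the primal descent step, and using the quadratic identity on the dual update) an inequality of the form
\begin{equation*}
\E[\Phi_k \mid \mathcal{F}_{k-1}] - \Phi_{k-1} \le A_k \, \bigl\| \E g_k - \nabla f_\lambda(y_{k-1})\bigr\| \cdot \|\xopt - z_k\| + \frac{A_k - A_{k-1}}{\lambda}\,\E\bigl\|g_k - \nabla f_\lambda(y_{k-1})\bigr\|^2,
\end{equation*}
where $z_k$ is an auxiliary point lying in $\ball_R(0)$, so $\|\xopt - z_k\| \le 2R$. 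Applying \Cref{coro:gradest} gives $\|\E g_k - \nabla f_\lambda(y_{k-1})\| \le \delta_k$ and $\E\|g_k - \nabla f_\lambda(y_{k-1})\|^2 \le \sigma_k^2 + \delta_k^2$, so telescoping yields
\begin{equation*}
\E\Phi_T \le \Phi_0 + 2R\sum_{k=1}^T A_k \delta_k + \frac{1}{\lambda}\sum_{k=1}^T (A_k - A_{k-1})(\sigma_k^2 + \delta_k^2).
\end{equation*}

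Next I would substitute the chosen parameters. With $A_k = k(k+1)/4$, $\delta_k = \eps/(8R)$, and $T = 7GD/\eps$, the bias term contributes $2R\cdot \delta_k \cdot O(T^3) \cdot \cdots$, wait — more carefully, $A_k \delta_k \le T^2 \delta_k / 2$ so $\sum_k A_k \delta_k \le T^3 \delta_k / 2 = O((GD/\eps)^3 \cdot \eps/R)$; this is too large unless one is careful. The correct bookkeeping uses that the bias error enters linearly in the cumulative weight $\sum_k A_k$ only after pairing with one factor of the step, which brings it to $O(A_T \cdot R\cdot \delta_k)$; with $A_T = O(T^2)$ and our parameter choice this is $O(T^2 R \cdot \eps/R) = O(G^2 D^2/\eps)$, which after dividing by $A_T$ at the end gives $O(\eps)$. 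For the variance term, $A_k - A_{k-1} = O(k)$, and with $\sigma_k^2 = 2\eps\lambda/(k+1)$ we have $\sum_k (A_k - A_{k-1})\sigma_k^2/\lambda = O(T\eps)$, so dividing by $A_T = \Theta(T^2)$ gives $O(\eps/T) \ll \eps$. Combined with the $\lambda D^2 / A_T = O(G^2 D^2/(\eps T^2)) = O(\eps)$ contribution from $\Phi_0$, the $T=7GD/\eps$ choice yields $\E f_\lambda(x_T) - f_\lambda(\xopt) \le 3\eps/4$ as required. The main technical obstacle is getting the constants right so that these three error sources (initial potential, bias, and variance) each fit inside the $3\eps/4$ budget with the stated choices of $\lambda$, $\delta_k$, $\sigma_k^2$, $T$; the recipe is standard but needs care, especially to justify the pairing $A_k \delta_k \|\xopt - z_k\|$ rather than a naive bound that would require $\delta_k = O(\eps/(RT))$ instead of the looser $\delta_k = O(\eps/R)$.

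Finally, for the complexity accounting, each call on line~\ref{line:gradest_proj} uses \Cref{coro:gradest} with parameters $(\delta_k, \sigma_k^2)$, costing in expectation
\begin{equation*}
O\!\left(\frac{G^2}{\sigma_k^2}\log^2\!\frac{G}{\min\{\delta_k,\sigma_k\}}\right) = O\!\left(\frac{k+1}{\eps\lambda}\cdot G^2 \cdot \log^2\!\frac{GR}{\eps}\right) = O\!\left((k+1)\log^2\!\frac{GR}{\eps}\right)
\end{equation*}
subgradient evaluations. Summing $k=1,\dots,T$ gives $O(T^2 \log^2(GR/\eps)) = O(G^2D^2\eps^{-2}\log^2(GR/\eps))$, matching the claimed bound. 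The projection count is exactly $T = O(GD/\eps)$ since each iteration invokes $\proj_\xset$ once (the projection onto $\ball_R(0)$ is not counted as expensive). Assembling these pieces in \Cref{app:proj-eff} completes the proof.
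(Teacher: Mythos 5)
You take the same route as the paper: view \Cref{alg:proj_eff} as inexact AGD on $f_\lambda$, track a potential of the form $A_k\bigl(f_\lambda(x_k) - f_\lambda(\xopt)\bigr) + c\lambda\|v_k - \xopt\|^2$, bound the error injected at each step by the bias and variance of \Cref{coro:gradest}, telescope, and then pay the Moreau gap $G^2/(2\lambda)$. The complexity accounting and the $\Phi_0$ bookkeeping are also the same as the paper's. So far so good.

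However, the one-step inequality you write down has the \emph{bias and variance coefficients swapped}, and your mid-argument self-correction only fixes the bias. The paper's \Cref{lem:PE-pot} (equation~\eqref{eq:potential1} after clearing the $1/(12\lambda)$) reads
\begin{equation*}
P_k - P_{k-1} \;\le\; \frac{k(k+1)}{4\lambda}\,\norm{g_k - \nabla f_\lambda(y_{k-1})}^2 \;+\; 2k\,\bigl\langle \nabla f_\lambda(y_{k-1}) - g_k,\, v_{k-1} - u\bigr\rangle,
\end{equation*}
so with $A_k = k(k+1)/4$ the variance enters with weight $\Theta(A_k/\lambda)$ and the bias with weight $\Theta(a_k) = \Theta(A_k - A_{k-1}) = \Theta(k)$. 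You wrote the reverse: bias weight $A_k$, variance weight $(A_k - A_{k-1})/\lambda$. You then noticed that bias weight $A_k$ blows up (it gives $\sum_k A_k\delta_k \sim T^3\delta_k$) and hand-waved the fix to $\Theta(a_k)$, which lands on the right place, but you did not notice the dual error: the variance weight should be $\Theta(A_k/\lambda)$, a factor of $\sim k$ larger than what you have. This matters for your final budget: with $\sigma_k^2 = 2\eps\lambda/(k+1)$ the correct accounting gives $\sum_k (A_k/\lambda)\sigma_k^2 \sim \eps T^2$, i.e.\ $\Theta(\eps)$ after dividing by $A_T$ — the variance consumes a constant fraction of the error budget, comparable to the bias, not the $O(\eps/T)$ you claim. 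That $\sigma_k^2$ scales like $1/k$ rather than being constant is precisely how the paper makes this work; if your $O(\eps/T)$ claim were true you could take $\sigma_k^2$ constant and cut the $\gest$ complexity by a factor of $T$, which should have been a red flag. The theorem still holds because the paper's parameter choices close the budget with the correct coefficients, but as written your per-step inequality would not survive scrutiny, and you should redo the primal-progress step (the $\mathsf{Prog}$ bound, which produces the $\frac{1}{4\lambda}\norm{g_k-\nabla f_\lambda}^2$ term multiplied by $A_k$) to get the variance weight right.
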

}

\section{Accelerated proximal methods and minimizing the maximal loss}\label{sec:mtm}

In this section we apply our estimator in an accelerated proximal point method and use it to obtain an optimal rate for minimizing the maximum of $N$ convex functions (up to logarithmic factors). 

\subsection{Accelerated proximal point method via Moreau gradient estimation}\label{sec:mtm-sapm}

\Cref{alg:MS} is an Monteiro-Svaiter-type~\cite{MonteiroS13a,carmon2020acceleration} accelerated proximal point method~\cite{lin2015universal,frostig2015unregularizing} that leverages our reduced-bias Moreau envelope gradient estimator.
To explain the method, we contrast it with stochastic AGD  on the Moreau envelope (\Cref{alg:proj_eff}). First and foremost, \Cref{alg:proj_eff} provides a suboptimality bound on the Moreau envelope $f_\lambda$ (which for small $\lambda$ is far from $f$) while \Cref{alg:MS} minimizes $f$ itself. 

Second, while \Cref{alg:proj_eff} uses a fixed regularization parameter $\lambda$, \Cref{alg:MS} handles an arbitrary sequence $\{\lambda_{k}\}$ given by a black-box function $\nextLambda$. To facilitate our application of the method to minimizing the maximal loss---where gradient estimation is only tractable in small Euclidean balls around a reference point---we include an optional parameter $r$ such that the proximal point movement bound $\norm{\prox_{f,\lambda_{k+1}}(y_k)-y_k}\le r$ holds for all $k$. However, most of our analysis of \Cref{alg:MS} does not require this parameter (i.e., holding for $r=\infty$), making it potentially applicable to other settings that use accelerated proximal point methods~\cite{bubeck2019complexity,MonteiroS13a,song2021unified}.
 
\notarxiv{
\begin{algorithm}
	\caption{Stochastic accelerated proximal point method\label{alg:MS}}
	\DontPrintSemicolon
	\KwInput{Gradient estimator $\gest$, function $\nextLambda$, initialization $x_0=v_0$ and $A_0\ge0$.}
	\KwParameters{Approximation parameters $\crl{\vphi_k,\delta_k,\sigma_k}$, stopping parameters $\Amax$ and $\Kmax$, optional movement bound $r>0$.}
	\For{$k=0,1,\ldots$}{
		$\lambda_{k+1} =\nextLambda(x_k, v_k, A_k)$\label{line:next-lambda}
		\Comment{guaranteeing that $\norm{\prox_{f,\lambda_{k+1}}(y_k)-y_k}\le r$}
		$a_{k+1} = \frac{1}{2 \lambda_{k+1}} \sqrt{1 + 4 \lambda_{k+1} A_{k}}$ and $A_{k+1} = A_k + a_{k+1}$ and $\xset_k = \xset \cap \ball_r(y_k)$\;
		$y_k = \frac{A_k}{A_{k+1}} x_{k}+\frac{a_{k+1}}{A_{k+1}} v_k$ and~$x_{k+1}=\appProx_{f,\lambda_{k+1}}^{\vphi_{k+1}}(y_k)$ \label{line:sapm-x}
		\Comment{defined in \cref{eq:app-prox-def}}
		$g_{k+1} = \gradest(\gest[f],y_k,\lambda_{k+1},\delta_k,\sigma^2_k,\xset_k)$ and $v_{k+1} = \proj_{\xset} \prn*{v_k - \half a_{k+1} g_{k+1}}$ 	\label{line:sapm-v}\;
		\lIf{$A_{k+1} \ge \Amax$ \textbf{\textup{or}}
			$k+1 = \Kmax$}
		{\Return{$x_{k+1}$}}
	}
\end{algorithm}
}
\arxiv{
\begin{algorithm}[b]
	\caption{Stochastic accelerated proximal point method\label{alg:MS}}
	\DontPrintSemicolon
	\KwInput{Gradient estimator $\gest$, function $\nextLambda$, initialization $x_0=v_0$ and $A_0\ge0$.}
	\KwParameters{Approximation parameters $\crl{\vphi_k,\delta_k,\sigma_k}$, stopping parameters $\Amax$ and $\Kmax$, optional movement bound $r>0$.}
	\For{$k=0,1,\ldots$}{
		$\lambda_{k+1} =\nextLambda(x_k, v_k, A_k)$\label{line:next-lambda}
		\Comment{guaranteeing that $\norm{\prox_{f,\lambda_{k+1}}(y_k)-y_k}\le r$}
		$a_{k+1} = \frac{1}{2 \lambda_{k+1}} \sqrt{1 + 4 \lambda_{k+1} A_{k}}$ and $A_{k+1} = A_k + a_{k+1}$\;
		$y_k = \frac{A_k}{A_{k+1}} x_{k}+\frac{a_{k+1}}{A_{k+1}} v_k$\;
		$x_{k+1}=\appProx_{f,\lambda_{k+1}}^{\vphi_{k+1}}(y_k)$ \label{line:sapm-x}
		\Comment{defined in \cref{eq:app-prox-def}}
		$g_{k+1} = \gradest(\gest[f],y_k,\lambda_{k+1},\delta_{k+1},\sigma^2_{k+1},\xset \cap \ball_r(y_k))$\; $v_{k+1} = \proj_{\xset} \prn*{v_k - \half a_{k+1} g_{k+1}}$ 	\label{line:sapm-v}\;
		\lIf{$A_{k+1} \ge \Amax$ \textbf{\textup{or}}
			$k+1 = \Kmax$}
		{\Return{$x_{k+1}$}}
	}
\end{algorithm}
}

The third and final notable difference between  \Cref{alg:proj_eff,alg:MS} is the method of updating the $x_k$ iteration sequence. While a projected stochastic gradient descent step suffices for \Cref{alg:proj_eff}, here we require a more direct approximation of function value decrease attained by the exact proximal mapping $\prox_{f,\lambda}$ (see \cref{eq:def-prox}). For a given accuracy $\vphi$, we define the $\vphi$-approximate proximal mapping 
\begin{equation}\label{eq:app-prox-def}
	\appProx_{f,\lambda}^{\vphi}(y)\defeq\mbox{  any $x\in\xset$ such that }\E F(x) \le F\prn*{\prox_{f,\lambda}(y)}+ \vphi~\mbox{for}~F(z)  \defeq f(z) + \frac{\lambda}{2}\norm{z-y}^2.
\end{equation}
Note that $\appProx_{f,\lambda}^{0}=\prox_{f,\lambda}$ and that for $\vphi>0$ we can compute $\appProx_{f,\lambda}^{\vphi}$ with an appropriate SGD variant (such as $\epochSGD$) using $O(G^2/(\lambda \vphi))$ evaluations of $\gest[f]$. 

With the differences between the algorithms explained, we emphasize their key similarity: both algorithms update the $v_k$ sequence using our bias reduction method $\gradest$ (\Cref{alg:gradest}), which holds the key to their efficiency. 
The following proposition shows that
 \Cref{alg:MS} has the same bound on $\Kmax$ as an exact accelerated proximal point method~\cite{carmon2020acceleration}, while requiring at most $\Otil{G^2R^2 \epsilon^{-2}}$ stochastic gradient evaluations; see proof in  \Cref{app:mtm-sapm}. 

\notarxiv{
\begin{restatable}{proposition}{propSAPM}\label{prop:sapm}
	Let $f:\xset\to\R$ and $\gest$ satisfy Assumption~\ref{ass:gest}, and let $\xset \subseteq \ball_R(x_0)$. For a target accuracy $\epsilon\le GR$ let 
$\vphi_{k+1} = \tfrac{\epsilon}{60\lambda_{k+1}a_{k+1}} 
		$, $\delta_{k+1} = \tfrac{\epsilon}{120R}$, $\sigma_{k+1}^2 =\frac{\epsilon}{60a_{k+1}}$, $A_0 = \frac{R}{G}$, and $\Amax=\frac{9R^2}{\epsilon}$.	If $\lambda_k \ge \lambda_{\min} \ge \frac{1}{\Amax} =\Omega(\frac{\epsilon}{R^2})$ for all $k \le \Kmax$, then  \cref{line:sapm-x,line:sapm-v} of \Cref{alg:MS} have total complexity
	 $\E \Nquery = O\prn[\Big]{ \Kmax\log \frac{GR}{\epsilon} +  \frac{G^2 R^2}{\epsilon^2} \log^2 \frac{GR}{\epsilon}}$. If  in addition $\norm{\prox_{f,\lambda_{k}}(y_{k-1}) - y_{k-1}} \ge 3r/4$ whenever $\lambda_k \ge 2\lambda_{\min}$ then for
$		\Kmax = O\prn[\Big]{ \prn*{\frac{R}{r}}^{2/3} \log\prn*{\frac{GR}{\epsilon}} + \sqrt{\frac{\lambda_{\min} R^2}{\epsilon} }}$, 
	the algorithm's output $x_K$ satisfies $f(x_K) - f(\xopt) \le \epsilon$ with probability at least $\frac{2}{3}$.
\end{restatable}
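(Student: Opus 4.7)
I would decompose the claim into three essentially independent pieces: bounding the expected stochastic-gradient complexity of \cref{line:sapm-x,line:sapm-v}; establishing correctness via a Monteiro--Svaiter style potential-function descent; and bounding the iteration count using the movement lower bound.

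\textbf{Complexity.} I would implement the approximate proximal mapping in \cref{line:sapm-x} with $\epochSGD$ applied to the $\lambda_{k+1}$-strongly convex composite objective $F(z)=f(z)+\tfrac{\lambda_{k+1}}{2}\norm{z-y_k}^2$, which by \Cref{lem:epoch-SGD} achieves suboptimality $\vphi_{k+1}$ using $O(G^2/(\lambda_{k+1}\vphi_{k+1}))=O(G^2 a_{k+1}/\epsilon)$ stochastic gradients after substituting $\vphi_{k+1}=\epsilon/(60\lambda_{k+1}a_{k+1})$. By \Cref{coro:gradest}, the call in \cref{line:sapm-v} costs $O(G^2/\sigma_{k+1}^2\log^2(GR/\epsilon)+\log(GR/\epsilon))=O((G^2 a_{k+1}/\epsilon)\log^2(GR/\epsilon)+\log(GR/\epsilon))$ queries in expectation with $\sigma_{k+1}^2=\epsilon/(60a_{k+1})$. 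Summing over $k<\Kmax$ and invoking $\sum_{k<\Kmax}a_{k+1}=A_{\Kmax}-A_0\le\Amax=9R^2/\epsilon$ produces the claimed $\E\Nquery$ bound.

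\textbf{Convergence.} I would track the Monteiro--Svaiter potential $\Phi_k\defeq A_k(f(x_k)-f(\xopt))+\tfrac12\norm{v_k-\xopt}^2$. The idealized analysis (with the exact prox and the exact Moreau envelope gradient $\grad f_{\lambda_{k+1}}(y_k)=\lambda_{k+1}(y_k-\prox_{f,\lambda_{k+1}}(y_k))$) combines the convexity of $f$, strong convexity of the prox subproblem, and the algebraic identity relating $a_{k+1}$, $A_{k+1}$, $\lambda_{k+1}$ to conclude $\Phi_{k+1}\le\Phi_k$. Three error sources appear in the approximate setting: (a) the $\appProx$ suboptimality contributes at most $A_{k+1}\vphi_{k+1}$ through $f(x_{k+1})\le f(\prox_{f,\lambda_{k+1}}(y_k))+\vphi_{k+1}$; (b) the bias $\delta_{k+1}$ of $g_{k+1}$ contributes at most $a_{k+1}\delta_{k+1}\norm{v_k-\xopt}\le a_{k+1}\delta_{k+1}R$ by Cauchy--Schwarz against the $v_k$-update; (c) the variance $\sigma_{k+1}^2$ of $g_{k+1}$ contributes at most $\tfrac{a_{k+1}^2\sigma_{k+1}^2}{4}$ via the expansion of $\norm{v_{k+1}-\xopt}^2$. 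With the chosen parameters each of these simplifies to $O(\epsilon a_{k+1})$, so telescoping gives $\E\Phi_K\le\Phi_0+O(\epsilon A_K)$; combined with $\Phi_0=O(R^2)$ and $A_K\ge\Amax=9R^2/\epsilon$ this yields $\E[f(x_K)-f(\xopt)]=O(\epsilon)$, and a Markov-type argument with a slightly sharpened constant then delivers the $\tfrac23$-probability bound.

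\textbf{Iteration count.} The same potential analysis also produces the MS residual bound $\sum_k A_k\norm{\prox_{f,\lambda_k}(y_{k-1})-y_{k-1}}^2=O(R^2)$. I would partition iterations into ``slow'' ($\lambda_k<2\lambda_{\min}$) and ``fast'' ($\lambda_k\ge 2\lambda_{\min}$) steps. The recurrence $a_{k+1}=\tfrac{1}{2\lambda_{k+1}}\sqrt{1+4\lambda_{k+1}A_k}$ forces $\sqrt{A_{k+1}}-\sqrt{A_k}=\Omega(1/\sqrt{\lambda_{\min}})$ in slow steps, capping their number by $O(\sqrt{\lambda_{\min}\Amax})=O(\sqrt{\lambda_{\min}R^2/\epsilon})$. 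For fast steps, the hypothesis $\norm{\prox_{f,\lambda_k}(y_{k-1})-y_{k-1}}\ge 3r/4$ together with the residual bound gives $\sum_{k\text{ fast}}A_k\le O(R^2/r^2)$; a H\"older-type dyadic-grouping argument in the spirit of the accelerated proximal-point analysis of \cite{carmon2020acceleration,bubeck2019complexity} then converts this into at most $O((R/r)^{2/3}\log(GR/\epsilon))$ fast iterations before $A_k$ reaches $\Amax$. Adding the two gives the claimed $\Kmax$.

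\textbf{Main obstacle.} The most delicate piece is the fast-regime iteration count: extracting the $(R/r)^{2/3}$ dependence from the constraint $\sum A_k\le R^2/r^2$ requires a careful dyadic decomposition of the values of $A_k$ together with the $A_k$-growth recurrence, and verifying that the MS residual bound is robust to our stochastic and approximate-prox perturbations demands showing that the error budget absorbed in the potential step does not in turn pollute the residual inequality. Managing parameters like $\sigma_{k+1},\vphi_{k+1}$ that depend on $a_{k+1}$ (and thus on the random trajectory) is routine but requires a careful conditional expectation bookkeeping.
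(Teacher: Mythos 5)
Your overall decomposition (complexity / Monteiro--Svaiter potential descent / iteration count via the movement lower bound) matches the paper, and the parameter plug-ins for the complexity bound are exactly right. There are, however, two substantive gaps.

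First, your claimed residual bound $\sum_k A_k\norm{\prox_{f,\lambda_k}(y_{k-1})-y_{k-1}}^2=O(R^2)$ is missing a $\lambda_k$ factor; the per-step descent lemma actually extracts the stronger term $\tfrac16\,\lambda_{k+1}A_{k+1}\norm{\prox_{f,\lambda_{k+1}}(y_k)-y_k}^2$ from the potential. This is not cosmetic: the fast-regime iteration count proceeds by bounding $\sum_{k\in\mathcal{I}_r}\lambda_k A_k\le O(R^2/r^2)$ and then applying the reverse-H\"older inequality with weights $u_i=A_i^{1/2}$, $v_i=(A_i\lambda_i)^{-1/2}$, using precisely the boundedness of $\sum\lambda_i A_i$ against the lower bound $\sqrt{A_t}\ge\tfrac12\sum_{i\le t}\lambda_i^{-1/2}$. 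Without the $\lambda_k$ weights the reverse-H\"older step fails, and the $(R/r)^{2/3}$ dependence does not come out.

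Second, you underplay the randomness of the stopping time. Writing ``telescoping gives $\E\Phi_K\le\Phi_0+O(\epsilon A_K)$'' and then ``Markov with a sharpened constant'' is not enough: $K$ is a random stopping time (the algorithm halts when $A_{k+1}\ge\Amax$ or $k+1=\Kmax$), and one cannot telescope a conditional-expectation recursion across a random number of steps without an optional-stopping argument. The paper handles this by folding the per-step error into the potential ($M_k = A_k(f(x_k)-f(\xopt)-\bar\varepsilon) + \tfrac16\sum_{i\le k}\lambda_iA_i\norm{\hat x_i - y_{i-1}}^2 + \norm{v_k-\xopt}^2$), showing $\{M_k\}$ is a supermartingale adapted to the iterate filtration, and invoking the optional stopping theorem at $K\le\Kmax$ to get $\E M_K\le M_0\le 3R^2$; Markov then gives the $9R^2$ bound with probability $\ge 2/3$, and the two cases ($A_K\ge\Amax$ vs.\ $K=\Kmax$) are finished via $9R^2/\Amax\le\epsilon$ and the deterministic $A_k$-growth lemma, respectively. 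Your ``bookkeeping'' remark flags the issue but does not resolve it, and your subsequent display implicitly assumes $A_K\ge\Amax$, which is exactly the case the deterministic iteration-count lemma is needed to supplement.
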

}
\arxiv{
\begin{restatable}{proposition}{propSAPM}\label{prop:sapm}
	Let $f:\xset\to\R$ and $\gest$ satisfy Assumption~\ref{ass:gest}, and let $\xset \subseteq \ball_R(x_0)$. For a target accuracy $\epsilon\le GR$ let 
\begin{equation*}
	\text{
			$\vphi_{k} = \frac{\epsilon}{60\lambda_{k}a_{k}} 
		$, $\delta_{k} = \frac{\epsilon}{120R}$, $\sigma_{k}^2 =\frac{\epsilon}{60a_{k}}$, $A_0 = \frac{R}{G}$, and $\Amax=\frac{9R^2}{\epsilon}$.
	}
\end{equation*}	
		If $\lambda_k \ge \lambda_{\min} \ge \frac{1}{\Amax} =\Omega\prn*{\frac{\epsilon}{R^2}}$ for all $k \le \Kmax$, then  \cref{line:sapm-x,line:sapm-v} of \Cref{alg:MS} have total complexity
	 \[\E \Nquery = O\prn*{ \prn*{\frac{G R}{\epsilon}}^2 \log^2 \frac{GR}{\epsilon}+ \Kmax\log \frac{GR}{\epsilon} }.\] If  in addition $\norm{\prox_{f,\lambda_{k}}(y_{k-1}) - y_{k-1}} \ge 3r/4$ whenever $\lambda_k \ge 2\lambda_{\min}$ then for
\[		\Kmax = O\prn*{ \prn*{\frac{R}{r}}^{2/3} \log\prn*{\frac{GR}{\epsilon}} + \sqrt{\frac{\lambda_{\min} R^2}{\epsilon} }},\] 
	the algorithm's output $x_K$ satisfies $f(x_K) - f(\xopt) \le \epsilon$ with probability at least $\frac{2}{3}$.
\end{restatable}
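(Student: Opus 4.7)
The plan is to analyze Algorithm~\ref{alg:MS} via a Monteiro-Svaiter potential function, then bound the iteration count by case analysis on $\lambda_k$. Define $\Phi_k \defeq A_k(f(x_k) - f(\xopt)) + \tfrac12\|v_k - \xopt\|^2$. The balance identity $\lambda_{k+1} a_{k+1}^2 = A_{k+1}$ together with the averaging rule for $y_k$ gives, as in the exact MS analysis, that the conditional expectation $\E[\Phi_{k+1} - \Phi_k \mid \mathcal{F}_k]$ is bounded by the sum of an $A_{k+1} \vphi_{k+1}$ term (from the $\vphi_{k+1}$-accurate proximal approximation $\appProx_{f,\lambda_{k+1}}^{\vphi_{k+1}}$), an $a_{k+1} R \delta_{k+1}$ term (from the Moreau gradient bias coupled against $\|v_k - \xopt\| \le R$), and a $\tfrac14 a_{k+1}^2 \sigma_{k+1}^2$ term (from the variance of $g_{k+1}$ that appears when expanding $\|v_{k+1} - \xopt\|^2$). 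The stated parameter choices make each of these three contributions $O(a_{k+1} \epsilon)$ per step. Summing and using $\|v_0 - \xopt\| \le R$ yields $\E[A_K(f(x_K) - f(\xopt))] \le \tfrac{R^2}{2} + O(A_K \epsilon)$; dividing by $A_K \ge \Amax = 9R^2/\epsilon$ and invoking Markov's inequality (with a small constant adjustment) delivers $f(x_K) - f(\xopt) \le \epsilon$ with probability at least $\tfrac{2}{3}$.

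To bound $\Kmax$, I partition iterations by whether $\lambda_{k+1} \le 2\lambda_{\min}$ or $\lambda_{k+1} > 2\lambda_{\min}$. For the former, the identity $\lambda_{k+1} a_{k+1}^2 = A_{k+1}$ implies $\sqrt{A_{k+1}} - \sqrt{A_k} \ge 1/(2\sqrt{\lambda_{k+1}}) \ge 1/(2\sqrt{2\lambda_{\min}})$, bounding the number of such iterations before $A_{k+1}\ge \Amax$ by $K_1 = O(\sqrt{\lambda_{\min} \Amax}) = O(\sqrt{\lambda_{\min} R^2/\epsilon})$. For the latter, the same $v$-update telescoping that underlies the potential analysis additionally extracts the bound $\sum_k a_{k+1}^2 \lambda_{k+1}^2 \|\prox_{f,\lambda_{k+1}}(y_k) - y_k\|^2 = O(R^2)$; combining with the hypothesis $\|\prox_{f,\lambda_{k+1}}(y_k) - y_k\| \ge 3r/4$ on this subset yields $\sum_{k:\lambda_{k+1} > 2\lambda_{\min}} a_{k+1}^2 \lambda_{k+1}^2 = O(R^2/r^2)$. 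Applying Holder's inequality with exponents $(3, \tfrac32)$, together with $(a_{k+1}\lambda_{k+1})^{-1} = (A_{k+1} - A_k)/A_{k+1}$ and the standard telescoping inequality $\sum_k (A_{k+1} - A_k)/A_{k+1} \le \log(A_{\Kmax}/A_0) = O(\log(GR/\epsilon))$, gives $K_2 = O((R/r)^{2/3} \log(GR/\epsilon))$, matching the claim.

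Finally, $\appProx_{f,\lambda_{k+1}}^{\vphi_{k+1}}$ can be implemented via $\epochSGD$ (Lemma~\ref{lem:epoch-SGD}) at cost $O(G^2/(\lambda_{k+1} \vphi_{k+1})) = O(G^2 a_{k+1}/\epsilon)$ evaluations of $\gest[f]$, and Corollary~\ref{coro:gradest} bounds the cost of each call to $\gradest$ by $O((G^2 a_{k+1}/\epsilon)\log^2(GR/\epsilon) + \log(GR/\epsilon))$. Summing using $\sum_k a_{k+1} = A_{\Kmax} \le \Amax = 9R^2/\epsilon$ produces the stated $\E \Nquery$ bound. The most delicate step will be verifying that the approximation errors do not degrade the leading constant of the $\sum_k a_{k+1}^2 \lambda_{k+1}^2 \|\prox - y_k\|^2 = O(R^2)$ bound that drives the $K_2$ argument; this requires the chosen parameters $\vphi_k, \delta_k, \sigma_k$ to be small enough that their contribution is absorbed into a lower-order additive error rather than inflating the coefficient of the movement quantity, and also that $\|x_{k+1} - y_k\|$ stays close to $\|\prox_{f,\lambda_{k+1}}(y_k)- y_k\|$ so the hypothesis can be applied.
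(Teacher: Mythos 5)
Your overall plan matches the paper's: a Monteiro--Svaiter potential with approximation errors tracked per step, a case split on $\lambda_k$ to control $\Kmax$, and the complexity bound by summing per-iteration costs against $\sum_k a_k = O(\Amax)$. The per-step error bookkeeping ($A_{k+1}\vphi_{k+1}$, $a_{k+1}^2\sigma_{k+1}^2$, $Ra_{k+1}\delta_{k+1}$, each forced to be $O(a_{k+1}\epsilon)$ by the parameter choices) is exactly what the paper's Lemma~\ref{lem:sapm-step-bound} establishes, and your $K_1$ argument via $\sqrt{A_{k+1}}-\sqrt{A_k}\ge \tfrac{1}{2}\lambda_{k+1}^{-1/2}$ is the same as the paper's use of $T_\lambda$.

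Two substantive remarks. First, a gap: you write ``Summing and using $\|v_0-\xopt\|\le R$ yields $\E[A_K(f(x_K)-f(\xopt))]\le \ldots$,'' but $K$ is a random stopping time (the algorithm halts either at $A_{k+1}\ge\Amax$ or at $k+1=\Kmax$), so you cannot simply sum conditional one-step inequalities up to $K$. The paper addresses exactly this in Lemma~\ref{lem:sapm-martingale} by verifying that $M_k = A_k(f(x_k)-f(\xopt)-\bar\varepsilon) + \tfrac16\sum_{i\le k}\lambda_i A_i\|\hx_i-y_{i-1}\|^2 + \|v_k-\xopt\|^2$ is a supermartingale adapted to the filtration and then invoking the optional stopping theorem for the bounded stopping time $K\le\Kmax$. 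Your sketch needs this step, or something equivalent, to be valid; without it the ``divide by $A_K$'' step is not justified because $A_K$ is a random quantity correlated with the error terms.

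Second, a genuine (and rather nice) deviation: for the $K_2$ bound you propose Hölder with exponents $(3,\tfrac32)$ applied to $1 = (\lambda_{k+1}a_{k+1})^{2/3}\cdot(\lambda_{k+1}a_{k+1})^{-2/3}$, combined with the telescoping estimate $\sum_k (A_{k+1}-A_k)/A_{k+1}\le \log(A_{\Kmax}/A_0) = O(\log(GR/\epsilon))$ and the constraint $\sum_{k\in\mathcal I}\lambda_{k+1}A_{k+1} = O(R^2/r^2)$ extracted from the potential. This gives $|\mathcal I| = O((R/r)^{2/3}\log^{2/3}(GR/\epsilon))$, which is actually slightly sharper than the paper's $O((R/r)^{2/3}\log(GR/\epsilon))$. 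The paper instead uses a reverse-Hölder inequality with exponent $2/3$ to prove a multiplicative growth recursion $A_t^{1/3}\ge c(r/R)^{2/3}\sum_{i\in\mathcal I_{r,t}}A_i^{1/3}$ and then cites a growth lemma from \cite{carmon2020acceleration} to turn this into exponential growth of $A_k$. Both routes close the argument; yours is more elementary and self-contained (it avoids citing the external growth lemma). One caveat you should verify: the telescoping-log bound requires an a priori upper bound on $A_{\Kmax}$, which the paper supplies by showing $A_K\le 2.2\Amax$ from $A_{K-1}\le\Amax$, $\lambda_K\ge\lambda_{\min}\ge 1/\Amax$ --- this detail is needed to make the $O(\log(GR/\epsilon))$ factor legitimate.

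Finally, your closing worry about whether $\|x_{k+1}-y_k\|$ stays close to $\|\prox_{f,\lambda_{k+1}}(y_k)-y_k\|$ is unnecessary: the movement quantity that the potential argument controls, and the quantity that the proposition's hypothesis concerns, are both the \emph{exact} proximal displacement $\|\prox_{f,\lambda_k}(y_{k-1})-y_{k-1}\|$, never the algorithmic iterate. The paper's per-step Lemma~\ref{lem:sapm-step-bound} produces a $-\tfrac16\lambda_{k+1}A_{k+1}\|\hx_{k+1}-y_k\|^2$ term with $\hx_{k+1}=\prox_{f,\lambda_{k+1}}(y_k)$, so no comparison to $x_{k+1}$ enters.
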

}

\subsection{Minimizing the maximal loss}\label{sec:mtm-mtm}

We now consider objectives of the form $\fmax(x) \defeq \max_{i\in[N]} \fhat[i](x)$ where each function $\fhat[i]:\xset\to \R$ is convex and $G$-Lipschitz. Our approach to minimizing $\fmax$ largely follows~\citet{carmon2021thinking}; the main difference is that we approximate proximal steps via \Cref{alg:MS} and our reduced-bias bias estimator. The first step of the approach is to replace $\fmax$ with the ``softmax'' function, defined for a given target accuracy $\epsilon$ as 
\begin{equation*}
	\fsm(x) \defeq \epsilon' \log\prn[\Bigg]{\sum_{i\le N} \exp\prn*{\fhat[i](x)/\epsilon'}},~\mbox{where}~\epsilon' \defeq \frac{\epsilon}{2\log N}.
\end{equation*}
Since $\fsm(x) - \fmax(x)\in [0,\frac{\epsilon}{2}]$, any $\frac{\epsilon}{2}$-accurate solution of $\fsm$ is $\epsilon$-accurate for $\fmax$. 

The second step is to develop an efficient gradient estimator for $\fsm$; this is non-trivial because $\fsm$ is not a finite sum or expectation. In  \cite{carmon2021thinking} this is addressed via an ``exponentiated softmax'' trick; we develop an alternative, rejection sampling-based approach that fits \Cref{alg:MS} more directly (see \Cref{alg:fsm-grad}). To produce an unbiased estimate for $\grad \fsm(x)$ for $x$ in a ball of radius $\reps=\eps'/G$ we require a single $\grad \fhat[i](x)$ evaluation (for some $i$), $O(1)$ evaluations of $\fhat[i](x)$ in expectation, and evaluation of the $N$ functions $\fhat[1](y),\ldots, \fhat[N](y)$ for pre-processing. Plugging this estimator into \Cref{alg:MS} with $r=\reps$, the total pre-processing overhead of \cref{line:sapm-x,line:sapm-v} is $O(\Kmax N)$. 

The final step is to find a function $\nextLambda$ such that  $\norm{\prox_{\fsm,\lambda_{t+1}}(y_k) -y_k} \le \reps$ for all $k$ (enabling  gradient estimation), and $\norm{\prox_{\fsm,\lambda_{t+1}}(y_k)- y_k} \ge \frac{3}{4}\reps$ when $\lambda_{k+1} > 2\lambda_{\min}$  (allowing us to bound $\Kmax$ with \Cref{prop:sapm}). 
Here we use the bisection subroutine from~\cite{carmon2021thinking} as is (see \Cref{alg:bisecx}). 
By judiciously choosing $\lambda_{\min}$---an improvement over the analysis in~\cite{carmon2021thinking}---we obtain the following complexity guarantee on $\NqueryCustom{\fhat[i]}$ and $\NqueryCustom{\del\fhat[i]}$, the total numbers of individual function and subgradient evaluations, respectively. (See proof \Cref{app:mtm-main}).

\notarxiv{
\begin{restatable}{theorem}{thmMinTheMax}\label{thm:min-the-max}
	Let $\fhat[1], \ldots, \fhat[N]: \xset\to \R$ be convex and $G$-Lipschitz and let $\xset \subseteq \ball_R(x_0)$. For any $\epsilon < \half GR/\log N$, \Cref{alg:MS} (with $\appProx_{\fsm, \lambda}^\vphi$ implemented  in~\Cref{alg:epochSGD}, $\gest[\fsm]$ given by \Cref{alg:fsm-grad} and 
	$\nextLambda$ given by \Cref{alg:bisecx} with $\lambda_{\min}= \Otil{\epsilon / (\reps^{4/3} R^{2/3})}$ outputs $x\in\xset$ that with probability at least $\half$ is $\epsilon$-suboptimal for $\fmax(x) = \max_{i\in[N]} \fhat[i](x)$ and has complexity $\E\NqueryCustom{\fhat[i]} = O\prn*{ \brk*{N \prn*{ \frac{GR\log N}{\epsilon}}^{2/3}  + \prn*{\frac{GR}{\epsilon}}^{2}} \log^2\frac{GR}{\epsilon} }$ and $E \NqueryCustom{\del\fhat[i]} = O\prn*{ \prn*{ \frac{GR}{\epsilon}}^{2} \log^2\frac{GR}{\epsilon} }$.
\end{restatable}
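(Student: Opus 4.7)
The plan is to reduce minimizing $\fmax$ to minimizing $\fsm$ and then apply \Cref{prop:sapm} to \Cref{alg:MS} instantiated on $\fsm$. First I would observe that $\fsm$ is convex and $G$-Lipschitz (since each $\fhat[i]$ is), so \Cref{ass:gest} is in force, and that $0\le \fsm-\fmax \le \epsilon'\log N = \epsilon/2$. Thus it suffices to find $x$ that is $\epsilon/2$-suboptimal for $\fsm$ with probability $\ge 1/2$, after which I can apply \Cref{prop:sapm} with target accuracy $\epsilon/2$ (which only changes constants).

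Next I would analyze the two subroutines that \Cref{alg:MS} invokes beyond $\gradest$ and $\epochSGD$. The rejection-sampling gradient estimator $\gest[\fsm]$ of \Cref{alg:fsm-grad} takes a reference point $y_k$, uses $O(N)$ function evaluations to precompute the softmax weights at $y_k$, and then each subsequent call at a point in $\ball_{\reps}(y_k)$ with $\reps = \epsilon'/G$ produces an unbiased estimate of $\grad \fsm$ with bounded second moment using one subgradient and $O(1)$ function evaluations in expectation (this is standard for softmax-type rejection sampling at a scale $\reps$ chosen so that ratios of the exponents are $O(1)$). The function $\nextLambda$ implemented by $\linesearch$ of \Cref{alg:bisecx} (inherited from~\cite{carmon2021thinking}) guarantees both $\norm{\prox_{\fsm,\lambda_{k+1}}(y_k)-y_k}\le \reps$ always and $\norm{\prox_{\fsm,\lambda_{k+1}}(y_k)-y_k}\ge \tfrac{3}{4}\reps$ whenever $\lambda_{k+1}\ge 2\lambda_{\min}$, using $O(\log(GR/\epsilon))$ approximate proximal queries per iteration plus $O(N)$ function evaluations of preprocessing per outer step.

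With these pieces in place I would verify the hypotheses of \Cref{prop:sapm} for $\fsm$ (with $r=\reps$) and plug in the prescribed $\lambda_{\min}= \wt{\Theta}(\epsilon/(\reps^{4/3} R^{2/3}))$. The proposition then delivers an $\epsilon/2$-suboptimal iterate with probability $\ge 2/3$ using
\[
\Kmax = O\!\prn*{(R/\reps)^{2/3}\log(GR/\epsilon) + \sqrt{\lambda_{\min}R^2/\epsilon}} = \wt{O}\!\prn*{(GR\log N/\epsilon)^{2/3}}
\]
outer iterations, where the second summand in $\Kmax$ is balanced with the first by our choice of $\lambda_{\min}$. The total subgradient complexity then follows immediately from \Cref{prop:sapm}: $\E\NqueryCustom{\del\fhat[i]} = O((GR/\epsilon)^2\log^2(GR/\epsilon))$, since each call to $\gest[\fsm]$ consumes one $\del\fhat[i]$ evaluation. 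For the function-evaluation complexity, each of the $\Kmax$ outer iterations pays $O(N)$ for preprocessing $\linesearch$ and $\gest[\fsm]$ at the new reference point $y_k$, giving $O(N\Kmax) = \wt{O}(N(GR\log N/\epsilon)^{2/3})$ function evaluations on top of $O(1)\cdot \E\Nquery = \wt{O}((GR/\epsilon)^2)$ function evaluations consumed inside $\gest[\fsm]$ itself. Summing gives the stated bound on $\E\NqueryCustom{\fhat[i]}$, and success probability $2/3$ can be downgraded to $1/2$ trivially (or boosted with standard amplification).

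The main obstacle is the careful choice of $\lambda_{\min}$: it must be small enough that the bisection terminates with the movement between $\tfrac{3}{4}\reps$ and $\reps$ whenever possible, yet large enough that the $\sqrt{\lambda_{\min}R^2/\epsilon}$ term in $\Kmax$ does not dominate $(R/\reps)^{2/3}$. Setting $\lambda_{\min}\asymp \epsilon/(\reps^{4/3}R^{2/3})$ (up to logs) is what balances these two contributions and yields the tight $\wt{O}(N(GR/\epsilon)^{2/3}+(GR/\epsilon)^2)$ bound; the improvement over~\cite{carmon2021thinking} is entirely driven by plugging the nearly unbiased $\gradest$ into an accelerated proximal-point framework so that the per-iteration gradient cost grows only polylogarithmically rather than polynomially with $1/\epsilon$.
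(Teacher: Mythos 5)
Your high-level plan is the same as the paper's: reduce $\fmax$ to $\fsm$, instantiate \Cref{alg:MS} with the rejection-sampling estimator and the bisection from~\cite{carmon2021thinking}, invoke \Cref{prop:sapm}, and balance $\lambda_{\min}$. The arithmetic for $\Kmax$ and the final bounds matches the paper's. However, there are two concrete gaps.

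First, you treat the guarantees of $\nextLambda$ (that $\norm{\prox_{\fsm,\lambda_{k+1}}(y_k)-y_k}\le\reps$ always and $\ge \tfrac{3}{4}\reps$ when $\lambda_{k+1}\ge 2\lambda_{\min}$) as deterministic, and then say you can ``downgrade $2/3$ to $1/2$ trivially.'' But \Cref{alg:bisecx} only produces these guarantees when every \BOO call inside it returns a valid output, and each \BOO call (implemented via \Cref{lem:sgd-broo}) fails with probability $\pf$. \Cref{prop:sapm} assumes those guarantees hold with probability one; to use it you must first union-bound over all $O(\Kmax\log\frac{GR}{\epsilon})$ \BOO calls, choose $\pf$ small enough that all succeed with probability at least $5/6$, and then conclude success with probability $\ge \tfrac{2}{3}-\tfrac{1}{6}=\tfrac12$. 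This union bound also feeds back into the \BOO complexity (the $\log(1/\pf)$ factor inside \Cref{lem:sgd-broo}), so it cannot be waved away.

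Second, you omit the subgradient cost of the \BOO calls inside $\nextLambda$. You count only ``$O(N)$ function evaluations of preprocessing per outer step,'' but each of the $O(\log\frac{GR}{\epsilon})$ \BOO calls in a bisection costs $O\bigl(\frac{G^2}{\lambda^2\rho^2}\log(1/\pf)\bigr)$ subgradient evaluations with $\rho=\reps/17$, and at $\lambda=\lambda_{\min}$ this is the dominant term of the bisection budget. It is precisely this $\frac{G^2}{\lambda_{\min}^2\reps^2}$ cost that forces $\lambda_{\min}$ to be \emph{not too small} — your stated rationale (that $\lambda_{\min}$ must be ``large enough that $\sqrt{\lambda_{\min}R^2/\epsilon}$ does not dominate'') is reversed: that term grows with $\lambda_{\min}$, so it is the reason $\lambda_{\min}$ must be \emph{small}. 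The actual balance is: $\lambda_{\min}$ small enough to keep $\Kmax$ controlled, large enough to keep per-call \BOO cost subdominant to $(GR/\epsilon)^2$. You happen to land on the right $\lambda_{\min}$ but via the wrong tradeoff, and you never verify that the bisection's $\del\fhat[i]$ cost is indeed dominated by the $\gest$-budget from \Cref{prop:sapm}, which is a required check.
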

\noindent
}
\arxiv{
\begin{restatable}{theorem}{thmMinTheMax}\label{thm:min-the-max}
	Let $\fhat[1], \ldots, \fhat[N]: \xset\to \R$ be convex and $G$-Lipschitz and let $\xset \subseteq \ball_R(x_0)$. For any $\epsilon < \half GR/\log N$, \Cref{alg:MS} (with $\appProx_{\fsm, \lambda}^\vphi$ implemented  in~\Cref{alg:epochSGD}, $\gest[\fsm]$ given by \Cref{alg:fsm-grad}, and 
	$\nextLambda$ given by \Cref{alg:bisecx} with $\lambda_{\min}= \wt{\Theta}({\epsilon / (\reps^{4/3} R^{2/3})})$ outputs $x\in\xset$ that is $\epsilon$-suboptimal for $\fmax(x) = \max_{i\in[N]} \fhat[i](x)$ with probability at least $\half$ and has complexity 
	\[\E\NqueryCustom{\fhat[i]} = O\prn*{ \brk*{N \prn*{ \frac{GR\log N}{\epsilon}}^{2/3}  + \prn*{\frac{GR}{\epsilon}}^{2}} \log^2\frac{GR}{\epsilon} }~\text{and}~E \NqueryCustom{\del\fhat[i]} = O\prn*{ \prn*{ \frac{GR}{\epsilon}}^{2} \log^2\frac{GR}{\epsilon} }.\]
\end{restatable}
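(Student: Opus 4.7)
The plan is to execute the three-step reduction outlined in \Cref{sec:mtm-mtm} and then apply \Cref{prop:sapm} to the softmax approximation. First, I would reduce from $\fmax$ to $\fsm$: since $0 \le \fsm(x) - \fmax(x) \le \epsilon/2$, any $(\epsilon/2)$-suboptimal minimizer of $\fsm$ is $\epsilon$-suboptimal for $\fmax$. A short calculation (convex combination of the $\grad\fhat[i]$) shows that $\fsm$ is convex and $G$-Lipschitz, so \Cref{ass:gest} is satisfied with the same constant $G$ as the individual $\fhat[i]$'s, and \Cref{prop:sapm} applies to $\fsm$.

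Second, I would verify the two hypotheses that feed into \Cref{prop:sapm}. For the gradient estimator, I would check that \Cref{alg:fsm-grad} applied at any point $x$ lying in a ball of radius $\reps = \epsilon'/G$ around a reference point $y$ produces an unbiased estimate of $\grad\fsm(x)$ with $O(G^2)$ second moment; the key point is that on such a ball the ratios $\exp(\fhat[i](x)/\epsilon')/\exp(\fhat[i](y)/\epsilon')$ are bounded within a constant factor (by Lipschitzness), so rejection sampling terminates in $O(1)$ expected $\fhat[i]$ evaluations and yields exactly one $\grad\fhat[i]$ evaluation per sample. For $\nextLambda$, I would appeal directly to the analysis of \linesearch{} from \cite{carmon2021thinking}: it outputs $\lambda_{k+1} \ge \lambda_{\min}$ with $\|\prox_{\fsm,\lambda_{k+1}}(y_k) - y_k\| \le \reps$, and whenever $\lambda_{k+1} > 2\lambda_{\min}$ the same routine certifies $\|\prox_{\fsm,\lambda_{k+1}}(y_k) - y_k\| \ge \tfrac{3}{4}\reps$, matching the hypothesis of \Cref{prop:sapm} with $r=\reps$.

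Third, I would instantiate \Cref{prop:sapm} with these pieces and the parameter settings prescribed there. Choosing $\lambda_{\min} = \wt{\Theta}(\epsilon/(\reps^{4/3}R^{2/3}))$ balances the two terms of $\Kmax$, yielding
\[
\Kmax = O\prn*{(R/\reps)^{2/3}\log(GR/\epsilon)} = O\prn*{(GR\log N/\epsilon)^{2/3} \log(GR/\epsilon)}.
\]
The subgradient complexity is then inherited directly: $\NqueryCustom{\del\fhat[i]} = O(\Nquery) = O((GR/\epsilon)^2\log^2(GR/\epsilon))$ because each $\gest[\fsm]$ invocation spends a single $\grad\fhat[i]$ evaluation. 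The function-evaluation complexity, however, absorbs two contributions: the $O(1)$ expected $\fhat[i]$ evaluations per $\gest[\fsm]$ call (which matches $\Nquery$ up to constants) plus the $N$ evaluations required once per iteration of \Cref{alg:MS} to set up the rejection-sampling weights at the fresh reference point $y_k$ (and inside \linesearch). Multiplying the latter by $\Kmax$ gives the $N(GR\log N/\epsilon)^{2/3}$ term, and summing with the estimator cost produces the stated bound on $\NqueryCustom{\fhat[i]}$.

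The main obstacle is the careful bookkeeping of the second-moment bound on $\gest[\fsm]$ inside the ball of radius $\reps$ (needed so that \Cref{coro:gradest} gives the required $\delta_k,\sigma_k^2$ for $\grad(\fsm)_{\lambda_{k+1}}$), together with choosing $\lambda_{\min}$ so that the two terms of $\Kmax$ balance --- everything else is a direct application of results already established in the paper and in \cite{carmon2021thinking}. The final high-probability guarantee follows, as in \Cref{prop:sapm}, by a union bound / Markov argument over the iterations; any constant success probability can be boosted by standard repetition.
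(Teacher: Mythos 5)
Your high‑level strategy matches the paper: reduce $\fmax$ to $\fsm$, verify that \Cref{alg:fsm-grad} and \Cref{alg:bisecx} supply the ingredients required by \Cref{prop:sapm}, balance the two terms of $\Kmax$ by choosing $\lambda_{\min} = \wt\Theta(\epsilon/(\reps^{4/3}R^{2/3}))$, and tally the complexity. The reduction, the rejection‑sampling argument, and the $\Kmax$ balance are all correct and as in the paper.

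However, there is a genuine gap in your complexity accounting. The bound $\E\Nquery$ furnished by \Cref{prop:sapm} covers \emph{only} \cref{line:sapm-x,line:sapm-v}; it says nothing about \cref{line:next-lambda}, i.e.\ the bisection. You charge the bisection only for the $O(N)$ pre‑processing needed to set up rejection sampling, but each call to $\oracles{\cdot}$ inside \Cref{alg:bisecx} is itself implemented via a (projected) SGD of length $O\bigl(G^2/(\lambda_{\min}^2\reps^2)\cdot\log(\cdot/\pf)\bigr)$ (\Cref{lem:sgd-broo}), and there are $\KmaxBis = O(\log(GR/\epsilon))$ such calls per iteration, each at a \emph{fresh} reference point $y_\lambda$ requiring its own $O(N)$ pre‑processing. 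Consequently $\NqueryCustom{\del\fhat[i]} \ne O(\Nquery)$ as you claim; one must separately verify that $\Kmax\cdot\KmaxBis\cdot G^2/(\lambda_{\min}^2\reps^2)\cdot\log(\cdot)$ is dominated by $(GR/\epsilon)^2\log^2(GR/\epsilon)$, and the $\log^2$ factor on the $N$ term comes from $\KmaxBis$, not just from $\Kmax$. In the paper's proof, the specific choice $\lambda_{\min} = (\epsilon/(\reps^{4/3}R^{2/3}))\log^2(GR/\epsilon)$ is calibrated precisely so that both the $\Kmax$ balance \emph{and} this BROO cost work out; your argument does not check the latter and merely happens to land on the right answer.

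A secondary issue is that you invoke \Cref{prop:bisecx} unconditionally, but its guarantees hold only if every BROO call within the bisection succeeds, and \Cref{lem:sgd-broo} gives each call a failure probability $\pf$. The paper sets $\pf \le 1/(6\Kmax\KmaxBis)$ so that a union bound gives success probability $\ge 5/6$, which combined with the $2/3$ from \Cref{prop:sapm} yields the theorem's $1/2$. Your remark that the success probability can be boosted ``by standard repetition'' glosses over this: without choosing $\pf$ to depend on $\Kmax\KmaxBis$, the claimed constant‑probability guarantee would not hold, and the choice of $\pf$ also feeds back into the $\log(\cdot/\pf)$ factor in the BROO complexity.
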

}
The rate given by \Cref{thm:min-the-max} matches (up to logarithmic factors) the lower bound $\Omega( N (GR/\epsilon)^2 + (GR/\epsilon)^2)$ shown in \cite{carmon2021thinking} and is therefore near-optimal.  
\arxiv{
When $N \log N = O( (GR/\epsilon)^{4/3})$ the cost of our algorithm is dominated by $\Otil{(GR/\epsilon)^2}$ which is within a logarithmic factor of the optimal rate for minimizing the average of $\fhat[1], \ldots, \fhat[N]$, a seemingly much easier task.
} 
\section{Gradient-efficient composite optimization}\label{sec:gs}

Consider the problem of finding a minimizer of the following convex composite optimization problem
\begin{equation}
	\minimize_{x\in\xset} \Psi(x) \defeq \Lambda(x) + f(x)~\mbox{where $\Lambda$ is $L$-smooth and $f$ satisfies \Cref{ass:gest}},
		\label{eq:composite}
\end{equation}
given $x_0$ such that $\norm{x_0-\xopt}\le R$ for some $\xopt\in\argmin_{x\in\xset}\Psi(x)$. 
\citet{lan2016gradient} developed a method called ``gradient sliding'' that finds an $\epsilon$-accurate solution to~\eqref{eq:composite} with complexity  $\NqueryCustom{\grad \Lambda} = O(\sqrt{LR^2/\epsilon})$ evaluations of $\grad \Lambda(x)$ and $\Nquery = O((GR/\epsilon)^2)$ evaluations of $\gest[f](x)$, which are optimal even for each component separately.\footnote{The gradient sliding result holds under a relaxed Lipschitz assumption~\cite[see][eq.~(1.2)]{lan2016gradient}. It is straightforward to extend $\epochSGD$, and hence all of our results, to that assumption as well.}

In this section, we provide an alternative algorithm that matches the complexity of gradient up to logarithmic factors and is conceptually simple. Our approach, \Cref{alg:GS}, is essentially composite AGD~\citep{nesterov2013gradient}, where at the $k$th iteration we compute a proximal point~\eqref{eq:def-prox} with respect to a partial linearization of $\Psi$ around $y_k$. In particular, letting $\bar{\Lambda}_k(v) \defeq \Lambda(y_k)+\langle\grad \Lambda(y_k),v-y_k\rangle$ and $\beta_k = \frac{2L}{k}$, we approximate $\prox_{\bar{\Lambda}+f,\beta_k}(v_{k-1})$. Similar to \Cref{alg:MS}, \Cref{alg:GS} computes two types of approximations: one is an $\epsilon_k$-approximate proximal point $\appProx_{\bar{\Lambda}+f,\beta_k}^{\epsilon_k}(v_{k-1})$ as per its definition~\eqref{eq:app-prox-def}, while the other is our bias-reduced optimum estimator from \Cref{alg:optest}. We note, however, that unlike \Cref{alg:MS} which approximates the $x_k$ update, here we approximate $v_k$, the ``mirror descent'' update. 

Below we state the formal guarantees for \Cref{alg:GS}; we defer its proof to Appendix~\ref{app:gs}.

\begin{algorithm}[htbp!]
	\DontPrintSemicolon
	\caption{Stochastic composite accelerated gradient descent}\label{alg:GS}
	\KwInput{A problem of the form~\eqref{eq:composite} with $\Lambda$, $f$, $\grad \Lambda$, $\gest[f]$.}
	\KwParameters{Step size parameters $\beta_k = \tfrac{2L}{k}$ and $\gamma_k = \tfrac{2}{k+1}$, iteration number $N$, approximation parameters $\{\eps_k,\delta_k,\sigma_k^2\}$ and $x_0=v_0$ satisfying $\|x_0-\xopt\|\le R$.}
	\For{$k=1,2,\cdots, N$}{
		
		$y_k = (1-\gamma_k)x_{k-1}+\gamma_k \proj_{\xset}(v_{k-1}) $%
		\;
		$\bar{v}_k = \appProx_{\bar{\Lambda}_k+f,\beta_k}^{\eps_k}(v_{k-1})$ for $\bar{\Lambda}_k(v)\defeq \Lambda(y_k)+\langle \grad \Lambda(y_k),v-y_k\rangle$\;
		$v_k=\optest(\gest[f], \psi_k, \beta_k, \delta_k,\sigma_k^2,\ball_R(v_0)\cap \xset)$ for $\psi_k(z)=\frac{\beta_k}{2}\|z-v_{k-1}\|^2+\bar{\Lambda}_k(z)$\;
		$x_k= (1-\gamma_k)x_{k-1}+\gamma_k\bar{v}_k$\; %
	}
	\Return $x_N$
\end{algorithm}

\notarxiv{
\begin{restatable}{theorem}{thmGS}\label{thm:GS}
	Given problem~\eqref{eq:composite} with solution $\xopt$, a point $x_0$ such that $\norm{x_0-\xopt}\le R$ and target accuracy $\epsilon>0$,  \Cref{alg:GS} with $\eps_k = LR/2kN$, $\delta_k = R/16N$, $\sigma^2_k = R^2/4N$, and $N = \Theta( \sqrt{LR^2/\epsilon})$ finds an approximate solution $x$ satisfying $\E \Psi(x)\le \Psi(\xopt)+\eps$ and has complexity
	$\mathcal{N}_{\grad \Lambda} = O\left(\sqrt{\frac{LR^2}{\eps}}\right)$ and $\E \mathcal{N}_{\gest[f]} = O\left(\prn*{\frac{GR}{\eps}}^2\log^2\frac{GR}{\eps}+ 
	\sqrt{\frac{LR^2}{\eps}}\log\left(\frac{GR}{\eps}\right)\right)$.
\end{restatable}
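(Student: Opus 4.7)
The plan is to extend the standard analysis of composite accelerated gradient descent to simultaneously accommodate the two sources of error: the $\eps_k$-accurate proximal approximation $\bar{v}_k$ and the biased, noisy optimum estimator $v_k$. First I derive the per-iteration inequality assuming \emph{exact} proximal points, i.e.\ $\bar{v}_k = v_k = p_k := \prox_{\bar{\Lambda}_k+f,\beta_k}(v_{k-1})$. Combining the smoothness bound $\Lambda(x_k) \le \bar{\Lambda}_k(x_k) + \tfrac{L}{2}\|x_k-y_k\|^2$, the identity $x_k = (1-\gamma_k)x_{k-1} + \gamma_k p_k$, the convexity of $\bar\Lambda_k + f$, and the prox optimality inequality applied at $u=\xopt$ yields
\begin{equation*}
\Psi(x_k) - \Psi(\xopt) \le (1-\gamma_k)[\Psi(x_{k-1}) - \Psi(\xopt)] + \tfrac{\gamma_k\beta_k}{2}\left[\|v_{k-1}-\xopt\|^2 - \|p_k - \xopt\|^2\right],
\end{equation*}
provided $\beta_k \ge L\gamma_k$ so that the residual $\tfrac{L\gamma_k^2}{2}\|x_k-y_k\|^2$ is absorbed by the strong-convexity term $\tfrac{\gamma_k\beta_k}{2}\|p_k-v_{k-1}\|^2$ from the prox optimality. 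Multiplying by $A_k = k(k+1)/2$ makes $A_k\gamma_k\beta_k/2 = a_k\beta_k/2 = L$ constant in $k$, so the $\|\cdot - \xopt\|^2$ terms telescope to $A_N[\Psi(x_N) - \Psi(\xopt)] \le L\|v_0-\xopt\|^2 \le LR^2$.

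Next I introduce the two error sources. For $\bar{v}_k$, the definition of $\appProx^{\eps_k}$ gives $\E F_k(\bar v_k) \le F_k(p_k) + \eps_k$, adding an $a_k\eps_k$ term to the $k$th one-step inequality (after multiplying by $A_k$). For $v_k$, I would apply \Cref{thm:optest} to obtain $\|\E v_k - p_k\| \le \delta_k$ and $\E\|v_k - p_k\|^2 \le \sigma_k^2$. Expanding $\|p_k - \xopt\|^2 = \|v_k-\xopt\|^2 + 2\langle p_k-v_k,\, v_k-\xopt\rangle + \|p_k - v_k\|^2$, taking expectation, and using the identity $\E\langle p_k-v_k,\, v_k - \E v_k\rangle = -\E\|v_k - \E v_k\|^2 \le 0$ together with $\|p_k - \xopt\| \le 2R$ (both lie in $\ball_R(v_0)\cap\xset$), I obtain
\begin{equation*}
\E[\|v_{k-1}-\xopt\|^2 - \|p_k-\xopt\|^2] \le \E[\|v_{k-1}-\xopt\|^2 - \|v_k-\xopt\|^2] + \sigma_k^2 + 4R\delta_k.
\end{equation*}
Telescoping the corrected one-step bound then gives
\begin{equation*}
A_N\,\E[\Psi(x_N)-\Psi(\xopt)] \le LR^2 + L\sum_{k=1}^N (\sigma_k^2 + 4R\delta_k) + \sum_{k=1}^N a_k\eps_k,
\end{equation*}
and the stated schedule makes each of these three contributions $O(LR^2)$; dividing by $A_N=\Theta(N^2)$ and setting $N=\Theta(\sqrt{LR^2/\epsilon})$ yields $\E[\Psi(x_N)-\Psi(\xopt)] = O(\epsilon)$.

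For complexity, the $\grad\Lambda$ count is $N = O(\sqrt{LR^2/\epsilon})$. Implementing $\bar{v}_k$ via $\epochSGD$ on the $\beta_k$-strongly convex objective $F_k$ costs $O(G^2/(\beta_k\eps_k))$ stochastic gradient evaluations per call; substituting $\beta_k=2L/k$ and summing over $k$ yields the $(GR/\epsilon)^2$ polylog part of the bound. Computing $v_k$ via \Cref{thm:optest} with $\mu=\beta_k$, $\delta=\delta_k$, $\sigma=\sigma_k$ costs $O((G^2/(\beta_k^2\sigma_k^2))\log^2 + \log)$ per call; summing reproduces the $(GR/\epsilon)^2\log^2$ term together with the $\sqrt{LR^2/\epsilon}\log$ additive contribution coming from the non-quadratic summand of $\E\Nquery$ in \Cref{thm:optest}. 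The main obstacle I anticipate lies in the $v_k$ analysis: converting the biased, heavy-tailed estimator into a Lyapunov inequality without an unfavorable dependence on $\|p_k-\xopt\|$ relies crucially on the $\optest$ call being restricted to $\ball_R(v_0)\cap\xset$, and I would need to verify that this restriction leaves $p_k$ unchanged (so the prox optimality inequality at $u=\xopt\in\ball_R(v_0)\cap\xset$ still applies) and that the auxiliary projection $\proj_\xset(v_{k-1})$ appearing in $y_k$ does not disturb the smoothness-based bound on $\|x_k - y_k\|^2$ used in step one.
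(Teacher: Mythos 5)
Your proposal follows essentially the same path as the paper's proof: derive the composite-AGD one-step inequality via $L$-smoothness of $\Lambda$ and convexity, use $\beta_k$-strong convexity of the regularized subproblem to produce the telescoping term, multiply by the weight $\Gamma_k^{-1} \propto k(k+1)$, and then feed in the two error sources through the expectation. Your handling of the $v_k$ error is a slightly different but equivalent decomposition: the paper expands $\|v_k - u\|^2$ around the exact constrained minimizer $v_k^\star$ and bounds the cross term by Cauchy--Schwarz using the bias bound $\|\E v_k - v_k^\star\| \le \delta_k$ together with $\|v_k^\star - u\| \le 2R$, which yields exactly your $\sigma_k^2 + 4R\delta_k$ additive error. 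So on these points the proposals coincide.

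The one place where you have a genuine unresolved gap is precisely the point you flag at the end. Working with $p_k := \prox_{\bar\Lambda_k+f,\beta_k}(v_{k-1})$ (the minimizer over all of $\xset$) does not match the target that $\optest$ actually estimates, which by construction is the minimizer over $\ball_R(v_0)\cap\xset$. Without an additional argument that $p_k$ lies in $\ball_R(v_0)$, the bias and MSE bounds from \Cref{thm:optest} refer to the wrong point, and the Lyapunov step fails. The paper's resolution is to define the reference point as $v_k^\star := \argmin_{z\in\ball_R(v_0)\cap\xset}\Phi_k(z)$ (matching the $\optest$ target), and observe that \emph{both} pieces of your argument still hold with $v_k^\star$ in place of $p_k$: (i) the $\appProx$ guarantee $\E\Phi_k(\bar v_k) \le \Phi_k(p_k) + \eps_k$ can be relaxed to $\le \Phi_k(v_k^\star) + \eps_k$ because $v_k^\star$ is feasible for the larger domain and hence $\Phi_k(p_k)\le\Phi_k(v_k^\star)$; (ii) strong convexity of $\Phi_k$ restricted to $\ball_R(v_0)\cap\xset$ gives $\Phi_k(u) \ge \Phi_k(v_k^\star) + \tfrac{\beta_k}{2}\|v_k^\star - u\|^2$ for $u = \xopt$, since $\xopt\in\ball_R(v_0)\cap\xset$. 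Once you make this substitution your plan goes through; the remaining concern about $\proj_\xset(v_{k-1})$ in the smoothness step is handled because $\bar v_k\in\xset$ and projection onto $\xset$ is a contraction, so $\|\bar v_k - \proj_\xset(v_{k-1})\|\le\|\bar v_k - v_{k-1}\|$.
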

}

\arxiv{
\begin{restatable}{theorem}{thmGS}\label{thm:GS}
	Given problem~\eqref{eq:composite} with solution $\xopt$, a point $x_0$ such that $\norm{x_0-\xopt}\le R$ and target accuracy $\epsilon>0$,  \Cref{alg:GS} with $\eps_k = LR/2kN$, $\delta_k = R/16N$, $\sigma^2_k = R^2/4N$, and $N = \Theta( \sqrt{LR^2/\epsilon})$ finds an approximate solution $x$ satisfying $\E \Psi(x)\le \Psi(\xopt)+\eps$ and has complexity
	\[\mathcal{N}_{\grad \Lambda} = O\left(\sqrt{\frac{LR^2}{\eps}}\right)~~\text{and}~~\E \mathcal{N}_{\gest[f]} = O\left(\prn*{\frac{GR}{\eps}}^2\log^2\frac{GR}{\eps}+ 
	\sqrt{\frac{LR^2}{\eps}}\log\left(\frac{GR}{\eps}\right)\right).\]
\end{restatable}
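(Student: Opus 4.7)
The plan is to carry out a standard composite accelerated gradient descent analysis in the style of Nesterov~\cite{nesterov2013gradient} while tracking two sources of approximation error: the functional error $\eps_k$ in $\bar{v}_k$, and the bias/variance errors $\delta_k,\sigma_k$ in $v_k$. Both quantities approximate the same exact proximal point $v_k^\star \defeq \prox_{\bar{\Lambda}_k+f,\beta_k}(v_{k-1})$, but in different ways: $\bar{v}_k$ is close to $v_k^\star$ in function value, while $v_k$ approximates $v_k^\star$ in first and second moment.

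First, I would establish a per-iteration progress inequality. By the $\beta_k$-strong convexity of the regularized subproblem, for every $u\in\xset\cap\ball_R(v_0)$,
\[
\bar{\Lambda}_k(v_k^\star)+f(v_k^\star)+\tfrac{\beta_k}{2}\|v_k^\star-v_{k-1}\|^2+\tfrac{\beta_k}{2}\|u-v_k^\star\|^2 \le \bar{\Lambda}_k(u)+f(u)+\tfrac{\beta_k}{2}\|u-v_{k-1}\|^2.
\]
Combined with the $L$-smoothness of $\Lambda$ (to control $\Lambda(x_k)-\bar{\Lambda}_k(\bar{v}_k)$), the $\appProx^{\eps_k}$ definition (costing an extra $\eps_k$), and convexity of $\Psi$, the standard AGD algebra with weights $A_k=\Theta(k^2)$ produces an error-aware recursion of the form
\[
A_k\prn*{\Psi(x_k)-\Psi(u)}+\tfrac{k\beta_k}{2}\|v_k^\star-u\|^2\le A_{k-1}\prn*{\Psi(x_{k-1})-\Psi(u)}+\tfrac{(k-1)\beta_{k-1}}{2}\|v_{k-1}-u\|^2+A_k\eps_k.
\]

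Next, I would replace $v_k^\star$ by the actual iterate $v_k$ on both sides. Expanding $\|v_k^\star-u\|^2=\|v_k-u\|^2+\|v_k-v_k^\star\|^2-2\tri{v_k-v_k^\star,u-v_k}$ and taking expectations, the squared term contributes at most $\sigma_k^2$ by \Cref{thm:optest}, while $|\E\tri{v_k-v_k^\star,u-v_k}|\le \|\E v_k-v_k^\star\|\cdot\|u-v_k\|\le 2R\delta_k$, using that both $v_k$ and $u$ lie in $\ball_R(v_0)$ (enforced by the domain parameter passed to $\optest$). Telescoping from $k=1$ to $N$ with $u=\xopt$ and dividing by $A_N=\Theta(N^2)$ yields
\[
\E\Psi(x_N)-\Psi(\xopt)\le O\prn*{\tfrac{LR^2}{N^2}}+\tfrac{1}{A_N}\sum_{k=1}^{N}\prn*{A_k\eps_k+\beta_k k(R\delta_k+\sigma_k^2)}.
\]
Substituting the chosen $\eps_k,\delta_k,\sigma_k^2$ and $N=\Theta(\sqrt{LR^2/\eps})$ makes each summand $O(\eps/N)$ on average, giving $\E\Psi(x_N)-\Psi(\xopt)\le \eps$.

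Finally, the stochastic gradient complexity follows by summing the per-iteration budgets. The exact gradient $\grad\Lambda$ is queried exactly $N=O(\sqrt{LR^2/\eps})$ times. For $\gest[f]$, \Cref{lem:epoch-SGD} implies $\appProx^{\eps_k}$ uses $O(G^2/(\beta_k\eps_k))$ calls, while \Cref{thm:optest} gives $O(G^2/(\beta_k^2\sigma_k^2)\log^2(\cdot)+\log(\cdot))$ calls for $\optest$. Summing over $k$ with $\beta_k=2L/k$ and the chosen parameters produces the claimed $O((GR/\eps)^2\log^2(GR/\eps)+\sqrt{LR^2/\eps}\log(GR/\eps))$ bound on $\E\Nquery$. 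The main obstacle is the bias term: because $\delta_k$ only controls a first moment, its contribution to $\E\|v_k^\star-u\|^2$ scales as $R\delta_k$ rather than $\delta_k^2$, forcing the choice $\delta_k=O(R/N)$; this is feasible only because the bounded-domain construction in $\optest$ lets us replace the generic $\|u-v_k\|$ by $2R$ in the cross-term bound without inflating the sample complexity of the bias-reduced estimator.
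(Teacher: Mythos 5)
Your proposal follows the same route as the paper: Nesterov-style composite AGD with error-aware weights, decomposing the error into the $\eps_k$ proximal suboptimality from $\bar{v}_k$ and the bias/variance $(\delta_k,\sigma_k^2)$ from $v_k$, then telescoping. The one step that is imprecise as written is the cross-term bound. You expand $\|v_k^\star - u\|^2 = \|v_k - u\|^2 + \|v_k - v_k^\star\|^2 - 2\tri{v_k - v_k^\star, u - v_k}$ and then assert $\abs{\E\tri{v_k - v_k^\star, u - v_k}} \le \norm{\E v_k - v_k^\star}\cdot\norm{u - v_k}$, but this is not a valid manipulation: $v_k$ is random and appears in \emph{both} arguments of the inner product, so you cannot move the expectation inside one argument while leaving the other untouched. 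The fix (implicitly what the paper does) is to expand around the deterministic point $v_k^\star$ instead, writing $-\norm{v_k^\star - u}^2 = -\norm{v_k - u}^2 + \norm{v_k - v_k^\star}^2 + 2\tri{v_k^\star - u, v_k - v_k^\star}$, so that the cross term has the deterministic vector $v_k^\star - u$ as one argument; then $\E\, 2\tri{v_k^\star - u, v_k - v_k^\star} = 2\tri{v_k^\star - u, \E v_k - v_k^\star} \le 4R\delta_k$ is a clean Cauchy--Schwarz on deterministic quantities. (Alternatively, decompose $u - v_k = (u - v_k^\star) - (v_k - v_k^\star)$ in your cross term and argue that the extra $-\E\norm{v_k - v_k^\star}^2$ only helps.) You should also note that the paper replaces $\norm{v_k - u}^2$ by $\norm{\proj_{\xset}(v_k) - u}^2$ via the projection inequality before telescoping, which is needed because the next iteration's potential involves $\proj_{\xset}(v_k)$. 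With these corrections the argument matches the paper's, and your complexity accounting is the same.
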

} 

\newcommand{\diffp}{\alpha}  %
\newcommand{\de}{\beta}  %
\newcommand{\ed}{\ensuremath{(\diffp,\de)}}

\newcommand{\Ds}{\mathcal{S}} %
\newcommand{\ds}{s} %
\newcommand{\domain}{\mathbb{S}} %

\newcommand{\A}{\mathcal{A}}
\newcommand{\cO}{\mathcal{O}}

\newcommand{\xdomain}{\mc{X}} %
\newcommand{\diam}{\mathsf{diam}} %
\newcommand{\opt}{^\star} %
\newcommand{\noise}{\zeta} %

\newcommand{\lip}{G}  %
\newcommand{\sm}{\beta} %
\renewcommand{\sc}{\mu} %
\newcommand{\rad}{R} %

\renewcommand{\ss}{\eta} %

\newcommand{\MLMCorac}{\mathcal{O_\delta}} %
\newcommand{\MLMCoracd}{\mathcal{O}} %

\renewcommand{\ltwo}{\norm}

\vspace{-9pt}

\section{Efficient non-smooth private convex optimization} \label{sec:DP-SCO}

We conclude the paper with a potential application of our optimum estimator for differentially private stochastic convex optimization (DP-SCO). 
In this problem we are given $n$ i.i.d.\ sample $s_i \sim P$ taking values in a set $\domain$, and out objective is to privately minimize the population average  $f(x) = \E_{S \sim P}[\hat f(x;S)]$,
where $\hat f: \xdomain \times \domain \to \R$, 
is convex in the first argument and $\xset \subset \R^d$ is a convex set, and $\domain$ is a population of data points. That is, 
We wish to find $\hat x \in \xdomain$ with small excess loss $f(\hat x) - \min_{x \in \xdomain} f(x)$ 
while preserving %
differential privacy. %

\begin{definition}[\cite{DworkMcNiSm06}]
	\label{def:DP}
	A randomized  algorithm $\A$ is \ed-differentially private (\ed-DP) if, for all datasets $\Ds,\Ds' \in \domain^n$ that differ in a single data element and for every event $\cO$ in the output space of $\A$, we have 
	$P[\A(\Ds)\in \cO] \le e^{\diffp} P[\A(\Ds')\in \cO] +\de$.
\end{definition}

DP-SCO has received increased attention over the past few years. \citet{BassilyFeTaTh19} developed (inefficient) algorithms that attain the optimal excess loss $1/\sqrt{n} + \sqrt{d \log(1/\de)}/n\diffp$. When each function is $O(\sqrt{n})$ smooth, \citet{FeldmanKoTa20} gave algorithms with optimal excess loss and $O(n)$ gradient query complexity.
In the non-smooth setting, however, their algorithms require $O(n^2)$ subgradients. Subsequently, \citet{AsiFeKoTa21} and~\citet{KulkarniLeLi21} developed more efficient algorithms for non-smooth functions which need $O( \min(n^2/\sqrt{d}, n^{5/4}d^{1/8} ,n^{3/2}/d^{1/8}))$ subgradients which is  $O(n^{11/8})$ for the high-dimensional setting $d=n$. Whether a linear gradient complexity is achievable for DP-SCO in the non-smooth setting is still open.

In this section, we develop an efficient algorithm for non-smooth DP-SCO that queries $\Otil{n}$ subgradients conditional on the existence of an optimum estimator with the following properties.

\notarxiv{
\begin{definition}
	\label{def:bounded-MLMC}
	Let $F = f + \psi$ be $\sc$-strongly convex with minimizer  $\xopt$ and  $f$ is $\lip$-Lipschitz. %
	For $\delta >0$, %
	we say that $\MLMCorac$ is efficient bounded low-bias estimator if it 
	returns $ \xmlmc = \MLMCorac(F)$ such that  %
	$\ltwo{\E[\xmlmc - \xopt]}^2 \le \delta^2 $, $\ltwo{\xmlmc - \xopt}^2 \le C_1 \lip^2  \log(\lip/\sc \delta)/\sc^2$,
	and the expected number of gradient queries is $C_2 \log(\lip/\sc\delta)$.
\end{definition}
\vspace{-3pt}
}
\arxiv{
\begin{definition}
	\label{def:bounded-MLMC}
	Let $F = f + \psi$ be $\sc$-strongly convex with minimizer  $\xopt$ and  $f$ is $\lip$-Lipschitz. %
	For $\delta >0$, %
	we say that $\MLMCorac$ is an \emph{efficient bounded low-bias optimum estimator (EBBOE)} if it 
	returns $ \xmlmc = \MLMCorac(F)$ such that  %
	\[\ltwo{\E[\xmlmc - \xopt]}^2 \le \delta^2,~~\ltwo{\xmlmc - \xopt}^2 \le C_1\frac{ \lip^2}{\sc^2}  \log(\lip/\sc \delta),\]
	and the expected number of gradient queries is $C_2 \log(\lip/\sc\delta)$.
\end{definition}
}

Comparing to our MLMC estimator~\eqref{eq:def-mlmc} and \Cref{prop:est}, we note that the only place our current estimator falls short of satisfying~\Cref{def:bounded-MLMC} is the probability 1 bound on $\ltwo{\xmlmc - \xopt}^2$, which for~\eqref{eq:def-mlmc} holds only in expectation. Indeed, for our estimator, $\ltwo{\xmlmc - \xopt}$ can be as large as $O(G/(\mu\delta))$, meaning that it is heavy-tailed.

It is not clear whether an EBBOE as defined above exists. Nevertheless, assuming access to such estimator, \Cref{alg:MLMC-loc-erm} solves the DP-SCO problem with a near-linear amount of gradient computations. The 
algorithm builds on the recent localization-based optimization methods in~\cite{FeldmanKoTa20} which iteratively solve regularized minimization problems. 

\begin{algorithm}[htbp!]
	\DontPrintSemicolon
	\caption{Differentially-private stochastic convex optimization via optimum estimation} \label{alg:MLMC-loc-erm}
	\KwInput{ $(\ds_1, \ldots, \ds_n)\in \domain^n$,
		domain $\xdomain\subset\ball_{\rad}(x_0)$, 
		EBBOE $\MLMCoracd$ (satisfying \Cref{def:bounded-MLMC}).
	}
	Set $k = \ceil{\log n}$, $B = 20 (\log (\frac{1}{\de}) + C_2 \log^2 n)$, $\bar n = \frac{n}{k}$, $\ss =\frac{\rad}{\lip} \min \crl*{\frac{1}{\sqrt{n}}, \frac{ \diffp}{ B \log(n) \sqrt{d \log(\frac{1}{\de}) }} }$\label{line:dp-defs}\;
	\For{$i=1,2,\cdots, k$}{
		Let $\ss_i = 2^{-4i} \ss$ , %
		$
		f_i(x) = \frac{1}{\bar n} \sum_{j=1 + (k-1) \bar n}^{k \bar n} \hat f(x;\ds_j)$,
		$
		\psi_i(x) = %
		\norm{x - x_{i-1}}^2 / (\ss_i \bar n)
		$ \;
		Let $\tilde x_i = \frac{1}{\bar n} \sum_{j=1}^{\bar n} \MLMCoracd_{\delta_i}(F_i)$ with $F_i = f_i + \psi_i$ , $\delta_i^2 = \lip^2 \ss_i^2 \bar n$ \; 
		Set $x_i = \tilde x_i + \noise_i$ where $\noise_i \sim \normal(0,\sigma_i^2 I_d)$ with $\sigma_i = 8  B ( \sqrt{C_1 \log n} + 2) \ss_i \sqrt{\log(2/\de)}/\diffp_i$\;
	}
	\Return  $x_k$\;
\end{algorithm}

We average multiple draws of the (hypothetical) bounded optimum estimator to solve the regularized problems, and apply private mean estimation procedures to preserve privacy. We defer the proof of the following results~\Cref{app:DP-SCO}.

\notarxiv{
\begin{restatable}[conditional]{theorem}{thmDPSCO}
	\label{thm:SCO-convex}
	Given an efficient bounded low-bias estimator $\MLMCorac$ satisfying~\Cref{def:bounded-MLMC} for any $\delta>0$, then for $\diffp \le \log(1/\de)$, $\xset\in\ball_R(x_0)$, convex and $\lip$-Lipschitz $\hat f(x;\ds)$, 
	\Cref{alg:MLMC-loc-erm} is $(\diffp,\de)$-DP, queries $\Otil{n}$ subgradients and has (hiding logarithmic factors in $n$)
	$	\E [ f(x_k) - \min_{x \in \xdomain} f(x)]
		\le \lip \rad \cdot \wt O \left( \frac{1}{\sqrt{n}} + \frac{   \sqrt{d \log^3(1/\de)}  }{n \diffp} \right).$
\end{restatable}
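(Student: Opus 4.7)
The plan is to adapt the localized ERM framework of \citet{FeldmanKoTa20}, replacing their expensive exact regularized minimization with averages of bounded-output EBBOE draws. I will separately verify privacy, utility, and gradient complexity, and then combine them.

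For privacy, I fix a single phase $i$ and bound the $\ell_2$-sensitivity of $\tilde x_i$. Changing one sample perturbs $f_i$ by an $(\lip/\bar n)$-Lipschitz function, so by $\sc_i = 2/(\ss_i \bar n)$-strong convexity of $F_i$ the exact regularized minimizer $\xopt_i$ shifts by at most $\lip \ss_i/2$. The boundedness clause of Definition~\ref{def:bounded-MLMC} forces each EBBOE draw to lie within $O(\lip \sqrt{\log n}/\sc_i)$ of $\xopt_i$ with probability one. A coupling argument---running the estimator on neighboring datasets with joint randomness and conditioning on the (probability $\ge 1-\de/(2k)$) event that all $\bar n$ draws land in a common ball of that radius (the role of the parameter $B$ in the algorithm)---combined with averaging then yields a sensitivity bound of order $B(\sqrt{C_1\log n}+1)\lip \ss_i /\bar n$. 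The Gaussian mechanism with $\sigma_i$ as prescribed thus makes each phase $(\diffp_i,\de/(2k))$-DP for $\diffp_i = \Theta(\diffp/\sqrt{k \log(1/\de)})$, and advanced composition over the $k=\lceil \log n \rceil$ phases yields $(\diffp,\de)$-DP overall.

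For utility, I will run a Feldman--Koren--Talwar style telescope. A bias--variance decomposition using Definition~\ref{def:bounded-MLMC} with $\delta_i^2=\lip^2\ss_i^2 \bar n$ gives $\E\ltwo{\tilde x_i-\xopt_i}^2 = O(\lip^2 \ss_i^2 \bar n \log n)$, while the Gaussian noise contributes an additional $O(d\sigma_i^2)$ to $\ltwo{x_i-\xopt_i}^2$. Combining the standard localization identity
\[
\E[f(x_i)-f(\xopt)] \le \E[F_i(x_i)-F_i(\xopt)] + \frac{\ltwo{x_{i-1}-\xopt}^2 - \ltwo{x_i-\xopt}^2}{\ss_i \bar n} + \text{(noise cross terms)}
\]
with strong convexity and summing over the geometric schedule $\ss_i=2^{-4i}\ss$, the choice of $\ss$ in line~\ref{line:dp-defs} balances the statistical $O(\lip \rad/\sqrt{n})$ and privacy $O(\lip \rad \sqrt{d\log^3(1/\de)}/(n\diffp))$ contributions, yielding the claimed excess loss.

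Gradient complexity is immediate: each EBBOE call costs $O(\log n)$ subgradients by Definition~\ref{def:bounded-MLMC}, and $\bar n \cdot k = n$ calls are made in total, totalling $\Otil{n}$. The main obstacle is the sensitivity step: controlling how the average of $\bar n$ heavy-looking EBBOE outputs changes under a single-sample swap requires the bounded-output property together with a tail event handled by $B$, and the unavoidable $\sqrt{\log n}$ in Definition~\ref{def:bounded-MLMC} propagates through the noise calibration to produce the $\log^3(1/\de)$ factor in the final utility. This propagation is precisely why a constant-sensitivity variant of our optimum estimator would be strictly stronger in this application.
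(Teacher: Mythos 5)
Your overall architecture matches the paper's: bound per-phase sensitivity using the probability-one bound of \Cref{def:bounded-MLMC} plus an event that caps how many draws a fixed sample can influence, apply the Gaussian mechanism, then run a Feldman--Koren--Talwar localization telescope for utility. But there are two substantive deviations in your privacy argument that matter for the final bound.

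First, you invoke advanced composition over the $k=\lceil\log n\rceil$ phases with $\diffp_i=\Theta(\diffp/\sqrt{k\log(1/\de)})$. This is unnecessary and lossy: \Cref{alg:MLMC-loc-erm} partitions the $n$ samples into $k$ disjoint blocks and each $F_i$ touches only its own block, so composition across phases is trivial---each phase is $(\diffp,\de)$-DP with respect to the data it sees and the overall mechanism is $(\diffp,\de)$-DP by post-processing, which is exactly what the paper's proof uses (``each sample is used in exactly one iterate''). Your choice of $\diffp_i$ inflates $\sigma_i$ by an extra $\sqrt{k\log(1/\de)}$ factor. The $\sqrt{k}$ is absorbed by the ``hiding logarithmic factors in $n$'' clause, but the extra $\sqrt{\log(1/\de)}$ is not: you would end up with $\sqrt{d\log^4(1/\de)}$ in the utility rather than the stated $\sqrt{d\log^3(1/\de)}$.

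Second, your description of the role of $B$ conflates two separate facts. The EBBOE bound $\ltwo{\MLMCorac(F_i)-\hat x_i}\le O(\lip\sqrt{\log n}/\sc_i)$ holds deterministically by \Cref{def:bounded-MLMC}; no high-probability event is needed for it. The event controlled by $B$ is different: since the expected per-draw query count is $O(\log n)$, a Chernoff bound (the paper's \Cref{lemma:orac-sample-appearances}) shows that with probability $\ge 1-\de/2$ every sample is queried by at most $B=O(\log(1/\de)+\log^2 n)$ of the $\bar n$ draws, so swapping one sample perturbs at most $B$ summands in $\tilde x_i = \frac1{\bar n}\sum_j \MLMCorac(F_i)$, each by at most twice the deterministic bound, giving sensitivity $O(B(\sqrt{C_1\log n}+1)\lip/(\sc_i\bar n))$. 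Relatedly, your claim that the $\sqrt{\log n}$ in \Cref{def:bounded-MLMC} is what produces $\log^3(1/\de)$ is incorrect: the $\sqrt{\log n}$ only contributes a polylog-in-$n$ factor; the $\log^3(1/\de)$ comes from $B\cdot\sqrt{\log(1/\de)}\cdot\sqrt{\log(1/\de)}$, i.e., one power of $\log(1/\de)$ from $B$ and two $\sqrt{\log(1/\de)}$ factors from the Gaussian mechanism and the DP lower-bound-matching rate. Your utility telescope and the $\Otil{n}$ gradient count are otherwise fine.
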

\vspace{-3pt}
}

\arxiv{
\begin{restatable}[conditional]{theorem}{thmDPSCO}
	\label{thm:SCO-convex}
	Given an efficient bounded low-bias estimator $\MLMCorac$ satisfying~\Cref{def:bounded-MLMC} for any $\delta>0$, then for $\diffp \le \log(1/\de)$, $\xset\in\ball_R(x_0)$, convex and $\lip$-Lipschitz $\hat f(x;\ds)$, 
	\Cref{alg:MLMC-loc-erm} is $(\diffp,\de)$-DP, queries $\Otil{n}$ subgradients and satisfies 
	\begin{equation*}
		\E [ f(x_k) - \min_{x \in \xdomain} f(x)]
		\le \lip \rad \cdot O \left( \frac{{\log n}}{\sqrt{n}} + \frac{   \sqrt{d \log(1/\de)} (\log^2(n) + \log(1/\de)) \log(n)  }{n \diffp} \right) . 
     \end{equation*}
\end{restatable}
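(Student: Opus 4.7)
The plan is to mirror the localization-based analysis of \citet{FeldmanKoTa20} (and its refinements in \citet{AsiFeKoTa21}), with the EBBOE playing the role of an approximate regularized ERM solver. The proof splits into a privacy argument and a utility argument; the key advantage of the EBBOE over a generic SGD subroutine is that its per-output boundedness lets us establish a sharp sensitivity bound, while its low bias and logarithmic query cost keep the utility bound tight and the overall gradient complexity near-linear.

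\textbf{Privacy.} First I would show that iteration $i$ is $(\alpha_i,\beta_i)$-DP for appropriate constants. Changing one datum $s_j$ in $\Ds$ alters a single summand of $f_i$, hence shifts the $\eta_i\bar n$-strongly-convex objective $F_i=f_i+\psi_i$ by at most $2G/\bar n$ in Lipschitz norm; standard strong-convexity sensitivity of the minimizer then gives $\|x_i^\star-x_i^{\star\prime}\|\lesssim \eta_i G$. Now the EBBOE boundedness guarantee $\|\xmlmc-\xopt\|^2\le C_1 G^2\log(G/\mu\delta_i)/\mu^2$ with $\mu=\eta_i\bar n$ gives $\|\xmlmc-\xopt\|\lesssim \eta_i G\sqrt{C_1\log n}$ almost surely, so by the triangle inequality each single draw, and hence the average $\tilde x_i$, has $\ell_2$-sensitivity $O(B(\sqrt{C_1\log n}+2)\eta_i G/\bar n)$ when the displaced datum falls in the $i$th block (and zero otherwise, since blocks are disjoint). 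The chosen Gaussian noise scale $\sigma_i$ therefore makes iteration $i$ satisfy the $(\alpha_i,\beta_i)$-DP Gaussian mechanism guarantee; composing over the $k=\lceil\log n\rceil$ disjoint-data iterations via advanced composition with parameters set as in Line \ref{line:dp-defs} yields the claimed $(\alpha,\beta)$-DP.

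\textbf{Utility.} I would follow the localization telescoping: write $x_k-\xopt$ as the sum of increments $x_i-x_{i-1}$ and use the three-point lemma for the regularizer $\psi_i$ to relate $F_i(x_i)-F_i(x_i^\star)$ to squared-distance progress. Concretely, for each $i$ I would bound $\E[f(x_i)-f(x_{i-1})]$ against (a) the EBBOE bias $\delta_i$ (which enters as $\delta_i/(\eta_i\bar n)$ after multiplication by the minimizer-displacement-to-function-value conversion), (b) the EBBOE variance, which after averaging $\bar n$ independent draws contributes $C_1 G^2\log n/(\eta_i\bar n^2\cdot\mu^2)$-type terms and drives the $1/\sqrt n$ statistical rate, and (c) the Gaussian noise variance $d\sigma_i^2$, which drives the $\sqrt{d\log(1/\beta)}/(n\alpha)$ privacy rate. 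The choices $\eta_i=2^{-4i}\eta$, $\delta_i^2=G^2\eta_i^2\bar n$, and $\eta$ as in Line \ref{line:dp-defs} balance these three sources block by block, and summing the geometric series in $i$ produces the advertised excess-loss bound.

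\textbf{Gradient complexity.} Each iteration makes $\bar n=n/k$ calls to $\MLMCoracd_{\delta_i}$; by \Cref{prop:est} each call costs $O(\log(G/(\eta_i\bar n\delta_i)))=O(\log n)$ subgradients in expectation. Summed over $k=O(\log n)$ iterations this gives $\E\Nquery=O(n\log^2 n)=\Otil n$.

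\textbf{Main obstacle.} The delicate step is the sensitivity analysis: for a generic unbiased strongly-convex minimizer we cannot bound $\|\tilde x_i-\tilde x_i'\|$ uniformly, only in expectation, which is why the EBBOE's a.s.\ boundedness is assumed. This is exactly where our actual MLMC estimator~\eqref{eq:def-mlmc} fails, since its tail is heavy (the $2^J$ weighting can produce very large draws), and it is the reason the theorem must be stated conditionally. Beyond that, care is needed when tracking the $\log n$ and $\log(1/\beta)$ factors through composition and through the $\log(G/(\mu\delta_i))$ factors inside the EBBOE guarantees, so as to match the stated $\wt O(\cdot)$ bound rather than suffer extra powers of $\log n$.
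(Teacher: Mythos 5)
Your overall architecture matches the paper's: bound the a.s.\ sensitivity of each iterate using the EBBOE boundedness, add Gaussian noise, and run the localization telescope of \citet{FeldmanKoTa20} for utility, with the $\Otil n$ complexity following from the logarithmic expected query cost. But there are two substantive gaps in the privacy half and a couple of slips worth flagging.

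The largest gap is that you write the factor $B$ into the sensitivity of $\tilde x_i$ without any argument for why it is there. Within block $i$, the algorithm averages $\bar n$ independent runs of $\MLMCoracd$ over the \emph{same} $\bar n$ samples, and each run draws a random (geometrically distributed) number of stochastic gradients. A single displaced datum $\ds_\ell$ can therefore in principle be queried by \emph{all} $\bar n$ runs, which would make the worst-case sensitivity catastrophic and the argument collapse. The paper handles this with a dedicated Chernoff-style counting lemma (\Cref{lemma:orac-sample-appearances}) showing that with probability $\ge 1-\de/2$ each sample appears in at most $B = O(\log(1/\de) + C_2\log^2 n)$ of the draws — this is exactly where the constant $C_2$ from \Cref{def:bounded-MLMC} and the expected-query bound enter. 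Moreover, because this bound is only high-probability, the per-iteration DP guarantee has to be established conditionally on the good event and then de-conditioned (using $e^{\diffp}\le 1/\de$ and the $\de/2$ failure probability), a step your sketch omits entirely.

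Second, you invoke ``advanced composition'' across the $k$ iterations, but since the blocks are data-disjoint each sample touches exactly one iterate and the right tool is parallel composition (post-processing): the overall guarantee is $\max_i(\alpha_i,\beta_i)$, not a $\sqrt{k\log(1/\de)}$-degraded composition. Using advanced composition here is both conceptually wrong and would add a spurious $\sqrt{\log n\log(1/\de)}$ factor. Finally, there is a consistent factor-of-$\bar n$ slip in the sensitivity arithmetic: the regularizer $\psi_i$ is $\Theta(1/(\ss_i\bar n))$-strongly convex, so the EBBOE a.s.\ bound gives $\|\xmlmc-\hat x_i\|\lesssim G\ss_i\bar n\sqrt{C_1\log n}$ (not $\eta_i G\sqrt{C_1\log n}$), and the resulting per-iterate sensitivity is $O(B(\sqrt{C_1\log n}+2)\lip\ss_i)$ rather than your smaller $O(B(\sqrt{C_1\log n}+2)\eta_i G/\bar n)$; the latter would not justify the noise level $\sigma_i$ set in the algorithm. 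The utility and complexity halves of your sketch are at the right level of detail and track the paper, though the telescope should run over the exact block minimizers $\hat x_i$ rather than the noisy iterates $x_i$, and the $\tfrac{1}{\sqrt n}$ statistical rate comes from the regularized-ERM stability bound of~\cite{ShwartzShSrSr09} rather than from the EBBOE variance per se.
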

}

\Cref{thm:SCO-convex} provides a strong motivation for %
constructing bounded optimum estimators that satisfy~\Cref{def:bounded-MLMC} .
In~\Cref{app:DP-SCO-challenges}, we discuss the challenges in making our MLMC estimator bounded, as well as some directions to overcome them.

\section*{Acknowledgments}
HA was supported by ONR YIP N00014-19-2288 and the Stanford DAWN Consortium.
YC was supported in part by Len Blavatnik and the Blavatnik Family foundation, and the Yandex Machine Learning Initiative for Machine Learning. YJ was supported by a Stanford Graduate Fellowship. AS was supported in part by a Microsoft Research Faculty Fellowship, NSF CAREER Award
CCF-1844855, NSF Grant CCF-1955039, a PayPal research award, and a Sloan Research Fellowship.

\bibliographystyle{abbrvnat}

\newpage

\appendix
\part*{Appendix}
\section{Additional results and discussion}\label{app:discussion}

Here we provide additional discussion of three topics pertinent to our results: a zero-bias optimum estimator,  
the parallel depth of our estimator, turning expected complexity bounds into deterministic ones, and a justification for the adjective ``optimal'' in \Cref{def:ODC} of the  ``optimal distance convergence''  property. We recommend reading \Cref{sec:intro,sec:estimator,sec:proj-eff} before the subsections below. 

\subsection{Zero-bias optimum estimation given exact gradients}\label{app:remark-zero-bias}

The main tool developed in this paper is an optimum estimator with bias $\delta$ whose expected query complexity is $O(\log(G/(\mu \delta)))$. In this section, we show how to obtain a completely unbiased optimum estimator when, in addition to a stochastic subgradient oracle, we assume access to a first-order oracle, i.e., one which outputs the functions exact value and subgradient at the query point. 

To be concrete, assume that the domain is a ball of radius $R$ in $\R^d$, and that the objective $F:\ball_R(x_0)\to \R$ is $\mu$-strongly-convex and of the form $F(x)=\frac{1}{n}\sum_{i\in[n]}\hat{F}(x;i)$, where each $\hat{F}(\cdot; i)$ is $G$-Lipschitz and given by a first-order oracle. In this case, we can compute an unbiased subgradient estimator with single oracle query by sampling $i\sim\uniform([n])$ and taking $\gest[F] \in \del  \hat{F}(x;i)$. Further, value and subgradient evaluations of $F$ can be implemented at $n$-times the cost by querying each each $F_i$. In this setting, we design an unbiased estimator of $\xopt = \argmin_{x\in\ball_R(0)} F(x)$ with variance $O((G^2/\mu^2) \log(nd))$, expected query complexity $O(\log(nd))$ and expected runtime $O(d \log(nd))$. 

To obtain this result, we leverage that first-order methods can compute the minimizer of a convex function with a number of queries and runtime that depends polynomially on dimension and logarithmically on regularity parameters and the desired accuracy. In fact, any polynomial bound suffices for our purposes and effect only constants factors in our expected complexity bounds. For concreteness, we use the classic ellipsoid method \cite{yudin1976evaluation, shor1977cut, khachiyan1979polynomial} whose complexity we describe in the following lemma. (We remark, however, that improved query complexities and runtimes are achievable; see \cite{JiangLSW20} for the state-of-the-art).

\begin{lemma}[Ellipsoid method]
\label{lem:ellipsoid}	
There is an algorithm, $\Ellipsoid(x_0, f, T)$, which given $x_0 \in \R^d$, a first order oracle for $G$-Lipschitz, $\mu$-strongly-convex $f : \ball_R(x_0) \rightarrow \R$, and query budget $T \geq 0$, runs in $O(d^2 T)$ time, makes at most $T$ queries, and outputs $\xmlmc \in  \ball_R(x_0)$ with $\norm{\xmlmc - \xopt}_2^2 \leq  (8 G^2 / \mu^2) \exp(-T/(2 d^2))$ for $\xopt \defeq \argmin_{x \in \ball_R(x_0)} f(x)$.
\end{lemma}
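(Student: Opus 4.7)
The plan is to implement $\Ellipsoid$ as the classical ellipsoid method applied to the initial ellipsoid $E_0 := \ball_R(x_0)$. At iteration $k$ I would query $\grad f(c_k)$ at the center $c_k$ of the current ellipsoid $E_k$, and take $E_{k+1}$ to be the minimum-volume ellipsoid containing the half-ellipsoid $\{x \in E_k : \grad f(c_k)^\top(x - c_k) \le 0\}$. The algorithm returns $\xmlmc = \arg\min_{k \le T} f(c_k)$. Maintaining the positive-definite matrix defining $E_k$ via the standard rank-one update formula costs $O(d^2)$ per iteration, each iteration uses a single first-order oracle query, yielding $O(d^2 T)$ runtime and at most $T$ queries in total; since every $c_k$ lies in $E_k \subseteq E_0$, the output lies in $\ball_R(x_0)$ as required.

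For correctness, I would invoke the standard ellipsoid volume contraction $\text{vol}(E_{k+1})/\text{vol}(E_k) \le \exp(-1/(2d))$, so that $\text{vol}(E_T) \le \text{vol}(E_0)\exp(-T/(2d))$. Next comes the classical cutting-plane argument: for any $x$ with $f(x) < f(\xmlmc)$, convexity and the definition of $\xmlmc$ give $\grad f(c_k)^\top(x - c_k) \le f(x) - f(c_k) \le f(\xmlmc) - f(c_k) \le 0$ for every $k$, so such an $x$ is never removed and $\{x \in E_0 : f(x) < f(\xmlmc)\} \subseteq E_T$. Writing $\Delta := f(\xmlmc) - f(\xopt)$, $G$-Lipschitzness gives $\ball_{\Delta/G}(\xopt) \subseteq \{x : f(x) < f(\xmlmc)\}$; since $\xopt \in E_0$, a short convex-geometry argument shows that $\ball_{\min(\Delta/G, R)}(\xopt) \cap E_0$ has volume at least $\tfrac{1}{2}\text{vol}(\ball_{\min(\Delta/G, R)})$ (the worst case being $\xopt$ on the boundary of $E_0$, which still leaves at least a half-ball inside).

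Combining these, $\tfrac{1}{2}\min(\Delta/G,R)^d \le R^d \exp(-T/(2d))$, which rearranges to $\Delta \le 2^{1/d} GR \exp(-T/(2d^2)) \le 2GR \exp(-T/(2d^2))$ (the case $\Delta > GR$ being vacuous because $\Delta \le 2GR$ by Lipschitzness and $\text{diam}(E_0) = 2R$). Applying $\mu$-strong convexity gives $\|\xmlmc - \xopt\|^2 \le (2/\mu)\Delta \le (4GR/\mu) \exp(-T/(2d^2))$. To convert the $R$ into a $G/\mu$, I use that for every $x$ in the domain, combining Lipschitzness with strong convexity yields $\tfrac{\mu}{2}\|x - \xopt\|^2 \le f(x) - f(\xopt) \le G\|x - \xopt\|$, hence $\|x - \xopt\| \le 2G/\mu$. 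Applied to any pair of antipodal points of $\ball_R(x_0)$ this forces $2R \le 4G/\mu$, so $R \le 2G/\mu$, and substituting gives $\|\xmlmc - \xopt\|^2 \le (8G^2/\mu^2)\exp(-T/(2d^2))$.

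The main obstacle, I expect, is pinning down the volume-contraction constant cleanly: the textbook bound is $\exp(-1/(2(d+1)))$ rather than $\exp(-1/(2d))$, and to match the claimed exponent $1/(2d^2)$ with the constant $8$ one must either invoke a slightly sharper form of the volume bound or absorb the small slack between $d(d+1)$ and $d^2$ into the multiplicative constant. The other delicate point is the half-ball volume estimate for $\ball_{\Delta/G}(\xopt) \cap \ball_R(x_0)$ when $\xopt$ sits near the boundary of $E_0$; this is a standard convex-geometry fact (the intersection of a Euclidean ball with any halfspace through an interior point retains at least half the volume) but is easy to overlook.
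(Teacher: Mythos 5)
Your approach re-derives the ellipsoid method guarantee from scratch (volume contraction plus cutting-plane argument), whereas the paper simply invokes Bubeck's Theorem~2.4 to obtain $f(\xmlmc) - f(\xopt) \le 2GR\exp(-T/(2d^2))$ directly and then applies strong convexity and $R \le 2G/\mu$. Your final two steps ($\|\xmlmc-\xopt\|^2 \le (2/\mu)\Delta$ and $R\le 2G/\mu$) coincide exactly with the paper's. The self-contained re-derivation is a legitimate alternative, but two points in it need attention.

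First, the ``half-ball'' volume estimate is wrong, and the inequality actually runs the other way. When $\xopt$ lies on the boundary of $E_0 = \ball_R(x_0)$, the ball $E_0$ is contained in the halfspace $H=\{x : (x-\xopt)^\top(x_0-\xopt)\ge 0\}$ that is tangent to $E_0$ at $\xopt$, and $\ball_r(\xopt)\cap H$ is \emph{exactly} half of $\ball_r(\xopt)$. Hence $\ball_r(\xopt)\cap E_0 \subseteq \ball_r(\xopt)\cap H$ has volume \emph{at most} half of $\ball_r(\xopt)$ --- and as $d$ grows the fraction drops further (it behaves like a cap-volume fraction and decays exponentially in $d$ for $r$ comparable to $R$). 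The fact you cite about halfspaces through interior points requires the point to be the \emph{center} of the ball whose volume you are preserving, and in any case $E_0$ is a ball, not a halfspace. The standard and exact replacement is the scaling argument: for $\alpha\in(0,1]$ the set $K_\alpha := (1-\alpha)\xopt + \alpha E_0$ satisfies $K_\alpha\subseteq E_0$, $\mathrm{vol}(K_\alpha)=\alpha^d\,\mathrm{vol}(E_0)$, and $\sup_{x\in K_\alpha}\|x-\xopt\|\le 2\alpha R$; choosing $\alpha < \Delta/(2GR)$ makes $K_\alpha\subseteq\{x:f(x)<f(\xmlmc)\}\subseteq E_T$ and yields $\Delta \le 2GR\exp(-T/(2d^2))$ with no slop.

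Second, your flagged worry about the contraction constant is a red herring, but your proposed fallback would not work. The per-step contraction factor $\tfrac{d}{d+1}\bigl(\tfrac{d^2}{d^2-1}\bigr)^{(d-1)/2}$ is in fact at most $e^{-1/(2d)}$, so the exponent $T/(2d^2)$ in the Lemma is attainable as stated (this is what Bubeck~\cite{Bubeck15} proves and what the paper cites). Had the only available bound been $e^{-1/(2(d+1))}$, you could \emph{not} absorb the gap into the multiplicative constant: $\exp(-T/(2d(d+1)))/\exp(-T/(2d^2)) = \exp\bigl(T/(2d^2(d+1))\bigr)$ grows without bound in $T$, so the discrepancy is not a constant factor. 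You would instead have to restate the Lemma with the weaker exponent.

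With the scaling argument in place of the half-ball estimate, the remainder of your derivation (including the case split $\Delta/G \le R$ versus $\Delta/G > R$, the strong-convexity step, and the bound $R\le 2G/\mu$) is correct and matches the paper's.
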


\begin{proof}
Since $f$ is $G$-Lipschitz for all $x \in \ball_R(x_0)$ we have $|f(x) - f(x_0)| \leq G R$. Consequently, the ellipsoid method applied to $f(x) - f(x_0)$ can compute $\xmlmc \in \ball_R(x_0)$ with $f(\xmlmc) - f(\xopt) \leq 2 G R \exp(-T/(2 d^2))$ with $O(T)$ queries and $O(d^2 T)$ time~\cite[see, e.g.,][Theorem~2.4]{Bubeck15}. Since by strong convexity $\norm{\xmlmc - \xopt}_2^2 \leq \frac{2}{\mu} [ f(\xmlmc) - f(\xopt)]$ this implies that $\norm{\xmlmc - \xopt}_2^2 \leq  (4 G R / \mu) \exp(-T/(2 d^2))$. Further, since $f$ is $G$-Lipschitz and $\mu$-strongly-convex we know that for all $y \in \ball_R(x_0)$ we have $G \norm{y - \xopt} \geq f(y) - f(\xopt) \geq \frac{\mu}{2} \norm{y - \xopt}^2$ and since $\ball_{R}(x_0)$ contain a point $y$ with $\norm{y - \xopt} \geq R$ this implies $R \leq 2 G / \mu$. Combining yields the result.
\end{proof}

\begin{algorithm}[b]
	\DontPrintSemicolon
	\KwInput{Initialization $x_0 \in \R^d$, first-order oracles for $\hat{F}(x;i)$ for all $i \in [n]$ and ODC algorithm $\ODC$.}
	Let  $J_0 \defeq \lceil 4 \log_2(14 (nd^2 + d^4)) \rceil$ \;
	For all $j > 1$ let $x_{j} \defeq \begin{cases}
		\ODC(\grad \hat{F}(\cdot; i), 0, 2^j)  & \text{if } j \leq J_0\\
		\Ellipsoid(x_0, F, \lceil 2^{j/2} \rceil) 	& \text{if } j > J_0
	\end{cases}$ \\
	Draw $J  \sim \geoRV\left(\tfrac{1}{2}\right)$\;
	\Return $x_{0} + 2^{J} (x_{J -1 } - x_{J})$ \Comment{Only $x_0$, $x_{J -1}$, and $x_J$ are computed explicitly by the algorithm.}
	\caption{Unbiased optimum estimator}%
	\label{alg:unbiased_opt_est}
\end{algorithm}

Combining the ellipsoid method with an $\ODC$ algorithm (see \Cref{def:ODC}) we obtain our unbiased optimum estimator, which we formally describe in \Cref{alg:unbiased_opt_est}. The procedure is similar to the MLMC estimator~\eqref{eq:def-mlmc}, with the key difference that when~\eqref{eq:def-mlmc} would output $x_0$, we instead apply the ellipsoid method. The following theorem establishes the performance of our algorithm.

\begin{theorem}[Unbiased optimum estimator]  
\label{thm:unbiased_est} Let $F : \xset \rightarrow \R$ be $\mu$-strongly convex with, $\xset = \ball_R(x_0)$, $F(x) = \frac{1}{n} \sum_{i \in [n]} \hat{F}(x;i)$ for all $x \in \xset$ and $\E \norm{\grad F(x;i)}^2 \le G^2$ for all  $x \in \xset$ and  $i\sim\uniform([n])$. Algorithm~\ref{alg:unbiased_opt_est} outputs $\xmlmc$ with $\E \xmlmc = \xopt = \argmin_{x \in \ball_R(x_0)} F(x)$ and $\E \norm{x - \xopt}_2^2 = O(\frac{G^2}{\mu^2} \log(nd))$ with expected $O(\log(nd) )$ queries to $(\hat{F}(x;i),\grad \hat{F}(x;i))$ and expected $O(d \log(nd))$ time.
\end{theorem}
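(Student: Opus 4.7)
The plan is to verify the four assertions of the theorem—unbiasedness, variance, expected query count, and expected runtime—by combining the MLMC telescoping identity at the heart of Proposition~\ref{prop:est} with the doubly-exponential error decay of $\Ellipsoid$ provided by Lemma~\ref{lem:ellipsoid}. The role of the threshold $J_0 = \lceil 4\log_2(14(nd^2+d^4))\rceil$ is to switch from the stochastic $\ODC$ subroutine (whose bias never vanishes at any finite budget) to the deterministic ellipsoid method, whose error decays like $\exp(-2^{j/2}/(2d^2))$ once we set its query budget to $\lceil 2^{j/2}\rceil$. This lets us pass from the near-unbiased guarantee of Proposition~\ref{prop:est} to true unbiasedness while preserving logarithmic expected complexity.

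For \textbf{unbiasedness}, I would write (up to the sign determining the direction of telescoping)
\[
\E \xmlmc = x_0 + \sum_{j\ge 1} 2^{-j}\cdot 2^j\,(\E x_j - \E x_{j-1}) = x_0 + \sum_{j\ge 1}(\E x_j - \E x_{j-1}),
\]
and invoke dominated convergence to justify the exchange of expectation and the infinite sum: for $j \le J_0$ the ODC property gives $\norm{\E x_j - \xopt} \le \sqrt{c}\,G/(\mu\sqrt{2^j})$, while for $j > J_0$ Lemma~\ref{lem:ellipsoid} gives $\norm{x_j - \xopt}^2 \le (8G^2/\mu^2)\exp(-\lceil 2^{j/2}\rceil/(2d^2))$, which is summable. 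The series then telescopes to $\lim_{j\to\infty}\E x_j - x_0 = \xopt - x_0$, yielding $\E\xmlmc = \xopt$. For \textbf{variance}, I would mimic the argument from Proposition~\ref{prop:est}: $\E\norm{\xmlmc - \xopt}^2 \le 2\E\norm{x_0-\xopt}^2 + 2\sum_{j\ge 1} 2^j\,\E\norm{x_j-x_{j-1}}^2$, where the summand is bounded using $\norm{x_j-x_{j-1}}^2 \le 2\norm{x_j-\xopt}^2 + 2\norm{x_{j-1}-\xopt}^2$. For $j\le J_0$, the ODC bound yields $\E\norm{x_j-x_{j-1}}^2 = O(G^2/(\mu^2 2^j))$ so that the first $J_0$ terms contribute $O(J_0\,G^2/\mu^2) = O(\log(nd)\cdot G^2/\mu^2)$; for $j>J_0$, the doubly-exponential ellipsoid error dominates $2^j$ trivially and contributes $O(G^2/\mu^2)$.

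For the \textbf{expected query count and runtime}, I would split the MLMC sum at $J_0$. For $j\le J_0$, computing $x_j$ via $\ODC$ costs $O(2^j)$ gradient evaluations and $O(d\cdot 2^j)$ time, so the expected contribution is $\sum_{j\le J_0} 2^{-j}\cdot O(2^j) = O(J_0) = O(\log(nd))$ queries and $O(d\log(nd))$ time. For $j>J_0$, a single $\Ellipsoid$ call uses $O(2^{j/2})$ exact-oracle queries to $F$ (costing $n$ per-sample gradient evaluations each) and $O(d^2\,2^{j/2})$ time, so the expected tail contributions equal $\sum_{j>J_0} 2^{-j}\cdot n\cdot 2^{j/2} = O(n\cdot 2^{-J_0/2})$ and $\sum_{j>J_0} 2^{-j}\cdot d^2\cdot 2^{j/2} = O(d^2\cdot 2^{-J_0/2})$; the chosen $J_0$ makes $2^{-J_0/2} \le 1/(14(nd^2+d^4))^2$, so both tails are $O(1)$. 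This yields the stated $O(\log(nd))$ expected queries and $O(d\log(nd))$ expected runtime.

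The main technical obstacle I anticipate is the justification of the infinite-sum exchange in Step 1—for a finite cutoff, as in Proposition~\ref{prop:est}, the telescoping is trivial, but here we truly need the tail of $\E x_j$ to converge to $\xopt$, not merely to a nearby point. Everything hinges on calibrating the constants in $J_0$ so that (a) the ellipsoid-tail bias and variance contributions are bounded by absolutely-summable series dominating $2^j\|x_j-x_{j-1}\|$, and (b) the ellipsoid-tail expected cost, which grows like $n\cdot 2^{j/2}$ or $d^2\cdot 2^{j/2}$, is overwhelmed by the geometric $2^{-j}$ weight. Both requirements are met comfortably by the doubly-exponential decay in $\Ellipsoid$'s guarantee together with the polylogarithmic choice of $J_0$.
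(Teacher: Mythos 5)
Your proposal is correct and follows essentially the same route as the paper's proof: expand the MLMC expectation over $J$, bound $\E\norm{\xmlmc-\xopt}^2$ by $O\bigl(\sum_{j\ge 0}2^j\,\E\norm{x_j-\xopt}^2\bigr)$, use the ODC rate $\E\norm{x_j-\xopt}^2\le cG^2/(\mu^2 2^j)$ for $j\le J_0$ to accumulate $O(J_0\,G^2/\mu^2)$, use the ellipsoid's doubly-exponential decay to make the tail geometrically summable, and verify the expected query and time counts by noting $J_0$ is calibrated so $2^{-J_0/2}$ kills the $n\cdot\mathrm{poly}(d)$ factor in the ellipsoid tail. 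Your explicit appeal to dominated convergence to justify the infinite telescoping sum is actually a slightly more careful articulation of a step the paper treats tersely.
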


\begin{proof}
Note that 
\begin{align*}
	\E \norm{\xmlmc - \xopt}^2
	&= \sum_{j = 1}^{\infty} \frac{1}{2^j} \cdot \E \norm{x_0 - \xopt + 2^j (x_{j - 1} - x_j ) }^2 \\
	&\leq  \sum_{j = 1}^{\infty} \frac{2}{2^j} \E \left[\norm{x_0 - \xopt}^2  + 2^{2j} \norm{x_{j -1} - x_j}^2\right] \\
	&\leq 2 \E \norm{x_0 - \xopt}^2  + 4 \sum_{j = 1}^{\infty} 2^{j} \E \left[ \norm{x_{j -1} - \xopt}^2 + \norm{x_j - \xopt}^2 \right] \\
	&= 10 \norm{x_0 - \xopt}^2 + 4 \sum_{j = 1}^{\infty} (2^j + 2^{j +1}) \E \norm{x_j - \xopt}^2
	\leq 12 \sum_{j = 0}^{\infty} 2^{j} \E \norm{x_j - \xopt}^2.
\end{align*}
Further, by definition of $\ODC$ we have that $\E \norm{x_j - \xopt}_2^2 \leq (c G^2 / \mu^2) 2^{- j/2}$ for all $j \leq J_0$ where $c$ is the constant in \Cref{def:ODC}. Also, by \Cref{lem:ellipsoid} we have $\norm{x_j - \xopt}_2^2 \leq (8 G^2 / \mu^2) \exp(- \lceil 2^{j/2}\rceil /(2 d^2))$ for all $j > J_0$ (since by assumption and Jensen's inequality for $i\sim\uniform([n])$ we have $\norm{\nabla F(x)}^2 = \norm{\E \nabla F(x;i)}^2 \leq  \E \norm{\nabla F(x,i)}^2 \leq G^2$ for alll $x \in \xset$ and therefore $F$ is $G$-Lipschitz).
Note that $j \leq 5 \cdot 2^{j/4}$ for all $j \geq 1$ and $5 \cdot 2^{j/4} \ln 2 \leq 2^{j/2} / (4d^2)$ for all $j \geq 4 \log_2(14 d^2)$. Consequently, $ \frac{\lceil 2^{j/2}\rceil}{2d^2} \geq 2 j \ln 2$ and  $\norm{x_j - \xopt}_2^2 \leq (8 G^2 / \mu^2) 2^{-2j}$ for all $j > J_0$. Therefore, 
\begin{align*}
\E \norm{\xmlmc - \xopt}^2
\leq 
12 \sum_{j = 0}^{J_0} \frac{c G^2}{\mu^2}
+ 12 \sum_{j = J_0 + 1}^{\infty} \frac{8 G^2}{\mu^2} 2^{-j} 
\leq 12 (c J_0 + 8) \frac{G^2}{\mu^2}
= O\left( \frac{G^2}{\mu^2} \log(nd)  \right) ~.
\end{align*}
Further,
\[
\E \xopt = \sum_{j = 1}^{\infty} 2^{-j} [x_0 + 2^{j} (x_j - x_{j - 1})]
= x_0 + \sum_{j = 1}^{\infty} (x_j - x_{j-1})
= \lim_{j \rightarrow \infty} x_j = \xopt\,.
\]
Now, note that when $J \leq J_0$ the algorithm makes $2^J$ subgradient queries and runs in time $O(d 2^J)$. Further, when $J > J_0$ by \Cref{lem:ellipsoid} the algorithm makes $\lceil 2^{J/2} \rceil \leq 2^{1 + (J/2)}$ first-order oracle queries, costing $O(n 2^{(J/2)})$ sub-gradient in total, and runs in time $O((nd^2 + d^4) 2^{J/2})$. Consequently, the expected number of subgradient queries is upper bounded by
\begin{align*}
&\sum_{j \in [J_0]} \frac{1}{2^j} \cdot 2^j 
+ \sum_{j > J_0}^{\infty} \frac{1}{2^j} \cdot n 2^{1 + (j/2)}
= J_0 + \frac{2n}{2^{J_0 /2}} \sum_{j = 1}^{\infty} \frac{1}{2^{j/2}} 
= O(J_0) = O(\log(nd))
\end{align*}
where in the last step we used that $J_0 = \Omega(\log(n))$ and $\sum_{j = 1}^{\infty} \frac{1}{2^{j/2}} = O(1)$. Similarly, since $J_0 \geq \log_2(nd^2 + d^4)$ the expected runtime is at most
\begin{align*}
	&\sum_{j \in [J_0]} \frac{1}{2^j} \cdot O(d 2^j) 
	+ \sum_{j > J_0}^{\infty} \frac{1}{2^j} \cdot O\left((n d^2 + d^4)2^{j/2} \right) \\
	& \hspace{16pt}= O(J_0 \cdot d) +  O\left( 2^{-J_0} (n d^2 + d^4)\right) \cdot \sum_{j = 1}^{\infty} \frac{1}{2^{j/2}}
	= O(J_0 \cdot d) = O(d \log(nd))~.
\end{align*}
\end{proof}
\subsection{The sequential depth of our optimum estimator}\label{app:remark-parallel}

Let us discuss the implications of our development---or more precisely, the lack thereof---on the parallel complexity of non-smooth optimization. Following the standard setting for this problem, consider the task of minimizing a $G$-Lipschitz convex function $f$ in a domain of diameter $R$ in $\R^d$ given the ability to query a subgradient oracle for $f$ in batches of $B$ parallel queries. That is, at round $t$ we query points $x_t^{(1)}, \ldots, x_t^{(B)}$ and observe subgradients $g_t^{(i)}\in\del f(x_t^{(i)})$ for $i\in[B]$. In sufficiently high dimension, the ability to query $B$ points in parallel does not improve worst-case complexity: for required accuracy $\epsilon$ and algorithm with batch size $B=\mathsf{poly}(1/\epsilon)$, there exists a problem instance in dimension $d=O( (\frac{GR}{\epsilon})^4 \log \frac{GR}{\epsilon})$ for which the algorithm must make $T=\Omega( (\frac{GR}{\epsilon})^2)$ queries in sequence in order to find an $\epsilon$-accurate solution~\cite{bubeck2019complexity}. 

At first glance, our algorithms---and \Cref{alg:proj_eff} in particular---seem to contradict the lower bound described above. Indeed, the algorithm performs $O(\frac{GR}{\epsilon})$ iterations, where each iterations consists of averaging  $\Otil{\frac{GR}{\epsilon}}$ copies of the optimum estimator~\eqref{eq:def-mlmc}. Since we can compute copies of the estimator in parallel, the sequential depth of the algorithm appears to be only $O(\frac{GR}{\epsilon})$. To resolve the apparent contradiction, recall that each evaluation of~\eqref{eq:def-mlmc} itself involves a sequential computation. In particular, while an evaluation of~\eqref{eq:def-mlmc} has depth $\Otil{1}$ on average, it also has depth $\Omega(\frac{GR}{\epsilon})$ with probability $\Omega(\frac{\epsilon}{GR})$. Therefore, for a batch of $O(\frac{GR}{\epsilon})$ copies of the estimator, one of them would have depth $\Omega(\frac{GR}{\epsilon})$ with constant probability, implying an overall bound of $\Omega( (\frac{GR}{\epsilon})^2)$ on the sequential depth of \Cref{alg:proj_eff}.

Viewed another way, the parallelism lower bound implies a limitation on the sequential depth \emph{distribution} of any lower bias optimum estimator. More specifically, let $\hat{T}$ be a random variable representing the sequential depth of a single copy of a low-bias optimum estimator and let $\hat{T}_1, \ldots, \hat{T}_{BK}$ be i.i.d.\ copies of that random variable, with $B$ and $K$ denoting batch size and AGD depth respectively. Then, when setting $B=K=O(\frac{GR}{\epsilon})$ we must have
\begin{equation*}
	\sum_{k\in [K]} \max_{b\in [B]} \crl*{\hat{T}_{b+(k-1)B}} = \Omega\prn*{ \prn*{\frac{GR}{\epsilon}}^2}
\end{equation*}
with high probability. In particular, it is impossible to create a low-bias optimum estimator whose depth is $\Otil{1}$ with high probability. This fact might serve as a useful sanity check when designing new optimum estimators.

\subsection{Obtaining deterministic complexity bounds}\label{app:remark-total-work}
This paper measures complexity via $\Nquery$, the number of gradient estimator evaluations by the algorithm. The performance guarantees of our algorithms bound the \emph{expected complexity} while guaranteeing correctness with constant probability. In particular our guarantees in \Cref{sec:proj-eff,sec:mtm,sec:gs} have the following general form: the algorithm outputs $x$ such that 
$f(x)-\min_{z\in\xset}f(z)\le \epsilon$ with probability at least $p$, and $\E \Nquery \le \mathsf{C}(\epsilon)$. To guarantee a probability 1 bound on $\Nquery$, we may terminate the algorithm and output an arbitrary point whenever $\Nquery$ exceeds $\frac{2}{p}\mathsf{C}(\epsilon)$. By Markov's inequality such termination occurs with probability at most $p/2$ and therefore by the union bound we will output  a correct $x$  (satisfying $f(x)-\min_{z\in\xset}f(z)\le \epsilon$) with probability at least $p/2$.

In \Cref{sec:DP-SCO} we describe a differentially-private algorithm with bounded expected error and expected gradient estimation complexity. Here too, we may terminate the algorithm if the number of gradient estimations exceeds the bound on the expectation by more than a constant, and maintain a constant probability bound on the error. Since the random amount of gradient estimations in this algorithm is independent of the input (and in fact can be computed ahead of the algorithm's execution), the termination strategy described above does not affect the algorithm's privacy guarantee. 

\subsection{The optimal distance convergence rate}\label{app:remark-odc}
\newcommand{\Fhard}{F}
\Cref{def:ODC} of an optimal-distance-convergence (ODC) algorithm implies a claim on the optimal rate of convergence (in Euclidean norm) to the minimizer  of strongly-convex and Lipschitz functions. \Cref{lem:epoch-SGD} shows that this rate is achievable, and here we sketch a matching lower bound, showing that this rate is not improvable and therefore optimal. More precisely, we exhibit a function $F$ that is $G$-Lipschitz, $\mu$ strongly-convex, has minimizer $\xopt$ and satisfies the following: for every algorithm that queries points in the span of previously observed subgradients and outputs $x_T$ after $T$ queries, we have $\norm{x_T-\xopt} \ge \Omega({G}/{(\mu\sqrt{T})})$. The restriction of queries to the span of previous gradients is a standard simplifying assumptions~\cite{nesterov2018lectures}, and we can extend the claim to any randomized algorithm by choosing a random coordinate system~\cite{woodworth2016tight,bubeck2019complexity}. 

Let us describe our hard instance construction for algorithms that execute $T$ steps, which we denote by $\Fhard$. The function $\Fhard:\R^{2T}\to \R$  is a strongly-convex variant of Nemirovski's function~\cite{nemirovski1983problem,nemirovski1994parallel,diakonikolas2019lower,bubeck2019complexity}, defined as follows
\begin{equation*}
	\Fhard(x) \defeq \frac{G}{2}\max_{i\in [2T]} \crl*{x_{[i]}} + \frac{\mu}{2} \norm{x}^2.
\end{equation*}
Note that the function is $\mu$-strongly-convex, and---when constrained to a ball of radius $G/(2\mu)$ around the origin---is $G$-Lipschitz as required. It is also easy to verify that the minimizer of the function is
\begin{equation*}
	\xopt = - \frac{G}{4\mu T} \ones,
\end{equation*}
where $\ones$ denotes to the all-ones vector in $\R^{2T}$, since a calculation shows that $0\in\del \Fhard(\xopt)$. 

To establish our claimed lower bound, consider a subgradient oracle for $\max_{i\in [2T]} \crl{x_{[i]}}$ which only outputs 1-sparse subgradients of $F$ (it is also possible to design differentiable hard instances via Moreau-Yoshida smoothing, see, e.g.,~\cite{diakonikolas2019lower}). Then, the query $x_T$ at iteration $T$ is in the span of $T$ 1-sparse vectors, which means that at least  $T$ of its  coordinates are zero. Recalling the expression of $\xopt$, this implies the claim that 
\begin{equation*}
	\norm{x_T-\xopt} \ge \frac{G}{4\mu T}\sqrt{T} = \Omega\prn*{\frac{G}{\mu \sqrt{T}}}\,.
\end{equation*}

\section{Proofs and additional results from Section~\ref{sec:estimator}}\label{app:estimator}

\subsection{Analysis of $\epochSGD$}\label{app:epochSGD}

\Cref{alg:epochSGD} is a composite variant of the ``epoch SGD'' algorithm of~\citet{hazan2014beyond}. We note that when $\psi(x) = \frac{\mu}{2}\norm{x-z}^2$ (as it is in all of our applications), the gradient step in \cref{line:composite-update} of the algorithm is simply
\begin{equation*}
	x_k^{t+1} = \proj_{\xset}\prn*{\frac{1}{1+\mu\eta_k}\brk*{x_k^t + \mu \eta_k z - \eta_k \gest[f](x_k^t)} },
\end{equation*}
where $\proj_{\xset}$ is the Euclidean projection to $\xset$. 
To analyze \Cref{alg:epochSGD}, we first prove the following standard single-epoch optimization guarantee. Below, we let $V_{x}(x') \defeq \frac{1}{2}\|x'-x\|_2^2$ denote the Bregman divergence induced by $\half\norm{\cdot}_2^2$.

\begin{algorithm}
	\DontPrintSemicolon
	\caption{$\epochSGD(\gest[f],\psi,\mu,\xset,T)$}	\label{alg:epochSGD}
	\KwInput{A $\mu$-strongly-convex function $F = f+\psi:\xset\to \R$ with $f$ satisfying \Cref{ass:gest}, iteration budget $T$.}
	\KwParameters{Initial step size $\eta_1 = 1/(4\mu)$ and epoch length $T_1 = 16$.}
	Initialize $x_1^0 \in \arg\min_{x\in\xset}\psi(x)$, and set  $k=1$\;
	\While{$\sum_{i\in[k]} T_i\le T$}{
	$x^{1}_k = \arg\min_{x\in\xset} \left(\eta_k\psi(x)+\frac{1}{2}\|x-x^t_k\|^2\right)$\;\label{line:composite-update-first}
	\For{$t = 1,2,\cdots T_k-1$}{
	$x^{t+1}_k = \arg\min_{x\in\xset} \left(\eta_k\left( \langle \gest(x^t_k),x\rangle+\psi(x)\right)+\frac{1}{2}\|x-x^t_k\|^2\right)$\; \label{line:composite-update}
	}
	Set $x^0_{k+1} = \tfrac{1}{T_k}\sum_{t\in[T_k]}x^t_k$, update $T_{k+1}= 2T_k$, $\eta_{k+1}= \eta_k/2$ and $k\gets k+1$\;
	}
		\Return $x = x^0_k$
\end{algorithm}

\begin{lemma}\label{lem:perepoch}
	Let $f:\xset\rightarrow \R$ and $\gest[f]$ satisfy \Cref{ass:gest}. For any $k\ge 1$, $T\ge 	1$ and $u\in\xset$, the iterates of \Cref{alg:epochSGD} satisfy
	\[\E\left[F\left(\frac{1}{T}\sum_{t\in[T]}x^t_k\right)\right]-F(u)\le \frac{V_{x^0_k}(u)}{\eta T} + \frac{\eta}{2} G^2.\]
\end{lemma}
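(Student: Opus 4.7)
The plan is a standard composite mirror-descent analysis with one non-routine twist to handle an indexing mismatch between what the update rule telescopes and what the lemma states. Let $\eta = \eta_k$. For each gradient step $t=1,\ldots,T-1$, optimality of $x^{t+1}_k$ combined with $1$-strong convexity of the objective in \cref{line:composite-update} yields the three-point inequality
\[\eta\langle \gest[f](x^t_k), x^{t+1}_k - u\rangle + \eta(\psi(x^{t+1}_k)-\psi(u)) + V_{x^t_k}(x^{t+1}_k) \le V_{x^t_k}(u) - V_{x^{t+1}_k}(u).\]
I would split $\langle \gest[f](x^t_k), x^{t+1}_k-u\rangle=\langle \gest[f](x^t_k), x^t_k-u\rangle+\langle \gest[f](x^t_k), x^{t+1}_k-x^t_k\rangle$ and apply Young's inequality to exchange the second inner product and the $-V_{x^t_k}(x^{t+1}_k)$ term for $\tfrac{\eta^2}{2}\|\gest[f](x^t_k)\|^2$. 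Taking expectations, using $\E[\gest[f](x^t_k)\mid x^t_k]\in\partial f(x^t_k)$ together with convexity of $f$ and \Cref{ass:gest}, gives the per-step regret bound
\[\eta\,\E[f(x^t_k)+\psi(x^{t+1}_k)-F(u)] \le \E V_{x^t_k}(u) - \E V_{x^{t+1}_k}(u) + \tfrac{\eta^2 G^2}{2}.\]
The analogous inequality for the prox-only step in \cref{line:composite-update-first} (which has no noise term) is $\eta(\psi(x^1_k)-\psi(u)) \le V_{x^0_k}(u)-V_{x^1_k}(u)$.

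Summing the per-step bounds over $t=1,\ldots,T-1$ and adding the prox-step inequality makes the $V_{x^1_k}(u)$ terms telescope; the $\psi$ contributions reindex to $\sum_{t=1}^T\psi(x^t_k)$, and after using $(T-1)F(u)+\psi(u)=TF(u)-f(u)$ I obtain
\[\eta\sum_{t=1}^{T-1}\E F(x^t_k) + \eta\E\psi(x^T_k) - T\eta F(u) + \eta f(u) \le V_{x^0_k}(u) - \E V_{x^T_k}(u) + \tfrac{(T-1)\eta^2 G^2}{2}.\]
Compared to the target $\eta\sum_{t=1}^T\E F(x^t_k) - T\eta F(u)\le V_{x^0_k}(u)+\tfrac{T\eta^2 G^2}{2}$, the LHS is short by exactly $\eta\,\E[f(x^T_k)-f(u)]$, and the RHS has slack of $\tfrac{\eta^2 G^2}{2}$ along with the retained $-\E V_{x^T_k}(u)$. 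Closing this gap is the main obstacle.

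To close it I would use that \Cref{ass:gest} implies $f$ is $G$-Lipschitz: Jensen gives $\|\E \gest[f](x)\|\le G$, and since $\E\gest[f](x)\in\partial f(x)$ there is a subgradient of norm at most $G$ at every point. Hence $f(x^T_k)-f(u)\le G\|x^T_k-u\|$, and AM-GM yields $\eta G\|x^T_k-u\|\le \tfrac{\eta^2 G^2}{2}+V_{x^T_k}(u)$. The new $V_{x^T_k}(u)$ exactly cancels the $-\E V_{x^T_k}(u)$ preserved on the RHS, and the additional $\tfrac{\eta^2 G^2}{2}$ combines with $\tfrac{(T-1)\eta^2 G^2}{2}$ to give the target $\tfrac{T\eta^2 G^2}{2}$. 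Dividing by $T\eta$ and invoking Jensen's inequality for the convex $F$ applied to $\tfrac{1}{T}\sum_{t\in[T]}x^t_k$ finishes the proof; the degenerate $T=1$ case follows directly from the prox-step inequality combined with the same Lipschitz-plus-AM-GM bound on $f(x^1_k)-f(u)$.
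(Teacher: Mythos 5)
Your proof is correct and reaches the stated bound with the same constants. It is, however, bookkept differently from the paper's argument. The paper sets $g^0 \defeq 0$ and $x^{T+1}\defeq u$, which makes the optimality condition of line~\ref{line:composite-update-first} (the prox-only step) and line~\ref{line:composite-update} (the gradient steps) into one uniform inequality for all $t\in[T]$; after summing and telescoping, it shifts the index by adding $\sum_{t\in[T]}\inner{\gest(x^t)}{x^t-x^{t+1}}$ to both sides and absorbs that sum into the retained $-\tfrac{1}{\eta}\sum_{t=0}^T V_{x^t}(x^{t+1})$ via Young's inequality. Because $x^{T+1}=u$, the $t=T$ term is $\inner{\gest(x^T)}{x^T-u}-\tfrac{1}{\eta}V_{x^T}(u)\le\tfrac{\eta}{2}\norm{\gest(x^T)}^2$, so the boundary is handled by a (purely analytical) gradient query at $x^T$ together with the subgradient inequality in expectation; the Lipschitz constant of $f$ is never invoked. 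You instead sum only the $T-1$ genuine gradient steps plus the prox-only step, explicitly isolate the leftover gap $\eta\,\E[f(x^T_k)-f(u)]$, and close it by first deriving that Assumption~\ref{ass:gest} makes $f$ $G$-Lipschitz (Jensen on the second-moment bound), then applying AM--GM against the retained $-\E V_{x^T_k}(u)$. Both arguments ultimately bound the same boundary quantity by $\E V_{x^T_k}(u)+\tfrac{\eta^2 G^2}{2}$; the paper's version is a bit more economical and avoids ever stating that $f$ is Lipschitz, while yours is more explicit about where the mismatch sits and why each of the $T$ copies of $\tfrac{\eta}{2}G^2$ appears. One small stylistic note: your per-step reasoning requires a fictitious gradient evaluation only implicitly (through the Lipschitz bound), whereas the paper's uses $\gest(x^T)$ directly in the analysis even though the algorithm never computes it; both are legitimate since Assumption~\ref{ass:gest} holds for all $x\in\xset$.
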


\begin{proof}
	$x_k^t \equiv x^t$, $\eta_k \equiv \eta$, and $T_k \equiv T$. We furthermore let $x^{T+1} \equiv u$, and let $g^t \defeq \gest[f](x^t)$ for $t \geq 1$ and $g^0 \defeq 0$. 
	By the optimality conditions of the minimization in \cref{line:composite-update-first} and \cref{line:composite-update}, we have
	\[
	\langle\eta\left(g^{t-1}+\nabla \psi(x^t)\right)+x^{t}-x^{t-1},x^{t}-u\rangle\le 0~\mbox{for all}~t\in [T],
	\]
	and consequently
	\begin{equation*}
		\langle g^{t-1}+\nabla \psi(x^{t}),x^t-u\rangle \le \frac{1}{\eta} \prn*{V_{x^{t-1}}(u)-V_{x^{t}}(u)-V_{x^{t-1}}(x^t) }~\mbox{for all}~t\in [T].
	\end{equation*}
    Using the convexity of $\psi$ and the bound above, we obtain 
	\begin{flalign*}
	\hspace{32pt} & \hspace{-32pt} \sum_{t\in[T]}\langle g^{t-1},x^t-u\rangle+\sum_{t\in[T]}\left(\psi(x^t)-\psi(u)\right)\
	\\ &
	\le \sum_{t\in[T]}\langle g^{t-1}+\nabla \psi(x^{t}),x^t-u\rangle
	\\ & \le 
	 \frac{1}{\eta}\sum_{t\in[T]}\left(V_{x^{t-1}}(u)-V_{x^{t}}(u)-V_{x^{t-1}}(x^t)\right)
	 \\ & 
	 \le \frac{1}{\eta}V_{x^0}(u)-\frac{1}{\eta}\sum_{t=0}^T V_{x_t}(x^{t+1}).
	\end{flalign*}
Adding $\sum_{t\in[T]}\langle g^t,x^t-x^{t+1}\rangle$ to both sides, recalling that $x^{T+1}\equiv u$ and $g^t = \gest[f](x^t)\indic{t>0}$, and rearranging terms, we have
\begin{flalign*}
	\hspace{32pt} & \hspace{-32pt} \sum_{t\in[T]}\langle\gest(x^{t}),x^t-u\rangle+\sum_{t\in[T]}\left(\psi(x^t)-\psi(u)\right)	\\&
	\le  \frac{1}{\eta}V_{x^0}(u)-\frac{1}{\eta}\sum_{t=0}^T V_{x_t}(x^{t+1})+\sum_{t\in[T]}\langle \gest(x^{t}),x^t-x^{t+1}\rangle
		\\& \le  \frac{1}{\eta}V_{x^0}(u)+\sum_{t\in[T]}\frac{\eta}{2}\norm{\gest(x^t)}^2,
	\end{flalign*}
where in the last transition we used  $\inner{g}{x-y} \le \frac{1}{\eta}V_y(x) + \frac{\eta}{2} \norm{g}^2$. 
Taking expectation, applying \Cref{ass:gest} and using convexity of $f$, we have
\begin{equation*}
	\E \sum_{t\in[T]} \prn*{ F(x^t) - F(u)} \le \frac{1}{\eta} V_{x^0}(u) + \frac{T}{2} \eta G^2.
\end{equation*}
Dividing by $T$ and applying Jensen's inequality to bound $F\left(\frac{1}{T}\sum_{t\in[T]}x^t\right) \le \frac{1}{T}\sum_{t\in[T]}  F(x^t)$  yields the claimed bound. 
\end{proof}

We now are ready to prove the main guarantee of  Algorithm~\ref{alg:epochSGD} (see also Lemma 8, Theorem 5 in~\citet{hazan2014beyond}), which implies \Cref{lem:epoch-SGD}.

\begin{proposition}\label{prop:epoch-sgd}
	Let $F:\xset \to \R$ by a $\mu$-strongly-convex function of the form $F=f+\psi$, such that $f$ satisfies \Cref{ass:gest} and $\xopt=\argmin_{x\in\xset}F(x)$. Then, for any $T\ge 1$, we have that $x=\epochSGD(\gest[f], \psi, \mu, \xset, T)$ satisfies 
	\begin{equation*}
		\E F(x) - F(\xopt) \le \frac{16G^2}{\mu T}~~\mbox{and}~~\E\norm{x-\xopt}^2 \le \frac{32G^2}{\mu^2 T}.
	\end{equation*}
	Consequently, $\epochSGD$ is an ODC algorithm with constant $c=32$. 
\end{proposition}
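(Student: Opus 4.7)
The plan is to combine the per-epoch guarantee of Lemma~\ref{lem:perepoch} (with $u = \xopt$) with the $\mu$-strong convexity of $F$, in order to show that the suboptimality $\Delta_k \defeq \E F(x^0_k) - F(\xopt)$ halves at each epoch. The key algebraic observation that makes this work is that the schedule satisfies the invariant $\eta_k T_k = \eta_1 T_1 = 4/\mu$ for every $k$, since doubling $T_k$ and halving $\eta_k$ preserves their product. Substituting this invariant into the per-epoch bound (and taking an outer expectation) yields
\begin{equation*}
\Delta_{k+1} \le \frac{1}{2\eta_k T_k} \E \norm{x^0_k - \xopt}^2 + \frac{\eta_k G^2}{2} = \frac{\mu}{8}\, \E\norm{x^0_k - \xopt}^2 + \frac{G^2}{8\mu\, 2^{k-1}}.
\end{equation*}

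Then I would apply the strong-convexity inequality $\norm{x - \xopt}^2 \le (2/\mu)\bigl(F(x) - F(\xopt)\bigr)$ to convert the squared-distance term into $\Delta_k$, obtaining the geometric recursion
\begin{equation*}
\Delta_{k+1} \le \tfrac{1}{4}\Delta_k + \frac{G^2}{4\mu\, 2^k}.
\end{equation*}
A straightforward induction then gives the closed-form bound $\Delta_k \le G^2/(\mu\, 2^k)$ for all $k \ge 1$, provided the base case $\Delta_1 \le G^2/(2\mu)$ holds. For the base case, I would use that $x^0_1$ is a constrained minimizer of $\psi$ over $\xset$, so its first-order optimality condition produces a subgradient $g \in \partial F(x^0_1)$ that lies in $\partial f(x^0_1)$ up to an inward normal to $\xset$ at $x^0_1$; combining this with the Jensen bound $\norm{\E \gest[f]} \le G$ (which makes $f$ $G$-Lipschitz) and the standard consequence of $\mu$-strong convexity $F(x) - F(\xopt) \le \norm{g}^2/(2\mu)$ yields the needed $G^2/(2\mu)$ bound.

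Finally, to translate into a bound in $T$, let $K$ be the largest integer with $\sum_{i=1}^{K} T_i \le T$, so the algorithm returns $x = x^0_{K+1}$. Since $\sum_{i=1}^{K+1} T_i = 16(2^{K+1}-1) > T$, we get $2^{K+1} > T/16$, and the induction bound gives
\begin{equation*}
\E F(x) - F(\xopt) = \Delta_{K+1} \le \frac{G^2}{\mu\, 2^{K+1}} < \frac{16 G^2}{\mu T},
\end{equation*}
from which $\E \norm{x - \xopt}^2 \le (2/\mu)\Delta_{K+1} \le 32 G^2/(\mu^2 T)$ by strong convexity, establishing the ODC constant $c = 32$. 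The main obstacle I anticipate is the base case, specifically the clean handling of the first-order condition for $x^0_1$ in the presence of the domain constraint (using the normal cone of $\xset$); the degenerate regime $T < 16$, where no full epoch runs, is then handled uniformly by this same bound since $\Delta_1 \le G^2/(2\mu) \le 16 G^2/(\mu T)$ for $T \le 32$.
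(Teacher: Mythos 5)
Your proof is correct and follows essentially the same route as the paper's: invoke the per-epoch guarantee (Lemma~\ref{lem:perepoch}) at $u=\xopt$, observe that $\eta_k T_k$ is constant across epochs, convert the squared-distance term to suboptimality via $\mu$-strong convexity to obtain the one-step contraction $\E\Delta_{k+1}\le\tfrac14\E\Delta_k+\tfrac{G^2}{2^{k+2}\mu}$, bound the base case $\E\Delta_1\le G^2/(2\mu)$ using first-order optimality of $x_1^0=\argmin_{x\in\xset}\psi(x)$ together with the Jensen bound $\norm{\grad f(x_1^0)}\le G$, and finally translate the epoch count to $T$. The only stylistic difference is in the base case: the paper writes the strong-convexity inequality for $F$ with subgradient $\grad f(x_1^0)+\grad\psi(x_1^0)$ and then \emph{drops} the nonnegative term $\langle\grad\psi(x_1^0),\xopt-x_1^0\rangle\ge 0$ (a consequence of $x_1^0$ minimizing $\psi$ over $\xset$) before maximizing over $\xopt$, yielding $\norm{\grad f(x_1^0)}^2/(2\mu)$ directly; your phrasing ``$g\in\partial F(x_1^0)$ up to an inward normal, then apply $F(x)-F(\xopt)\le\norm{g}^2/(2\mu)$'' encodes the same fact but glosses over the step showing that the normal-cone component can be discarded from $g$ without weakening the quadratic lower bound. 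Making that one inequality explicit is all that is needed to close the argument; everything else, including your remark on the $T<16$ degenerate case, matches the paper.
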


\begin{proof}

First we claim that $F(x_1^0)-F(\xopt)\le \frac{G^2}{2\mu}$. To see this  we have by $\mu$-strong-convexity of $F$ that 
\begin{align*}
F(\xopt) & \ge F(x_1^0)+\langle \grad f(x_1^0),\xopt-x_1^0\rangle + \langle\grad \psi(x_1^0),\xopt-x_1^0\rangle+\frac{\mu}{2}\norm{x_1^0-\xopt}^2\\
& \ge F(x_1^0)+\langle \grad f(x_1^0),\xopt-x_1^0\rangle +\frac{\mu}{2}\norm{x_1^0-\xopt}^2,
\end{align*}
where we use the definition that $x_1^0 \in  \arg\min_{x\in\xset}\psi(x)$ and its first-order optimality condition for the second inequality. Rearranging terms gives 
\begin{align*}
	F(x_1^0) - F(\xopt)  & \le -\langle \grad f(x_1^0),\xopt-x_1^0\rangle - \frac{\mu}{2}\norm{x_1^0-\xopt}^2\\
& \le \max_{x} \left(-\langle \grad f(x_1^0),x-x_1^0\rangle -\frac{\mu}{2}\norm{x_1^0-x}^2\right) = \frac{\norm{\grad f(x_1^0)}^2}{2\mu}\le\frac{G^2}{2\mu}.
\end{align*}

For $\xopt=\argmin_{x\in\xset}F(x)$, we so define the potential $\Delta_k = F(x^0_k)-F(\xopt)$ and use induction to prove that  $\E\Delta_k\le \frac{G^2}{2^{k}\mu}$ for all $k$, with the base case $k=1$ established above. Suppose that $\E\Delta_k\le \frac{G^2}{2^{k}\mu}$ for a fixed $k$. Then for $k+1$ \Cref{lem:perepoch} yields
\[
\E\Delta_{k+1} \le \frac{\E V_{x^0_k}(\xopt)}{\eta_k T_k}+\frac{\eta_k}{2} G^2 \overle{(i)} \frac{\E\Delta_k}{\mu\eta_k T_k}+\frac{\eta_k}{2} G^2 \overeq{(ii)} \frac{\E\Delta_k}{4}+\frac{G^2}{2^{k+2}\mu} \overle{(iii)} \frac{G^2}{2^{k+1}\mu},
\]
with the transitions above following from $(i)$ strong convexity of $F$, which implies that $V_{x_k^0}(\xopt) = \frac{1}{2}\norm{x_k^0-\xopt}^2 \le \frac{1}{\mu}\Delta_k$; $(ii)$ the choice of parameters ensures $\eta_k T_k = \tfrac{4}{\mu}$ and $\eta_k = \tfrac{1}{2^{k+1}\mu}$; and $(iii)$ the inductive hypothesis $\E\Delta_k\le \frac{G^2}{2^{k}\mu}$. This completes the induction.

Let $K$ be such that the algorithm outputs $x=x_K^0$, and note that $T\le 16\cdot (2^{K}-1)-1$. Therefore, we have
\begin{equation*}
	\E F(x) - F(\xopt) = \E \Delta_K \le \frac{G^2}{2^{K}\mu} \le \frac{16 G^2}{\mu T},
\end{equation*}
and
\begin{equation*}
	\E \norm{x-\xopt}^2 \le \frac{2}{\mu} (\E F(x) - F(\xopt)) \le \frac{32 G^2}{\mu^2T}.
\end{equation*}
Recalling \Cref{def:ODC}, we conclude that $\epochSGD$ is an ODC algorithm with constant $c=32$.
\end{proof}

\begin{remark}[{Using $\epochSGD$ for optimum estimation}]
	When using $\epochSGD$ as the ODC algorithm in our MLMC optimum estimator~\eqref{eq:def-mlmc}, we need only call once with $T=2^J$ and take $x_0, x_{J-1}$ and $x_{J}$ to be the iterates $x_1^0, x_{K-1}^0$ and $x_K^0$ of $\epochSGD$, for $K$ the last value of $k$ that $\epochSGD$ reaches.
\end{remark}

\subsection{Proof of \Cref{thm:optest}}\label{app:optest}

\restateThmOptest*
\begin{proof}
	Write the algorithm's output as $\xmlmc=\frac{1}{N}\sum_{i=1}^N \xmlmc^{(i)}$ where $\xmlmc^{(1)},\ldots, \xmlmc^{(N)}$ are independent draws of the estimator~\eqref{eq:def-mlmc}, with 
	\begin{equation*}
		\Tmax= \ceil*{\frac{(2c)^2G^2}{\mu^2\min\crl{\delta^2, \half\sigma^2}}}~\mbox{and}~N= \ceil*{ \frac{2(4c)^2 G^2}{\mu^2\sigma^2}\log (\Tmax) } 
	\end{equation*}
	as in \Cref{alg:optest}. Then, \Cref{prop:est} implies that 
	\[\norm{\E \xmlmc^{(1)} - \xopt} \le \min\crl*{\delta, \frac{1}{\sqrt{2}}\sigma}
	~\mbox{and}~
	\E\norm{ \xmlmc^{(1)} - \E \xmlmc^{(1)}}^2 \le \frac{N}{2}\sigma^2.\]
	 Noting that $\E \xmlmc = \E \xmlmc^{(1)}$ and 
	 \[\E\norm{ \xmlmc - \xopt}^2 = \frac{1}{N}\E\norm{ \xmlmc^{(1)} - \E \xmlmc^{(1)}  }^2 + \norm{\E \xmlmc^{(1)} - \xopt}^2,\]
	 we obtain the claimed bias and error bounds. Finally, \Cref{prop:est} guarantees that $\E \Nquery = O( N \cdot \log(\Tmax) )$, giving the claimed bound on the number of evaluations.
\end{proof}

\subsection{Properties of the proximal operator and Moreau envelope}\label{app:moreau-properties}

\newcommand{\proxset}{\xset}
\newcommand{\envmin}{\rho}
\newcommand{\xl}{x_{\lambda}}
\newcommand{\yl}{y_{\lambda}}
\newcommand{\halflam}{\frac{\lambda}{2}}

For a convex function $f:\xset\to \R$ we  recall the definitions of 
\begin{equation*}
\begin{aligned}
	\text{the proximal operator}~\prox_{f,\lambda}(x) & \defeq \argmin_{y\in\xset}\crl*{f(y) + \tfrac{\lambda}{2}\norm{y-x}^2}\\
	\text{and the Moreau envelope}~f_\lambda(x) & \defeq \min_{y\in\xset}\crl*{f(y) + \tfrac{\lambda}{2}\norm{y-x}^2}.
\end{aligned}
\end{equation*}
Below, we collect several well-known properties that we use throughout the paper. 
\begin{fact} \label{fact:moreau}
Given a convex function $f:\xset\rightarrow \R$, and $\lambda>0$ defined on a closed convex set $\xset$, the following properties of the Moreau envelope $f_\lambda:\R^d\rightarrow \R$ and the proximal operator $\prox_{f,\lambda}:\xset \to \xset$ hold for all $x\in\xset$
\begin{enumerate}
\item \label{item:moreau-convex} \emph{Convexity}: $f_\lambda$ is convex.
\item \label{item:moreau-smooth} \emph{Differentiablility}: $f_\lambda$ is $\lambda$-smooth and  $\grad f_\lambda(x) = \lambda(x-\prox_{f,\lambda}(x))$. 
\item \label{item:moreau-approx} \emph{Approximation}: If $f$ is $G$-Lipschitz then $f(x) -  \frac{G^2}{2\lambda} \le f_\lambda(x) \le f(x)$ .
\item \label{item:moreau-subgrad} \emph{Subgradient}: $\grad f_{\lambda}(x) \in \del f( \prox_{f,\lambda}(x))$,
\item \label{item:moreau-three-point} \emph{Three point inequality}: for all $u\in \xset$:
\[
\inner{\grad f_{\lambda}(x)}{\prox_{f,\lambda}(x)-u} \le \halflam \norm{u-x}^2 - \halflam \norm{u-\prox_{f,\lambda}(x)}^2 - \halflam\norm{x - \prox_{f,\lambda}(x)}^2
\,.
\]

\end{enumerate}
\end{fact}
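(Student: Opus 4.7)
The plan is to treat each of the five items in turn, relying on the variational representation $f_\lambda(x) = \min_{y\in\xset}\{f(y) + \frac{\lambda}{2}\norm{y-x}^2\}$ and the strong convexity of the inner objective. To handle the constraint $y\in\xset$ cleanly I would extend $f$ by the indicator $\mathbb{I}_\xset$, so ``$\partial f$'' below is interpreted as the subdifferential of this extended function (which agrees with the usual $\partial f$ in the interior of $\xset$ and incorporates the normal cone $N_\xset$ on the boundary); this interpretation makes item~\ref{item:moreau-subgrad} literally correct as stated.

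For item~\ref{item:moreau-convex} (convexity), I would note that the function $h(x,y) := f(y) + \frac{\lambda}{2}\norm{y-x}^2 + \mathbb{I}_\xset(y)$ is jointly convex in $(x,y)$ (sum of a convex function of $y$, a jointly convex quadratic, and a convex indicator), so its partial minimization over $y$ yields a convex function of $x$. For item~\ref{item:moreau-approx} (approximation), the upper bound $f_\lambda(x)\le f(x)$ is immediate by taking $y=x$ in the minimization, and the lower bound uses $G$-Lipschitzness: for every $y\in\xset$, $f(y) + \frac{\lambda}{2}\norm{y-x}^2 \ge f(x) - G\norm{y-x} + \frac{\lambda}{2}\norm{y-x}^2 \ge f(x) - \frac{G^2}{2\lambda}$, where the last step minimizes the scalar quadratic in $t=\norm{y-x}\ge 0$.

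For items~\ref{item:moreau-smooth} and~\ref{item:moreau-subgrad}, I would set $p = \prox_{f,\lambda}(x)$ and invoke the first-order optimality condition for the $\lambda$-strongly convex inner problem, which gives $0 \in \partial f(p) + \lambda(p-x)$, i.e., $\lambda(x-p)\in\partial f(p)$; this is item~\ref{item:moreau-subgrad}. For item~\ref{item:moreau-smooth}, the envelope theorem (applied to the partial minimization $f_\lambda(x)=\min_y h(x,y)$, in which the inner minimizer is unique by strong convexity) yields differentiability with $\grad f_\lambda(x) = \grad_x h(x,p) = \lambda(x-p)$. Finally, $\lambda$-smoothness follows from showing that $x\mapsto p(x) = \prox_{f,\lambda}(x)$ is firmly nonexpansive, hence $1$-Lipschitz: this is a standard consequence of the monotonicity of $\partial f$ combined with the optimality condition $\lambda(x-p(x))\in\partial f(p(x))$, which one verifies by subtracting the optimality conditions at two points $x_1,x_2$ and taking inner product with $p(x_1)-p(x_2)$. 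Consequently $\grad f_\lambda(x)=\lambda(x-p(x))$ is $\lambda$-Lipschitz.

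Item~\ref{item:moreau-three-point} is then just the polarization identity: substituting $\grad f_\lambda(x)=\lambda(x-p)$ and using $2\inner{a-b}{b-c}=\norm{a-c}^2-\norm{a-b}^2-\norm{b-c}^2$ with $a=x$, $b=p$, $c=u$ turns the left-hand side into exactly the right-hand side (with equality). The main place to be careful is in items~\ref{item:moreau-smooth}--\ref{item:moreau-subgrad}, where one must track that the subdifferential we use includes the constraint $y\in\xset$; once that bookkeeping is set up, everything reduces to well-known manipulations and no serious obstacle remains.
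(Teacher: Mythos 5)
The paper does not prove Fact~\ref{fact:moreau} itself; it cites Hiriart-Urruty and Lemar\'echal, \citet{carmon2021thinking}, and \citet{thekumparampil2020projection} for the standard arguments. Your proposal supplies those arguments in a self-contained way, and they are essentially correct: joint convexity plus partial minimization for item~\ref{item:moreau-convex}; the scalar minimization $\min_{t\ge 0}\{-Gt + \tfrac{\lambda}{2}t^2\} = -\tfrac{G^2}{2\lambda}$ for item~\ref{item:moreau-approx}; the first-order optimality condition $\lambda(x-p)\in\partial \tilde f(p)$ for item~\ref{item:moreau-subgrad} (with the appropriate caveat about extending $f$ by $\mathbb{I}_\xset$ so the subdifferential absorbs the normal cone on $\partial\xset$); Danskin's theorem for the gradient formula in item~\ref{item:moreau-smooth}; and the polarization identity for item~\ref{item:moreau-three-point}, which you correctly observe holds with equality once $\grad f_\lambda(x)=\lambda(x-p)$ is substituted.

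The one place where the wording telescopes too quickly is the Lipschitz bound in item~\ref{item:moreau-smooth}. You write that $p$ is ``firmly nonexpansive, hence $1$-Lipschitz,'' and then ``consequently $\lambda(x-p(x))$ is $\lambda$-Lipschitz.'' Mere $1$-Lipschitzness of $p$ would only give $\lambda\norm{(x_1-p_1)-(x_2-p_2)}\le 2\lambda\norm{x_1-x_2}$. The factor $\lambda$ (rather than $2\lambda$) needs the firm nonexpansiveness $\inner{x_1-x_2}{p_1-p_2}\ge\norm{p_1-p_2}^2$ used directly, via
\[
\norm{(x_1-p_1)-(x_2-p_2)}^2 = \norm{x_1-x_2}^2 - 2\inner{x_1-x_2}{p_1-p_2} + \norm{p_1-p_2}^2 \le \norm{x_1-x_2}^2.
\]
You do invoke firm nonexpansiveness, so the underlying argument is the right one; the chain of ``hence $1$-Lipschitz $\Rightarrow$ $\lambda$-Lipschitz'' is just a misleading intermediate step and should be replaced by the displayed computation. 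With that small fix, the proposal is a correct, standard proof of the fact the paper leaves to citation.
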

See~\cite[][Section 4.1]{hiriart1993convex_ii} as well as \cite[][Lemma 1]{carmon2021thinking} and \cite[][Lemma 1]{thekumparampil2020projection} for proofs and additional background and properties.

\section{Proofs from Section~\ref{sec:proj-eff}}\label{app:proj-eff}

\newcommand{\yt}{\widetilde{y}_{k-1}}

In this section, we give a proof of \Cref{thm:PE}. Before we give the technical details, we briefly comment on our algorithm and its analysis. \Cref{alg:proj_eff} is at its core an instantiation of Nesterov's accelerated gradient method applied to the Moreau envelope $f_{\lambda}(x) = \min_{y \in \ball_R(0)} \left\{ f(y) + \frac{\lambda}{2} \norm{y-x}^2 \right\}$. We compute stochastic gradient estimates of $f_{\lambda}$ via \Cref{alg:gradest}, and apply techniques from \cite{Zhu17,ZhuO17} to bound the accumulated error.

Based on the iterates $\{x_k, v_k \}$ of \Cref{alg:proj_eff}, we define
\[
E_k = \moreau(x_k) - \moreau(u), R_k = \frac{1}{2} \norm{v_k - u}^2, \text{ and } P_k = k(k+1) E_k + 12 \lambda R_k
\]
for any fixed $u \in \ball_R(0)$. We first prove that (conditioned on the iterates $x_{k-1}, v_{k-1}$) the potential $P_k$ cannot increase significantly in expectation. 

\begin{lemma}
\label{lem:PE-pot}
Consider an execution of \Cref{alg:proj_eff} with parameters given by \Cref{thm:PE}. Fix any $u \in  \ball_R(0)$. For any $k \geq 1$ we have  $y_{k-1} \in \ball_R(0)$ and
\[
\mathbb{E} \left[ P_k | x_{k-1}, v_{k-1} \right] \leq P_{k-1} + \eps k\,.
\]
\end{lemma}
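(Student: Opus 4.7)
I will first verify $y_{k-1} \in \ball_R(0)$ and then execute a stochastic accelerated gradient analysis on the $\lambda$-smooth function $f_\lambda$. The membership is immediate: $x_{k-1} \in \xset \subseteq \ball_R(0)$ by construction (starting from $x_0 = v_0$ and projecting onto $\xset$), $v_{k-1} \in \ball_R(0)$ by the projection step, and $y_{k-1}$ is their convex combination. For the potential bound, let $g^* := \grad f_\lambda(y_{k-1})$ and $\tilde g_k := g_k - g^*$, so that by Corollary~\ref{coro:gradest} we have $\norm{\E_{k-1}[\tilde g_k]} \le \delta_k$ and $\E_{k-1}\norm{\tilde g_k}^2 \le \sigma_k^2$ with $\E_{k-1} := \E[\,\cdot\mid x_{k-1},v_{k-1}]$.

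The analysis combines four standard ingredients. First, expanding $g_k = g^* + \tilde g_k$ in the $\lambda$-smoothness inequality applied to the gradient step with stepsize $1/(3\lambda)$ gives, after taking conditional expectation,
\[
\E_{k-1}[f_\lambda(x_k) - f_\lambda(y_{k-1})] \le -\tfrac{5}{18\lambda}\norm{g^*}^2 + \tfrac{2G\delta_k}{9\lambda} + \tfrac{\sigma_k^2}{18\lambda}.
\]
Second, the AGD identity $A_k y_{k-1} = A_{k-1}x_{k-1} + a_k v_{k-1}$ with $A_k = k(k+1)$ and $a_k = 2k$ satisfying $A_k - A_{k-1} = a_k$, together with convexity of $f_\lambda$, yields
$A_k E_k - A_{k-1}E_{k-1} \le A_k[f_\lambda(x_k) - f_\lambda(y_{k-1})] + a_k\langle g^*, v_{k-1}-u\rangle$.
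Third, the mirror step three-point inequality paired with Young's inequality (tuned to cancel the $6\lambda\norm{v_k-v_{k-1}}^2$ term) gives $a_k\langle g_k, v_{k-1}-u\rangle \le 12\lambda(R_{k-1}-R_k) + \tfrac{k^2}{6\lambda}\norm{g_k}^2$. Fourth, decomposing $\langle g^*, v_{k-1}-u\rangle = \langle g_k, v_{k-1}-u\rangle - \langle \tilde g_k, v_{k-1}-u\rangle$ isolates the bias contribution, which is bounded in conditional expectation by $a_k \delta_k \cdot 2R = k\epsilon/2$ using $\delta_k = \epsilon/(8R)$ and $\norm{v_{k-1}-u} \le 2R$.

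Assembling the four ingredients in conditional expectation, the critical step is absorbing the troublesome $\tfrac{k^2}{6\lambda}\E_{k-1}\norm{g_k}^2 \le \tfrac{k^2}{6\lambda}\norm{g^*}^2 + \tfrac{k^2 G\delta_k}{3\lambda} + \tfrac{k^2\sigma_k^2}{6\lambda}$ contribution. Inverting the first ingredient yields $\norm{g^*}^2 \le \tfrac{18\lambda}{5}\E_{k-1}[f_\lambda(y_{k-1}) - f_\lambda(x_k)] + \tfrac{4G\delta_k}{5} + \tfrac{\sigma_k^2}{5}$, so $\tfrac{k^2}{6\lambda}\norm{g^*}^2$ contributes $\tfrac{3k^2}{5}\E_{k-1}[f_\lambda(y_{k-1}) - f_\lambda(x_k)]$ (plus negligible noise), which cancels against the $A_k\E_{k-1}[f_\lambda(x_k)-f_\lambda(y_{k-1})]$ term because $A_k - \tfrac{3k^2}{5} = \tfrac{2k^2+5k}{5} > 0$ leaves a nonpositive residual. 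The surviving $12\lambda(R_{k-1}-R_k)$ exactly cancels against the $12\lambda(R_k-R_{k-1})$ component of $\Delta_k := P_k - P_{k-1}$, so $\E_{k-1}\Delta_k$ reduces to a sum of noise contributions. The main obstacle is verifying that each such contribution is bounded by $\epsilon k$ per step: the bias term gives $\epsilon k/2$ as noted; the variance term $\tfrac{k^2\sigma_k^2}{\lambda} \sim \epsilon k^2/(k+1)$ gives $O(\epsilon k)$ thanks to the decreasing schedule $\sigma_k^2 = 2\epsilon\lambda/(k+1)$; and the Young's cross-term $\tfrac{k^2 G\delta_k}{\lambda} \sim \tfrac{k^2\epsilon^2}{GR}$ is bounded by $\epsilon k$ via the implicit constraint $k \le T = O(GR/\epsilon)$. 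Choosing the constants tightly in these three noise terms yields exactly $\E[P_k \mid x_{k-1},v_{k-1}] \le P_{k-1} + \epsilon k$.
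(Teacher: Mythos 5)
The student's core decomposition agrees with the paper up to ingredient~3 (the AGD identity $A_k y_{k-1}=A_{k-1}x_{k-1}+a_k v_{k-1}$ with $A_k=k(k+1)$, $a_k=2k$, the three-point inequality for the mirror step, and the decomposition of $\langle g^*,v_{k-1}-u\rangle$ into a $\langle g_k,\cdot\rangle$ part and a bias part bounded by $2Ra_k\delta_k$). Where the argument diverges is in the treatment of the ``gradient step'' update $x_k=\proj_\xset\bigl(y_{k-1}-\tfrac{1}{3\lambda}g_k\bigr)$. The student's first ingredient claims
\[
\E_{k-1}\bigl[f_\lambda(x_k)-f_\lambda(y_{k-1})\bigr] \le -\tfrac{5}{18\lambda}\norm{g^*}^2 + \text{noise},
\]
but this is a \emph{projected} step onto $\xset$, and the unconstrained descent lemma in terms of $\norm{\nabla f_\lambda(y_{k-1})}^2$ is false here: if $y_{k-1}$ lies on the boundary of $\xset$ and $g_k$ points outward, the projection can send $x_k$ back to $y_{k-1}$ exactly, so $f_\lambda(x_k)-f_\lambda(y_{k-1})=0$ while $\norm{g^*}$ is arbitrarily large. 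In the constrained case the decrease scales with the \emph{gradient mapping} norm, which has no lower bound in terms of $\norm{g^*}$. This makes the student's central cancellation step --- inverting the first ingredient to kill the $\tfrac{k^2}{6\lambda}\norm{g^*}^2$ term coming from Young's inequality --- invalid. The paper sidesteps this entirely: it keeps the quantity $\mathsf{Prog}(y_{k-1};g_k)=\min_{x\in\xset}\{\tfrac{3\lambda}{2}\norm{x-y_{k-1}}^2+\langle g_k,x-y_{k-1}\rangle\}$ intact, bounds the function decrease by $\mathsf{Prog}+\tfrac{1}{4\lambda}\norm{g_k-g^*}^2$, and then \emph{cancels $\mathsf{Prog}$} against the linear-coupling term by plugging the auxiliary point $\tilde y_{k-1}=\tfrac{k-1}{k+1}x_{k-1}+\tfrac{2}{k+1}v_k$ into its defining minimum; no lower bound on the decrease in terms of $\norm{g^*}^2$ is ever needed.

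A second, subtler issue is the noise bookkeeping. By expanding $\norm{g_k}^2=\norm{g^*}^2+2\langle g^*,\tilde g_k\rangle+\norm{\tilde g_k}^2$, the student picks up a bias cross-term $\tfrac{k^2}{6\lambda}\cdot 2G\delta_k\sim k^2\eps^2/(GR)$, which is \emph{quadratic} in $k$, whereas the paper's bias contribution $\tfrac{k}{6\lambda}\cdot 2R\delta_k$ is linear. The student is forced to invoke $k\le T=O(GR/\eps)$ to bring this down to $O(k\eps)$, which makes the proof establish the potential inequality only for $k=O(GR/\eps)$ rather than for all $k\ge 1$ as the lemma states. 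The paper achieves the clean $O(k\eps)$ bound for every $k$ precisely because it bounds the error via $\norm{g_k-g^*}^2$ rather than $\norm{g_k}^2$, and the bias term never gets multiplied by $k^2$.
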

\begin{proof}
We first remark that $x_{k-1} \in \xset \subseteq \ball_R(0)$ and $v_{k-1} \in  \ball_R(0)$ by construction. As a result, $y_{k-1} \in \ball_R(0)$ as well. Following \cite{Zhu17,ZhuO17}, we define the function
\[
\mathsf{Prog}(y;g) \defeq \min_{x \in \xset} \left\{ \frac{3\lambda}{2} \norm{x - y}^2 + \left\langle  g, x - y \right\rangle \right\}.
\]
We observe
\begin{align}
\mathsf{Prog}(y_{k-1};g_k) &=  \min_{x \in \xset} \left\{ \frac{3\lambda}{2} \norm{x - y_{k-1}}^2 + \left\langle  g_k, x - y_{k-1} \right\rangle \right\} \label{eqn:katyusha} \\
&\substack{(i)\\=} \frac{3\lambda}{2} \norm{x_{k} - y_{k-1}}^2 + \left\langle g_k, x_{k} - y_{k-1} \right\rangle \nonumber \\ 
&= \left(\halflam \norm{x_{k} - y_{k-1}}^2 + \left\langle \nabla \moreau(y_{k-1}), x_{k} - y_{k-1} \right\rangle\right) \nonumber \\
&\phantom{\quad} +  \lambda \norm{x_{k} - y_{k-1}}^2 + \left\langle g_k - \nabla \moreau(y_{k-1}), x_{k} - y_{k-1} \right\rangle \nonumber \\
&\substack{(ii) \\ \geq} \moreau(x_{k}) - \moreau(y_{k-1})+  \lambda \norm{x_{k} - y_{k-1}}^2 + \left\langle g_k - \nabla \moreau(y_{k-1}), x_{k} - y_{k-1} \right\rangle \nonumber \\
&\substack{(iii) \\ \geq} \moreau(x_{k}) - \moreau(y_{k-1}) - \frac{1}{4\lambda} \norm{g_k - \nabla \moreau(y_{k-1})}^2 \nonumber.
\end{align}
Here, we use $(i)$ the definition of $x_{k}$, $(ii)$ smoothness of $\moreau$ (\Cref{item:moreau-smooth} of \Cref{fact:moreau}), and $(iii)$ Young's inequality $\left\langle a, b \right\rangle + \frac{1}{2} \norm{b}^2 \ge  -\frac{1}{2} \norm{a}^2$ with $a=\frac{1}{2\lambda}(g_k - \grad f_\lambda(y_{k-1}))$ and $b=2\lambda(x_k-y_{k-1})$. Define the point
\[
\yt = \frac{k-1}{k+1} x_{k-1} + \frac{2}{k+1} v_{k}. 
\]
We observe that  
\[
y_{k-1} - \yt = \left( \frac{k-1}{k+1} x_{k-1} + \frac{2}{k+1} v_{k-1} \right) - \left( \frac{k-1}{k+1} x_{k-1} + \frac{2}{k+1} v_{k} \right) = \frac{2}{k+1} \left( v_{k-1} - v_{k} \right)\,.
\]
Consequently, we have
\begin{align}
\frac{k}{6 \lambda} \left\langle g_k, v_{k-1} - u \right\rangle &= \frac{k}{6 \lambda} \left\langle g_k, v_{k-1} - v_k \right\rangle +\frac{k}{6 \lambda} \left\langle g_k, v_k - u \right\rangle \nonumber \\ 
&\substack{(i) \\ \leq} \frac{k}{6 \lambda} \left\langle g_k, v_{k-1} - v_{k} \right\rangle + \frac{1}{2} \left(\norm{v_{k-1} - u}^2 - \norm{v_{k} - u}^2 - \norm{v_{k-1} - v_{k}}^2 \right) \nonumber \\
&\substack{(ii) \\ =} \frac{k(k+1)}{12 \lambda} \left\langle g_k, y_k - \yt \right\rangle - \frac{(k+1)^2}{8} \norm{y_k -  \yt}^2 + R_{k-1} - R_k \nonumber \\ 
&\substack{(iii) \\ \leq } \frac{k(k+1)}{12 \lambda} \left( \left\langle g_k, y_{k-1} - \yt \right\rangle - \frac{3  \lambda}{2} \norm{y_k -  \yt}^2 \right) + R_{k-1} - R_k \nonumber \\
&\substack{(iv) \\ \leq } - \frac{k(k+1)}{12 \lambda} \mathsf{Prog}(y_{k-1}; g_k) +  R_{k-1} - R_k \nonumber \\
&\substack{(v) \\ \leq} \frac{k(k+1)}{12 \lambda} \left(\moreau(y_{k-1}) -\moreau(x_{k}) + \frac{1}{4\lambda} \norm{g_k - \nabla \moreau(y_{k-1})}^2 \right) + R_{k-1} - R_k \label{eqn:coupling}.
\end{align}
Here we use $(i)$ the proximal three-point inequality (\Cref{item:moreau-three-point} of \Cref{fact:moreau}), $(ii)$ the definition of $\yt$, $(iii)$ $\frac{(k+1)^2}{8} \geq  \frac{3\lambda}{2} \cdot \frac{k(k+1)}{12 \lambda}$ and $\norm{y_{k-1} - \yt}^2 \geq 0$, $(iv)$ the definition of $\mathsf{Prog}$, and $(v)$ Equation~\eqref{eqn:katyusha}.
Thus, 
\begin{align*}
\frac{k}{6 \lambda} \left( \moreau(y_{k-1}) - \moreau(u)\right) &\leq \frac{k}{6 \lambda} \left\langle \nabla \moreau(y_{k-1}), y_{k-1} - u \right\rangle \\
&\leq \frac{k}{6 \lambda} \left\langle \nabla \moreau(y_{k-1}), y_{k-1} - v_{k-1} \right\rangle + \frac{k}{6 \lambda} \left\langle \nabla \moreau(y_{k-1}), v_{k-1} - u \right\rangle\\
&\substack{(i) \\ =} \frac{k(k-1)}{12 \lambda} \left\langle \nabla \moreau(y_{k-1}), x_{k-1} - y_{k-1} \right\rangle + \frac{k}{6 \lambda} \left\langle \nabla \moreau(y_{k-1}), v_{k-1} - u \right\rangle \\ 
&\leq \frac{k(k-1)}{12 \lambda} \left( \moreau(x_{k-1}) - \moreau(y_{k-1}) \right) + \frac{k}{6 \lambda} \left\langle \nabla \moreau(y_{k-1}), v_{k-1} - u \right\rangle \\
&\substack{(ii) \\ \leq} \frac{k(k-1)}{12 \lambda} \left( \moreau(x_{k-1}) - \moreau(y_{k-1}) \right)  + R_{k-1} - R_{k} \\
&\phantom{=} + \frac{k(k+1)}{12 \lambda} \left( \moreau(y_{k-1}) - \moreau(x_{k})+ \frac{1}{4\lambda} \norm{g_k - \nabla \moreau(y_{k-1})}^2 \right) \\
&\phantom{=} + \frac{k}{6 \lambda} \left\langle \nabla \moreau(y_{k-1}) - g_k, v_{k-1} - u \right\rangle,
\end{align*}
where we use $(i)$ $y_{k-1} - v_{k-1} = \frac{k-1}{2} (x_{k-1} - y_{k-1})$ and $(ii)$ Equation~\eqref{eqn:coupling}. Rearranging, we obtain
\begin{align}
\frac{1}{12 \lambda} \left( P_k - P_{k-1} \right) &= \frac{k(k+1)}{12 \lambda} E_k + R_k - \frac{k(k-1)}{12 \lambda} E_{k-1} - R_{k-1} \nonumber \\
&\leq \frac{k(k+1)}{48 \lambda^2} \norm{g_k - \nabla \moreau(y_{k-1})}^2 + \frac{k}{6 \lambda}  \left\langle \nabla \moreau(y_{k-1}) - g_k, v_{k-1} - u \right\rangle  \label{eq:potential1}
\end{align}
Applying \Cref{coro:gradest}, we observe
\[
\mathbb{E}\left[ \norm{g_k - \nabla \moreau(y_k)}^2 | x_{k-1}, v_{k-1} \right] \leq \sigma^2_{k} = \frac{2 \eps \lambda }{k+1}
\]
and 
\[
\mathbb{E} \left[ \left\langle \nabla \moreau(y_{k-1}) - g_k, v_{k-1} - u \right\rangle | x_{k-1}, v_{k-1} \right] \leq \norm{ \mathbb{E} \left[ g_k \right] -  \nabla \moreau(y_{k-1}) } \norm{v_{k-1} - u} \leq 2 R \delta_{k} = \frac{\eps}{4}
\]
by the Cauchy-Schwarz inequality, the constraint that $u, v_k \in \ball_R(0)$, and the choice of parameters $\sigma_k, \delta_k$. Taking expectations and applying these to \Cref{eq:potential1}, we obtain
\[
\frac{1}{12 \lambda} \left( \mathbb{E} \left[P_k | x_{k-1}, v_{k-1} \right] - P_{k-1} \right) \leq \frac{\eps k}{24 \lambda} + \frac{\eps k}{24 \lambda} =  \frac{\eps k}{12 \lambda}.
\]
Multiplying both sides by $12 \lambda$ yields the claim. 
\end{proof}

With Lemma~\ref{lem:PE-pot} in hand, we complete the proof of \Cref{thm:PE}.

\thmPE*

\begin{proof}[Proof of \Cref{thm:PE}]
Applying the law of total probability and inductively applying \Cref{lem:PE-pot}, we obtain
\[
\mathbb{E} \left[ P_T \right] \leq P_0 +  \eps \sum_{k=1}^T k =  P_0 + \frac{\eps}{2} T (T+1).
\]
We choose $u = \xopt$ and observe $P_T = T(T+1) E_T + 12 \lambda R_T \geq T(T+1) \left( \moreau(x_T) - \moreau(\xopt) \right)$ and $P_0 = 12 \lambda R_0 \leq 6 \lambda D^2$.  Plugging these in, we have
\[
\mathbb{E} \left[ \moreau(x_T) \right] - \moreau(\xopt)  \leq \frac{6 \lambda D^2}{T(T+1)} + \frac{\eps}{2}.
\]
As $f$ is $G$-Lipschitz, we apply \Cref{item:moreau-convex} of \Cref{fact:moreau} and our choices of $\lambda$ and $T$: this gives
\[
\mathbb{E} \left[ f(x_T) \right] - f(\xopt) \leq \frac{G^2}{2 \lambda} + \frac{6 \lambda D^2}{T(T+1)} + \frac{\eps}{2} \leq \frac{\eps}{4} + \frac{12 G^2 D^2}{\eps T^2} + \frac{\eps}{2} \leq \frac{\eps}{4} + \frac{12 \eps}{49} + \frac{\eps}{2} < \eps
\]
as desired.

To finish, we bound the number of oracle queries. The bound on the number of projection oracle calls is immediate since we only call it once per iteration of the algorithm. To bound the number of stochastic gradients needed, we apply Corollary~\ref{coro:gradest} together with the fact that $y_k \in \ball_R(0)$ at all times. Thus, we need 
\[
O\left( \sum_{k = 0}^{T-1} \frac{G^2}{\sigma_k^2} \log^2 \left(\frac{G}{\delta_k} \right) \right) = O \left(\sum_{k = 0}^{T-1} \frac{G^2 (k+1)}{\eps \lambda} \log^2 \left(\frac{GR}{\eps}\right) \right) = O\left( \frac{G^2 D^2}{\eps^2} \log^2 \left(\frac{GR}{\eps}\right) \right) 
\] 
subgradient computations as desired.
\end{proof}

%
\section{Proofs from \Cref{sec:mtm}}\label{app:mtm}
	
\subsection{Analysis of the stochastic accelerated proximal method}\label{app:mtm-sapm}

In this section we provide a complete analysis of the stochastic accelerated proximal method. We first prove \Cref{lem:sapm-step-bound}, which shows potential decrease in (conditional) expectation for the iterates of \Cref{alg:MS}. Then we give \Cref{lem:sapm-martingale} which provides an in-expectation bound on the potential when the algorithm terminates. In Lemma~\ref{lem:sapm-deterministic-Ak-bound} we give a deterministic error bound resulting from the growth of the $A_k$ sequence. We then combine  these ingredients to prove \Cref{prop:sapm}.

\newcommand{\hg}{\hat{g}}
\newcommand{\filt}[1][k]{\mathcal{F}_{#1}}
\newcommand{\epsbar}{\bar{\varepsilon}}

\paragraph{Notation.} 
Define the filtration 
\[\filt = \sigma(x_1, v_1, A_1, \zeta_1 \ldots, x_k, v_k, A_k, \zeta_k)\]
where $\zeta_i$ is the internal randomness in $\nextLambda(x_i, v_i, A_i)$. 
Throughout, we let
\[
\hx_k = \prox_{f,\lambda_k}(y_{k-1})
\]
denote the exact proximal mapping which iteration $x_k$ of the algorithm approximate. We note that $A_{k+1}, y_k, \hx_{k+1}, \in \filt$, i.e., they are deterministic when conditioned on $x_k, v_k, A_k, \zeta_k$.

For each iteration of Algorithm~\ref{alg:MS}, we obtain the following bound on  potential decrease.

\begin{lemma}\label{lem:sapm-step-bound}
	Let $f:\xset\to \R$ satisfy \Cref{ass:gest}. 
	If $\xset\subseteq \ball_R(x_0)$, we have
	\begin{flalign*}
		&\Ex*{
			A_{k+1} (f(x_{k+1}) - f(\xopt)) + \norm{v_{k+1}-\xopt}^2  | \filt
		}
		\\ & \hspace{1cm} 
		\le 
		A_{k} (f(x_{k}) - f(\xopt)) +  \norm{v_{k}-\xopt}^2 - \frac{1}{6} \lambda_{k+1} A_{k+1} \norm{\hx_{k+1}-y_k}^2 
		\\ &  \hspace{1cm} \hphantom{\le}+ \lambda_{k+1}a_{k+1}^2\vphi_{k+1} +  a_{k+1}^2 \sigma_{k+1}^2 + 2R a_{k+1}  \delta_{k+1}.
	\end{flalign*}
\end{lemma}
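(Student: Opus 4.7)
The plan is to analyze the change in the potential
\[
\Delta_k \defeq A_k\prn*{f(x_k) - f(\xopt)} + \norm{v_k - \xopt}^2
\]
conditional on $\filt$ by splitting it into the contributions of the $x$ and $v$ sequences. Throughout, I will use the identity $\lambda_{k+1}a_{k+1}^2 = A_{k+1}$ (from the defining quadratic equation for $a_{k+1}$), together with the shorthand $g^\star_{k+1} \defeq \grad f_{\lambda_{k+1}}(y_k) = \lambda_{k+1}(y_k - \hx_{k+1})$, which by \Cref{fact:moreau} lies in $\del f(\hx_{k+1})$, and the momentum identity $A_k x_k + a_{k+1} v_k = A_{k+1} y_k$.

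For the $f$-contribution, the definition~\eqref{eq:app-prox-def} of $\appProx^{\vphi_{k+1}}_{f,\lambda_{k+1}}$ yields
\[
A_{k+1}\E\brk*{f(x_{k+1}) \mid \filt} \le A_{k+1} f(\hx_{k+1}) + \tfrac{A_{k+1}\lambda_{k+1}}{2}\norm{\hx_{k+1}-y_k}^2 + \lambda_{k+1} a_{k+1}^2 \vphi_{k+1},
\]
after dropping $\tfrac{A_{k+1}\lambda_{k+1}}{2}\E[\norm{x_{k+1}-y_k}^2\mid \filt] \ge 0$. To compare $A_{k+1}f(\hx_{k+1})$ against $A_k f(x_k) + a_{k+1}f(\xopt)$, I apply the subgradient inequality for $g^\star_{k+1}$ at $\hx_{k+1}$ with comparison points $x_k$ (weight $A_k$) and $\xopt$ (weight $a_{k+1}$). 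Using the momentum identity to rewrite the resulting linear form gives
\[
A_{k+1}f(\hx_{k+1}) - A_k f(x_k) - a_{k+1}f(\xopt) \le -\lambda_{k+1}A_{k+1}\norm{y_k-\hx_{k+1}}^2 + a_{k+1}\inner{g^\star_{k+1}}{v_k-\xopt}.
\]

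For the $v$-contribution, the nonexpansiveness of $\proj_\xset$ gives
\[
\norm{v_{k+1}-\xopt}^2 \le \norm{v_k-\xopt}^2 - a_{k+1}\inner{g_{k+1}}{v_k-\xopt} + \tfrac{a_{k+1}^2}{4}\norm{g_{k+1}}^2.
\]
Taking the conditional expectation and invoking \Cref{coro:gradest}, the bias bound $\norm{\E[g_{k+1}\mid \filt] - g^\star_{k+1}} \le \delta_{k+1}$ combined with the domain diameter bound $\norm{v_k-\xopt}\le 2R$ contributes $2Ra_{k+1}\delta_{k+1}$. For the second-moment term, I will use Young's inequality $\norm{g_{k+1}}^2 \le (1+\alpha)\norm{g^\star_{k+1}}^2 + (1+1/\alpha)\norm{g_{k+1} - g^\star_{k+1}}^2$ with $\alpha = 1/3$, so that (after using $\norm{g^\star_{k+1}}^2 = \lambda_{k+1}^2\norm{y_k-\hx_{k+1}}^2$ and $\lambda_{k+1}a_{k+1}^2 = A_{k+1}$) the second moment contributes $\tfrac{\lambda_{k+1}A_{k+1}}{3}\norm{y_k-\hx_{k+1}}^2 + a_{k+1}^2\sigma_{k+1}^2$.

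Adding the $f$- and $v$-contributions, the two $\pm a_{k+1}\inner{g^\star_{k+1}}{v_k - \xopt}$ terms cancel (this is the mirror-descent coupling underlying accelerated methods), and the coefficient of $\norm{y_k-\hx_{k+1}}^2$ collects as $\lambda_{k+1}A_{k+1}\prn*{\tfrac{1}{2} - 1 + \tfrac{1}{3}} = -\tfrac{1}{6}\lambda_{k+1}A_{k+1}$, matching the claim exactly; the residual error terms are $\lambda_{k+1}a_{k+1}^2\vphi_{k+1} + a_{k+1}^2\sigma_{k+1}^2 + 2Ra_{k+1}\delta_{k+1}$. The main delicate point is the choice of the Young parameter: the naive bound $\norm{g}^2 \le 2\norm{g^\star}^2 + 2\norm{g-g^\star}^2$ leaves a nonnegative coefficient on $\norm{y_k-\hx_{k+1}}^2$ and forfeits the negative term we need later (e.g., for bounding $\Kmax$ in \Cref{prop:sapm}), so the refined split with $\alpha = 1/3$ is what makes the whole argument succeed.
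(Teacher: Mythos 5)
Your proof is correct and follows essentially the same route as the paper: you use the subgradient inequality for $\grad f_{\lambda_{k+1}}(y_k) \in \partial f(\hx_{k+1})$ with comparison points $x_k$ and $\xopt$ weighted by $A_k$ and $a_{k+1}$ (the paper phrases this as bounding $a_{k+1}\langle \hg_{k+1}, v_k - \xopt\rangle$ from below), the nonexpansiveness of the projection for the $v$-step, the bias and second-moment bounds from \Cref{coro:gradest} with the Young split at $\alpha = 1/3$, and the identity $\lambda_{k+1}a_{k+1}^2 = A_{k+1}$. The only difference is presentational: you organize the argument as additive $f$- and $v$-contributions to the potential, whereas the paper pivots both bounds on the single quantity $a_{k+1}\langle \hg_{k+1}, v_k - \xopt\rangle$; the cancellation you observe is the same telescoping the paper gets by sandwiching that inner product.
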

\begin{proof}
 We let
	\[
	\hg_{k}=\grad f_{\lambda_{k}}\left(y_{k-1}\right)=\lambda_{k}\left(y_{k-1}-\hx_{k}\right)
	\]
	and bound from both sides the quantity $a_{k+1}\inner{\hg_{k+1}}{v_{k}-\xopt}$.
	First, note that
	\[
	v_{k}-\xopt=\hx_{k+1}-\xopt+\frac{A_{k}}{a_{k+1}}\left(\hx_{k+1}-x_{k}\right)-\frac{A_{k+1}}{a_{k+1}}\left(\hx_{k+1}-y_{k}\right).
	\]
	Since $\hg_{k+1}\in\partial f\left(\hx_{k+1}\right)$ (see \Cref{item:moreau-subgrad} in \Cref{fact:moreau}),
	$f$ is convex and $\inner{\hg_{k+1}}{\hx_{k+1}-y_{k}}=-\lambda_{k+1}\norm{\hx_{k+1}-y_{k}}^{2}$,
	we have that
	\begin{flalign*}
		\inner{\hg_{k+1}}{v_k-\xopt} & =\inner{\hg_{k+1}}{\hx_{k+1}-\xopt}+\frac{A_{k}}{a_{k+1}}\inner{\hg_{k+1}}{\hx_{k+1}-x_{k}}-\frac{A_{k+1}}{a_{k+1}}\inner{\hg_{k+1}}{\hx_{k+1}-y_{k}}\\
		& \ge f\left(\hx_{k+1}\right)-f\left(\xopt\right)+\frac{A_{k}}{a_{k+1}}\prn*{f(\hx_{k+1})-f(x_{k})}-\frac{A_{k+1}}{a_{k+1}}\inner{\hg_{k+1}}{\hx_{k+1}-y_{k}}\\
		& =\frac{A_{k+1}}{a_{k+1}}\left(f\left(\hx_{k+1}\right)-f\left(\xopt\right)\right)-\frac{A_{k}}{a_{k+1}}\prn*{f(x_{k})-f(\xopt)}+\frac{\lambda_{k+1}A_{k+1}}{a_{k+1}}\norm{\hx_{k+1}-y_{k}}^{2}.
	\end{flalign*}
	Moreover, by definition of $x_{k+1}$ we have that
	\[
	\Ex*{f\left(x_{k+1}\right)|\filt}\le\Ex*{f\left(x_{k+1}\right)+\frac{\lambda_{k+1}}{2}\left\Vert x_{k+1}-y_{k}\right\Vert ^{2}|\filt}\le f\left(\hx_{k+1}\right)+\frac{\lambda_{k+1}}{2}\left\Vert \hx_{k+1}-y_{k}\right\Vert ^{2}+\vphi_{k+1}.
	\]
	Substituting back, we have
	\begin{flalign}
		a_{k+1}\inner{\hg_{k+1}}{v_k-\xopt}\ge & A_{k+1}\left(\Ex*{f(x_{k+1})|\filt}-f\left(\xopt\right)\right)-A_{k}\left(f\left(x_{k}\right)-f\left(\xopt\right)\right)\nonumber \\
		& +\frac{\lambda_{k+1}A_{k+1}}{2}\norm{\hx_{k+1}-y_{k}}^{2}-A_{k+1}\vphi_{k+1}.\label{eq:sapm-inner-lb}
	\end{flalign}
	To upper bound $a_{k+1}\inner{\hg_{k+1}}{v_k-\xopt}$, note that, since $\xopt\in\xset$,
	\[
	\norm*{v_{k+1}-\xopt} ^{2}\le\norm*{ v_k-\frac{1}{2}a_{k+1}g_{k+1}-\xopt}^{2}= \norm{v_{k}-\xopt} ^{2}-a_{k+1}\inner{g_{k+1}}{v_{k}-\xopt}+\frac{a_{k+1}^{2}}{4}\left\Vert g_{k+1}\right\Vert ^{2}.
	\]
	Our Moreau Envelope gradient estimtor (see \Cref{coro:gradest}) guarantees that
	\begin{flalign*}
		\Ex*{\inner{g_{k+1}}{v_{k}-\xopt}|\filt} &
		\ge
		\inner{\hg_{k+1}}{v_{k}-\xopt}-\left\Vert \Ex*{g_{k+1}|\filt}-\hg_{k+1}\right\Vert \left\Vert v_{k}-\xopt\right\Vert \\& \ge\inner{\hg_{k+1}}{v_{k}-\xopt}-2R\delta_{k+1},
	\end{flalign*}
	and moreover
	\begin{flalign*}
		\Ex*{\left\Vert g_{k+1}\right\Vert ^{2}|\filt} & =\left(1+\frac{1}{3}\right)\E\left\Vert \hg_{k+1}\right\Vert ^{2}+\left(1+3\right)\Ex*{\left\Vert g_{k+1}-\hg_{k+1}\right\Vert ^{2}|\filt}\\
		& \le\frac{4}{3}\left\Vert \hg_{k+1}\right\Vert ^{2}+4\sigma_{k+1}^{2}.
	\end{flalign*}
	Combining the last three displays and rearranging, we obtain
	\begin{flalign}
		a_{k+1}\inner{\hg_{k+1}}{v_{k}-\xopt}\le & \left\Vert v_{k}-\xopt\right\Vert ^{2}-\Ex*{\left\Vert v_{k+1}-\xopt\right\Vert ^{2}|\filt}+\frac{\lambda_{k+1}^{2}a_{k+1}^{2}}{3}\norm{\hx_{k+1}-y_{k}}^{2} \nonumber \\
		& +a_{k+1}^{2}\sigma_{k+1}^{2}+2Ra_{k+1}\delta_{k+1}\label{eq:sapm-inner-ub}
	\end{flalign}
	Combining \eqref{eq:sapm-inner-lb} and \eqref{eq:sapm-inner-ub}
	and simplifying using $A_{k+1}=\lambda_{k+1}a_{k+1}^{2}$, we obtain
	the claimed bound.
\end{proof}

Combining Lemma~\ref{lem:sapm-step-bound} with the optional stopping theorem, one obtains the following bound on the potential at the final iteration $K$ of the algorithm.

\begin{lemma}\label{lem:sapm-martingale}
	Let $K \le \Kmax$ be the iteration in which \Cref{alg:MS} returns and let
	 \[
	 \epsbar \ge \max_{k\le \Kmax} \crl*{\lambda_k a_k \vphi_k +  a_k \sigma_k^2 + 2R \delta_{k}}
	 \]
	 with probability 1. Then, under the assumptions of \Cref{lem:sapm-step-bound}, we have
	\begin{equation*}
		\Ex*{ A_{K} ( f(x_K) - f(\xopt) - \epsbar) + \frac{1}{6} \sum_{i \le K} \lambda_i A_i \norm{\hx_i - y_{i-1}}^2 } \le A_0 ( f(x_0) - f(\xopt) ) + R^2. 
	\end{equation*}
\end{lemma}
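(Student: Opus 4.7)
The plan is to combine the per-step bound of \Cref{lem:sapm-step-bound} with an optional stopping argument. First, I would rewrite the three error terms on the right-hand side of \Cref{lem:sapm-step-bound} in a uniform way: using the identity $A_{k+1} = \lambda_{k+1} a_{k+1}^2$ (which follows from the quadratic formula defining $a_{k+1}$) and the hypothesis on $\epsbar$, I factor out $a_{k+1}$ to obtain
\[
\lambda_{k+1} a_{k+1}^2 \vphi_{k+1} + a_{k+1}^2 \sigma_{k+1}^2 + 2R a_{k+1} \delta_{k+1} = a_{k+1}\prn*{\lambda_{k+1} a_{k+1} \vphi_{k+1} + a_{k+1}\sigma_{k+1}^2 + 2R \delta_{k+1}} \le (A_{k+1}-A_k) \epsbar,
\]
where the last step uses $a_{k+1} = A_{k+1}-A_k$.

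Next I would define the potential
\[
\Phi_k \defeq A_k\prn*{f(x_k)-f(\xopt)-\epsbar} + \norm{v_k-\xopt}^2 + \frac{1}{6}\sum_{i\le k}\lambda_i A_i\norm{\hx_i - y_{i-1}}^2.
\]
Moving the $A_{k+1}\epsbar$ term produced by the rewriting into the $(f(x_{k+1})-f(\xopt))$ term, and correspondingly subtracting $A_k\epsbar$ to maintain balance (which works precisely because $A_{k+1}\epsbar - A_k\epsbar = (A_{k+1}-A_k)\epsbar$), \Cref{lem:sapm-step-bound} rewrites as $\Ex*{\Phi_{k+1}\mid \filt} \le \Phi_k$. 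Thus $\crl{\Phi_k}$ is a supermartingale with respect to $\crl{\filt}$.

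To conclude, I would verify that $K$, the iteration at which \Cref{alg:MS} returns, is a bounded stopping time with respect to $\crl{\filt}$: indeed, $K\le \Kmax$ deterministically, and the event $\crl{K\le k}$ is determined by $A_0,\ldots,A_k$, each of which is $\filt$-measurable since $\zeta_k$ (the randomness in $\nextLambda$) is included in the filtration. Applying the optional stopping theorem yields $\E \Phi_K \le \Phi_0$, and evaluating $\Phi_0$ using $v_0 = x_0$ and $\xopt \in \ball_R(x_0)$ gives $\Phi_0 \le A_0(f(x_0)-f(\xopt)) + R^2$ (the $-A_0 \epsbar$ term is nonpositive and can be discarded, as can the empty sum). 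Dropping the nonnegative $\norm{v_K-\xopt}^2$ term from $\E\Phi_K$ yields the stated bound. The only subtle point is the measurability check that makes $K$ a valid stopping time; apart from that, the argument is a straightforward rearrangement plus optional stopping.
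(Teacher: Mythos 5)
Your proof is correct and matches the paper's argument almost line by line: the potential $\Phi_k$ you define is exactly the supermartingale $M_k$ in the paper, the rearrangement of the error terms via $a_{k+1}=A_{k+1}-A_k$ and the definition of $\epsbar$ is the same, and the conclusion via optional stopping with the same evaluation of $\Phi_0$ and the dropping of the nonnegative $\norm{v_K-\xopt}^2$ term is identical. The only (useful) addition is that you spell out the $\filt$-measurability of $K$, which the paper asserts without elaboration.
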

\begin{proof}

Define $M_k = A_k ( f(x_k) - f(\xopt) - \epsbar) + \frac{1}{6} \sum_{i \le k} \lambda_i A_i \norm{\hx_i - y_{i-1}}^2 + \norm{v_k - \xopt}^2$ for all $k\in[K]$.  We argue that it is a supermartingale adapted to filtration $\filt$. Clearly, $\E[|M_k|]<\infty$ for each $k$ due to boundedness of $f,K,\lambda_i$ and $A_i$. It therefore suffices to show that $\E[M_{k+1}|F_{k}]\le M_{k}$ for all $k+1\in  [K]$. By \Cref{lem:sapm-step-bound} we have
\begin{align*}
\E\left[M_{k+1}|F_{k}\right] 
& \le A_{k} (f(x_{k}) - f(\xopt)) +  \norm{v_{k}-\xopt}^2 - \frac{1}{6} \lambda_{k+1} A_{k+1} \norm{\hx_{k+1}-y_k}^2 
		\\ &  \ \hphantom{\le}+ \lambda_{k+1}a_{k+1}^2\vphi_{k+1} +  a_{k+1}^2 \sigma_{k+1}^2 + 2R a_{k+1}  \delta_{k+1} - A_{k+1}\epsbar+\frac{1}{6}\sum_{i \le k+1} \lambda_i A_i \norm{\hx_{i+1} - y_{i}}^2
		\\& \le A_{k} (f(x_{k}) - f(\xopt)-\epsbar ) +  \norm{v_{k}-\xopt}^2 +\frac{1}{6}\sum_{i \le k} \lambda_i A_i \norm{\hx_{i+1} - y_{i}}^2 = M_k,
\end{align*}
where the second inequality used the definition of $\epsbar$ and $A_{k+1}=A_k+a_{k+1}$ for the second inequality. This completes the proof that $M_k$ being a supermartingale adapted to filtration $\filt$.

Now note $K$ is a stopping time adapted to $\filt$ as it only depends on $A_{k+1}$. Also, $K$ as a random variable is finitely bounded by $\Kmax$ with probability $1$. Thus,  by optional stopping theorem for supermartingale~\cite{grimmett2020probability}, we have 
\[
\E M_{K}\le M_0 = A_0 ( f(x_0) - f(\xopt) - \epsbar) + \norm{v_0 - \xopt}^2\le  A_0 ( f(x_0) - f(\xopt) ) + R^2.
\]
\end{proof}

Further, following a similar argument to~\citet{carmon2020acceleration,carmon2021thinking}, we obtain a deterministic growth bound on the coefficients $A_k$.

\begin{lemma}\label{lem:sapm-deterministic-Ak-bound}
	Fix $k > 0$ and let 
	\begin{equation*}
		T_{\lambda} = \sum_{i\le k} \indic{\lambda_i <  2\lambda_{\min}}~~\mbox{and}~~T_r = \sum_{i \le k} \indic{\norm{\hx_i - y_{i-1}} \ge 3r/4}
	\end{equation*}
	count the number of times $\lambda_i < 2\lambda_{\min}$ and $\norm{\hx_i - y_{i-1}} \ge 3r/4$, respectively. Then, the following holds with probability 1,
	\begin{equation*}
		\frac{1}{A_k}\prn*{ 9 R^2 - \frac{1}{6} \sum_{i \le k}\lambda_i A_i \norm{\hx_i - y_{i-1}}^2 } \le O\prn*{\min\crl*{ \frac{\lambda_{\min}R^2}{T_\lambda^2}, \frac{R^2}{A_0}\exp\prn*{-\Omega(1)\frac{r^{2/3}}{R^{2/3}} T_r }}}.
	\end{equation*}
\end{lemma}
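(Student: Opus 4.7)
The plan is to prove the two bounds in the minimum separately, with each reducing to a lower bound on $A_k$ via the elementary identity
\[
\sqrt{A_k}-\sqrt{A_0}\;\ge\;\tfrac{1}{2}\sum_{i\le k}\lambda_i^{-1/2},
\]
which follows from $a_i=\sqrt{A_i/\lambda_i}$ (since $\lambda_i a_i^2 = A_i$) together with $\sqrt{A_i}-\sqrt{A_{i-1}}=a_i/(\sqrt{A_i}+\sqrt{A_{i-1}})\ge a_i/(2\sqrt{A_i})$. The numerator of the expression to be bounded is at most $9R^2$, so any lower bound on $A_k$ directly yields an upper bound on the left-hand side.

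For the first term in the minimum, I would restrict the identity's sum to the $T_\lambda$ indices with $\lambda_i<2\lambda_{\min}$, each contributing at least $(2\lambda_{\min})^{-1/2}$. This yields $A_k\ge T_\lambda^2/(8\lambda_{\min})$ and hence $9R^2/A_k = O(\lambda_{\min}R^2/T_\lambda^2)$.

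For the second term, let $s_i\defeq\lambda_iA_i\norm{\hx_i-y_{i-1}}^2$, $S\defeq\sum_{i\le k}s_i$, and $I\defeq\{i\le k:\norm{\hx_i-y_{i-1}}\ge 3r/4\}$. If $S\ge 54R^2$ then the numerator is non-positive and the bound is trivial, so assume $S<54R^2$. For $i\in I$ we have $s_i\ge \tfrac{9}{16}\lambda_iA_ir^2$, and since $\lambda_iA_i=A_i^2/a_i^2\ge 1$ (using $a_i\le A_i$), also $s_i\ge 9r^2/16$. The first inequality rearranges to $\lambda_i^{-1/2}\ge 3r\sqrt{A_i}/(4\sqrt{s_i})$; substituting into the identity and rearranging gives $\sqrt{A_i}/\sqrt{A_{i-1}}\ge 1/(1-3r/(8\sqrt{s_i}))$, where the second $s_i$-bound guarantees the denominator is at least $1/2$. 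Taking logarithms, summing over $i\in I$, and applying Jensen's inequality to the convex map $y\mapsto y^{-1/2}$ yields
\[
\log(A_k/A_0)\;\ge\;\tfrac{3r}{4}\sum_{i\in I}s_i^{-1/2}\;\ge\;\tfrac{3r\,T_r^{3/2}}{4\sqrt{S}}\;\ge\;\Omega\!\left(\tfrac{r\,T_r^{3/2}}{R}\right).
\]

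The main subtlety is that the derived exponent $rT_r^{3/2}/R$ has a different shape than the target exponent $r^{2/3}T_r/R^{2/3}$. A direct comparison shows $rT_r^{3/2}/R\gtrsim r^{2/3}T_r/R^{2/3}$ precisely when $T_r\gtrsim(R/r)^{2/3}$, in which regime the derived bound dominates the target exponent and the claim follows. In the complementary regime $T_r\lesssim(R/r)^{2/3}$ the target exponent is $O(1)$, so the target bound is already $\Omega(R^2/A_0)$ and the trivial estimate $R^2/A_k\le R^2/A_0$ (using $A_k\ge A_0$) suffices. Combining the two regimes gives the exponential bound and completes the proof.
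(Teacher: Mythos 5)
Your proof is correct, but it handles the exponential ($T_r$) branch by a genuinely different route than the paper. For the $T_\lambda$ branch you match the paper exactly: both use the elementary identity $\sqrt{A_k}-\sqrt{A_0}\ge\frac12\sum_{i\le k}\lambda_i^{-1/2}$ (Lemma~23 of \cite{carmon2020acceleration}) restricted to the $T_\lambda$ small-$\lambda$ indices. For the $T_r$ branch, the paper applies the reverse H\"older inequality with exponent $2/3$ (taking $u_i=\sqrt{A_i}$, $v_i=1/\sqrt{A_i\lambda_i}$) to derive the recursion $A_t^{1/3}\ge \Omega(1)\,\frac{r^{2/3}}{R^{2/3}}\sum_{i\in\mathcal I_{r,t}}A_i^{1/3}$, then invokes an auxiliary geometric-growth lemma (Lemma~32 of \cite{carmon2020acceleration}) to conclude $A_k^{1/3}\ge\exp\bigl(\Omega(1)\tfrac{r^{2/3}}{R^{2/3}}T_r\bigr)A_0^{1/3}$ directly, which matches the target exponent with no case split. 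Your argument instead bounds each multiplicative step $\log(A_i/A_{i-1})$ by $\tfrac{3r}{4\sqrt{s_i}}$ (via $-\log(1-x)\ge x$), then uses Jensen on $y\mapsto y^{-1/2}$ together with the budget $\sum_{i\in I} s_i\le S<54R^2$ to obtain $\log(A_k/A_0)\ge\Omega(rT_r^{3/2}/R)$ --- a differently shaped exponent that you then reconcile with the target by the two-regime observation. Both approaches are sound: yours is more self-contained (no appeal to reverse H\"older or the external Lemma~32) and even produces a strictly sharper bound when $T_r\gtrsim(R/r)^{2/3}$, at the cost of the regime split and a slightly less direct match to the stated form; the paper's approach is more opaque but lands exactly on the claimed exponent in one pass.
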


\begin{proof}
	When $9 R^2 - \frac{1}{6} \sum_{i \le k}\lambda_i A_i \norm{\hx_i - y_{i-1}}^2 \le 0$, the inequality holds true trivially. Thus, we only consider the case when $\frac{1}{6} \sum_{i \le k}\lambda_i A_i \norm{\hx_i - y_{i-1}}^2 \le 9R^2$. Consider the following iterate index subsets
	\[
	\mathcal{I}_\lambda\defeq \{i\le k:\lambda_i<2\lambda_{\min}\}
	\] 
	and, for  $t\le k$
	 \[
	 \mathcal{I}_{r,t}\defeq \{i\le t:\norm{\hat{x}_i-y_{i-1}}\ge 3r/4\}.
	 \] 
	 
	 We first show that
	\begin{equation}
	\frac{1}{A_k}\prn*{ 9 R^2 - \frac{1}{6} \sum_{i \le k}\lambda_i A_i \norm{\hx_i - y_{i-1}}^2 } \le O\prn*{\frac{R^2}{A_0}\exp\prn*{-\Omega(1)\frac{r^{2/3}}{R^{2/3}} T_r }}.\label{eq:growth-condition-1}
\end{equation}
	To see this, observe that for any $t\le k$ by definition of $\mathcal{I}_{r,t}$,
	\[
	\frac{1}{6} \sum_{i \in \mathcal{I}_{r,t}}\lambda_i A_i \cdot \left(\frac{9}{16}r^2\right)\le \frac{1}{6} \sum_{i \le t}\lambda_i A_i \norm{\hx_i - y_{i-1}}^2 \le 9R^2,
	\]
	which by rearranging terms implies
	\begin{align}
	\sum_{i \in \mathcal{I}_{r,t}}\lambda_i A_i\le \frac{96R^2}{r^2}.\label{eq:potential-bound-rearranged}
	\end{align}
	Note the reverse H\"older's inequality with $p=2/3$ states that for any $u,v\in \R^d_{>0}$,
	\[
	\langle u,v \rangle\ge \left(\sum_{i\in[d]}u_i^{2/3}\right)^{3/2}\cdot \left(\sum_{i\in[d]}v_i^{-2}\right)^{-1/2}.
	\] 
	We have 
\begin{flalign*}
	\sqrt{A_t} \overge{(i)} \frac{1}{2} \sum_{i \in \mathcal{I}_{r,t}} \frac{1}{\sqrt{\lambda_i}} 
	&\overge{(ii)} \frac{1}{2} \left( \sum_{i \in \mathcal{I}_{r,t}} \left(\sqrt{A_i}\right)^{2/3} \right)^{3/2} \cdot \left( \sum_{i \in\mathcal{I}_{r,t}} \left( \frac{1}{\sqrt{A_i \lambda_i}} \right)^{-2} \right)^{-1/2} \\& \overge{(iii)} 
	\frac{r}{8\sqrt{6}R} \cdot \left( \sum_{i \in \mathcal{I}_{r,t}} \left(\sqrt{A_i}\right)^{2/3} \right)^{3/2} ,
\end{flalign*}
where we used $(i)$ Lemma 23 of~\cite{carmon2020acceleration} and $\mathcal{I}_{r,t}\subseteq [t]$, $(ii)$ the reverse H\"older's inequality with $u_i=\sqrt{A_i}$, and $v_i=1/\sqrt{A_i\lambda_i}$, and $(iii)$ the bound \eqref{eq:potential-bound-rearranged}. Rearranging, we have
\begin{equation}
\label{eq:At_bound_1}
A_t^{1/3} \geq  \frac{r^{2/3}}{4\sqrt[3]{6} R^{2/3}}  \left( \sum_{i \in \mathcal{I}_{r,t}} A_i^{1/3}\right), ~~\text{for all}~~t\le k,
\end{equation}
which by applying Lemma 32 of~\cite{carmon2020acceleration} and noting that $T_r = \abs*{\mc{I}_{r,k}}$ gives
\[
A_k^{1/3} \geq  \exp\left(\frac{r^{2/3}}{4\sqrt[3]{6} R^{2/3}}T_r\right)A_0^{1/3} ,
\]
and thus 
\[
	\frac{1}{A_k}\prn*{ 9 R^2 - \frac{1}{6} \sum_{i \le k}\lambda_i A_i \norm{\hx_i - y_{i-1}}^2 } \le \frac{9R^2}{A_k} \le  O\prn*{\frac{R^2}{A_0}\exp\prn*{-\Omega(1)\frac{r^{2/3}}{R^{2/3}} T_r }}.
	\]
	
Next, we show that
\begin{equation}\frac{1}{A_k}\prn*{ 9 R^2 - \frac{1}{6} \sum_{i \le k}\lambda_i A_i \norm{\hx_i - y_{i-1}}^2 } \le O\prn*{ \frac{\lambda_{\min}R^2}{T_\lambda^2}}.\label{eq:growth-condition-2}
\end{equation}
Using Lemma 23 of~\cite{carmon2020acceleration} again, along with  $\mathcal{I}_\lambda\subseteq[k]$ and $\abs*{\mathcal{I}_\lambda} = T_\lambda$, we have 
\[\sqrt{A_k} \ge  \frac{1}{2} \sum_{i \in \mathcal{I}_{\lambda}} \frac{1}{\sqrt{\lambda_i}} \ge\frac{T_\lambda}{2\sqrt{2\lambda_{\min}}}.\]
Rearranging the terms, we see that $1/A_k \le O(\lambda_{\min}/T_\lambda^2)$ as desired.

Combining Equations~\eqref{eq:growth-condition-1} and~\eqref{eq:growth-condition-2} we obtain the claimed bound.
\end{proof}

Putting these pieces together gives \Cref{prop:sapm}, which we prove below.

\propSAPM*
\begin{proof}
	First, let us prove correctness of the algorithm. The settings of $\vphi_k,\delta_k$ and $\sigma_k$ in the proposition guarantee that
	\begin{equation*}
		\max_{k\le \Kmax} \crl*{\lambda_k a_k \vphi_k +  a_k \sigma_k^2 + 2R \delta_{k}} = \frac{\epsilon}{20}.
	\end{equation*}
	Therefore, \Cref{lem:sapm-martingale} with $\epsbar = \epsilon/20 \le R^2 / (2.2\Amax)$ yields
	\begin{equation*}
		\Ex*{ A_{K} ( f(x_K) - f(\xopt)) + \frac{1}{6} \sum_{i \le K} \lambda_i A_i \norm{\hx_i - y_{i-1}}^2 } \le R^2 +  \epsbar \cdot \E A_K + A_0 ( f(x_0) - f(\xopt) ).
	\end{equation*}
	
	Note that $A_{K-1}\le \Amax$ by definition. Therefore, $\lambda_{\min} \ge \frac{1}{\Amax}$ implies that
	\begin{equation*}
		a_K = \frac{1}{2}\sqrt{\frac{1}{\lambda_K^2}+\frac{4A_{K-1}}{\lambda_K}} \le \frac{\sqrt{5}}{2}\Amax,
	\end{equation*}
	and therefore $A_K \le  2.2\Amax \le R^2/\epsbar$ with probability 1. Moreover the choice of $A_0=R/G$ and the fact that $f$ is $G$ Lipschitz imply that $A_0 ( f(x_0) - f(\xopt) ) \le R^2$. \notarxiv{Therefore,
	$\Ex*{ A_{K} ( f(x_K) - f(\xopt)) + \frac{1}{6} \sum_{i \le K} \lambda_i A_i \norm{\hx_i - y_{i-1}}^2 }$ is at most $3R^2$.}
	\arxiv{
	Therefore,
	\[\Ex*{ A_{K} ( f(x_K) - f(\xopt)) + \frac{1}{6} \sum_{i \le K} \lambda_i A_i \norm{\hx_i - y_{i-1}}^2 }\le 3R^2.\]
	}
	Since the term in the expectation is non-negative, we conclude that  with probability  at least $2/3$ it is bounded by $9R^2$, which implies 
	\begin{equation*}
		f(x_K) - f(\xopt) \le \frac{1}{A_K} \prn*{ 9 R^2 - \frac{1}{6} \sum_{i \le K}\lambda_i A_i \norm{\hx_i - y_{i-1}}^2 }.
	\end{equation*}
	If $A_K \ge \Amax=9R^2 / \epsilon$ we are done. Otherwise, $K=\Kmax$ and by the assumption on $\nextLambda$ we have $T_\lambda + T_r \ge  \Kmax$  for $T_\lambda$ and $T_r$ defined in \Cref{lem:sapm-deterministic-Ak-bound}. Therefore, either $T_r \ge \Kmax/2$ or $T_\lambda \ge \Kmax/2$, and in either case taking $\Kmax = O\prn*{ \prn*{\frac{R}{r}}^{2/3} \log\prn*{\frac{GR}{\epsilon}} + \sqrt{\frac{\lambda_{\min} R^2}{\epsilon} }}$ and applying \Cref{lem:sapm-deterministic-Ak-bound} yields $f(x_K) - f(\xopt) \le \epsilon$ and establishing correctness.
	
	Next, let us prove the stated complexity bound. We note each step of computing $x_{k}$ in Line~\ref{line:sapm-x} requires $O(G^2/\lambda_{k}\varphi_k)$ queries via~\Cref{prop:epoch-sgd} and the definition~\eqref{eq:app-prox-def} of the approximate proximal mapping. Moreover, by \Cref{coro:gradest} computing $g_{k}$ in Line~\ref{line:sapm-v} requires \[O\prn*{\log \prn*{\frac{G}{\min\{\delta_k,\sigma_k\}}}+\frac{G^2}{\sigma_k^2}\log^2\prn*{\frac{G}{\min\{\delta_k,\sigma_k\}}}}\] queries in expectation. Summing over $k\in [K]$ and substituting $\vphi_k,\delta_k,\sigma_k$, we obtain 
	\begin{align*}
	\E\Nquery & = \sum_{k\in[K]}	O\prn*{\frac{G^2}{\lambda_{k}\varphi_k}} + \sum_{k\in[K]}O\prn*{\log \prn*{\frac{G}{\min\{\delta_k,\sigma_k\}}}+\frac{G^2}{\sigma_k^2}\log^2\prn*{\frac{G}{\min\{\delta_k,\sigma_k\}}}}\\
	& = \sum_{k\in[K]}O\prn*{\log\prn*{\frac{GR}{\eps}} + \frac{a_k G^2}{\eps}\log^2\prn*{\frac{GR}{\eps}}} =O\prn*{\log\prn*{\frac{GR}{\eps}}\cdot K + \frac{A_K G^2}{\eps}\log^2\prn*{\frac{GR}{\eps}}}\\
	& = O\prn*{\Kmax\log\prn*{\frac{GR}{\eps}} + \frac{G^2R^2}{\eps^2}\log^2\prn*{\frac{GR}{\eps}}},
	\end{align*}
where we have used $A_K = O(\Amax) = O(R^2/\eps)$ once more.
\end{proof}

\subsection{Minimizing the maximum of $N$ functions}\label{app:mtm-main}

\arxiv{
In this section, we first revisit the problem setup of minimizing the maximum of $N$ functions and reintroduce key notation. Then we provide the procedure of estimating the gradient of the softmax using rejection sampling in \Cref{alg:fsm-grad} and prove its guarantees in \Cref{lem:fsm-grad}. Next, we bound the query complexity of \cref{line:sapm-x,line:sapm-v} 
of \Cref{alg:MS} in \Cref{lem:BROO-complexity-y,lem:BROO-complexity-g} respectively. Citing~\cite{carmon2021thinking}, we provide a bisection procedure in Algorithm~\ref{alg:bisecx} and state its guarantee in Lemma~\ref{prop:bisecx}. To run this bisection procedure we use the Ball Regularization Optimization Oracle (\BOO) implementation of~\cite{carmon2021thinking}; see \Cref{def:broo} and \Cref{lem:sgd-broo}. Combining these components with the previous developments in \Cref{sec:mtm-sapm}, we prove \Cref{thm:min-the-max}.
}
\notarxiv{
In this section, we first revisit the problem setup of minimizing the maximum of $N$ functions and reintroduce key notation. Then we provide the procedure of estimating the gradient of the softmax using rejection sampling in \Cref{alg:fsm-grad} and prove its guarantees in \Cref{lem:fsm-grad}. Next, we bound the query complexity of \cref{line:sapm-x,line:sapm-v} 
of \Cref{alg:MS} in \Cref{lem:BROO-complexity-y,lem:BROO-complexity-g} respectively. Citing~\cite{carmon2021thinking}, we provide a bisection procedure in Algorithm~\ref{alg:bisecx} and state its guarantee in Lemma~\ref{prop:bisecx}. For this procedure \sidford{maybe clarify what ``this procedure'' refers to? e.g. ``to implement this bisection procedure''} we use the Ball Regularization Optimization Oracle (\BOO) implementation of~\cite{carmon2021thinking}; see \Cref{def:broo} and \Cref{lem:sgd-broo}. Combining these components with the developments of the previous subsection, we prove \Cref{thm:min-the-max}.\sidford{maybe include the sub sub section refs? don't have a strong view.}
}

\paragraph{Notation.} 
Consider the problem of approximately minimizing the maximum of $N$ convex functions: given $\fhat[i]$ such that for every $i\in [N]$ the function $\fhat[i]: \R^d \to \R$ is convex, $G$-Lipschitz, with a subgradient oracle $\grad \fhat[i]$ and a target accuracy $\epsilon$ we wish to
\begin{equation}\label{eq:problem}
\mbox{find a point $x$ such that }~
\fmax(x) - \inf_{\xopt \in \R^d} \fmax(\xopt) \le \epsilon
	~~\mbox{where}~~
	\fmax(x) \defeq \max_{i\in[N]} \fhat[i](x) ~.
\end{equation}

A common approach to solving this problem is to consider the following ``softmax'' approximation of $\fmax$,
\begin{equation}\label{eq:softmax}
	\fsm(x) \defeq 
	\epsilon' \log\prn*{\sum_{i\in[N]} e^{\fhat[i](x) / \epsilon'}},~~\mbox{where }
	\epsilon' = \frac{\epsilon}{2\log N}.
\end{equation}
It is straightforward to show that $0\le \fsm(x)-\fmax(x) \le \frac{\epsilon}{2}$
for all $x\in\R^d$, and that the subgradients of $\fsm$ are of the form
\begin{equation}\label{eq:softmax-prob}
	\grad \fsm(x) = \sum_{i\in [N]} p_i(x) \grad \fhat[i](x)~~\text{where}~~p_i(x) = \frac{e^{\fhat[i](x)/\epsilon'}}{\sum_{j\in [N]} e^{\fhat[j](x)/\epsilon'}}
\end{equation}
for $\grad \fhat[i](x)\in \del \fhat[i](x)$ for all $i\in[N]$. 
The small radius
\begin{equation*}
	\reps \defeq \frac{\epsilon'}{G} = \frac{\eps}{2G\log N}
\end{equation*}
plays a key role in our analysis, since---as we now discuss in detail---this is a domain size where we can efficiently minimize $\fsm$ using stochastic gradient methods.

\subsubsection{Gradient estimation via rejection sampling}

\newcommand{\accprob}{q_{\mathrm{accept}}}

We first construct the gradient estimator of $\fsm(x)$ using rejection sampling. The high-level idea of the technique is as follows. Given a ball $\ball_{\reps}(\bx)$ where $\reps = \eps'/G$, Lipschitz continuity of $\fhat[i]$ implies 
\begin{equation}\label{eq:ball-approx-bound}
	\frac{\abs*{\fhat[i](x) - \fhat[i](\bx)}}{\eps'} \le \frac{G\reps}{\eps'} = 1.
\end{equation}
 As a result, we can perform a full data pass~\emph{once} to compute $p(\bx)$, and use it to sample from $p(x)$ at nearby points  $x\in\ball_{\reps}(\bx)$ via rejection sampling. In particular, we draw $i$ from $p(\bx)$ and accept it with probability  $\accprob=\exp(\fhat[i](x)/\eps'-\fhat[i](\bx)/\eps'-1)$, and otherwise repeat the process. The the bound~\eqref{eq:ball-approx-bound} guarantees that $\accprob < 1$ (so it is indeed a probably), and therefore the output $i$ has distribution $p$. The bound~\eqref{eq:ball-approx-bound} also guarantees that $\accprob = \Omega(1)$ and consequently that the query complexity of the procedure is $O(1)$. We sate the procedure formally in \Cref{alg:fsm-grad} and give its guarantees in~\Cref{lem:fsm-grad}.
 
\begin{algorithm}
	\DontPrintSemicolon
	\KwInput{Functions $\fhat[i]$, pre-computed  $\fhat[i](\bx)$ and $\bar{p}_i = p_i(\bx)$ for $i\in[N]$, query point $x\in\ball_{\reps}(\bx)$.}
	\KwOutput{An unbiased estimator for $\grad \fsm$ with norm at most $G$.}
	\SetKwFor{Loop}{Loop}{}{}
	\Loop{}{
	Sample $i$ from $\bar{p}$\;
	Let $\accprob=\exp(\fhat[i](x)/\eps'-\fhat[i](\bx)/\eps'-1)$\;
	Draw $A\sim \bernoulli(\accprob)$\;
	\lIf{$A=1$}{\Return $\nabla \fhat[i](x)$}
	}
	\caption{$\gradestfsm(\{\fhat[i]\},\{\bar{p}_i\},\bx,x)$}	\label{alg:fsm-grad}
\end{algorithm}

\begin{lemma}[Rejection sampling]\label{lem:fsm-grad}
Given $G$-Lipschitz functions $\fhat[i]$ and $\bar{p} = p(\bx)$, $\forall i\in[N]$, the procedure $\gradestfsm$ with input $x\in\ball_{\reps}(\bx)$ returns a vector $\gest[\fsm](x)$ such that $\E\gest[\fsm](x)\in \del \fsm(x)$ and $\norm{\gest[\fsm](x)}\le G$. The procedure has complexity $\E \NqueryCustom{\fhat[i]}=O(1)$ and $\NqueryCustom{\del \fhat[i]}=1$.
\end{lemma}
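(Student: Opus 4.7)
The plan is to verify the four claims in order: (i) the acceptance probability $\accprob$ is a valid probability, (ii) conditional on acceptance, the returned index $i$ has distribution $p(x)$, (iii) this gives the claimed unbiasedness and norm bound on $\gest[\fsm](x)$, and (iv) the expected acceptance probability is $\Omega(1)$, bounding the expected cost.

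First, since $\|x - \bx\| \le \reps$ and $\fhat[i]$ is $G$-Lipschitz, we have $|\fhat[i](x) - \fhat[i](\bx)|/\eps' \le G\reps/\eps' = 1$ for every $i$, so the exponent $\fhat[i](x)/\eps' - \fhat[i](\bx)/\eps' - 1$ lies in $[-2,0]$ and thus $\accprob \in [e^{-2}, 1]$. In particular, each loop iteration returns with probability at least $e^{-2}$, so the number of iterations is stochastically dominated by a geometric random variable with mean $e^2 = O(1)$.

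Next, on a single iteration the probability of accepting and returning index $i$ is $\bar p_i \exp(\fhat[i](x)/\eps' - \fhat[i](\bx)/\eps' - 1)$. Plugging in $\bar p_i = e^{\fhat[i](\bx)/\eps'}/\sum_j e^{\fhat[j](\bx)/\eps'}$, the $e^{\fhat[i](\bx)/\eps'}$ factors cancel and the per-iteration acceptance probability summed over $i$ equals
\[
\frac{1}{e} \cdot \frac{\sum_i e^{\fhat[i](x)/\eps'}}{\sum_j e^{\fhat[j](\bx)/\eps'}},
\]
which is a common constant in $i$. Therefore, conditional on acceptance, index $i$ is returned with probability proportional to $e^{\fhat[i](x)/\eps'}$, i.e., exactly $p_i(x)$ as defined in~\eqref{eq:softmax-prob}. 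Hence
\[
\E \gest[\fsm](x) = \sum_{i\in[N]} p_i(x)\, \grad \fhat[i](x) \in \del \fsm(x),
\]
and since $\gest[\fsm](x)$ is always a subgradient of some $G$-Lipschitz $\fhat[i]$, we have $\|\gest[\fsm](x)\| \le G$ deterministically.

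Finally, for the complexity, each loop iteration evaluates $\fhat[i](x)$ a constant number of times (once, to form $\accprob$; note $\fhat[i](\bx)$ and $\bar p_i$ are provided as input) and does no subgradient evaluations. Since the expected number of iterations is $O(1)$, the expected function evaluation cost is $O(1)$. A single subgradient evaluation $\grad \fhat[i](x)$ is performed only upon acceptance, giving $\NqueryCustom{\del \fhat[i]} = 1$. The main (very mild) obstacle is the bookkeeping in step (ii), ensuring the cancellation identifying the conditional distribution with $p(x)$ is correct; everything else is immediate from the Lipschitz bound on $|\fhat[i](x)-\fhat[i](\bx)|/\eps'$.
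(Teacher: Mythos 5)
Your proof is correct and follows essentially the same approach as the paper: establish $\accprob\in[e^{-2},1]$ from the Lipschitz bound and ball radius, observe that the per-iteration probability of accepting index $i$ is proportional to $e^{\fhat[i](x)/\eps'}$ (the $\bar p_i$ and $e^{-\fhat[i](\bx)/\eps'}$ factors combine and the constants cancel), so the output index has distribution $p(x)$, and bound the expected number of rounds by $e^2$. Your explicit computation of the per-iteration acceptance probability is a slightly more careful bookkeeping of the same cancellation the paper invokes.
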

\begin{proof}
We first prove correctness. Note that $G$-Lipschitz continuity of the $\fhat[i]$'s along with $\norm{x-\bx}\le \reps = \eps'/G$ guarantees that, $\fhat[i](x)\eps'-\fhat[i](\bx)/\eps'\le1$ and therefore $\accprob \le 1$ is a valid probability of every value of $i$. Therefore, the probability of sample and accepting $i$ is proportional to 
\[
\bar{p}_i\cdot \exp\prn*{\frac{\fhat[i](x)}{\eps'}-\frac{\fhat[i](\bx)}{\eps'}}\propto \exp\left(\frac{\fhat[i](x)}{\eps'}\right)\propto p_i(x),
\]
which proves the correctness of the sampling distribution for the output $i$ and, via~\cref{eq:softmax-prob}, the unbiasedness of the gradient estimator. The norm bound on the output of the procedure is immediate from Lipschitzness of  $\fhat[i]$.

Next, we prove the complexity bound. Clearly, the algorithm only queries a single subgradient at termination. To bound the number of function value queries, note that Lipschitz continuity and the ball radius imply $\fhat[i](x)\eps'-\fhat[i](\bx)/\eps' \ge -1$, and therefore the probability of acceptance is at least $e^{-2}$. Consequently, the expected number of iterations before accepting a sample is at most $e^2=O(1)$.
\end{proof}

\subsubsection{Estimating the proximal mapping and Moreau envelope gradient}

Using gradient estimator for $\gest[\fsm]$ developed above, we can implement \cref{line:sapm-x,line:sapm-v} in \Cref{alg:MS}, provided that the true proximal bound $\hx=\prox_{\lambda,\fsm}(y)$ satisfies $\norm{\hx-y}\le r=\reps$. We begin the implementation of the approximate proximal step in \cref{line:sapm-x}, which we obtain by directly applying $\epochSGD$. The following is an immediate consequence of \Cref{lem:fsm-grad} and \Cref{prop:epoch-sgd}.

\begin{lemma}\label{lem:BROO-complexity-y}
	Let $\fhat[i]$ be convex and $G$-Lipschitz for all $i\in[N]$, let $\epsilon, \varphi > 0$ and $\reps =  \eps/(2\log G N)$. 
	For any $\bx\in\R^d$ and $\lambda > 0$, if $\prox_{\fsm,\lambda}(\bx)\in\ball_{\reps}(\bx)$ then $\epochSGD(\gest[\fsm], \frac{\lambda}{2}\norm{\cdot-\bx}, \lambda, \xset \cap \ball_{\reps}(\bx), \ceil{16G^2/(\lambda \vphi)})$ (with $\gest[\fsm]$ implemented with \Cref{alg:fsm-grad}) outputs a valid point $\appProx_{\fsm,\lambda}^{\vphi}(\bx)$, and has complexity
\begin{equation*}%
		\E\NqueryCustom{\fhat[i]} = O\left(N + \frac{G^2}{\lambda\vphi}\right)~~\text{and}~~\NqueryCustom{\del\fhat[i]} = O\left(\frac{G^2}{\lambda\vphi}\right).
\end{equation*}
\end{lemma}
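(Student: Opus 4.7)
The plan is to directly combine the softmax gradient estimator guarantees of \Cref{lem:fsm-grad} with the epoch SGD convergence bound in \Cref{prop:epoch-sgd}, after a small preprocessing argument that reduces the regularized softmax minimization to a problem to which \Cref{ass:gest} applies on the small ball $\ball_{\reps}(\bx)$.

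First, I would verify correctness. Let $F(x) \defeq \fsm(x) + \frac{\lambda}{2}\norm{x - \bx}^2$, which is $\lambda$-strongly convex. By hypothesis $\prox_{\fsm,\lambda}(\bx) \in \ball_{\reps}(\bx)$, so the unique minimizer of $F$ over $\xset$ coincides with its minimizer over the restricted domain $\xset \cap \ball_{\reps}(\bx)$. Hence it suffices to bound the expected suboptimality of the epoch SGD iterate on this restricted domain. Second, I would invoke \Cref{lem:fsm-grad} on $\ball_{\reps}(\bx)$: the returned $\gest[\fsm](x)$ is an unbiased subgradient of $\fsm$ and has norm at most $G$ deterministically, so in particular $\E \norm{\gest[\fsm](x)}^2 \le G^2$. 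Thus $\fsm$ together with $\gest[\fsm]$ satisfies \Cref{ass:gest} on the restricted domain.

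Third, I would apply \Cref{prop:epoch-sgd} to $F = \fsm + \psi$ with $\psi(x) = \frac{\lambda}{2}\norm{x-\bx}^2$, $\mu = \lambda$, and iteration budget $T = \ceil{16 G^2/(\lambda \vphi)}$. This gives $\E F(x) - F(\xopt) \le 16 G^2/(\lambda T) \le \vphi$, which by the definition~\eqref{eq:app-prox-def} means the output is a valid $\appProx_{\fsm,\lambda}^{\vphi}(\bx)$.

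Finally, I would count queries. Before the epoch SGD loop we must compute $\bar{p}_i = p_i(\bx) = e^{\fhat[i](\bx)/\eps'}/\sum_j e^{\fhat[j](\bx)/\eps'}$, which requires one evaluation of each $\fhat[i](\bx)$ for a total of $N$ function-value queries; this is the only source of the additive $N$ term. Each of the $T = O(G^2/(\lambda\vphi))$ epoch SGD iterations makes a single call to $\gest[\fsm]$, which by \Cref{lem:fsm-grad} uses exactly one subgradient query of some $\fhat[i]$ and $O(1)$ function evaluations in expectation. Summing yields the stated bounds $\E\NqueryCustom{\fhat[i]} = O(N + G^2/(\lambda\vphi))$ and $\NqueryCustom{\del\fhat[i]} = O(G^2/(\lambda\vphi))$. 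I do not anticipate a substantive obstacle here; the only thing to be slightly careful about is that the ceiling in $T$ and the $\lceil \cdot \rceil$ in \Cref{prop:epoch-sgd}'s epoch schedule contribute only constants, and that the $O(N)$ preprocessing is truly one-time and is not repeated across epoch SGD iterations.
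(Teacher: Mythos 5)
Your proof is correct and follows precisely the route the paper intends. The paper offers no argument beyond the remark that the lemma is ``an immediate consequence of Lemma~\ref{lem:fsm-grad} and Proposition~\ref{prop:epoch-sgd}''; what you have written is exactly the expansion of that sentence, including the two points that actually require a moment of care: (i) since $\prox_{\fsm,\lambda}(\bx)\in\ball_{\reps}(\bx)$, restricting the feasible set to $\xset\cap\ball_{\reps}(\bx)$ preserves the minimizer of $F$ (and hence the epoch-SGD suboptimality bound transfers to the definition of $\appProx$), and (ii) the query accounting splits into a one-time $N$-query preprocessing to compute $p(\bx)$ versus per-iteration costs of exactly one subgradient query (deterministically, which is why the lemma states $\NqueryCustom{\del\fhat[i]}$ without an expectation) and $O(1)$ function queries in expectation. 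No gaps.
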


Similarly combining Lemma~\ref{lem:epoch-SGD} with Corollary~\ref{coro:gradest}, one can also obtain the following expected oracle complexity guarantee for estimating the Moreau envelope gradient .

\begin{lemma}\label{lem:BROO-complexity-g}
	Let $\fhat[i]$ be convex and $G$-Lipschitz for all $i\in[N]$, let $\sigma,\epsilon, \delta > 0$ and $\reps =  \eps/(2\log N\cdot G)$. 
	For any $\bx\in\R^d$ and $\lambda >0$,  if $\prox_{\fsm,\lambda}(\bx)\in\ball_{\reps}(\bx)$ then  $\hat{g}=\gradest(\gest[\fsm],\lambda,\bx,\delta,\sigma^2,\xset\cap\ball_{\reps}(\bx))$  (with $\gest[\fsm]$ implemented with \Cref{alg:fsm-grad}) is an estimator of the Moreau envelope gradient $\grad {\fsm}_{,\lambda}(\bx)$ with bias at most $\delta$ and expected square error at most $\sigma^2$. Its complexity is
\begin{equation*}%
\begin{aligned}
		\E\NqueryCustom{\fhat[i]}  & = O\left(N + \frac{G^2}{\sigma^2}\log^2\left(\frac{G}{\min\{\delta,\sigma\}}\right)+\log\left(\frac{G}{\min\{\delta,\sigma\}}\right)\right)\\
		\E\NqueryCustom{\del\fhat[i]}  & = O\left(\frac{G^2}{\sigma^2}\log^2\left(\frac{G}{\min\{\delta,\sigma\}}\right)+\log\left(\frac{G}{\min\{\delta,\sigma\}}\right)\right).
\end{aligned}
\end{equation*}
\end{lemma}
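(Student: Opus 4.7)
The proof will proceed by combining the guarantees of the Moreau envelope gradient estimator (Corollary \ref{coro:gradest}) with the rejection-sampling-based subgradient estimator for $\fsm$ (Lemma \ref{lem:fsm-grad}). The plan has three stages: establishing correctness on the restricted domain, invoking the general $\gradest$ guarantee to count calls to $\gest[\fsm]$, and converting that count into a cost in terms of individual function and subgradient evaluations.

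First I would handle correctness. The assumption $\prox_{\fsm,\lambda}(\bx)\in\ball_{\reps}(\bx)$ means that the minimizer of $\fsm(x)+\frac{\lambda}{2}\norm{x-\bx}^2$ over $\xset$ already lies in $\xset\cap\ball_{\reps}(\bx)$, so restricting the domain to this smaller set does not change the proximal point or the Moreau envelope gradient $\grad \fsm_{,\lambda}(\bx)$. Moreover, Lemma \ref{lem:fsm-grad} provides a valid subgradient estimator $\gest[\fsm]$ on $\ball_{\reps}(\bx)$ with $\E\gest[\fsm](x)\in\partial \fsm(x)$ and $\norm{\gest[\fsm](x)}\le G$ (hence $\E\norm{\gest[\fsm](x)}^2\le G^2$), so that Assumption \ref{ass:gest} is satisfied on the restricted domain with second moment bound $G^2$.

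Next, I would apply Corollary \ref{coro:gradest} directly. With the restricted domain and the $G$-bounded subgradient estimator $\gest[\fsm]$, the output $\hat g = \gradest(\gest[\fsm],\lambda,\bx,\delta,\sigma^2,\xset\cap\ball_{\reps}(\bx))$ satisfies $\norm{\E \hat g - \grad \fsm_{,\lambda}(\bx)}\le \delta$ and $\E\norm{\hat g - \grad \fsm_{,\lambda}(\bx)}^2\le \sigma^2$, and requires an expected number of $\gest[\fsm]$ calls of order
\[
\E\NqueryCustom{\gest[\fsm]} = O\!\left(\frac{G^2}{\sigma^2}\log^2\!\frac{G}{\min\{\delta,\sigma\}} + \log\frac{G}{\min\{\delta,\sigma\}}\right).
\]

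Finally, I would translate this bound into counts of $\fhat[i]$-evaluations and $\partial\fhat[i]$-evaluations using Lemma \ref{lem:fsm-grad}. Each invocation of Algorithm \ref{alg:fsm-grad} costs one subgradient query and an expected $O(1)$ function-value queries, and the precomputation of the base distribution $\bar p = p(\bx)$ required by Algorithm \ref{alg:fsm-grad} costs $N$ function-value evaluations once, at the start (to evaluate $\fhat[i](\bx)$ for all $i\in[N]$). By Wald's identity, multiplying the expected number of $\gest[\fsm]$ calls by the expected per-call cost and adding the one-time $O(N)$ preprocessing yields the stated $\E\NqueryCustom{\fhat[i]}$ bound, while the subgradient bound follows because each $\gest[\fsm]$ call uses exactly one subgradient query and no preprocessing subgradient queries are needed. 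No step poses a real obstacle; the only subtlety is to remember the one-time $O(N)$ preprocessing so that the $+N$ term in the function-evaluation bound is accounted for.
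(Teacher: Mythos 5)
Your proposal is correct and follows essentially the same logic the paper relies on (the paper's "proof" is just a one-line pointer to combining $\epochSGD$'s ODC property, \Cref{coro:gradest}, and \Cref{lem:fsm-grad}). You correctly identify the three key pieces: the restriction to $\xset\cap\ball_{\reps}(\bx)$ is harmless because the true proximal point already lies there; \Cref{lem:fsm-grad} gives a valid $\gest[\fsm]$ satisfying \Cref{ass:gest} on that restricted domain; and Wald's identity converts the expected $\gest[\fsm]$-call count from \Cref{coro:gradest} into per-oracle complexities, with the one-time $O(N)$ precomputation of $\bar p$ accounting for the $+N$ term in $\E\NqueryCustom{\fhat[i]}$.
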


\subsubsection{Implementing $\nextLambda$ via bisection}

\newcommand{\bisectionTarget}{\Delta}
\newcommand{\idealBisectionTarget}{\widehat{\Delta}}
\newcommand{\brooAcc}{\rho}

The third and final component in our algorithm is an implementation of the subroutine $\nextLambda$ in \cref{line:next-lambda} of \Cref{alg:MS} that guarantees the following things on $\lambda_{k+1}$ and $\hx_{k+1}=\prox_{\fsm,\lambda}(y_k)$: $(i)$ that $\norm{\hx_{k+1}-y_k}\le r$ and $(ii)$ either $\norm{\hx_{k+1}-y_k} \ge 3r/4$ or $\lambda < 2\lambda_{\min}$; we later set $r=\reps$ and  $\lambda_{\min}= \Otil{\epsilon / (\reps^{4/3} R^{2/3})}$ but for the development of the bisection procedure we keep them general. Our implementation of $\nextLambda$ is identical to the one in \cite{carmon2021thinking}, and we reproduce it here for completeness.

We start by introducing the notion of a \emph{Ball Regularization Optimization Oracle} (\BOO).
\begin{definition}[{\cite[][Definition 1]{carmon2021thinking}}]\label{def:broo}
	We say that a mapping $\oracles[\lambda,\brooAcc]{\cdot}:\xset \to \xset$ is a Ball Regularized Optimization Oracle of radius $r$ ($r$-\BOO) for $f$, if for every query point $\bx$, regularization parameter $\lambda$ and desired accuracy $\brooAcc$, it return $\tilde{x} = \oracles[\lambda,\brooAcc]{\bx}$ satisfying
\begin{equation}\label{eq:broo-req}
		f(\tilde{x})+\frac{\lambda}{2}\|\tilde{x}-\bx\|^2\le \min_{x \in \ball_{r}(\bx)\cap \xset} \crl*{
		f(x) + \frac{\lambda}{2} \norm{x-\bx}^2}+\frac{\lambda}{2}\brooAcc^2.
\end{equation}
\end{definition}

While a \BOO is quite similar to the approximate proximal mapping  $\appProx_{\lambda}^\vphi$, there are two important differences. First, in the \BOO definition we constrain the minimization to $\ball_r(\bx)$ where the approximate proximal mapping is defined for the all domain---this allows us to efficiently compute a \BOO via stochastic methods even for values of $\lambda$ where the true (unconstrained) proximal point is far from $\bx$. Second, we require the sub-optimality guarantee to hold deterministically (a requirement that we will satisfy with high probability), as opposed the requirement~\eqref{eq:app-prox-def} of an expected suboptimality bound. In addition, note that the accuracy parameter $\vphi$ and $\rho$ are related via $\vphi = \lambda\rho^2/2$ and that $\rho$ has units of distance. Strong convexity of the \BOO optimization objective then implies that $\norm{\oracles[\lambda,\brooAcc]{x} - \prox_{f,\lambda}(x)}\le \rho$ whenever $\prox_{f,\lambda}(x)\in\ball_r(x)$. 

We have the following high-probability complexity guarantee for implementing a \BOO.

\newcommand{\pf}{p_{\mathrm{f}}} %

\begin{lemma}[{\cite[][Corollary 1]{carmon2021thinking}}]\label{lem:sgd-broo}
	Let $\fhat[i]$ be convex and $G$-Lipschitz for all $i\in[N]$, 
 	let $\pf\in(0,1)$, $\epsilon, \rho > 0$ and $\reps =  \eps/(2\log N\cdot G)$. 
	For any $\bx\in\R^d$ and $\lambda \le O(G / \reps)$, with probability at least $1-\pf$, $\textsc{Epoch-SGD-Proj}$~\cite[][Algorithm 2]{carmon2021thinking} that  outputs a valid $\reps$-\BOO response for $\fsm$ to query $\bx$ with regularization $\lambda$ and accuracy $\brooAcc$, and has  complexity
	\begin{equation}\label{eq:sgd-complexity-bound}
		\NqueryCustom{\fhat[i]} = O\left(N + \frac{G^2}{\lambda^2\brooAcc^2}\log\left(\frac{\log(G/(\lambda\brooAcc))}{\pf}\right)\right)~\mbox{and}~\NqueryCustom{\del\fhat[i]} = O\left( \frac{G^2}{\lambda^2\brooAcc^2}\log\left(\frac{\log(G/(\lambda\brooAcc))}{\pf}\right)\right)
	\end{equation}
\end{lemma}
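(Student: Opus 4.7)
The plan is to reduce the claim to Corollary 1 of~\cite{carmon2021thinking} (the underlying \BOO guarantee for $\textsc{Epoch-SGD-Proj}$), supplying the required unbiased, bounded-norm stochastic subgradient oracle for $\fsm$ via the rejection sampling procedure $\gradestfsm$ from~\Cref{lem:fsm-grad}.

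First, I would preprocess the query point: evaluate $\fhat[i](\bx)$ for all $i \in [N]$ and form the probability vector $\bar{p}_i = p_i(\bx)$ defined in~\eqref{eq:softmax-prob}, at a one-time cost of $N$ function evaluations. This enables subsequent calls to $\gest[\fsm](x) \defeq \gradestfsm(\{\fhat[i]\},\{\bar{p}_i\},\bx,x)$ at any $x\in\ball_{\reps}(\bx)$. By~\Cref{lem:fsm-grad}, each such call returns an unbiased estimator of a subgradient of $\fsm$ at $x$ with $\norm{\gest[\fsm](x)} \le G$ (in particular, a second-moment bound of $G^2$), while using exactly one subgradient evaluation of some $\fhat[i]$ and, in expectation, $O(1)$ function evaluations.

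Second, I would invoke Corollary 1 of~\cite{carmon2021thinking} with $f \gets \fsm$, Lipschitz constant $G$, \BOO radius $\reps$, regularization $\lambda$, accuracy $\brooAcc$, and failure probability $\pf/2$. Since $\fsm$ is convex (as a log-sum-exp of convex functions) and the oracle $\gest[\fsm]$ fits the assumptions of that corollary under the stated constraint $\lambda = O(G/\reps)$, the result guarantees that $\textsc{Epoch-SGD-Proj}$ produces a valid $\reps$-\BOO response satisfying~\eqref{eq:broo-req} with probability at least $1-\pf/2$, using at most $T = O\!\left(\frac{G^2}{\lambda^2 \brooAcc^2}\log\!\prn[\big]{\frac{\log(G/(\lambda\brooAcc))}{\pf}}\right)$ queries to $\gest[\fsm]$. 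Since each such query uses one subgradient evaluation of a single $\fhat[i]$, this gives the bound on $\NqueryCustom{\del\fhat[i]}$ stated in~\eqref{eq:sgd-complexity-bound}.

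Third, I would upgrade the \emph{expected} per-call function-evaluation cost of $\gest[\fsm]$ to a \emph{high-probability} bound on the total function-evaluation cost. Let $Z_1, \ldots, Z_T$ denote the number of rejection attempts across the $T$ calls; each $Z_j$ is stochastically dominated by an independent geometric random variable with success probability $e^{-2}$ (by the acceptance-probability analysis in the proof of~\Cref{lem:fsm-grad}). A standard Chernoff-type tail bound for sums of such geometrics yields $\sum_{j\le T} Z_j = O(T)$ with probability at least $1 - \pf/2$ (with the constant in $O(\cdot)$ absorbing an additive $\log(1/\pf)$ term already dominated by the $\log\log$-factor in the bound on $T$). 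A union bound over the \BOO-validity failure and this concentration failure gives the total $\NqueryCustom{\fhat[i]} = O(N + T)$ bound with probability at least $1-\pf$, matching~\eqref{eq:sgd-complexity-bound}.

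The main obstacle I expect is the third step: ensuring the concentration bound on the rejection-sampling work holds even in regimes where $T$ is small (so that naive Chernoff might give too weak a bound). A clean fallback, if the concentration argument is unwieldy, is to modify $\gradestfsm$ to truncate after $O(\log(T/\pf))$ unsuccessful rejections and output $\nabla \fhat[i](\bx)$ as a default; this makes the per-call cost deterministic and only introduces a bias of order $\pf/T$ per call, which is negligible relative to the other error terms driving the \BOO guarantee in~\cite{carmon2021thinking}.
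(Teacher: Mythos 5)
This lemma has no proof in the paper: it is stated as a direct citation of Corollary~1 of \cite{carmon2021thinking}, so there is no paper-internal argument to compare against. With that caveat, a few remarks on your reconstruction.

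The main logical problem is that your Step~2 is circular as stated. You ``invoke Corollary~1 of \cite{carmon2021thinking}'' to establish the lemma---but the lemma \emph{is} Corollary~1 of \cite{carmon2021thinking}; it is quoted verbatim (including the $N$ pre-processing term and the $\log\log$ inside the $\log$, which are specific to that paper's analysis). If you want to reconstruct the result rather than restate it, you should instead invoke a more primitive ingredient: a high-probability guarantee for a projected Epoch-SGD variant on a $\lambda$-strongly-convex objective restricted to $\ball_{\reps}(\bx)$, which gives the $O\bigl(\tfrac{G^2}{\lambda^2\rho^2}\log(\tfrac{\log(G/\lambda\rho)}{\pf})\bigr)$ oracle-call count, and then separately charge each oracle call for its $\fhat[i]$-level cost.

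Second, you silently swap the gradient estimator. The complexity bound in the lemma was established in \cite{carmon2021thinking} with their ``exponentiated softmax'' estimator, not with the rejection-sampling estimator $\gradestfsm$ of \Cref{lem:fsm-grad}, which is a contribution of \emph{this} paper. Using $\gradestfsm$ is a legitimate alternative route and is in the spirit of this paper, but it is not a reconstruction of the cited proof; you should be explicit about that, and in particular you cannot quote Corollary~1 as already covering it.

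Third, your Step~3 is the right idea but needs one missing hypothesis made explicit. A Chernoff bound for a sum of $T$ independent variables stochastically dominated by $\mathsf{Geom}(e^{-2})$ gives $\sum_{j\le T} Z_j = O(T)$ with probability $\geq 1 - e^{-\Omega(T)}$, so absorbing the failure probability into $\pf/2$ requires $T = \Omega(\log(1/\pf))$. This does follow from the stated query budget as long as $G^2/(\lambda^2\rho^2) \geq 1$, but that is an assumption on the parameter regime that you should state; also note your phrasing (``dominated by the $\log\log$-factor'') is not quite right---the $\log(1/\pf)$ is an additive summand inside the $\log(\cdot)$, not dominated by $\log\log(G/\lambda\rho)$. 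Your truncation fallback is fine, but the bias it introduces needs to be propagated through the \BOO suboptimality requirement~\eqref{eq:broo-req}, which you currently wave at.
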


Given a \BOO implementation \Cref{alg:bisecx} outputs values of $\lambda$
meeting the requirements of \Cref{prop:sapm}. The algorithm and the formal guarantee below are reproduced from \cite{carmon2021thinking} for completeness, and we refer the reader to Appendix B.3 of that paper for additional description and discussion. 
\begin{algorithm}
	\DontPrintSemicolon
	\caption{$\linesearch(x,v,A)$\label{alg:bisecx}}	
	\KwInput{Points $x,v\in\xset$, scalar $A\ge 0$.}
	\KwParameters{ \BOO $\oracles{\cdot}$ (see \Cref{def:broo}), bisection bounds $\lambda_{\min},\lambda_{\max}$, Lipschitz bound $G$, distance bounds $R$ and $r$.}
	For all $\lambda'$, let $y_{\lambda'} \defeq 
	\alpha_{2A\lambda'} \cdot x + (1-\alpha_{2A\lambda'}) \cdot v$, where
	$\alpha_\tau \defeq \frac{\tau}{1+\tau+\sqrt{1 + 2\tau}}$\;
	Define $\bisectionTarget(\lambda) \defeq \norm{\oracles[\lambda, \frac{r}{17}]{y_\lambda} - y_\lambda}$ 
	\Comment*{approximation of ball optimizer to $y_\lambda$}
	Let $\lambda = \lambda_{\max}$ \;
	\lWhile{$\lambda \geq \lambda_{\min}$ \textup{\textbf{and}}  $\bisectionTarget(\lambda) \leq \frac{13r}{16}$  \label{line:while1start}}{
		$\lambda \gets \lambda/2$
		\Comment*[f]{terminates in $O(\log\frac{\lambda_{\max}}{\lambda_{\min}})$ steps}
	}\label{line:while1end}

	\lIf{$\lambda \leq \lambda_{\min}$}{\Return $2\lambda$ \label{line:ls_lower_boundary}
	\Comment*[f]{happens only if ball optimizer is $O(\epsilon)$-optimal}
}
Let $\lambda_u = 2\lambda$, $\lambda_{\ell} = \lambda$ and $\lambda_m = \sqrt{\lambda_u \lambda_{\ell}}$
	
	\lIf{$\bisectionTarget(\lambda_{\ell}) \leq  \frac{15r}{16}$}{\Return $\lambda_{\ell}$
	\Comment*[f]{happens only if $\bisectionTarget(\lambda_{\ell}) \in [\frac{13r}{16}, \frac{15r}{16}]$}
	}\label{line:stop-middle}
		
	\While{$\bisectionTarget(\lambda_m)\notin [\frac{13r}{16}, \frac{15r}{16}]$ \textup{\textbf{and}} $\log_2 \frac{\lambda_u}{\lambda_{\ell}} \ge  \frac{r}{8(R+G/\lambda_{\ell})}$ \label{line:while2start} } 
	{
	\leIf{
		$\bisectionTarget(\lambda_m) < \frac{13r}{16}$}{$\lambda_u=\lambda_m$}{$\lambda_\ell = \lambda_m$}
	$\lambda_m = \sqrt{\lambda_u \lambda_{\ell}}$ 	\label{line:while2end}
	} 

	\Return $\lambda_m$ \Comment*[f]{
		the while loop terminates in $O\prn[\big]{\log\prn[\big]{\frac{R}{r} + \frac{G}{\lambda_{\min} r}}}$ steps}
\end{algorithm}

\begin{proposition}[{\cite[][Proposition 2]{carmon2021thinking}}]\label{prop:bisecx}
Let $f : \R^d \rightarrow \R$ be $G$-Lipschitz and convex, and let $x,v\in\R^d$, $\eps, r, R \in \R_{> 0}$ satisfy $\eps \leq G R$, $r \leq R$ and $\norm{x-v}\le 2R$. Given $\lambda_{\max}\ge \tfrac{2G}{r}$ and $\lambda_{\min}\in(0,\lambda_{\max})$, $\linesearch(x,v,A)$ outputs $\lambda \in [\lambda_{\min}, \lambda_{\max}]$ such that 
\[\norm{\prox_{f,\lambda}(y_\lambda)-y_\lambda}\le r.\] 
The subroutine uses $O(\log(\tfrac{\lambda_{\max}}{\lambda_{\min}})+\log (\tfrac{R + G/\lambda_{\min}}{r}))$ calls to $\oracles[\lambda', \frac{r}{17}]{\cdot}$ with $\lambda' \in [\half\lambda, \lambda_{\max}]$. Moreover, for  $\alpha_{2\lambda A} = \frac{2\lambda A}{1+2\lambda A + \sqrt{1+4\lambda A}}$ and  $y_{\lambda} \defeq \alpha_{2 \lambda A } x + (1-\alpha_{2 \lambda A}) v$ one of the following outcomes must occur:
	\begin{enumerate}
		\item \label{item:outcome-large-move} $ 
		\lambda \in [2\lambda_{\min}, \lambda_{\max}]$ and  $\norm{\prox_{f,\lambda}(y_\lambda)-y_\lambda} > \frac{3r}{4}$, or
		\item \label{item:outcome-small-lambda} $\lambda < 2\lambda_{\min}$.
	\end{enumerate}
	When taking $\lambda_{\max}=\tfrac{2G}{\eps}$ and $\lambda_{\min} = \Omega(\frac{\epsilon}{rR})$, the number of calls to $\oracles[\lambda', \frac{r}{17}]{\cdot}$ is $O(\log\tfrac{G R^2}{r\eps})$.
\end{proposition}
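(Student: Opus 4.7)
The plan is to follow the approach of the cited paper~\cite{carmon2021thinking}, since Proposition~\ref{prop:bisecx} is reproduced verbatim from there. The conceptual backbone is that the map $\lambda \mapsto \idealBisectionTarget(\lambda)\defeq \norm{\prox_{f,\lambda}(y_\lambda)-y_\lambda}$ is monotone and Lipschitz in $\log\lambda$, so bisection can find a $\lambda$ with $\idealBisectionTarget(\lambda) \in [3r/4, r]$, and the BROO gives us access to $\bisectionTarget(\lambda)$ within distance $r/17$ of $\idealBisectionTarget(\lambda)$.

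First, I would establish two structural properties of $\idealBisectionTarget$. Monotonicity: as $\lambda$ grows, $\alpha_{2A\lambda}$ increases, so $y_\lambda$ moves continuously from $v$ toward $x$, and for each fixed $y$ the map $\lambda\mapsto \norm{\prox_{f,\lambda}(y)-y}$ is non-increasing (a standard property of the Moreau proximal mapping, provable via the three-point inequality of \Cref{item:moreau-three-point} in \Cref{fact:moreau}). A short argument combining these yields monotonicity of $\idealBisectionTarget$ in $\lambda$. Log-Lipschitz continuity: doubling (or halving) $\lambda$ shifts $y_\lambda$ by at most $O(\norm{x-v})=O(R)$ and changes the prox displacement by at most $O(G/\lambda)$ via $G$-Lipschitzness of $f$; iterating, a factor-$2^s$ change in $\lambda$ produces at most $O(s(R+G/\lambda))$ change in $\idealBisectionTarget$. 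I would then propagate the BROO accuracy: strong convexity of the prox objective with modulus $\lambda$ turns the $\frac{\lambda}{2}\brooAcc^2$ function-value slack from \Cref{def:broo} into a distance bound $\norm{\oracles[\lambda,\brooAcc]{y}-\prox_{f,\lambda}(y)}\le \brooAcc$, so with $\brooAcc=r/17$ we have $|\bisectionTarget(\lambda)-\idealBisectionTarget(\lambda)|\le r/17$.

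Next, I would analyze the three phases of \Cref{alg:bisecx}. Phase~1 (the first \texttt{while} loop) halves $\lambda$ from $\lambda_{\max}$; each step takes one BROO call, terminating in $O(\log(\lambda_{\max}/\lambda_{\min}))$ steps either when $\bisectionTarget(\lambda)>13r/16$ (establishing a nontrivial lower value) or when $\lambda<\lambda_{\min}$ (which triggers Outcome~\ref{item:outcome-small-lambda} via the return on \cref{line:ls_lower_boundary}, with $\lambda_{\max}\ge 2G/r$ ensuring $\bisectionTarget(\lambda_{\max})$ is tiny). The interstitial check on \cref{line:stop-middle} handles the case that the previous boundary value already sits in the target window $[13r/16,15r/16]$. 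Phase~2 (the second \texttt{while} loop) performs geometric bisection on $[\lambda_\ell,\lambda_u]$, a factor-$2$ interval, narrowing until either $\bisectionTarget(\lambda_m)$ lands in $[13r/16, 15r/16]$ or the log-interval shrinks below $r/(8(R+G/\lambda_\ell))$—at which point the log-Lipschitz bound forces $\bisectionTarget(\lambda_m)$ to lie within $r/16$ of its value at either endpoint, hence inside the target window anyway. This second loop therefore runs for at most $O\prn{\log\frac{R+G/\lambda_{\min}}{r}}$ iterations, giving the stated complexity.

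Finally, I would verify the two outcomes. At termination we always have $\bisectionTarget(\lambda)\le 15r/16$, so by the $r/17$ approximation error $\idealBisectionTarget(\lambda)\le 15r/16 + r/17 \le r$, giving the first conclusion. If the algorithm returned $\lambda\ge 2\lambda_{\min}$, then $\bisectionTarget(\lambda)\ge 13r/16$ (either by the exit condition of Phase~1 with the doubling in defining $\lambda_u$, or by Phase~2's window criterion), so $\idealBisectionTarget(\lambda)\ge 13r/16 - r/17 > 3r/4$, establishing Outcome~\ref{item:outcome-large-move}. Otherwise $\lambda<2\lambda_{\min}$, i.e., Outcome~\ref{item:outcome-small-lambda}. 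The main obstacle is bookkeeping the approximation error through the two phases to confirm that the numeric thresholds $13r/16$ and $15r/16$, offset by $r/17$, correctly produce the target $[3r/4,r]$ window; the specific constants are chosen tightly and need to be checked against both the BROO accuracy and the log-Lipschitz slack used to terminate Phase~2.
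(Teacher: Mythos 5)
The paper does not actually prove \Cref{prop:bisecx}: it states the result and delegates the proof entirely to \cite{carmon2021thinking}, explicitly noting that ``the algorithm and the formal guarantee below are reproduced from \cite{carmon2021thinking} for completeness, and we refer the reader to Appendix B.3 of that paper.'' So there is no in-paper proof to compare against; your proposal is a reconstruction of the cited argument.

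That said, your reconstruction is structured along the right lines: monotonicity and log-Lipschitz continuity of $\idealBisectionTarget(\lambda) = \norm{\prox_{f,\lambda}(y_\lambda)-y_\lambda}$, conversion of the \BOO's $\tfrac{\lambda}{2}\brooAcc^2$ function-value slack into a $\brooAcc$ distance bound via strong convexity, and a two-phase analysis (halving, then geometric bisection) with careful constant bookkeeping to show the $r/17$ \BOO error and the $r/16$ log-Lipschitz slack fit inside the $[13r/16,\, 15r/16]$ window. Two places deserve more scrutiny than you give them. First, the monotonicity claim is not an immediate consequence of ``$y_\lambda$ moves from $v$ toward $x$'' plus ``$\norm{\prox_{f,\lambda}(y)-y}$ decreases in $\lambda$ for fixed $y$'': those two effects can pull in opposite directions, and the actual argument in \cite{carmon2021thinking} requires a quantitative comparison of the two derivatives, not a ``short argument.'' Second, when Phase~2 terminates because the log-interval is small, the log-Lipschitz bound controls the \emph{ideal} $\idealBisectionTarget$ at $\lambda_m$ relative to the endpoints, not the \BOO outputs $\bisectionTarget$ directly, so one must first pass through the $r/17$ \BOO error at the endpoints and only then invoke the Lipschitz bound — this ordering matters for the arithmetic to close with the chosen constants. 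Both are filling-in-details issues rather than wrong ideas, but as written your sketch leaves them as asserted rather than established.
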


\subsubsection{Proof of \Cref{thm:min-the-max}}

Finally, we combine the guarantees collected above to prove our near-optimal rate for minimizing the maximum-loss.

\newcommand{\KmaxBis}{\Kmax^{\mathrm{bisect}}}
\newcommand{\pBOO}{p_{\textup{\BOO}}}
\thmMinTheMax*

\begin{proof}
	We first prove correctness. Since $\eps' = \frac{\eps}{2\log N}$, we have  $0 \le \fsm(x) -\fmax(x)\le \eps/2$~\cite[see, e.g.,][Lemma 45]{carmon2020acceleration}. Therefore, it suffices to find an $\eps/2$-approximate solution of $\fsm$ over the domain $\xset\subseteq \ball_R(x_0)$. Let $\pBOO$ be the probability that all the \BOO calls within \Cref{alg:bisecx} (implemented as described in \Cref{lem:sgd-broo}) result in a valid output. Then, noting that $y_\lambda$ defined in \Cref{prop:bisecx} is precisely $y_k$ defined in \Cref{alg:MS}, the guarantees of \Cref{prop:bisecx} imply that, for  $\hx_{k+1}=\prox_{\fsm,\lambda}(y_k)$, we have $\norm{\hx_{k+1}-y_k}\le r$ and either $\norm{\hx_{k+1}-y_k} \ge 3r/4$ or $\lambda < 2\lambda_{\min}$ with probability at least $\pBOO$. Consequently, \Cref{prop:sapm} (with $\epsilon\to\epsilon/2$ and $\Kmax$ as required by the proposition), the output $x$ of \Cref{alg:MS} satisfies $\fsm(x)-\fsm(x\opt)\le \epsilon/2$ with probability at least $1-(1-\frac{2}{3}) - (1-\pBOO)=\pBOO-\frac{1}{3}$. 
	
	To finish the proof of correctness, it remains to verify that $\pBOO \ge 5/6$. To that end, let $\KmaxBis=O(\log \frac{GR^2}{\reps\eps})$ to be the total number of \BOO calls in a single execution of $\linesearch$, as per \Cref{prop:bisecx}. Then, if the probability of failure of a single \BOO implementation is $\pf$ and we perform at most $\Kmax$ calls to $\linesearch$, we have $\pBOO \ge 1 - \Kmax \KmaxBis \pf$. Therefore, taking 
	\[\pf \le  \frac{1}{6\Kmax \KmaxBis}\]
	guarantees correctness.

	We now proceed to bound the algorithm's complexity. To that end, we set
	\[
	\lambda_{\min}=\frac{\eps}{\reps^{4/3}R^{2/3}}\log^2 \prn*{\frac{GR}{\epsilon}}.
	\]
	Recalling that $\reps = \tfrac{\eps}{2G\log N}$, the total number of iterations in \Cref{alg:MS} is  at most
	\begin{align}
	\Kmax & = O\left(\left(\frac{R}{\reps}\right)^{2/3}\log\frac{GR}{\eps}+\sqrt{\frac{\lambda_{\min}R^2}{\eps}}\right) = O\left(\left(\frac{GR\log N}{\eps}\right)^{2/3}\log\frac{GR}{\eps}\right).\label{eq:Kmax-simplified}
	\end{align}
	Setting the approximation parameters to be 
	$\vphi_{k} = O(\frac{\epsilon}{\lambda_{k}a_{k}})$,  $\delta_{k} = O(\frac{\epsilon}{R})$ and $\sigma_{k}^2 = O(\frac{\epsilon}{a_{k}})$ as required in \Cref{prop:sapm}, the complexity of \cref{line:sapm-x,line:sapm-v} in the $k$th iteration of \Cref{alg:MS} is bounded by \Cref{lem:BROO-complexity-y} and~\Cref{lem:BROO-complexity-g} as
	\begin{align*}
	\E\NqueryCustom{\fhat[i]}^{(k),1} & = O\left(N + \frac{G^2}{\lambda_k\vphi_k} + \frac{G^2}{\sigma_k^2}\log^2\left(\frac{G}{\min\{\delta_k,\sigma_k\}}\right)+\log\left(\frac{G}{\min\{\delta_k,\sigma_k\}}\right)\right)\\
	& = O\left(N + \frac{G^2a_k}{\eps}\log^2\left(\frac{GR}{\eps}\right)+\log\left(\frac{GR}{\eps}\right)\right)\\
	\E\NqueryCustom{\del\fhat[i]}^{(k),1} & = O\left(\frac{G^2}{\lambda_k\vphi_k} + \frac{G^2}{\sigma_k^2}\log^2\left(\frac{G}{\min\{\delta_k,\sigma_k\}}\right)+\log\left(\frac{G}{\min\{\delta_k,\sigma_k\}}\right)\right)\\
	& = O\left(\frac{G^2a_k}{\eps}\log^2\left(\frac{GR}{\eps}\right)+\log\left(\frac{GR}{\eps}\right)\right).
	\end{align*}
	
	To bound the complexity of the bisection procedure at the $k$th iteration of \Cref{alg:MS}, note that it makes a total of  $\KmaxBis = O(\log \frac{GR^2}{\reps\eps}) = O(\log \frac{GR\log N}{\eps}) = O(\log \frac{GR}{\eps})$ \BOO calls, $\reps = \tfrac{\eps}{2G\log N}$ and $\log N\le\frac{GR}{2\eps}$. Applying \Cref{lem:sgd-broo} with $\pf$ and $\lambda_{\min}$ as determined above, the complexity is bounded by
	\begin{align*}
	\NqueryCustom{\fhat[i]}^{(k),2} & = O\left(\left(N + \frac{G^2}{\lambda_{\min}^2\reps^2}\log\left(\frac{\log(G/(\lambda_{\min}\reps))}{\pf}\right)\right)\KmaxBis\right)\\
	 & = O\left(\left(N + \frac{G^2\reps^{2/3}R^{4/3}}{\eps^2\log^4\left(\frac{GR}{\eps}\right)}\log\left(\frac{GR}{\eps}\right)\right)\log\left(\frac{GR}{\eps}\right)\right),\\
	\NqueryCustom{\del\fhat[i]}^{(k),2} & = O\left(\frac{G^2}{\lambda_{\min}^2\reps^2}\log\left(\frac{\log(G/(\lambda_{\min}\reps))}{\pf}\right)\KmaxBis\right) = O\left(\frac{G^2\reps^{2/3}R^{4/3}}{\eps^2\log^2\left(\frac{GR}{\eps}\right)}\right).
	\end{align*}
	
	Summing the bounds above over iterations $1$ to $K\le \Kmax$ and noting that $\sum_{k\le K} a_k = A_K \le 2A_{\max}=O(R^2 / \epsilon)$ (see proof of \Cref{prop:sapm}) we obtain the total complexity bounds 
	\begin{align*}
	\E\NqueryCustom{\fhat[i]} & = \sum_{k\le K}\left(\E\NqueryCustom{\fhat[i]}^{(k),1} + \E\NqueryCustom{\fhat[i]}^{(k),2}\right)\\
	 & = O\left(\Kmax N\log\frac{GR}{\eps}+\frac{G^2A_K}{\eps}\log^2\frac{GR}{\eps} + \Kmax \frac{G^2\reps^{2/3}R^{4/3}}{\eps^2\log^4\left(\frac{GR}{\eps}\right)}\cdot\log^2\left(\frac{GR}{\eps}\right)\right)\\
	& = O\left(\left(\frac{GR\log N}{\eps}\right)^{2/3}N\cdot \log^2\frac{GR}{\eps}+\frac{G^2R^2}{\eps^2}\log^2\frac{GR}{\eps}\right),\end{align*}
	and 
	\begin{align*}
	\E\NqueryCustom{\del\fhat[i]}  = \sum_{k\le K}\left(\E\NqueryCustom{\del\fhat[i]}^{(k),1} + \E\NqueryCustom{\del\fhat[i]}^{(k),2}\right) = O\left(\frac{G^2R^2}{\eps^2}\log^2\frac{GR}{\eps}\right),\end{align*}
	 where we have used formula~\eqref{eq:Kmax-simplified} for $\Kmax$. This concludes the proof.
\end{proof}

\section{Proofs from Section~\ref{sec:gs}}\label{app:gs}

In the section we prove \Cref{thm:GS}, the convergence guarantee for \Cref{alg:GS}, our gradient-efficient composite optimization method. We first provide a lemma (\Cref{lem:helper-seq-conv}) that helps us analyze the behavior of the $\beta_k$ and $\gamma_k$ sequences in the algorithm. Then we combine it with the approximation guarantees of our estimator to show the convergence rate of Algorithm~\ref{alg:GS} in~\Cref{prop:GS-rate}. Finally we apply this proposition and bound the expected number of gradient queries complete the proof of Theorem~\ref{thm:GS}.

The following helper lemma is also used in~\citet{lan2015bundle,lan2016gradient}; we provide it here for completeness of analysis.
\begin{lemma}[Convergence of geometric sequence, cf. Lemma~2 of~\citet{lan2016gradient}]\label{lem:helper-seq-conv}
	Given $\gamma_k\in(0,1)$, for all $k\in\N$, and $\Gamma_1>0$, define the sequence
	\[
	\Gamma_k \defeq (1-\gamma_k)\Gamma_{k-1},\quad \forall k\ge 2.
	\]
	If a sequence $E_k$ satisfies $E_k\le (1-\gamma_k)E_{k-1}+B_k$, for all $k\ge 1$, then we have for any $k\ge 1$,
	\[
	E_k\le \Gamma_k\left[\frac{1-\gamma_1}{\Gamma_1} E_0+\sum_{i\in[k]}\frac{B_i}{\Gamma_i}\right].
	\]
\end{lemma}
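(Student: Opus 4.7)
The plan is to divide the recursive inequality by $\Gamma_k$ and telescope. The key algebraic identity I will exploit is that by the definition $\Gamma_k = (1-\gamma_k)\Gamma_{k-1}$, we have $(1-\gamma_k)/\Gamma_k = 1/\Gamma_{k-1}$ for $k \ge 2$, which turns the given recursion into an additive one after dividing.

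Concretely, for $k \ge 2$, dividing $E_k \le (1-\gamma_k)E_{k-1} + B_k$ by $\Gamma_k > 0$ gives
\[
\frac{E_k}{\Gamma_k} \le \frac{E_{k-1}}{\Gamma_{k-1}} + \frac{B_k}{\Gamma_k}.
\]
For the base case $k=1$, directly dividing $E_1 \le (1-\gamma_1)E_0 + B_1$ by $\Gamma_1$ yields
\[
\frac{E_1}{\Gamma_1} \le \frac{(1-\gamma_1)E_0}{\Gamma_1} + \frac{B_1}{\Gamma_1}.
\]
Telescoping the inequality for $k \ge 2$ down to the base case, I obtain
\[
\frac{E_k}{\Gamma_k} \le \frac{(1-\gamma_1)E_0}{\Gamma_1} + \sum_{i=1}^{k}\frac{B_i}{\Gamma_i},
\]
and multiplying through by $\Gamma_k$ gives exactly the stated inequality.

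There is essentially no obstacle here; the only thing to be careful about is handling the $k=1$ case separately (since the identity $(1-\gamma_k)/\Gamma_k = 1/\Gamma_{k-1}$ uses the definition of $\Gamma_k$ which is only given for $k \ge 2$), which is why the leading term comes out as $(1-\gamma_1)E_0/\Gamma_1$ rather than $E_0/\Gamma_0$. Positivity of $\Gamma_k$ (needed to divide while preserving the inequality) follows from $\Gamma_1 > 0$ and $\gamma_k \in (0,1)$ by induction.
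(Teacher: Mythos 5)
Your proof is correct and complete. The paper does not supply its own proof of this lemma—it cites Lemma~2 of \citet{lan2016gradient} for the argument—so there is no in-paper proof to compare against; the telescoping argument you give (normalize by $\Gamma_k$, use $(1-\gamma_k)/\Gamma_k = 1/\Gamma_{k-1}$ to make the recursion additive, sum from $2$ to $k$, and handle $k=1$ separately) is the standard one and is what the cited reference uses. You also correctly flag the two small technical points (positivity of $\Gamma_k$ by induction, and the need to treat $k=1$ separately since $\Gamma_0$ is undefined), which is exactly why the $(1-\gamma_1)E_0/\Gamma_1$ term appears in the statement.
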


Using the helper lemma, we can show the following convergence rate for Algorithm~\ref{alg:GS}.

\begin{proposition}[Convergence rate]\label{prop:GS-rate}
	Given problem~\eqref{eq:composite} with optimizer $x^\star$ and initial point $\|x_0-x^\star\|\le R$, let $\sigma_k^2 = \frac{R^2}{4N}$, $\delta_k = \frac{R}{16N}$, $\eps_k = \frac{LR^2}{2kN}$, and let parameters $\beta_k=\tfrac{2L}{k}$,  $\gamma_k = \tfrac{2}{k+1}$. Then, the iterates of \Cref{alg:GS} satisfy
	\[
	\Psi(x_N)-\Psi(\xopt)\le O\left(\frac{LR^2}{N^2}\right).
	\]
\end{proposition}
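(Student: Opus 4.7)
The plan is to follow a Nesterov-style accelerated composite gradient analysis, where the $\bar v_k$ sequence provides the function-value decrease and the $v_k$ sequence---refined via the bias-reduced optimum estimator $\optest$---plays the role of the ``mirror descent'' iterate. Writing $E_k \defeq \mathbb{E}\Psi(x_k) - \Psi(\xopt)$ and denoting by $v^*_k$ the exact constrained proximal point $\argmin_{z\in\ball_R(v_0)\cap\xset}\{\bar\Lambda_k(z) + f(z) + \tfrac{\beta_k}{2}\|z - v_{k-1}\|^2\}$ that $v_k$ is designed to approximate via \Cref{thm:optest}, I will establish a recursion of the form $E_k \le (1-\gamma_k) E_{k-1} + B_k$ with a suitable $B_k$ and then apply \Cref{lem:helper-seq-conv} with $\Gamma_1 = 1$ (so $\Gamma_k = \prod_{i=2}^k(1-\gamma_i) = \tfrac{2}{k(k+1)}$).

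For the one-step inequality, $L$-smoothness of $\Lambda$ gives $\Lambda(x_k) \le \bar\Lambda_k(x_k) + \tfrac{L}{2}\|x_k - y_k\|^2$, while $x_k - y_k = \gamma_k(\bar v_k - \proj_{\xset}(v_{k-1})) = \gamma_k(\bar v_k - v_{k-1})$ since $v_{k-1} \in \xset$ by construction of $\optest$. Combining with convexity of $f$ and linearity of $\bar\Lambda_k$ applied to $x_k = (1-\gamma_k)x_{k-1} + \gamma_k \bar v_k$ yields $\Psi(x_k) \le (1-\gamma_k)\Psi(x_{k-1}) + \gamma_k[\bar\Lambda_k(\bar v_k) + f(\bar v_k)] + \tfrac{L\gamma_k^2}{2}\|\bar v_k - v_{k-1}\|^2$. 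The defining property~\eqref{eq:app-prox-def} of $\appProx_{\bar\Lambda_k+f,\beta_k}^{\epsilon_k}$ bounds the expected prox-objective at $\bar v_k$ by the unconstrained minimum value plus $\epsilon_k$; since the unconstrained minimum value is at most the constrained one, it is also at most the value at $v^*_k$ plus $\epsilon_k$. Combining with $\beta_k$-strong convexity of the constrained prox objective at $\xopt \in \ball_R(v_0)\cap\xset$ and the bound $\bar\Lambda_k(\xopt) \le \Lambda(\xopt)$ gives
\[\mathbb{E}\bigl[\bar\Lambda_k(\bar v_k) + f(\bar v_k) + \tfrac{\beta_k}{2}\|\bar v_k - v_{k-1}\|^2\bigr] \le \Psi(\xopt) + \tfrac{\beta_k}{2}\bigl[\|\xopt - v_{k-1}\|^2 - \|\xopt - v^*_k\|^2\bigr] + \epsilon_k.\]
The choice $\gamma_k = 2/(k+1)$, $\beta_k = 2L/k$ gives $L\gamma_k \le \beta_k$, so the smoothness remainder $\tfrac{L\gamma_k^2}{2}\|\bar v_k - v_{k-1}\|^2$ is absorbed by the negative contribution $-\tfrac{\gamma_k \beta_k}{2}\|\bar v_k - v_{k-1}\|^2$ coming from the displayed inequality.

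The main technical obstacle is translating the telescoping quantity $\|\xopt - v^*_k\|^2$ into $\|\xopt - v_k\|^2$ using the bias and variance guarantees $\|\mathbb{E}[v_k\mid \mathcal{F}_{k-1}] - v^*_k\| \le \delta_k$ and $\mathbb{E}[\|v_k - v^*_k\|^2\mid \mathcal{F}_{k-1}] \le \sigma_k^2$ supplied by \Cref{thm:optest}. Expanding conditionally and using $\|v^*_k - \xopt\| \le 2R$ (since both points lie in $\ball_R(v_0)$) gives
\[\|v^*_k - \xopt\|^2 = \mathbb{E}\bigl[\|v_k - \xopt\|^2 \,\big|\, \mathcal{F}_{k-1}\bigr] - \mathbb{E}\bigl[\|v_k - v^*_k\|^2 \,\big|\, \mathcal{F}_{k-1}\bigr] - 2\bigl\langle \mathbb{E}[v_k\mid\mathcal{F}_{k-1}] - v^*_k,\, v^*_k - \xopt\bigr\rangle,\]
from which $-\mathbb{E}\|\xopt - v^*_k\|^2 \le -\mathbb{E}\|\xopt - v_k\|^2 + \sigma_k^2 + 4R\delta_k$ follows by Cauchy--Schwarz. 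Inserting this into the one-step inequality produces $E_k \le (1-\gamma_k)E_{k-1} + B_k$ with $B_k = \gamma_k\bigl[\tfrac{\beta_k}{2}(\mathbb{E}\|\xopt - v_{k-1}\|^2 - \mathbb{E}\|\xopt - v_k\|^2) + \epsilon_k + \tfrac{\beta_k}{2}(\sigma_k^2 + 4R\delta_k)\bigr]$.

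To finish, applying \Cref{lem:helper-seq-conv} (the $k=1$ boundary is harmless because $1-\gamma_1 = 0$) and exploiting the key identity $\gamma_k \beta_k/\Gamma_k = 2L$ (and $\gamma_k/\Gamma_k = k$), the sum $\sum_{k=1}^N B_k/\Gamma_k$ decomposes into a telescoping contribution bounded by $L\|\xopt - v_0\|^2 \le LR^2$, plus the three error sums $\sum_{k=1}^N k\epsilon_k = LR^2/2$, $\sum_{k=1}^N L\sigma_k^2 = LR^2/4$, and $\sum_{k=1}^N 4LR\delta_k = LR^2/4$ under the prescribed schedule. Their total is $2LR^2$, so $E_N \le \Gamma_N \cdot 2LR^2 = \tfrac{4LR^2}{N(N+1)} = O(LR^2/N^2)$ as claimed.
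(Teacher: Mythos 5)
Your proof follows the same strategy as the paper's: derive a one-step recursion $E_k \le (1-\gamma_k)E_{k-1} + B_k$ using $L$-smoothness of $\Lambda$, convexity of $f$, the $\appProx$ suboptimality guarantee, $\beta_k$-strong convexity of the prox objective, and the bias/variance bounds on $v_k$ from \Cref{thm:optest}; then apply \Cref{lem:helper-seq-conv} with the identity $\gamma_k\beta_k/\Gamma_k = 2L$. Your parameter accounting ($\sum_k k\epsilon_k = LR^2/2$, $\sum_k L\sigma_k^2 = LR^2/4$, $\sum_k 4LR\delta_k = LR^2/4$, telescoping $\le LR^2$, total $2LR^2$, conclusion $E_N \le 4LR^2/(N(N+1))$) matches the paper's exactly.

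However, there is a concrete gap: the claim that ``$v_{k-1} \in \xset$ by construction of $\optest$'' is false. $\optest$ returns $\tfrac{1}{N}\sum_i \xmlmc^{(i)}$, where each $\xmlmc^{(i)} = x_0 + 2^J(x_J - x_{J-1})$ per~\eqref{eq:def-mlmc} is an affine combination of ODC outputs with a factor $2^J$; this need not lie in $\xset$ even though each $x_j$ does. This is precisely why \Cref{alg:GS} projects, writing $y_k = (1-\gamma_k)x_{k-1} + \gamma_k\proj_\xset(v_{k-1})$. You use the false claim to collapse $\proj_\xset(v_{k-1})$ to $v_{k-1}$, so that the $L$-smoothness remainder $\tfrac{L\gamma_k^2}{2}\|\bar v_k - \proj_\xset(v_{k-1})\|^2$ and the negative quadratic $-\tfrac{\gamma_k\beta_k}{2}\|\bar v_k - v_{k-1}\|^2$ coming from the prox inequality have the same center; without it, the two quadratics are centered at different points and the absorption via $L\gamma_k \le \beta_k$ does not go through as stated. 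The paper avoids this specific slip by defining $\Phi_k(x) = \bar\Lambda_k(x) + f(x) + \tfrac{\beta_k}{2}\|x-\proj_\xset(v_{k-1})\|^2$ with the projected center, so that $\Phi_k(\bar v_k)$ matches the smoothness remainder directly, and then uses the projection inequality $\|\proj_\xset(v_k) - u\|^2 \le \|v_k - u\|^2$ to make the telescoping work in terms of the projected iterates. (The paper's definitions of $\bar v_k$ and $\psi_k$ do write the prox centers as $v_{k-1}$, which should be read as $\proj_\xset(v_{k-1})$ to match $y_k$; but that is a matter of the paper's notation, not something your argument can resolve by asserting $v_{k-1}\in\xset$.) To repair your proof, keep the projection explicit: replace $v_{k-1}$ by $\proj_\xset(v_{k-1})$ wherever it appears as a prox center, and telescope on $\|\xopt - \proj_\xset(v_k)\|^2$ using the contraction property of $\proj_\xset$, as the paper does.
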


\begin{proof}
We first observe that
\begin{align*}	
	\Lambda(x_k) \stackrel{(i)}{\le} &  \Lambda(y_k)+\langle\grad \Lambda(y_k),x_k-y_k\rangle+\frac{L}{2}\|x_k-y_k\|^2\\
\stackrel{(ii)}{=}  & (1-\gamma_k)\left[\Lambda(y_k)+\langle\grad \Lambda(y_k),x_{k-1}-y_k\rangle\right]\\
& +\gamma_k\left[\Lambda(y_k) +\langle\grad \Lambda(y_k),\bar{v}_{k}-y_k\rangle\right] + \frac{L\gamma_k^2}{2}\|\bar{v}_k-\proj_{\xset}(v_{k-1})\|^2\\
\stackrel{(iii)}{\le} & (1-\gamma_k)\Lambda(x_{k-1})+\gamma_k\left[\Lambda(y_k)+\langle\grad \Lambda(y_k),\bar{v}_{k}-y_k\rangle + \frac{\beta_k}{2} \|\proj_{\xset}(v_{k-1})-\bar{v}_k\|^2\right]\\
& -\frac{\gamma_k\beta_k-L\gamma_k^2}{2}\|\proj_{\xset}(v_{k-1})-\bar{v}_k\|^2\\
\stackrel{(iv)}{\le} & (1-\gamma_k)\Lambda(x_{k-1})+\gamma_k\left[\Lambda(y_k)+\langle\grad \Lambda(y_k),\bar{v}_{k}-y_k\rangle + \frac{\beta_k}{2} \|\proj_{\xset}(v_{k-1})-\bar{v}_k\|^2\right],
\end{align*}
where we use $(i)$ $L$ smoothness of function $\Lambda$, $(ii)$ expanding $x_{k} = (1-\gamma_k)x_{k-1}+\gamma_k\bar{v}_k$ and replacing $y_k-x_k = \gamma_k (\proj_{\xset}(v_{k-1})-\bar{v}_{k})$, $(iii)$ convexity of $\Lambda$, and $(iv)$ that $\beta_k\ge L\gamma_k$.

Similarly using convexity of the non-smooth component $f$ and the definition of $x_k$ and $\bar{v}_k$, we obtain
\begin{align*}
    f(x_k)\le(1-\gamma_k)f(x_{k-1})+\gamma_kf(\bar{v}_k). 
\end{align*}

Thus, summing the two inequalities and recalling the definition $\bar{\Lambda}_k(v)=\Lambda(y_k)+\langle \grad \Lambda(y_k),v-y_k\rangle$, this is equivalent to
\[
\Lambda(x_k)+f(x_k)\le (1-\gamma_k)\left(\Lambda(x_{k-1})+f(x_{k-1})\right)+\gamma_k\left[\bar{\Lambda}_k(\bar{v}_k)+f(\bar{v}_k)+\frac{\beta_k}{2}\|\proj_{\xset}(v_{k-1})-\bar{v}_k\|^2\right].
\]

Now we recall the definition of composite objectives $\Psi(x) = \Lambda(x)+f(x)$ and define
\[\Phi_k(x) = \bar{\Lambda}_k(x)+f(x)+\frac{\beta_k}{2}\|x-\proj_{\xset}(v_{k-1})\|^2.\]
By convexity of $\Psi$ one has the recursion
\[
\Psi(x_k)-\Psi(u)\le (1-\gamma_k)\left(\Psi(x_{k-1})-\Psi(u)\right)+\gamma_k \left(\Phi_k(\bar{v}_k)-\Phi_k(u)+\frac{\beta_k}{2}\|\proj_{\xset}(v_{k-1})-u\|^2\right).\]

Let $v^\star_k$ be the exact minimizer of  $\Phi_k$ restricted to  $\bar{\xset}\defeq \ball_R(v_0)\cap \xset$. We have, for any $u\in\bar{\xset}$, that $\Phi_k(u)\ge \Phi_k(v^\star_k) + \frac{\beta_k}{2}\|v^\star_k-u\|^2$, and consequently
\begin{align*}
\Psi(x_k)-\Psi(u)\le & (1-\gamma_k)\left(\Psi(x_{k-1})-\Psi(u)\right)\\
& +\gamma_k \left(\Phi_k(\bar{v}_k)-\Phi_k(v_k^\star)+\frac{\beta_k}{2}\left(\|\proj_{\xset}(v_{k-1})-u\|^2-\|v_k^\star-u\|^2\right)\right).
\end{align*}

Conditioning on past events and taking expectation over randomness of $v_k$ and $\bar{v}_{k}$, this gives  for any $u\in\bar{\xset}$, 
\begin{align*}
\E \Psi(x_k)-\Psi(u)   & \le (1-\gamma_k)\left(\Psi(x_{k-1})-\Psi(u)\right)+\gamma_k \left(\E \Phi_k(\bar{v}_k)-\Phi_k(v_k^\star)\right)\\
 &\phantom{\le}+\frac{\gamma_k\beta_k}{2} \left(\|\proj_{\xset}(v_{k-1})-u\|^2-\E\|v_k-u\|^2+ \E\|v_k-v_k^\star\|^2+\E2\langle v_k^\star-u,v_k-v_k^\star\rangle\right)\\
& \stackrel{(i)}{\le}  (1-\gamma_k)\left(\Psi(x_{k-1})-\Psi(u)\right)+\gamma_k \left(\E \Phi_k(\bar{v}_k)-\Psi(v_k^\star)\right)\\
 & \phantom{\le} +\frac{\gamma_k\beta_k}{2} \left(\|\proj_{\xset}(v_{k-1})-u\|^2-\E\|v_{k}-u\|^2+ \E\|v_k-v_k^\star\|^2+4R\|\E v_k -v_k^\star\|\right)\\
 & \stackrel{(ii)}{\le} (1-\gamma_k)\left(\Psi(x_{k-1})-\Psi(u)\right)+\gamma_k \left(\E \Phi_k(\bar{v}_k)-\Phi_k(v_k^\star)\right)\\
&\phantom{\le}  +\frac{\gamma_k\beta_k}{2} \left(\|\proj_{\xset}(v_{k-1})-u\|^2-\E\|\proj_{\xset}(v_{k})-u\|^2+ \E\|v_k-v_k^\star\|^2+4R\|\E v_k -v_k^\star\|\right)\\
\end{align*}
where we use $(i)$ the triangle inequality and $v_k^\star\in\bar{\xset}$ to conclude  $\|v_k^\star-u\|\le \|v_k^\star-x_0\|+\|x_0-u\|\le 2R$, and $(ii)$ the projection property that $\|\proj_{\xset}(v_{k})-u\|^2\le \|v_{k}-u\|^2$ for any $u\in\bar{\xset}$.

Note that $\E\Phi_k(\bar{v}_k)-\Phi_k(v_k^\star) \le \epsilon_k$ by the definition of  $\bar{v}_k=\appProx_{\bar{\Lambda}_k+f,\beta_k}^{\eps_k}(v_{k-1})$. Moreover, \Cref{thm:optest} guarantees that $\E\|v_k-v_k^\star\| \le \delta_k$ and that $\E\|v_k-v_k^\star\|^2 \le \sigma_k^2$. Therefore, writing
\begin{equation*}
	E_k = \E \Psi(x_k)-\Psi(u)
\end{equation*}
and
\begin{equation*}
	B_k = \frac{\gamma_k\beta_k }{2}\prn*{ \E \|\proj_{\xset}(v_{k-1})-u\|^2 - \E\|\proj_{\xset}(v_{k})-u\|^2} + {\gamma_k \beta_k}\prn*{\frac{\epsilon_k}{\beta_k} + \frac{\sigma_k^2}{2} + 2R\delta_k},
\end{equation*}
we conclude that $E_k \le (1-\gamma_k) E_{k-1} + B_k$. Applying \Cref{lem:helper-seq-conv}, we obtain
\begin{align*}
\E \Psi(x_N)-\Psi(u) \le & \Gamma_N \frac{1-\gamma_1}{\Gamma_1}\left[\Psi(x_0)-\Psi(u)\right]\\
& +\Gamma_N\sum_{k=1}^N\frac{\beta_k\gamma_k}{2\Gamma_k}\left(\E\|\proj_{\xset}(v_{k-1})-u\|^2-\E\|\proj_{\xset}(v_{k})-u\|^2\right)\\
& +\Gamma_N\sum_{k\in[N]}\frac{\beta_k\gamma_k}{\Gamma_k} \prn*{\frac{\epsilon_k}{\beta_k} + \frac{\sigma_k^2}{2} + 2R\delta_k}\\
\stackrel{(i)}{\le} &  \Gamma_N L \|v_0-u\|^2  +\Gamma_N\sum_{k\in[N]}\frac{\beta_k\gamma_k}{\Gamma_k}\prn*{\frac{\epsilon_k}{\beta_k} + \frac{\sigma_k^2}{2} + 2R\delta_k} \stackrel{(ii)}{\le}  \frac{4LR^2}{N(N+1)},
\end{align*}
where $(i)$ follows from telescoping and $\gamma_1=1$, and $(ii)$ is due to $\Gamma_k = \prod_{k\ge 2} (1-\gamma_k) = \frac{2}{k(k+1)}$, so that
 $\frac{\beta_k\gamma_k}{\Gamma_k}=2L$, and 
 $\frac{\epsilon_k}{\beta_k} + \frac{\sigma_k^2}{2} + 2R\delta_k \le \frac{ R^2}{2N}$ 
by the choices $\sigma_k^2 = \frac{R^2}{4N}$, $\delta_k = \frac{R}{16N}$ and  $\eps_k = \frac{LR^2}{2kN}$.
\end{proof}

We are now ready to prove the main theorem of the section.

\thmGS*

\begin{proof}

By Proposition~\ref{prop:GS-rate}, it suffices to run Algorithm~\ref{alg:GS} for $N = O(\sqrt{LR^2/\eps})$ iterations, which immediately implies the stated bound on $\NqueryCustom{\grad \Lambda}$.

Now we consider the cost of attaining the requiring accuracy $\epsilon_k$ when computing $\bar{v}_k$. Using the $\epochSGD$ and \Cref{prop:epoch-sgd} we can do so with
\[
N^{(1)}_k = O\left(\frac{G^2}{\beta_k\eps_k}\right) = O\left(\frac{G^2k^2N}{L^2R^2}\right)
\]
queries to $\gest[f]$.

Applying \Cref{thm:optest}, the expected cost of attaining bias $\delta_k = \frac{R}{16N}$ and variance $\sigma_k^2 = \frac{R^2}{4N}$ is 
\begin{align*}
\E N^{(2)}_k & = O\left(\log\left(\frac{GNk}{LR}\right) + \frac{NG^2}{\beta_k^2R^2}\log^2\left(\frac{GNk}{LR}\right)\right)\\
& = O\left(\log\left(\frac{GNk}{LR}\right) +\frac{G^2k^2N}{L^2R^2}\log^2\left(\frac{GNk}{LR}\right) \right)
\end{align*}
queries to $\gest[f]$.

Summing these over all $k\le N = O(\sqrt{LR^2/\epsilon})$, we obtain the the required complexity bound
\begin{equation*}
\begin{aligned}
    \E \mathcal{N}_{\gest[f]} & =\sum_k\left(N_k^{(1)}+\E N_k^{(2)}+1\right) =  O\left(N\log\left(\frac{GN^2}{LR}\right)+\frac{G^2N^4}{L^2R^2}\log^2\left(\frac{GN^2}{LR}\right) \right)\\
    & = O\left(\sqrt{\frac{LR^2}{\eps}}\log\left(\frac{GR}{\eps}\right)+ \frac{G^2R^2}{\eps^2}\log^2\left(\frac{GR}{\eps}\right)\right).
    \end{aligned}
\end{equation*}
\end{proof}

\section{Proofs and additional remarks from Section~\ref{sec:DP-SCO}}\label{app:DP-SCO}

In this section we prove~\Cref{thm:SCO-convex} which gives an optimal complexity and generalization bound for differentially private stochastic convex optimization, conditional on the existence of an improved optimum estimator  (\Cref{def:bounded-MLMC}). We begin by stating a standard privacy guarantee for the Gaussian mechanism applied on mappings with bounded $\ell_2$ sensitivity, and a lemma that helps us bound the sensitivity of the conjunctured bounded estimator. With these results in hand, we prove \Cref{thm:SCO-convex}. Finally, we discuss some challenges and prospects for constructing bounded estimators that satisfy~\Cref{def:bounded-MLMC}.

\newcommand{\dham}{d_{\mathsf{ham}}}

\subsection{Helper lemmas}
\paragraph{Privacy of the Gaussian mechanism.}
In this section, we present the privacy guarantees of the Gaussian mechanism which will be useful for the proof of~\Cref{thm:SCO-convex}.
First, for an estimator (or a function) $h : \domain^n \to \R^d $,
the $\ell_2$-sensitivity of the estimator is upper bounded by $\Delta$ if $\sup_{\Ds,\Ds' \in \domain^n: \dham(\Ds,\Ds') \le 1} \ltwo{h(\Ds) - h(\Ds')} \le \Delta$, where $\dham$ is the hamming distance between the two samples (i.e., $\Ds,\Ds'$ with hamming distance $\dham(\Ds,\Ds') \le 1$ have at most a single different element).  We can now state the privacy guarantees of the Gaussian mechanism.

\begin{lemma}[Gaussian mechanism~{\cite[][Theorem~A.1]{DworkRo14}}]
	\label{lemma:gauss-mech}
	Let $h: \domain^n \to \R^d$ have $\ell_2$-sensitivity $\Delta$.
	Then the Gaussian mechanism $\A(\Ds) = h(\Ds) + \normal(0,\sigma^2 I_d)$ with $\sigma = 2 \Delta \log(2/\de)/\diffp$ is $(\diffp,\de)$-DP.
\end{lemma}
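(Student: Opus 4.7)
The plan is to prove this standard Gaussian mechanism guarantee via a privacy loss random variable (PLRV) argument, following the classical template. First I would reduce to the case of two fixed neighboring datasets $\Ds, \Ds'$ with $\dham(\Ds,\Ds') \le 1$, and set $\mu = h(\Ds)$, $\mu' = h(\Ds')$, so that the sensitivity assumption gives $\nrm{\mu-\mu'} \le \Delta$. Since the noise distribution $\normal(0,\sigma^2 I_d)$ is spherically symmetric, by orthogonal invariance I may rotate coordinates so that $\mu' - \mu = \Delta' e_1$ with $\Delta' \le \Delta$; the densities then differ only in their first coordinate, reducing the problem to a one-dimensional Gaussian shift.

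Next I would compute the PLRV explicitly: for output $o = \mu + W$ with $W \sim \normal(0,\sigma^2 I_d)$,
\begin{equation*}
L(o) \defeq \log \frac{p_{\normal(\mu,\sigma^2 I_d)}(o)}{p_{\normal(\mu',\sigma^2 I_d)}(o)} = \frac{2\langle \mu'-\mu, W\rangle + \nrm{\mu-\mu'}^2}{2\sigma^2}.
\end{equation*}
Projecting $W$ onto the unit vector $(\mu'-\mu)/\nrm{\mu'-\mu}$ gives a scalar $Z \sim \normal(0,\sigma^2)$, so $L$ is an affine function of $Z$. I would then invoke the standard equivalence (see, e.g., Dwork-Roth) that $(\diffp,\de)$-DP is implied by the tail bound $\Pr(L > \diffp) \le \de$ on the PLRV under $\A(\Ds)$.

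Rearranging, $\Pr(L>\diffp) = \Pr\prn*{Z > \frac{\sigma^2 \diffp}{\nrm{\mu-\mu'}} - \frac{\nrm{\mu-\mu'}}{2}}$. Using $\nrm{\mu-\mu'}\le \Delta$ and the Gaussian tail bound $\Pr(Z>t)\le \exp(-t^2/(2\sigma^2))$, the final step is to verify that substituting $\sigma = 2\Delta \log(2/\de)/\diffp$ drives this tail probability below $\de$. This is a straightforward (if slightly tedious) computation: for $\diffp \le \log(1/\de)$ the threshold $t = \sigma^2\diffp/\Delta - \Delta/2$ is at least $\sigma\sqrt{2\log(2/\de)}$ up to constants absorbed into the factor of $2$ in the definition of $\sigma$.

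The main obstacle, and the reason the paper just cites this lemma rather than reproving it, is purely bookkeeping — tracking the constants carefully enough to land on the specific $\sigma = 2\Delta\log(2/\de)/\diffp$ choice. A cleaner route that avoids this is R\'enyi-DP or concentrated-DP accounting, which yields tight constants via a single moment computation of the log-MGF of a shifted Gaussian, followed by conversion to $(\diffp,\de)$-DP; I would adopt that framing if the exact constants mattered, but for the statement as given the elementary tail-bound argument above suffices.
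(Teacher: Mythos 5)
There is no proof in the paper to compare against: this lemma is cited verbatim from Theorem~A.1 of Dwork and Roth. Your privacy-loss random-variable argument is the standard route to the result and is substantively correct, including the spherical-symmetry reduction to one dimension, the expression for the privacy loss as an affine function of $Z \sim \normal(0,\sigma^2)$ (a sign differs from what I would write, but it is distributionally irrelevant), the implication from $\P(L > \diffp) \le \de$ to $(\diffp,\de)$-DP (it is a one-way implication rather than the ``equivalence'' you call it, but the direction you need), and the monotonicity that lets you replace $\nrm{\mu-\mu'}$ by $\Delta$ in the tail threshold.

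The one thing worth being explicit about: the lemma as stated (here and in the paper) carries no restriction on $\diffp$, but your final tail-bound verification implicitly assumes $\diffp \le \log(1/\de)$, and without some such restriction the claim is false. Concretely, writing $\ell \defeq \log(2/\de)$ and substituting $\sigma = 2\Delta\ell/\diffp$ with $\nrm{\mu-\mu'} = \Delta$, the Chernoff exponent is
\[
\frac{t^2}{2\sigma^2} = \frac{(4\ell^2 - \diffp/2)^2}{8\ell^2},
\]
which vanishes at $\diffp = 8\ell^2$ while $\de = 2 e^{-\ell}$ can be arbitrarily small. Dwork--Roth's Theorem~A.1 restricts to $\diffp \in (0,1)$, and the paper only invokes this lemma under the standing hypothesis $\diffp \le \log(1/\de)$ (see \Cref{thm:SCO-convex}), so this is an omission in the lemma's statement rather than a flaw in your plan; but a complete proof should state the restriction explicitly and carry it through the final arithmetic you left as ``slightly tedious,'' since that is precisely where the constraint is consumed.
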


\paragraph{Bounding the number of estimator copies that use a particular sample.}
To prove~\Cref{thm:SCO-convex}, we begin with a lemma which bounds the number of optimum estimator copies that each sample can participate in. To this end, 
let $S_{i,t}$ denote the set of samples used in iteration $i$ of \Cref{alg:MLMC-loc-erm} during the computation of the $t$'th optimum estimator copy. For a sample $\ds_\ell$, we let $K_{i,\ell}$ denote the number of sets $S_{i,t}$ such that $z_\ell \in S_{i,t}$. Recalling that the number of iterations $k=\ceil{\log n }$ and that $\bar{n} = n/k$, we have the following lemma.
\begin{lemma}
	\label{lemma:orac-sample-appearances}
	Let $\sc_i = \frac{1}{\ss_i \bar n}$.
	Assume we use an optimum oracle $\MLMCoracd$ satisfying \Cref{def:bounded-MLMC} with constant $C_2$ and $\delta_i^2 = \frac{\lip^2}{\sc_i^2 \bar n}$. %
	Then, for any $\de \le 1/n$,
	\begin{equation*}
	\P \left( \max_{1 \le i \le k, 1 \le \ell \le n} K_{i,\ell}  \ge 20 \log (1/\de) + 6 C_2 \log^2 n  \right)
	\le \de/2.
	\end{equation*}
\end{lemma}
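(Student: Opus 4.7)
The plan is to express $K_{i,\ell}$ as a sum of $\bar n$ independent Bernoulli indicators and apply a multiplicative Chernoff bound, then take a union bound over the $n$ samples. The first step is to compute the mean $\mu := \E K_{i,\ell}$. Substituting the algorithm's parameter choices $\sc_i = 1/(\ss_i \bar n)$ and $\delta_i = \lip \ss_i \sqrt{\bar n}$ gives $\lip/(\sc_i \delta_i) = \sqrt{\bar n}$, so by \Cref{def:bounded-MLMC} the $t$-th copy uses an expected number of gradient queries $\E N_{i,t} \le C_2 \log \sqrt{\bar n} = \tfrac{C_2}{2}\log \bar n$. Each query samples a data point uniformly from the $\bar n$ samples of iteration $i$, so conditional on $N_{i,t}$ the union bound over queries yields $\P(\ds_\ell \in S_{i,t} \mid N_{i,t}) \le N_{i,t}/\bar n$; taking expectation and summing over $t\in[\bar n]$ gives $\mu \le \tfrac{C_2}{2} \log n$. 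Crucially, different copies of the estimator use independent internal randomness, so the indicators $\{\indic{\ds_\ell \in S_{i,t}}\}_{t\in[\bar n]}$ are mutually independent Bernoullis, which is the property needed for Chernoff.

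The second step is the Chernoff bound $\P(K_{i,\ell} \ge a) \le (e\mu/a)^a \le e^{-a}$, which holds whenever $a \ge e^2 \mu$. Setting $a := 20 \log(1/\de) + 6 C_2 \log^2 n$, the condition $a \ge e^2\mu$ holds for all moderate $n$ since $a \ge 6 C_2 \log^2 n \gg \tfrac{e^2 C_2}{2}\log n$. Then $e^{-a} = \de^{20} \cdot e^{-6 C_2 \log^2 n} \le \de^{20} \cdot n^{-6C_2}$ (using $\log^2 n \ge \log n$ for $n\ge e$), and the assumption $\de \le 1/n$ makes this at most $\de/(2n)$. Finally, since the algorithm partitions the $n$ samples across the $k$ iterations (each $\ds_\ell$ is used in only one iteration's regularized objective $F_i$), $K_{i,\ell}=0$ for all but $n$ pairs $(i,\ell)$, so a union bound over the non-trivial pairs yields $\P(\max_{i,\ell} K_{i,\ell} \ge a) \le n \cdot \de/(2n) = \de/2$.

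The main obstacle is not conceptual but quantitative: the constants $20$ and $6$ in the target threshold must leave enough slack to absorb the $n$-way union bound without requiring $a$ to grow beyond $O(\log(1/\de) + \log^2 n)$. The $\de^{20}$ factor handles the $\de \le 1/n$ regime, while the $e^{-6 C_2 \log^2 n}$ factor dominates any polynomial in $n$; matching these two contributions against the required $\de/(2n)$ failure probability is what pins down the specific constants in the lemma statement.
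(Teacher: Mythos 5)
Your proof is correct and follows essentially the same route as the paper: bound the per-copy inclusion probability by conditioning on the random query count $N_{i,t}$, observe that $K_{i,\ell}$ is a sum of i.i.d.\ Bernoullis, apply a multiplicative Chernoff bound, and union bound over sample indices. Your accounting is a bit cleaner in two places: you correctly sum over the $\bar n$ estimator copies that the algorithm actually produces at iteration $i$ (yielding $\mu\le\tfrac{C_2}{2}\log n$), whereas the paper's proof writes $K_{i,\ell}=\sum_{t=1}^n Y_t$ and carries an extra factor of $k$ through the Chernoff step; and you note that each sample lives in exactly one iteration's block, so the union bound need only cover $n$ nontrivial pairs rather than $nk$. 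Neither change affects the result---both arguments have slack in the constants. One small thing worth tightening: the inequality $\log^2 n\ge\log n$ you invoke only holds for $n\ge e$, so the $n=2$ case should be checked by hand (it does go through, with room to spare, because $\de^{19}\le 2^{-19}$ already dominates everything), and your Chernoff validity condition $a\ge e^2\mu$ reduces to $12\log n\ge e^2$, which also holds for all $n\ge 2$ but not ``with much slack'' as the $\gg$ suggests.
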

\begin{proof}
	We first prove the claim for a fixed $i$ and $\ell$ and then we apply a union bound. Fix $1 \le i \le k$ and $1 \le \ell \le n$
	and define $Y_t = \indic{z_\ell \in S_{i,t}}$. Now we upper bound $p = \P(Y_t=1)$. Let the random variable $N_t$ denote the number of subgradients the $t$'th query to $\MLMCorac$ at iteration $i$ uses.
	First, note that whenever $N_t = j$, we have
	\begin{align*}
	\P(Y_t=1 \mid N_t=j ) 
	 \le j / \bar n,
	\end{align*}  
	by the union bound.
	Thus, \Cref{def:bounded-MLMC} now implies
	\begin{align*}
	\P(Y_t=1) 
	& = \sum_{k=1}^{\infty} \P(Y_t=1 \mid N_t=j) \P(N_t=j) \\
	& \le \frac{1}{\bar{n}} \sum_{k=1}^{\infty} \P(N_t=j) j  =\frac{ \E[N]}{\bar{n}} 
	  = \frac{C_2}{\bar{n}} \log\frac{\lip}{\sc_i\delta_i}.
	\end{align*}		
	We can now use a Chernoff bound to prove the claim. Indeed, as $K_{i,\ell} = \sum_{t=1}^n Y_t$ and $Y_t \sim \mathsf{Bernoulli}(p)$ are i.i.d., \Cref{lemma:chernoff} below implies that for $c \ge 6$,
	\begin{align*}
	\P\prn*{K_{i,\ell} \ge c \E[K_{i,\ell}] }
	= \P\prn*{ \sum_{t=1}^n Y_t \ge c n p }
	\le 2^{-cnp}.
	\end{align*}
	As $p \le C_2 \log(n) \log(\lip/\sc_i\delta_i)/n$, we take $ c \ge 6$ such that $c n p \ge 20 \log (1/\de)$, hence we have
	\[\P(K_{i,\ell} \ge 20 \log (1/\de) + 6 C_2 \log(n) \log(\lip/\sc_i\delta_i)) ) \le \de^4.\] Applying a union bound over all $n$ samples and all $k=\ceil{\log n}$ iterations, we have that 
	\begin{equation*}
	\P\prn*{ \max_{1 \le i \le k, 1 \le \ell \le n} K_{i,\ell}  \ge 20 \log (1/\de) + 6 C_2 \log(n)  \log(\lip/\sc_i\delta_i) ) }
	\le \de/2.
	\end{equation*}
	The claim now follows by noting that $\frac{\lip}{\sc_i \delta_i} \le \sqrt{n}$ using our choice of $\delta_i$ in~\Cref{alg:MLMC-loc-erm}.

\end{proof}

\begin{lemma}[\cite{MitzenmacherUp05}, Ch.~4.2.1]
	\label{lemma:chernoff}
	Let $X = \sum_{i=1}^n X_i$ for $X_i \simiid \mathsf{Bernoulli}(p)$.
	Then for $c \ge 6$,
	\begin{align*}
	\\P(X \ge c np ) \le 2^{-c np}.
	\end{align*}
\end{lemma}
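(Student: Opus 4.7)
The plan is to apply the standard Chernoff method (exponential Markov inequality) and then verify the numerical constant $c \ge 6$ in the final step. Specifically, for any $t > 0$, Markov's inequality gives
\begin{equation*}
\P(X \ge c n p) \le e^{-t c n p}\, \E[e^{t X}] = e^{-t c n p} \prod_{i=1}^n \E[e^{t X_i}] = e^{-t c n p} (1 - p + p e^t)^n,
\end{equation*}
using independence of the $X_i$. Next I would apply the inequality $1 + x \le e^x$ with $x = p(e^t - 1)$ to obtain $(1 - p + p e^t)^n \le \exp(n p (e^t - 1))$, yielding
\begin{equation*}
\P(X \ge c n p) \le \exp\bigl(n p (e^t - 1 - t c)\bigr).
\end{equation*}

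The second step is to optimize the exponent in $t$. Differentiating $e^t - 1 - tc$ in $t$ and setting to zero gives the minimizer $t = \ln c$, and substituting back yields
\begin{equation*}
\P(X \ge c n p) \le \exp\bigl(n p (c - 1 - c \ln c)\bigr).
\end{equation*}
To obtain the stated bound $2^{-cnp}$, it suffices to check that $c - 1 - c \ln c \le -c \ln 2$, or equivalently $c \ln(c/2) \ge c - 1$, i.e., $\ln(c/2) \ge 1 - 1/c$.

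The final step, which is essentially the only place the constant $c \ge 6$ enters, is to verify this last inequality. At $c = 6$ it reads $\ln 3 \ge 5/6$, which holds since $\ln 3 > 1.09$. Moreover, the function $g(c) = \ln(c/2) - (1 - 1/c)$ has derivative $g'(c) = 1/c + 1/c^2 > 0$, so $g$ is increasing on $[6,\infty)$, and therefore the inequality holds for all $c \ge 6$. I do not anticipate any real obstacle here; the main (minor) subtlety is just making sure the threshold $c \ge 6$ is tight enough for the clean base-$2$ bound (note that a slightly smaller threshold $c \ge 2e$ would already suffice, and the cited reference \cite{MitzenmacherUp05} uses the same kind of argument).
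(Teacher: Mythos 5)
Your proof is correct and follows the standard Chernoff-method argument that the cited textbook \cite{MitzenmacherUp05} itself uses; the paper does not supply its own proof of this lemma but simply cites the reference, so there is nothing to compare against beyond noting your derivation matches the standard one. One small slip: the derivative of $g(c)=\ln(c/2)-1+1/c$ is $g'(c)=1/c-1/c^2=(c-1)/c^2$, not $1/c+1/c^2$ as you wrote, but this is still positive for $c>1$, so your monotonicity conclusion (and the rest of the argument) is unaffected.
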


\subsection{Proof of~\Cref{thm:SCO-convex}}
\thmDPSCO*

\newcommand{\goodev}{\mathfrak{E}}
\begin{proof}
	We begin by proving the privacy claim. We show that each iterate is $(\diffp,\de)$-DP which completes the proof by post-processing as each sample is used in exactly one iterate. To this end, first we show that, with high probability, each sample $z_\ell$ is used in at most $B=20 (\log (\frac{1}{\de}) + C_2 \log^2 n)$ different optimum-estimator queries; we let $\goodev$ denote this event. More precisely, let $S_{i,t}$ denote the set of samples used in iteration $i$ during the application of the $t$'th oracle. Then for every $i$ and sample $z_\ell$, letting $K_{i,\ell}$ be the number of sets $S_{i,t}$ such that $z_\ell \in S_{i,t}$. Using this notation, the event $\goodev$ is equivalent to $\max_{1 \le i \le k, 1 \le \ell \le n} K_{i,\ell} \le B$. \Cref{lemma:orac-sample-appearances} implies that $P[\goodev] \ge 1 - \de/2$, therefore we only have to prove $(\diffp,\de^2 /2)$-differential privacy assuming event $\goodev$ happens as we have using $e^\diffp \le 1/\de$ that
	\begin{align*}
	P[\A(\Ds)\in \cO] 
	& \le P[\A(\Ds')\in \cO \mid \goodev] P[\goodev] + ( 1 - P[\goodev]) \\
	& \le e^{\diffp} P[\A(\Ds')\in \cO \mid \goodev ]P[\goodev] +\de/2 \\
	& \le e^{\diffp} P[\A(\Ds')\in \cO ] + \de .
	\end{align*} 

	We therefore assume $\goodev$ holds and proceed to bound the $\ell_2$-sensitivity of $\tilde x_i$. 
	To this end, let  $\sc_i = 1/(\ss_i \bar n)$ and $\hat x_i = \argmin_{x \in \xdomain} F_i(x)$.
	First, note that each optimum estimation oracle output satisfies 
	\begin{align*}
	\ltwo{\MLMCoracd_{\delta_i}(F_i) - x_{i-1}} 
	& \le \ltwo{\MLMCoracd_{\delta_i}(F_i) - \hat x_i} + \ltwo{\hat x_i - x_{i-1}} \\
	& \overle{(\star)}   \sqrt{C_1} \lip \sqrt{\log n}/\sc_i + \lip/\sc_i \\
	& = ( \sqrt{C_1 \log n} + 1) \lip /\sc_i,
	\end{align*}
	where the first term in inequality $(\star)$ above holds since the estimator $\MLMCoracd_{\delta_i}$ satisfies~\Cref{def:bounded-MLMC} and $F_i = f_i + \psi_i$ where $f_i$ is $\lip$-Lipschitz and $\psi_i$ is $\sc_i$-strongly convex with $\lip/(\sc_i \delta_i) \le \sqrt{n}$.
	The second term of the inequality holds since $\psi_i(x) = 	\sc_i \norm{x - x_{i-1}}^2$, thus as $f_i$ is $\lip$-Lipschitz we have
	\begin{equation*}
	\sc_i \ltwo{\hat x_i - x_{i-1}}^2
	\le f_i(x_{i-1}) - f_i(\hat x_i)
	\le \lip \ltwo{\hat x_i - x_{i-1}}.
	\end{equation*}
	As event $\goodev$ holds, each sample participates in at most $B$ of the optimum estimator computations queries, hence we have that the $\ell_2$-sensitivity of $\tilde x_i$
	is at most $2 \frac{B}{\bar n} ( \sqrt{C_1 \log n} + 2)  \lip / \sc_i $.
	Privacy properties of the Gaussian mechanism (\Cref{lemma:gauss-mech}) and our choice of $\sigma_i$ now imply that each iterate is $(\diffp,\de^2/2)$-DP whenever event $\goodev$ holds, which proves the claim about privacy.
	
	Let us now prove utility following steps similar to the proof of Theorem 4.4 in~\cite{FeldmanKoTa20}.
	We define the non-private minimizers, $\hat x_i = \argmin_{x \in \xdomain} F_i(x)$ and $\hat x_0 = \xopt$. We have
	\begin{equation}
	\label{eqn:main-loc}
	f(x_k) - f(\xopt)
	= \sum_{i=1}^k \brk*{f(\hat x_i) - f(\hat x_{i-1})}
	+ f (x_k) - f(\hat x_k).
	\end{equation}
	Using the definitions of $\sigma_i$ and $\eta_i$ in \Cref{alg:MLMC-loc-erm}, we also have that for every $i \ge 1$
	\begin{align*} 
	\E [\ltwo{\hat x_i - x_i}^2] 
	& \le 2 \E [\ltwo{\hat x_i - \tilde x_i}^2] + 2 \E [\ltwo{\tilde x_i - x_i}^2] \\
	& \le 2 \E [\ltwo{\hat x_i - \tilde x_i}^2] + O \left( \frac{\lip^2 B^2 \ss_i^2 d  \log(n)\log(1/\de) }{\diffp^2} \right) \\
	&  \le 2 \E [\ltwo{\hat x_i - \tilde x_i}^2] + O \left( \frac{\lip^2 B^2 \ss^2 d  \log(n) \log(1/\de) }{\diffp^2 2^{8i}} \right) .
	\end{align*}
	Moreover, using properties of the bounded-optimum estimator from~\Cref{def:bounded-MLMC}, that is, $\ltwo{\MLMCoracd_{\delta_i}(F_i,x_{i-1}) - \hat x_i}^2 \le C_1 \lip^2 \log(n) /\sc_i^2$ and $\ltwo{\E[\MLMCoracd_{\delta_i}(F_i,x_{i-1}) - \hat x_i]}^2  \le \delta_i^2$, we have by choosing $\delta_i^2 = \lip^2/ \sc_i^2 \bar n = \lip^2 \ss_i^2 \bar n$,
	\begin{align*} 
	\E \ltwo{\tilde x_i - \hat x_i}^2 
	& = \E{\norm[\Bigg]{\frac{1}{\bar n} \sum_{j=1}^{\bar n} \MLMCoracd_{\delta_i}(F_i,x_{i-1}) - \hat x_i}^2} \\
	& \le \frac{C_1 \lip^2 \log(n)}{\sc_i^2 \bar n} + \rho^2 
	\le (C_1 + 1) {\lip^2 \ss_i^2 \bar n \log(n)}.
	\end{align*}
	We can now bound the terms in~\eqref{eqn:main-loc}. For the second term, the choice of $\ss$ gives %
	\begin{align*} 
	\E[f (x_k) -  f(\hat x_k)]
	& \le \lip \E [\ltwo{x_k - \hat x_k}] \\
	& \le \lip \cdot O \left(\lip \ss_k \sqrt{\bar n \log(n)}  + \frac{\rad B}{2^{6k}} \right) \\
	& \le \lip \cdot O \left(\frac{2 \lip \ss \sqrt{\bar n \log(n) }}{2^{4k}}  + \frac{\rad B}{2^{6k}} \right) \\
	& \le O \left( \frac{\rad \lip}{n} \right).
	\end{align*}
	
	For the first term in~\eqref{eqn:main-loc}, as $F_i$ is $\lip$-Lipschitz over $ \xdomain_i= \{x\in \xdomain: \norm{x - x_{i-1}} \le {2\lip \ss_i  \bar n} \}$, Theorems 6 and 7 in~\cite{ShwartzShSrSr09} imply that for all $y \in \xdomain_i$
	\begin{equation*}
	\E[ f(\hat x_i) - f(y) ] \le \frac{\E[\ltwo{y - x_{i-1}}^2]}{\ss_i \bar n} + 2 \lip^2 \ss_i ,
	\end{equation*}
	hence we now have
	\begin{align*}
	\sum_{i=1}^k \E [f  (\hat x_i) - f (\hat x_{i-1})]
	& \le \sum_{i=1}^k \sc_{i-1} {\E[\norm{\hat x_{i-1} - x_{i-1}}^2] } + 2 \lip^2 \ss_i  \\
	& \le O \left(  \frac{\rad^2}{\ss \bar n} 
	+  \sum_{i=2}^k  \sc_i \left( \frac{\lip^2 \log(n) }{\sc_i^2 \bar n} +  \frac{\lip^2 B^2 \ss_i^2 d \log(n) \log(1/\de)}{\diffp_i^2}\right) +  \lip^2 \ss_i   \right) \\
	& \le O \left( \frac{\rad^2}{\ss \bar n} 
	+  \sum_{i=2}^k   {\lip^2 \ss_i \log(n) }  +  \frac{\lip^2 B^2 \ss_i d \log(n) \log(1/\de)}{\diffp_i^2 \bar n } \right)  \\
	& \le O \left( \frac{\rad^2}{\ss \bar n} 
	+  \lip^2 \ss \log(n)   +  \sum_{i=2}^k  2^{-i} \frac{\lip^2 B^2 \ss d \log(n)  \log(1/\de)  }{\diffp^2 \bar n } \right)  \\
	& \le \lip \rad \cdot O \left( \frac{{\log n}}{\sqrt{n}} + \frac{ B  \log(n) \sqrt{d \log(1/\de)  }}{n \diffp} \right) ,
	\end{align*}
	where the last inequality follows since $\bar n = n/\ceil{\log(n)}$,
	and $\ss =\frac{\rad}{\lip} \min ( 1/{\sqrt{n}}, { \diffp}/{ B \log(n) \sqrt{d \log(1/\de) }} )$.
\end{proof}

\subsection{The challenges of obtaining a bounded optimum estimator}\label{app:DP-SCO-challenges}
To highlight the challenge of finding bounded estimators that satisfy \Cref{def:bounded-MLMC}, let us explain why our MLMC optimum estimator~\eqref{eq:def-mlmc} fails to do so. For this estimator, we have (when $2^J \le \Tmax$)
\begin{equation*}
	\norm{\xmlmc - \xopt} \le \norm{\xopt - x_0} + 2^J \norm{x_J - x_{J-1}},
\end{equation*}  
where $x_j$ is the output of an ODC algorithm with query budget $2^j$. The ODC property and the triangle inequality then roughly imply that $\norm{x_j - x_{j-1}} = O(2^{-j/2} G / \mu)$ and consequently (since $\norm{\xopt - x_0}=O(G/\mu)$) we have $\norm{\xmlmc - \xopt}=O(2^{J/2} G/\mu)=O(\sqrt{\Tmax}G/\mu)$ which clearly is not enough to guarantee an $\Otil{G/\mu}$ bound on $\norm{\xmlmc - \xopt}$. Indeed, to guarantee such bound with a similar analysis we would have needed $\norm{x_j- x_{j-1}} = O(2^{-j} G / \mu)$. However, this would imply that, by the triangle inequality,
\begin{equation*}
	\norm{x_j - \xopt} = \norm{x_j - x_\infty} \le \sum_{k=j+1}^\infty \norm{x_k - x_{k-1}} = \sum_{k=j+1} O(2^{-k} G/ \mu) = O(2^{-j} G/\mu),
\end{equation*}
which contradicts the lower bound on the optimal distance convergence rate in \Cref{app:remark-odc}. 

Having explained why the analysis strategy underlying our estimator~\eqref{eq:def-mlmc} cannot directly yield a bounded optimum estimator, we discuss two approaches with a potential to solve the problem.
The first approach is to apply ODC algorithms on a smooth surrogate of the true objective $F$, for which the faster convergence to the optimum is possible, e.g., using randomized smoothing~\cite{duchi2012randomized,KulkarniLeLi21}.

\notarxiv{
The second approach is try to directly bound the $\ell_2$ sensitivity of our MLMC-based approach. In particular, it might be possible to leverage the structure of our estimator (or an improved version thereof)  in order to control the $\ell_2$ sensitivity without relying on the boundedness of the estimator as we currently do in the proof of \Cref{thm:SCO-convex}.
}

\arxiv{
The second approach is try to directly bound the $\ell_2$ sensitivity of our MLMC-based approach. In particular, it might be possible to leverage the structure of our estimator (or an improved version thereof)  in order to control the $\ell_2$ sensitivity without relying on the boundedness of the estimator as we currently do in the proof of \Cref{thm:SCO-convex}.
}

\end{document}